\def\biblio{\bibliography{duality}\bibliographystyle{alpha}}
\definecolor{dark-red}{rgb}{0.5,0.15,0.15}
\definecolor{dark-blue}{rgb}{0.15,0.15,0.6}
\definecolor{dark-green}{rgb}{0.15,0.6,0.15}
\newcommand{\cW}{\cal{W}}
\renewcommand*{\backref}[1]{}
\renewcommand*{\backrefalt}[4]{%
  \ifcase #1 %
No citations.% use \relax if you do not want the "No citations" message
  \or
(cit. on p. #2).%
  \else
(cit on pp. #2).%
  \fi%
}
\newtheorem{thm}{Theorem}[section]
\newtheorem{cor}[thm]{Corollary}
\newtheorem{prop}[thm]{Proposition}
\newtheorem{lem}[thm]{Lemma}
\theoremstyle{definition}
\newtheorem{defn}[thm]{Definition}
\newtheorem{ex}[thm]{Example}
\theoremstyle{remark}
\newtheorem{rem}[thm]{Remark}
\theoremstyle{theorem}
\newtheorem*{thm*}{Theorem}
\let\c@equation\c@thm
\numberwithin{equation}{section}
\DeclareMathOperator{\Sp}{Sp}
\DeclareMathOperator{\Hom}{Hom}
\DeclareMathOperator{\End}{End}
\DeclareMathOperator{\colim}{colim}
\DeclareMathOperator{\cA}{\mathcal{A}}
\DeclareMathOperator{\cC}{\mathcal{C}}
\DeclareMathOperator{\cD}{\mathcal{D}}
\DeclareMathOperator{\cE}{\mathcal{E}}
\DeclareMathOperator{\cS}{\mathcal{S}}
\DeclareMathOperator{\cT}{\mathcal{T}}
\DeclareMathOperator{\cF}{\mathcal{F}}
\DeclareMathOperator{\cG}{\mathcal{G}}
\DeclareMathOperator{\cV}{\mathcal{V}}
\DeclareMathOperator{\Rep}{Rep}
\DeclareMathOperator{\Spec}{Spec}
\DeclareMathOperator{\Mod}{Mod}
\DeclareMathOperator{\StMod}{StMod}
\DeclareMathOperator{\Loc}{Loc}
\DeclareMathOperator{\Coloc}{Coloc}
\DeclareMathOperator{\Thick}{Thick}
\DeclareMathOperator{\Ind}{Ind}
\DeclareMathOperator{\res}{res}
\newcommand{\N}{\mathbb{N}}
\newcommand{\Q}{\mathbb{Q}}
\DeclareMathOperator{\Res}{Res}
\DeclareMathOperator{\Coind}{Coind}
\DeclareMathOperator{\id}{id}
\DeclareMathOperator{\Mor}{Mor}
\DeclareMathOperator{\Spin}{Spin}
\DeclareMathOperator{\supp}{supp}
\DeclareMathOperator{\cosupp}{cosupp}
\newcommand{\kos}[2]{{#1}/\!\!/{#2}}
\newcommand{\cal}{\mathcal}
\newcommand{\xr}{\xrightarrow}
\newcommand{\cH}{\mathcal{H}}
\newcommand{\Z}{\mathbb{Z}}
\Crefname{figure}{Figure}{Figures}
\Crefname{assu}{Assumption}{Assumptions}
\Crefname{lem}{Lemma}{Lemmas}
\Crefname{thm}{Theorem}{Theorems}
\Crefname{prop}{Proposition}{Propositions}
\renewcommand{\frak}{\mathfrak}
\DeclareMathOperator{\Inj}{Inj}
\newcommand{\fp}{\mathfrak{p}}
\newcommand{\fq}{\mathfrak{q}}
\newcommand{\fr}{\mathfrak{r}}
\newcommand{\recollement}[5]{
\xymatrix{{#1} \ar[r]|-{#2} & #3 \ar[r]|-{#4} \ar@<1ex>[l]^-{{#2}_!} \ar@<-1ex>[l]_-{{#2}^*} & #5, \ar@<1ex>[l]^-{{#4}!} \ar@<-1ex>[l]_-{{#4}^*}
}}
\let\lim\relax
\DeclareMathOperator{\lim}{lim}
\DeclareMathOperator{\Proj}{Proj}
\newcommand{\mm}{/\!\!/}
\DeclareMathOperator{\op}{op}
\newcommand{\cZ}{\mathcal{Z}}
\newcommand{\cU}{\mathcal{U}}
\newcommand{\F}{\mathbb{F}}
\newcommand{\cL}{\mathcal{L}}
\newcommand{\pc}[1]{{#1}_{p}^{\wedge}}
\DeclareMathOperator{\map}{map}
\newcommand{\Gorenstein}{absolute Gorenstein }
\newcommand{\Normalization}{Gorenstein normalization }
\title[Stratification and duality for homotopical groups]{Stratification and duality for homotopical groups}
\author{Tobias Barthel}
\address{Max-Planck-Institut f\"ur Mathematik, Vivatsgasse 7, 53111 Bonn, Germany}
\email{barthel.tobi@gmail.com}
\author{Nat{\`a}lia Castellana}
\address{Departament de Matem\`atiques, Universitat Aut\`onoma de Barcelona and BGSMath, 08193 Bellaterra, Spain}
\email{natalia@mat.uab.cat}
\author{Drew Heard}
\address{Fakult{\"a}t f{\"u}r Mathematik, Universit{\"a}t Regensburg}
\email{drew.k.heard@gmail.com}
\author{Gabriel Valenzuela}
\address{Max-Planck-Institut f\"ur Mathematik, Vivatsgasse 7, 53111 Bonn, Germany}
\email{gvalenzuela@mpim-bonn.mpg.de}
\date{\today}
\subjclass[2010]{55R35,20J05,13D45,55P42}
\keywords{$p$-local compact groups, support theory, $F$-isomorphism theorem, stratification and costratification, Gorenstein duality}
\begin{document}

\begin{abstract}
We generalize Quillen's $F$-isomorphism theorem, Quillen's stratification theorem, the stable transfer, and the finite generation of cohomology rings from finite groups to homotopical groups. As a consequence, we show that the category of module spectra over $C^*(B\cG,\F_p)$ is stratified and costratified for a large class of $p$-local compact groups $\cG$ including compact Lie groups, connected $p$-compact groups, and $p$-local finite groups, thereby giving a support-theoretic classification of all localizing and colocalizing subcategories of this category. Moreover, we prove that $p$-compact groups admit a homotopical form of Gorenstein duality.
\end{abstract}

\maketitle

\def\biblio{}

\setcounter{tocdepth}{1}
\tableofcontents

\section{Introduction}

Let $G$ be a finite group and suppose $k$ is a field whose characteristic $p$ divides the order of $G$. Modular representation theory studies the $k$-linear representations of $G$, or equivalently the abelian category of modules over the group algebra $kG$. Since classifying these objects up to isomorphism is in general impossible, one may consider two simplifications: Firstly divide out by the projective representations to form the stable module category $\StMod_{kG}$ of $G$, and secondly classify objects up to a coarser equivalence relation than isomorphism. This leads to the question of how to classify thick subcategories of compact objects and localizing subcategories of $\StMod_{kG}$. 

The classification of thick subcategories was given in \cite{bcr_thick} using support theoretic techniques. Building on their work, Benson, Iyengar, and Krause \cite{benson_local_cohom_2008,bik11} developed the notion of stratification of a triangulated category by a Noetherian commutative ring that captures both the classification of thick and localizing subcategories. Their work then culminated in the statement that the canonical action of $H^*(BG,k)$ stratifies $\StMod_{kG}$, thus giving a complete  classification of all localizing ideals in terms of subsets of the support variety $\Proj(H^*(BG,k))$.

In \cite{blo_pcompact}, Broto, Levi, and Oliver introduced the powerful concept of $p$-local compact groups as a common generalization of the notions of $p$-compact group \cite{dwyerwilkerson_finloop} as well as fusion systems $\cF$ on a finite group \cite{blo_fusion}. A $p$-local compact group $\cG = (S,\cF,\cL)$ consists of a saturated fusion system on a discrete $p$-toral group $S$ together with a centric linking system $\cL$. Informally speaking, this definition provides a combinatorial model of the $p$-local structure of a compact Lie group $(S,\cF)$. The extra structure $\cL$ makes it possible to construct a ($p$-completed) classifying space $B\cG$ associated to $\cG$ which is uniquely determined \cite{chermak_existence,boboliver_existence,ll_uniqueness}, thus making saturated fusion systems amenable to homotopical techniques. Examples are given by compact Lie groups with no restriction on the group of components \cite{blo_pcompact} as well as $p$-completions of finite loop spaces \cite{BLO_finiteloopspaces}. The main result of the present paper is: 

\begin{thm*}
If $\cG  = (S,\cF,\cL)$ is a $p$-local compact group such that $S$ is geometric in the sense defined below, then the category $\Mod_{C^*(B\cG,\F_p)}$ of module spectra over the cochains $C^*(B\cG,\F_p)$ is canonically stratified and costratified.
\end{thm*}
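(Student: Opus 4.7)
The plan is to adapt the Benson--Iyengar--Krause strategy for the stable module category of a finite group to the homotopical setting of a $p$-local compact group, using precisely the four generalizations flagged in the abstract: the generalized Quillen $F$-isomorphism, the generalized Quillen stratification theorem, finite generation of $H^*(B\cG,\F_p)$, and a stable transfer. I would first invoke the finite generation statement to ensure that $H^*(B\cG,\F_p)$ is a Noetherian graded-commutative ring, so that its canonical action on $\Mod_{C^*(B\cG,\F_p)}$ produces a well-behaved theory of local cohomology, support, and cosupport in the sense of Benson--Iyengar--Krause; this makes the very notions of stratification and costratification available to begin with.

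Once the framework is set up, the generalized $F$-isomorphism identifies $\Spec H^*(B\cG,\F_p)$ up to $F$-isomorphism with a colimit over the Quillen category of elementary abelian $p$-subgroups of $S$, and the generalized Quillen stratification theorem then reduces the verification of stratification to one prime at a time on each stratum. The remaining content is therefore stratification of $\Mod_{C^*(BE,\F_p)}$ for each elementary abelian $p$-subgroup $E\le S$, which is the classical, polynomial-ring case and can be settled by a Koszul-duality argument over $H^*(BE,\F_p)$.

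To globalize from the elementary abelians back to $B\cG$, I use the generalized stable transfer: for each such $E$ the restriction $C^*(B\cG,\F_p)\to C^*(BE,\F_p)$ admits a wrong-way map whose composite with restriction is invertible after passing to the appropriate localization, which is the standard descent input needed to upgrade local stratification on each stratum to global stratification of $\Mod_{C^*(B\cG,\F_p)}$. For costratification, I would then appeal to a homotopical Gorenstein duality statement of the kind announced in the paper: stratification together with Gorenstein duality forces costratification by a local Matlis-duality argument, following the pattern now standard in tensor-triangular duality theory.

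The main obstacle is that one cannot treat $B\cG$ as compact, and it is precisely here that the \emph{geometric} hypothesis on $S$ must earn its keep. The discrete $p$-toral identity component of $S$ produces an a priori infinite family of elementary abelian $p$-subgroups, and without a finiteness constraint Noetherianity of $H^*(B\cG,\F_p)$, effectiveness of the Quillen reduction, and convergence of the transfer descent can all fail. The delicate step in the proof will be verifying that this single hypothesis simultaneously rescues all four classical inputs, and that compact Lie groups, connected $p$-compact groups, and $p$-local finite groups all fall within its scope.
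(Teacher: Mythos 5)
Your high-level list of ingredients is right, but the proposed descent mechanism does not match what actually makes the argument go through, and the two points you get wrong are precisely where the work is. First, the paper does not descend stratification via a wrong-way map $C^*(BE,\F_p)\to C^*(B\cG,\F_p)$ for each elementary abelian $E$; no such retract exists in general (it would require $E$ to have transfer-compatible index). The stable transfer in the paper is the Ragnarsson--Gonzalez retract $C^*(BS,\F_p)\to C^*(B\cG,\F_p)$ at the level of the Sylow $S$. What actually drives stratification is the abstract descent theorem (\Cref{thm:stratdescent}), whose two hypotheses you never mention: (i) conservativity of induction and coinduction along $C^*(B\cG,\F_p)\to\prod_{\cE(\cG)}C^*(BE,\F_p)$ -- a homotopical Chouinard theorem -- and (ii) the ``Quillen lifting'' condition on primes, which is the precise technical output of strong Quillen stratification. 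This also means your diagnosis of the geometric hypothesis is off: Noetherianity of $H^*(B\cG,\F_p)$, the $F$-isomorphism, Quillen stratification, and the stable transfer are all proved for arbitrary $p$-local compact groups in the paper; the geometric hypothesis is used only to prove Chouinard's theorem for the discrete $p$-toral group $S$ (\Cref{prop:toralchouinard}), by transporting the Benson--Greenlees result for compact Lie groups across an equivalence $BS^\wedge_p\simeq(BS')^\wedge_p$.

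Second, your costratification argument is not available. You propose to deduce costratification from stratification together with a homotopical Gorenstein duality by a local Matlis-duality argument, but there is no such general implication, and the paper's Gorenstein duality theorem (\Cref{thm:p-comp_goren}) is only established for $p$-compact groups, not for $p$-local compact groups with geometric $S$ (which includes, e.g., all $p$-local finite groups and all compact Lie groups). The paper instead proves costratification by a separate descent theorem (\Cref{thm:costratdescent}) that requires an $R$-module retract of $f$ -- supplied by the stable transfer $\theta^*\colon C^*(B\cG,\F_p)\to C^*(BS,\F_p)$ -- together with costratification of $\Mod_{C^*(BS,\F_p)}$; the latter uses the finiteness of $C^*(BS,\F_p)\to\prod_E C^*(BE,\F_p)$ coming from the fact that $BS^\wedge_p$ is a $p$-compact toral group. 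So the route through Gorenstein duality would at best handle $p$-compact groups, and even there it is not the argument.
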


In particular, this result contains as special cases the main theorems of \cite{bik_finitegroups} for $p$-groups and \cite{bg_stratifyingcompactlie} for compact Lie groups $G$ with $\pi_0G$ a finite $p$-group, but applies to many other examples of homotopical groups as well, such as connected $p$-compact groups and $p$-local finite groups. In order to prove it, we employ techniques from unstable and stable homotopy theory to establish generalizations of Quillen's $F$-isomorphism and Quillen's stratification theorem, Chouinard's theorem, and the support theory of Benson, Iyengar, and Krause to the context of homotopical groups.

\subsection*{Summary of results and methods}

From a formal point of view, the key difference between finite $p$-groups or connected Lie groups and general homotopical groups is that the canonical morphism induced by a Sylow subgroup inclusion $\phi_{\cG}\colon C^*(B\cG, \F_p) \to C^*(BS,\F_p)$ is not necessarily finite, i.e., $C^*(BS,\F_p)$ is not necessarily compact as a $C^*(B\cG,\F_p)$-module. In fact, we characterize those $p$-local compact groups for which $\phi_{\cG}$ is finite as the $p$-compact groups. Therefore, our first goal is to develop an abstract generalization of the methods from \cite{bik_finitegroups} which allows descent along certain nonfinite morphisms. 

To this end, we prove several general base-change formulae (e.g., \Cref{thm:suppbasechange}) for support along morphisms of ring spectra, which extend the theory for finite morphisms developed in \cite{bik12}. We then isolate sufficient conditions for descent of stratifications and costratifications: a morphism $f\colon R \to S$ is said to satisfy Quillen lifting if certain prime ideals in $\Spec^h(\pi_*R)$ can be lifted along $f$; for a precise definition, see \Cref{defn:qlifting}. 

\begin{thm*}[\Cref{thm:stratdescent}, \Cref{thm:costratdescent}]
Suppose that $f \colon R \to S$ is a morphism of Noetherian ring spectra satisfying Quillen lifting and such that induction and coinduction along $f$ are conservative. If $\Mod_S$ is canonically stratified, then so is $\Mod_R$. If $f$ additionally admits an $R$-module retract, then canonical costratification descends along $f$ as well. 
\end{thm*}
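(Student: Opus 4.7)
The plan is to follow the Benson--Iyengar--Krause template: canonical stratification of a Noetherian ring spectrum is equivalent to the local-to-global principle together with minimality of each local category $\Gamma_\fp \Mod_R$, and similarly costratification is equivalent to the colocal-to-global principle plus minimality of each colocal category $\Lambda^\fp \Mod_R$, for $\fp \in \Spec^h(\pi_*R)$. The (co)local-to-global principles are automatic in the Noetherian setting, so the entire proof reduces to verifying minimality at every such $\fp$, after which the classification of localizing (resp.\ colocalizing) subcategories follows from the standard machinery.

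For stratification, I would fix $\fp \in \Spec^h(\pi_*R)$ together with a nonzero $M \in \Gamma_\fp \Mod_R$, and aim to show that $\Loc_R(M)$ recovers all of $\Gamma_\fp \Mod_R$. First, use Quillen lifting to select a prime $\fq \in \Spec^h(\pi_*S)$ lying over $\fp$. Then invoke the base-change formula \Cref{thm:suppbasechange} in combination with conservativity of induction $f_!$ to conclude that $\Gamma_\fq f_! M$ is nonzero in $\Mod_S$; since $\Mod_S$ is canonically stratified, minimality at $\fq$ yields $\Gamma_\fq \Mod_S \subseteq \Loc_S(f_! M)$. Given any other $0 \neq N \in \Gamma_\fp \Mod_R$, the same argument applied to $N$ shows that $f_! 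N$ has a nonzero component at $\fq$, and hence $f_! N \in \Loc_S(f_! M)$. Restricting along $f$ (which preserves colimits) transports this membership back to $\Mod_R$, giving $f_* f_! N \in \Loc_R(f_* f_! M)$; one then finishes with the projection formula $f_* f_!(-) \simeq (-) \otimes_R f_* S$ and conservativity of coinduction $f^!$ to deduce $N \in \Loc_R(M)$.

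For costratification, I would run the dual argument, replacing $\Gamma_\fp$ by $\Lambda^\fp$, induction by coinduction $f^!$, and $\Loc$ by $\Coloc$. The key point where the additional hypothesis enters is the step analogous to the final descent: without a retract, the counit of $f^* \dashv f^!$ need not exhibit $M$ as a retract of $f^* f^! M$, so one cannot transport a colocalizing-subcategory membership back from $\Mod_S$ to $\Mod_R$. The assumed $R$-module retract $S \to R$ splits $f$ off, yielding the splitting needed for the analogue of the final step.

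The main obstacle throughout will be the base-change of support along a nonfinite morphism $f$ and tracking how $\Gamma_\fp$ and $\Lambda^\fp$ interact with $f_!$, $f_*$, $f^!$: this is exactly the technical role played by the Quillen lifting condition and by \Cref{thm:suppbasechange}. The need for an $R$-module retract in the costratification half reflects the absence of an analogue of the projection formula for $\iHom$ in general, and should be the only point where the stratification and costratification arguments genuinely diverge.
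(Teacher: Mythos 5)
Your general framing is right — both halves reduce to minimality of $\Gamma_\fp\Mod_R$ and $\Lambda^\fp\Mod_R$ via the Noetherian local-to-global principle — but the stratification argument has a substantive gap that tracks exactly the subtle point the paper is designed to handle.

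First, you use Quillen lifting as though it selects a single prime $\fq$ over $\fp$ that then works for both $M$ and for an arbitrary second module $N$. This is not what the definition (\Cref{defn:qlifting}) says: it is a statement about \emph{pairs} $(M,N)$, and it delivers a $\fq$ lying simultaneously in $\supp_S(\Ind M)$ and in $\cosupp_S(\Coind N)$. You instead run the same support computation for $N$ alone, which at best gives \emph{some} prime $\fq'$ over $\fp$ with $\Gamma_{\fq'}\Ind N\neq 0$; there is no reason for $\fq'$ to equal $\fq$, nor for $\supp_S(\Ind N)\subseteq\supp_S(\Ind M)$, so the step ``hence $f_!N\in\Loc_S(f_!M)$'' does not follow. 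The cosupport of $\Coind N$ is essential and cannot be traded for the support of $\Ind N$ here.

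Second, even granting $f_!N\in\Loc_S(f_!M)$, your final descent step fails: restricting gives $\Res\Ind N\simeq N\otimes_R\Res S\in\Loc_R(M)$ by the projection formula, but without an $R$-module retract of $f$ you cannot extract $N\in\Loc_R(M)$ from this, since $N$ is not a retract of $N\otimes_R\Res S$ in general. If that step worked, the retract hypothesis would be superfluous for both halves. The paper avoids this entirely by using the criterion of \cite[Lem.~4.1]{bik11}: minimality of $\Gamma_\fp\Mod_R$ is equivalent to $\Hom_R(M,N)\not\simeq 0$ for all nonzero $M,N\in\Gamma_\fp\Mod_R$. Quillen lifting (correctly invoked for the pair $(M,N)$) together with stratification of $\Mod_S$ and \Cref{thm:tensor_hom_(co)supp} give $\Hom_S(\Ind M,\Coind N)\simeq\Hom_R(\Res\Ind M,N)\not\simeq 0$, and since $\Res\Ind M\in\Loc(M)$, the functor $\Hom_R(-,N)$ cannot vanish on $M$. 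No retract is needed. Your costratification sketch is closer to the paper's, which argues via the cogenerator $T_\fp I_\fp$ and the retract, but it is too compressed to assess beyond saying the shape is plausible.
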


Checking that the conditions of the theorem are satisfied for $\phi_{\cG}\colon C^*(B\cG, \F_p) \to C^*(BS, \F_p)$ crucially relies on three ingredients: A generalization of Chouinard's theorem to discrete $p$-toral groups, the existence of approximations of $p$-local compact groups by $p$-local finite groups due to Gonzalez \cite{gonzalez_approxpcompact}, and the existence of a stable transfer $\Sigma^{\infty}_+ B\cal{G} \to \Sigma^{\infty}_+ BS$ whenever $\cal{G}$ is a $p$-local finite group (i.e., when $S$ is a finite $p$-group), as proven by Ragnarsson \cite{ragnarsson_transfer}. The first ingredient  is  proven for discrete $p$-toral groups which are geometric in the sense that they are $p$-discrete approximations  of $p$-toral groups. As a first step, in order to construct a general transfer, we have to deal with the subtle problem that the stable transfers constructed in the finite case are not necessarily compatible with the given morphisms in the approximation tower, which prevented progress in this area to date. Overcoming this issue requires the study of phantom maps via Brown--Comenetz duality, and is therefore an intrinsically stable result. This motivates the terminology stable transfer for the resulting $C^*(B\cG, \F_p)$-module retract of $\phi_{\cG}$.

\begin{thm*}[\Cref{prop:plocalcompactchouniard}]
Any $p$-local compact group $\cG = (S,\cF)$ admits a stable transfer $C^*(BS, \F_p) \to C^*(B\cG, \F_p)$ of $C^*(B\cG, \F_p)$-modules.
\end{thm*}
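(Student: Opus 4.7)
The plan is to assemble the stable transfer as a homotopy limit of Ragnarsson's finite transfers along a $p$-local finite approximation of $\cG$, using Brown--Comenetz duality to eliminate the resulting phantom-map obstructions.

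By Gonzalez's approximation theorem, $\cG$ is the colimit of an ascending sequence of $p$-local finite subgroups $\cG_i = (S_i,\cF_i,\cL_i)$ with $S = \bigcup_i S_i$; on classifying spaces $B\cG \simeq (\hocolim_i B\cG_i)^{\wedge}_p$, and dually
\begin{equation*}
C^*(B\cG,\F_p) \simeq \holim_i C^*(B\cG_i,\F_p), \qquad C^*(BS,\F_p) \simeq \holim_i C^*(BS_i,\F_p),
\end{equation*}
compatibly with $\phi_{\cG} = \holim_i \phi_{\cG_i}$. For each $i$, Ragnarsson's theorem produces a stable transfer $\Sigma^{\infty}_+ B\cG_i \to \Sigma^{\infty}_+ BS_i$, which on $\F_p$-cochains gives a retraction $\tau_i \colon C^*(BS_i,\F_p) \to C^*(B\cG_i, \F_p)$ of $\phi_{\cG_i}$ as $C^*(B\cG_i,\F_p)$-modules.

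The principal obstacle is that the $\tau_i$ are not a priori compatible with the restriction maps in the tower, so one cannot simply take the limit. I would express the mapping spectrum $\map_{C^*(B\cG,\F_p)}(C^*(BS,\F_p), C^*(B\cG,\F_p))$ via a Milnor-type short exact sequence, whose $\lim\nolimits^1$-term captures the failure of the $\tau_i$ to form a coherent tower and consists precisely of phantom maps in the $p$-complete category. The crux of the argument is to show that these phantom obstructions vanish; this is where Brown--Comenetz duality enters, reinterpreting the phantom classes as $\Ext^1$-groups between the degreewise finite-dimensional $\F_p$-vector spaces $H^*(B\cG_i,\F_p)$ and $H^*(BS_i,\F_p)$, which vanish after appropriate Mittag-Leffler-type analysis.

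Granted this vanishing, the $\tau_i$ can be corrected by higher homotopies (or by passing to a cofinal subsystem) to form a compatible tower whose homotopy limit is the desired map $\tau \colon C^*(BS,\F_p) \to C^*(B\cG,\F_p)$. The retract identity $\tau \circ \phi_\cG \simeq \id$ then follows at each finite stage from Ragnarsson's construction. I expect the most delicate step to be the identification and vanishing of the phantom obstructions: this is an intrinsically stable, $p$-complete phenomenon with no unstable counterpart, explaining why the problem had resisted earlier attempts that attempted to construct an unstable transfer directly.
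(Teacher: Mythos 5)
Your proposal has the right toolbox — Gonzalez's approximation, Ragnarsson's finite transfers, Brown--Comenetz duality, and a Mittag--Leffler argument — but the mechanism you propose for dealing with the incompatibility of the $\tau_i$ has a genuine gap, and the paper resolves that incompatibility in a completely different way.

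Concretely, you write that the $\lim^1$-term of a Milnor sequence for $\map_{C^*(B\cG,\F_p)}(C^*(BS,\F_p),C^*(B\cG,\F_p))$ ``captures the failure of the $\tau_i$ to form a coherent tower,'' and that once this vanishes one can ``correct the $\tau_i$ by higher homotopies.'' This is not what $\lim^1$ does. The Milnor sequence controls the ambiguity in lifting an \emph{already $\pi_0$-compatible} family of maps to a genuine map out of the homotopy limit; it gives no handle on a family $(\tau_i)$ which is not even known to be compatible at the level of homotopy classes. Nothing in your argument explains why the Ragnarsson transfers can be re-chosen to form such a compatible family, and that step would be at least as hard as the original problem.

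The paper sidesteps this entirely by never touching the $\tau_i$ directly. It forms the \emph{compatible} tower of fibers $(F_i)\to(M_i)\xrightarrow{g_i}(N_i)$ (with $M_i = C^*(B\cG_i,\F_p)$, $N_i = C^*(BS_i,\F_p)$; compatibility is automatic because the $g_i$ come from Gonzalez's diagram), passes to limits to get $F\to M\xrightarrow{g} N$, and observes that constructing a splitting of $g$ is equivalent to showing the limiting fiber map $f\colon F\to M$ is null. Brown--Comenetz duality then enters exactly where you anticipated, but in a cleaner form: since $\pi_*M$ is degreewise finite (\Cref{cor:degreewisefinite}), $M$ is Brown--Comenetz self-double-dual and hence receives no nonzero $R$-phantom maps (\Cref{lem:mbcphantom}), so it suffices to show $f$ is phantom. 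This is checked by dualizing against a compact $C$: Mittag--Leffler (using finiteness of $\pi_*(M_i\otimes_R D_R C)$) kills $\lim^1$ for the $M$-tower, and at each finite stage the existence of \emph{some} splitting of $g_i$ forces $\pi_*(F_i\otimes_R D_RC)\to\pi_*(M_i\otimes_R D_RC)$ to vanish. The Ragnarsson transfers are thus used only level-by-level to produce these injectivity statements, and no coherence between them is ever required. That reformulation — from ``make the sections compatible'' to ``show the fiber inclusion is phantom'' — is the essential idea missing from your proposal.
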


One consequence of this theorem is that the cohomology ring $H^*(B\cG,\F_p) = \pi_{-*}C^*(B\cG, \F_p)$ is a finitely generated $\F_p$-algebra for any $p$-local compact group $\cG$, generalizing the fundamental result for finite groups and compact Lie groups proven in \cite{evens,venkov}, and for $p$-compact groups \cite{dwyerwilkerson_finloop}. 

\begin{thm*}[\Cref{cor:noetherian}]
The cohomology ring $H^*(B\cG,\F_p)$ is Noetherian for any $p$-local compact group $\cG$.
\end{thm*}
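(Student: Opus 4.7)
The plan is to deduce Noetherianity directly from the stable transfer established in \Cref{prop:plocalcompactchouniard}. Writing $\phi_{\cG}\colon C^*(B\cG, \F_p)\to C^*(BS,\F_p)$ for the canonical map and $t$ for its stable transfer, the relation $t\circ\phi_{\cG}\simeq \id$ in $C^*(B\cG,\F_p)$-modules descends under $\pi_{-*}$ to a splitting
\[
H^*(B\cG;\F_p)\xrightarrow{\phi_{\cG}^*} H^*(BS;\F_p)\xrightarrow{t_*} H^*(B\cG;\F_p)
\]
of the identity by $H^*(B\cG;\F_p)$-module maps. Consequently $\phi_{\cG}^*$ is a split monomorphism of modules over $H^*(B\cG;\F_p)$, and it suffices to supply two inputs: Noetherianity of the target $H^*(BS;\F_p)$ as a ring, and descent of Noetherianity along such split inclusions.

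For the first input, I would exploit that any discrete $p$-toral $S$ sits in an extension $1\to T\to S\to \pi_0 S\to 1$ in which $T\cong (\Z/p^\infty)^r$ is a discrete $p$-torus and $\pi_0 S$ is a finite $p$-group. A direct computation (for instance via the short exact sequence $0\to\Z\to\Z[1/p]\to\Z/p^\infty\to 0$, whose middle term is $\F_p$-acyclic) identifies $H^*(BT;\F_p)$ with a polynomial $\F_p$-algebra on $r$ generators of degree two, and in particular Noetherian. The Serre spectral sequence of the fibration $BT\to BS\to B\pi_0 S$ then has $E_2$-page $H^*(B\pi_0 S;\, H^*(BT;\F_p))$, which is Noetherian by the Evens--Venkov theorem for the finite $p$-group $\pi_0 S$ acting on the Noetherian ring $H^*(BT;\F_p)$; propagating Noetherianity through the multiplicative filtration yields Noetherianity of $H^*(BS;\F_p)$.

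The second input is an elementary algebraic observation: if $\phi\colon R\to A$ is a map of graded-commutative rings admitting an $R$-linear retract $t\colon A\to R$ with $A$ Noetherian, then $R$ is Noetherian. Given a strictly ascending chain $I_1\subsetneq I_2\subsetneq\cdots$ of homogeneous ideals of $R$, the extensions $J_n := A\cdot\phi(I_n)$ form an ascending chain in $A$, which must stabilize. However, $R$-linearity of $t$ together with $t\circ\phi = \id_R$ gives $t(J_n) = t(A)\cdot I_n = I_n$, so an equality $J_n = J_{n+1}$ would force $I_n = I_{n+1}$, contradicting strict inclusion. Applying the lemma to $R = H^*(B\cG;\F_p)$ and $A = H^*(BS;\F_p)$ completes the proof. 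Since the main homotopical content already sits in \Cref{prop:plocalcompactchouniard}, no fundamental obstacle remains; only the propagation of Noetherianity through the Serre spectral sequence requires the usual care concerning the multiplicative filtration.
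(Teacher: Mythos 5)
Your overall strategy matches the paper's: first establish Noetherianity of $H^*(BS;\F_p)$ for the Sylow discrete $p$-toral subgroup, then descend along the $C^*(B\cG,\F_p)$-module retract of $\phi_{\cG}$ provided by the stable transfer. Your descent lemma is proven by an elementary ascending-chain argument using extension and contraction of ideals through $\phi$ and $t$, whereas the paper's \Cref{lem:noethretract} argues via the characterization of Noetherian rings in terms of direct sums of injective modules (and also cites \cite[Lem.~2.4]{dwyerwilkerson_finloop}); both are correct and your version is perhaps more self-contained.

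The gap is in your re-derivation of the Noetherianity of $H^*(BS;\F_p)$, which the paper simply cites from \cite[Thm.~12.1]{dwyerwilkerson_finloop}. Your identification $H^*(BT;\F_p)\cong\F_p[x_1,\ldots,x_r]$ with $|x_i|=2$ is correct, and Evens' theorem does give Noetherianity of the $E_2$-page $H^*(B\pi_0 S;H^*(BT;\F_p))$. However, ``propagating Noetherianity through the multiplicative filtration'' is not a routine formality: each $E_{r+1}$ is a quotient of a \emph{subring} of $E_r$, and subrings of Noetherian rings need not be Noetherian, so Noetherianity of $E_2$ does not formally pass to $E_\infty$ or the abutment. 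The standard way to make this work---implicit in Dwyer--Wilkerson's treatment (their Lemma 6.14) and in Evens' original proof---is to produce a Noetherian subring of permanent cycles (e.g.\ the image of $H^*(B\pi_0 S;\F_p)$ together with $p^k$-th powers of polynomial generators coming from $E_2^{0,*}$, obtained via the Evens norm map or the Kudo transgression theorem) over which $E_2$ is a finitely generated \emph{module}; module-finiteness is what survives to $E_\infty$ and the abutment. You acknowledge the issue but defer it as ``the usual care,'' and that hides the actual hard content of the Evens--Venkov argument. The shortest repair is to cite \cite[Thm.~12.1]{dwyerwilkerson_finloop} directly, as the paper does.
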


In a second step, we then use the stable elements formula of Gonzalez for $H^*(B\cG,\F_p)$ \cite[Thm. 2]{gonzalez_approxpcompact} as well as work of Rector \cite{rector_quillenstrat} and Broto--Zarati \cite{brotozarati_steenrod} to extend the $F$-isomorphism theorem for $p$-local finite groups proven in \cite{blo_fusion} to $p$-local compact groups. This allows us to verify that Rector's general formalism applies to arbitrary $p$-local compact groups to deduce a strong form of Quillen stratification from the $F$-isomorphism theorem, closely following Quillen's original argument~\cite{quillen_stratification}. 

\begin{thm*}[\Cref{thm:fiso}, \Cref{thm:qstratification}]
The $F$-isomorphism theorem holds for any $p$-local compact group $\cG$, i.e., there is an $F$-isomorphism
\[
\xymatrix{H^*(B\cG,\F_p) \ar[r]^-{ } & \varprojlim \limits_{\cF^e}H^*(BE,\F_p),}
\]
where $\cF^e$ is the full subcategory of $\cF$ on the elementary abelian subgroups of $S$. Moreover, the variety of $\cG$ admits a strong form of Quillen stratification:
\[
\cV_{\cG} \cong \coprod_{E\in \cE(\cG)}\cV_{\cG,E}^+,
\]
where $\cE(\cG)$ denotes a set of representatives of $\cF$-isomorphism classes of elementary abelian subgroups of $S$.
\end{thm*}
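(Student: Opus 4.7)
The plan is to establish the $F$-isomorphism first and then derive the strong Quillen stratification formally, closely following Quillen's original argument. Throughout I abbreviate $H^*(-) = H^*(-,\F_p)$. The starting point is the stable elements formula of Gonzalez \cite{gonzalez_approxpcompact}, which produces a cofiltered system of $p$-local finite subgroups $\cG_i = (S_i,\cF_i,\cL_i)$ approximating $\cG$ with $B\cG \simeq \hocolim_i B\cG_i$, giving
\[
H^*(B\cG) \cong \varprojlim_i H^*(B\cG_i).
\]
For each $\cG_i$ the $F$-isomorphism theorem of Broto-Levi-Oliver \cite{blo_fusion} provides an $F$-isomorphism $H^*(B\cG_i) \to \varprojlim_{\cF_i^e} H^*(BE)$. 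Since every elementary abelian subgroup of a discrete $p$-toral group $S$ is finite, it lies in some $S_i$, and the categories $\cF_i^e$ are cofinal in $\cF^e$; assembling the maps over $i$ produces the canonical comparison map $H^*(B\cG) \to \varprojlim_{\cF^e} H^*(BE)$.

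To promote this comparison map to an $F$-isomorphism I would appeal to the general formalism of Rector \cite{rector_quillenstrat}. Rector isolates three hypotheses on a space $X$ guaranteeing that the restriction map to the inverse limit over elementary abelians is an $F$-isomorphism: Noetherianity of $H^*(X)$, a Steenrod-algebra unstable algebra condition on the target, and a stable transfer retract for a Sylow-like restriction. The first and third are in hand from the stable transfer theorem and its corollary on finite generation proved earlier in the paper; the second is supplied by the unstable algebra machinery of Broto-Zarati \cite{brotozarati_steenrod}, applied to the cohomology of elementary abelians inside $\cF$. I expect this verification to be the most delicate step, since Rector's axioms need to be reinterpreted in the homotopical setting where the elementary abelians are organized by the fusion system rather than by conjugation in an ambient group, and one must identify the category produced by Rector's formalism with $\cF^e$ rather than some larger tautological category of maps $BE \to B\cG$.

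Given the $F$-isomorphism, the strong stratification of $\cV_{\cG} = \Proj H^*(B\cG)$ is a formal consequence, obtained by transcribing Quillen's argument in \cite{quillen_stratification}. Passing to homogeneous prime spectra, the $F$-isomorphism induces a homeomorphism $\cV_{\cG} \cong \varprojlim_{\cF^e} \cV_E$. For each $E \in \cE(\cG)$, I would define $\cV_{\cG,E}$ as the image of the map $\cV_E \to \cV_{\cG}$ induced by restriction, and $\cV_{\cG,E}^+$ as the complement in $\cV_{\cG,E}$ of the union of the $\cV_{\cG,E'}$ over proper subgroups $E' \subsetneq E$. Quillen's inclusion-exclusion argument then expresses $\cV_{\cG}$ as the union of these locally closed pieces, and the fusion-system equivariance automatically built into $\varprojlim_{\cF^e}$ collapses $\cF$-conjugate subgroups, giving the disjoint decomposition indexed by a set of representatives of $\cF$-isomorphism classes. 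The only structural point requiring care beyond Quillen's original setup is that conjugacy is replaced by the richer equivalence relation coming from $\cF$; but since this relation is already encoded in the indexing category of the $F$-isomorphism, Quillen's proof goes through essentially verbatim.
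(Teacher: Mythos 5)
There is a genuine gap in the first half. You propose to obtain the $F$-isomorphism by passing to the inverse limit over Gonzalez's approximation tower $\cG_i$ and assembling the Broto--Levi--Oliver $F$-isomorphisms for each $\cG_i$. But $F$-isomorphisms are not preserved by inverse limits in general: the exponent of nilpotency in the kernel, and the power of $p$ needed to hit elements of the target, can grow without bound as $i$ increases, so cofinality of the $\cF_i^e$ in $\cF^e$ alone does not produce an $F$-isomorphism. The paper avoids this by using a different decomposition: it uses Gonzalez's \emph{stable elements formula} $H^*(B\cG,\F_p)\cong\varprojlim_{P\in\cF}H^*(BP,\F_p)$ (the limit over the fusion category, not over the approximation tower) together with Broto--Levi--Oliver's finite full subcategory $\cF^\bullet$, which reduces everything to the Sylow subgroup $S$. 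The heart of the paper's argument is a step you do not have at all: first proving the $F$-isomorphism for the discrete $p$-toral group $S$ itself by identifying Rector's category $\cC(H^*(BS,\F_p))$ with $(\cF_S(S)^e)^{\op}$ via Lannes's $T$-functor theory, and only then descending to $\cG$ by the stable elements argument of \cite{blo_fusion}.

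Your account of Rector's formalism is also mischaracterized. Rector's theorem does not have a ``stable transfer retract'' hypothesis; it is a statement about Noetherian unstable $\cA_p$-algebras $\Lambda$ and the $F$-isomorphism $\Lambda\to\varprojlim_{\cC(\Lambda)}H^*(BE,\F_p)$, where $\cC(\Lambda)$ is the category of pairs $(E,\Phi)$ with $\Phi\colon\Lambda\to H^*(BE,\F_p)$ a finite $\cA_p$-algebra map. The stable transfer enters elsewhere in the paper --- it gives the Noetherianity of $H^*(B\cG,\F_p)$ that one needs to invoke Rector at all. You do correctly flag the delicate point that one must identify Rector's category with $\cF^e$ rather than ``some larger tautological category,'' but you do not say how, and that identification --- Lannes's theory plus $[BE,B\cG]\cong\Rep(E,\cF)$ from \cite[Thm.~6.3]{blo_pcompact} --- is precisely the technical content. (The paper's Remark after the stratification theorem spells out exactly this ``direct'' alternative route, so your instinct is sound, but your proposal does not actually carry it out.) The second half on stratification is closer in spirit to what the paper does, though the paper proceeds by verifying Rector's axioms for the pair $((\cF^e)^{\op},H^*(B-,\F_p))$ rather than transcribing Quillen, and uses \Cref{prop:elabcohomfingen} (finite generation of $H^*(BE)$ over $H^*(B\cG)$) to identify Rector's strata with the $\cV^+_{\cG,E}$; that finiteness input is absent from your sketch.
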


The Quillen stratification of this theorem generalizes previous work of Linckelmann \cite{linckelman_stratification}, and in fact gives an alternative proof that does not rely on the existence of certain bisets for $p$-local finite groups. 

The final ingredient is a generalization of Chouinard's theorem to discrete $p$-toral groups $S$ which satisfy an additional condition: $S$ is called geometric if there exists a compact Lie group $S'$ and an equivalence $BS^{\wedge}_p \simeq (BS')^{\wedge}_p$ of $p$-completed classifying spaces. We deduce from the classification of connected $p$-compact groups due to Andersen, Grodal, M{\o}ller, and Viruel~\cite{AGMV_classification1,AG_classification2} as well as previous work of Benson and Greenlees~\cite{bg_stratifyingcompactlie} that a large class of homotopical groups is geometric in this sense. When combined with the stable transfer constructed above, we establish:

\begin{thm*}[\Cref{prop:plocalcompactchouniard}, \Cref{cor:geometricexamples}]
If $\cG = (S,\cF)$ is a $p$-local compact group with geometric $S$, then Chouinard's theorem holds for $\cG$, in the sense that a $C^*(B\cG,\F_p)$-module is trivial if and only if it is trivial after induction (resp.~coinduction) along $C^*(B\cG,\F_p) \to C^*(BE,\F_p)$ for each elementary abelian subgroup $E \le S$. Moreover, $S$ is geometric if $\cG$ belongs to one of the following classes:
	\begin{enumerate}
		\item compact Lie groups, or
		\item connected $p$-compact groups, or
		\item $p$-local finite groups.
	\end{enumerate}
\end{thm*}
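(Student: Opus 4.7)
The proof will break into three parts: first, establish Chouinard on the Sylow $S$ itself under the geometricity hypothesis; second, descend from $S$ to $\cG$ via the stable transfer; third, verify geometricity for each of the three listed classes.

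For Chouinard on a geometric $S$, let $S'$ be a compact Lie group with $BS^{\wedge}_p \simeq (BS')^{\wedge}_p$. Since the latter is the classifying space of a discrete $p$-toral group, $S'$ may be taken to be $p$-toral, so in particular $\pi_0 S'$ is a finite $p$-group. The equivalence of $p$-completed classifying spaces gives an equivalence of cochain algebras $C^*(BS, \F_p) \simeq C^*(BS', \F_p)$ and identifies the families of subalgebras indexed by elementary abelian $p$-subgroups on both sides, since the elementary abelian subgroups of $S$ correspond to those of $S'$ via $p$-discrete approximation. Chouinard's theorem proven for compact Lie groups with $p$-toral component group by Benson--Greenlees therefore transports to $C^*(BS, \F_p)$.

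To descend to $\cG$, write $R = C^*(B\cG, \F_p)$ and $T = C^*(BS, \F_p)$. The previously established stable transfer exhibits $R$ as an $R$-module retract of $T$, so every $R$-module $M$ is a retract of both $M \otimes_R T$ and $\Hom_R(T, M)$. Given $M$ with $M \otimes_R C^*(BE, \F_p) \simeq 0$ for every elementary abelian $E \le S$, base change along $T \to C^*(BE, \F_p)$ gives
\[
(M \otimes_R T) \otimes_T C^*(BE, \F_p) \simeq M \otimes_R C^*(BE, \F_p) \simeq 0,
\]
so Chouinard for $S$ forces $M \otimes_R T \simeq 0$ and hence $M \simeq 0$ by the retract. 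The coinduction version is dual, using $\Hom_R(T,-)$ in place of $-\otimes_R T$.

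For the \emph{moreover} clause I expect the main obstacle to be connected $p$-compact groups, since exotic examples such as $DI(4)$ do not arise from compact Lie groups. The two easy cases are immediate: for $p$-local finite groups, $S$ is itself a zero-dimensional compact Lie group; and for $p$-local compact groups coming from a compact Lie group $G$, the Sylow $S$ is by construction the $p$-discrete approximation of a maximal $p$-toral subgroup $S' \le G$, so $BS^{\wedge}_p \simeq (BS')^{\wedge}_p$ is tautological. For a connected $p$-compact group $X$ I would invoke the classification of Andersen--Grodal--Moller--Viruel and Andersen--Grodal to reduce to the statement that the Sylow $S$, which is governed by an extension of the discrete $p$-torus $(\Z/p^{\infty})^r$ by a Sylow $p$-subgroup $W_p$ of the Weyl group of $X$, can always be realized as the $p$-discrete approximation of a compact Lie group extension of $T^r$ by $W_p$. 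Matching the cohomological extension class is the main technical point and can be done case-by-case along the Clark--Ewing families listed in the classification.
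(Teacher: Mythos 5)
Your first two steps line up with the paper's argument: Chouinard for the Sylow $S$ is obtained by transporting Benson--Greenlees's theorem for compact Lie groups across the cochain equivalence $C^*(BS,\F_p)\simeq C^*(BS',\F_p)$ (the paper makes the identification of elementary abelian subgroups precise via Lannes's $T$-functor, which you gesture at but do not justify), and the descent to $\cG$ via the stable transfer retract together with the base-change and factorization argument is exactly the paper's proof of \Cref{prop:plocalcompactchouniard}. One small correction to step one: Benson--Greenlees prove Chouinard for \emph{all} compact Lie groups, not only those with $p$-toral component group; the restriction is unnecessary but harmless here since $S'$ is $p$-toral.

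The genuine gap is in the ``moreover'' clause for connected $p$-compact groups. You correctly single this out as the main obstacle, but your proposed strategy --- realize the Sylow as the $p$-discrete approximation of a compact Lie group extension of $T^r$ by $W_p$ by ``matching the cohomological extension class case-by-case'' --- is left undone, and it is a harder problem than necessary. It would require computing and comparing $k$-invariants of normalizer extensions for every exotic family, which the paper deliberately avoids. Instead, the paper's \Cref{lem:reduction} gives a cleaner criterion: it suffices to produce a compact Lie group $G'$ of the same rank with a monomorphism $G'\hookrightarrow\cG$ such that $[W_\cG : W_{G'}]$ is prime to $p$; then the $p$-normalizer of the maximal torus of $G'$ is already a maximal $p$-compact toral subgroup of $\cG$, so the discrete $p$-toral Sylow of $\cG$ agrees with that of $G'$, making it geometric without any extension-class computation. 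The classification is then used only to exhibit such a $G'$ in each case: $\Spin(7)\subset DI(4)$ for $p=2$ (Weyl index $7$), the maximal torus $(S^1)^r$ in the non-modular Clark--Ewing cases, $SU(p)$ in the Aguad\'e--Zabrodsky cases, and $U(n)$ in the Notbohm generalized Grassmannians. Without this reduction, or a carried-out version of your alternative, your argument for item (2) of the corollary is incomplete.
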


Applying our abstract descent technique in connection with the previous three theorems, we thus obtain our main result:

\begin{thm*}[\Cref{thm:plocalstrat}, \Cref{thm:plocalcostrat}, \Cref{thm:costratpgroupcons}]
If $\cG $ is a $p$-local compact group with geometric $S$, then $\Mod_{C^*(B\cG, \F_p)}$ is canonically stratified and costratified. In particular, there are bijections
\[
\begin{Bmatrix}
\text{Localizing subcategories} \\
\text{of } \Mod_{C^*(B\cG, \F_p)}
\end{Bmatrix} 
\xymatrix@C=2pc{ \ar@{<->}[r]^-{\sim} &}
\begin{Bmatrix}
\text{Subsets of}  \\
\Spec^h(H^*(B\cG,\F_p))
\end{Bmatrix}
\xymatrix@C=2pc{ \ar@{<->}[r]^-{\sim} &}
\begin{Bmatrix}
\text{Colocalizing subcategories}  \\
\text{of } \Mod_{C^*(B\cG, \F_p)}
\end{Bmatrix}
\]
as well as
\[
\begin{Bmatrix}
\text{Thick subcategories} \\
\text{of } \Mod_{C^*(B\cG, \F_p)}^{\mathrm{compact}}
\end{Bmatrix} 
\xymatrix@C=2pc{ \ar@{<->}[r]^-{\sim} &}
\begin{Bmatrix}
\text{Specialization closed subsets of}  \\
\Spec^h(H^*(B\cG,\F_p))
\end{Bmatrix}.
\]
Moreover, the telescope conjecture holds in $\Mod_{C^*(B\cG, \F_p)}$.
\end{thm*}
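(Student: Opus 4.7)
\emph{Proof proposal.} The plan is to apply the abstract descent results (\Cref{thm:stratdescent} and \Cref{thm:costratdescent}) to the Sylow-like morphism $\phi_{\cG} \colon C^*(B\cG,\F_p) \to C^*(BS,\F_p)$. The main work consists in verifying the four hypotheses required: canonical (co)stratification of the target $\Mod_{C^*(BS,\F_p)}$, Quillen lifting for $\phi_{\cG}$, conservativity of induction and coinduction along $\phi_{\cG}$, and the existence of a $C^*(B\cG,\F_p)$-module retract of $\phi_{\cG}$.

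For the base of the descent, I would first establish canonical stratification and costratification of $\Mod_{C^*(BS,\F_p)}$. The geometricity assumption gives an equivalence $BS^{\wedge}_p \simeq (BS')^{\wedge}_p$ for some compact Lie group $S'$, hence an equivalence of cochain algebras $C^*(BS,\F_p) \simeq C^*(BS',\F_p)$; since $S$ is discrete $p$-toral, $S'$ has identity component a torus and $\pi_0 S'$ a finite $p$-group, so this case is covered by \cite{bg_stratifyingcompactlie}. As a sanity check, one can also argue intrinsically via the generalized Chouinard theorem applied to the trivial fusion system on $S$, reducing to the elementary abelian case where the cohomology is polynomial and stratification is classical.

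Next, I would verify the remaining descent hypotheses for $\phi_{\cG}$. Conservativity of induction and coinduction follows immediately from the generalized Chouinard theorem (\Cref{prop:plocalcompactchouniard}), which says that triviality of a $C^*(B\cG,\F_p)$-module is detected by restriction to elementary abelian subgroups of $S$; these restrictions factor through $\phi_{\cG}$, so $\phi_{\cG}$ itself is conservative on both sides. The retract needed for costratification descent is precisely the stable transfer produced in \Cref{prop:plocalcompactchouniard}, which splits $\phi_{\cG}$ in $\Mod_{C^*(B\cG,\F_p)}$. Quillen lifting is the main nontrivial input, and is where the $F$-isomorphism theorem (\Cref{thm:fiso}) and the strong form of Quillen stratification (\Cref{thm:qstratification}) enter: combined with Noetherianity of $H^*(B\cG,\F_p)$ (\Cref{cor:noetherian}), the explicit decomposition $\cV_{\cG} \cong \coprod_{E \in \cE(\cG)} \cV_{\cG,E}^+$ lets one lift each homogeneous prime of $H^*(B\cG,\F_p)$ along $\phi_{\cG}$ to one of $H^*(BS,\F_p)$ as demanded by \Cref{defn:qlifting}.

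With all hypotheses in place, \Cref{thm:stratdescent} and \Cref{thm:costratdescent} deliver canonical stratification and costratification of $\Mod_{C^*(B\cG,\F_p)}$. The three displayed bijections then follow by the standard Benson--Iyengar--Krause machinery: canonical stratification plus Noetherianity of $H^*(B\cG,\F_p)$ classifies localizing subcategories and, via the compact-to-thick correspondence, thick subcategories of compact objects by specialization closed subsets; canonical costratification gives the colocalizing classification through the parallel cosupport theory; and the telescope conjecture is a further formal consequence of canonical stratification together with Noetherianity. I expect the principal technical obstacle to be the verification of Quillen lifting, since extracting the precise algebraic lifting condition of \Cref{defn:qlifting} from the geometric stratification of \Cref{thm:qstratification} requires carefully tracking how the $F$-isomorphism intertwines the two homogeneous prime spectra on either side of $\phi_{\cG}$.
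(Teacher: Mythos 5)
Your approach to stratification diverges from the paper's: the paper applies \Cref{thm:stratdescent} to the morphism $f\colon C^*(B\cG,\F_p)\to \prod_{E\in\cE(\cG)}C^*(BE,\F_p)$ (where the target modules are elementary), and it is for \emph{this} morphism that Quillen lifting is established in \Cref{prop:quillenliftingfusion}. You instead descend along the Sylow map $\phi_{\cG}\colon C^*(B\cG,\F_p)\to C^*(BS,\F_p)$. The conservativity and retract steps go through as you say, and the base case for $\Mod_{C^*(BS,\F_p)}$ via geometricity and \cite{bg_stratifyingcompactlie} is fine. However, the Quillen lifting verification for $\phi_{\cG}$ is where there is a genuine gap: the paper never proves Quillen lifting for $\phi_{\cG}$, and your justification mischaracterizes the condition. \Cref{defn:qlifting} is not about lifting individual primes along $\res^{\ast}$; it requires, for each pair $M,N$, a \emph{common} lift $\fq$ lying simultaneously in $\supp_S(\Ind M)$ and $\cosupp_S(\Coind N)$. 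The fiber $\res^{-1}(\{\fp\})\subseteq\Spec^h(H^*(BS,\F_p))$ typically contains several primes that are $\cF$-conjugate but not $\cF_S(S)$-conjugate, so the support-witness and cosupport-witness may a priori be distinct elements of the fiber, and the decomposition $\cV_\cG\cong\coprod_E\cV^+_{\cG,E}$ does not by itself produce a common one.

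It turns out that Quillen lifting for $\phi_{\cG}$ \emph{is} true, but proving it is as involved as \Cref{prop:quillenliftingfusion} and uses an ingredient you don't mention: one must push a support-witness $\widetilde{\fq}_1$ and a cosupport-witness $\widetilde{\fq}_2$ down to originating pairs $(E_1,\fr_1)$ and $(E_2,\fr_2)$ via the strong Quillen stratification of $\cV_S$, invoke the support and cosupport subgroup theorems (\Cref{prop:abstractsubgroupthm}) along $C^*(BS,\F_p)\to C^*(BE_i,\F_p)$ — which in turn require the stratification and costratification of $\Mod_{C^*(BS,\F_p)}$ that you established as the base case — transport across the $\cF$-conjugacy of the two originating pairs provided by \Cref{thm:qstratification}, and then apply the subgroup theorem again to lift back up to a single prime $\widetilde{\fq}_1\in\supp_S(\Ind M)\cap\cosupp_S(\Coind N)$. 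Your proposal skips this entirely, citing only \Cref{thm:fiso}, \Cref{thm:qstratification}, and \Cref{cor:noetherian}. So while your route via $\phi_{\cG}$ is viable and arguably conceptually cleaner, as written the key Quillen lifting step is asserted rather than proved, and the proof that does work is not the one you gesture at.
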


Finally, we turn to the duality theory for homotopical groups. The proof of Benson's conjecture by Benson and Greenlees \cite{bg_localduality} reveals that $C^*(BG,\F_p)$ is an absolute Gorenstein ring spectrum for any finite group $G$, i.e., admits an algebraic form of Poincar\' e duality. We use our general methods from \cite{bhv2} to extend this result to $p$-compact groups. 

\begin{thm*}[\Cref{thm:p-comp_goren}]
Let $\cG$ be a $p$-compact group of dimension $w$, then $\cG$ is absolute Gorenstein, i.e., for each $\fp \in \Spec^h(H^*(B\cG,\F_p))$ of dimension $d$, there is an isomorphism
\[
\pi_*(\Gamma_{\fp}C^*(B\cG,\F_p)) \cong I_{\fp}[w+d],
\]
where $I_{\fp}$ denotes the injective hull of $(H^*(B\cG,\F_p))/\fp$, and $\Gamma_{\fp}$ is the local cohomology functor constructed by Benson, Iyengar, and Krause. 
\end{thm*}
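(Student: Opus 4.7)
The plan is to first establish Gorenstein duality at the maximal ideal $\mathfrak{m}$ of $H^*(B\cG,\F_p)$ and then propagate it to arbitrary primes via the local duality machinery of \cite{bhv2}. The key inputs already provided by earlier results are: $H^*(B\cG,\F_p)$ is Noetherian (\Cref{cor:noetherian}), and $\Mod_{C^*(B\cG,\F_p)}$ is canonically stratified and costratified (\Cref{thm:plocalstrat}, \Cref{thm:plocalcostrat}, both applicable because the discrete $p$-toral Sylow of a $p$-compact group is geometric).

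For the local statement at $\mathfrak{m}$, I would exploit the classical result of Dwyer--Wilkerson that a $p$-compact group $\cG$ of dimension $w$ has underlying $\F_p$-finite loop space $\cG \simeq \Omega B\cG$ satisfying mod $p$ Poincar\'e duality of formal dimension $w$. Rothenberg--Steenrod/Koszul duality yields $C^*(\cG,\F_p) \simeq \End_{C^*(B\cG,\F_p)}(\F_p)$, and under this equivalence Poincar\'e duality for $\cG$ translates into Gorenstein duality for the augmentation $C^*(B\cG,\F_p) \to \F_p$ in the Dwyer--Greenlees--Iyengar sense, with shift $w$. Specializing yields the identification $\pi_*\Gamma_{\mathfrak{m}} C^*(B\cG,\F_p) \cong I_{\mathfrak{m}}[w]$, which is exactly the desired formula at the unique closed point $\fp = \mathfrak{m}$, where $d = 0$. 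This is a direct adaptation of the Benson--Greenlees argument for finite groups, replacing Evens--Venkov finiteness by the finite-loop-space structure of $\cG$.

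To pass from $\mathfrak{m}$ to arbitrary primes I would then invoke the abstract absolute Gorenstein criterion of \cite{bhv2}: for a Noetherian ring spectrum with stratified and costratified module category that satisfies Gorenstein duality at the closed point with shift $w$, the local duality spectral sequence identifies $\Gamma_\fp R$ with $I_\fp$ shifted by $w + d$ where $d = \dim\fp$, the extra $d$ being the standard codimension contribution for Gorenstein complexes. The principal obstacle in this program is the first step: one must lift the Poincar\'e fundamental class of $\cG$ to a generator of $\pi_{-w}\Gamma_{\mathfrak{m}} C^*(B\cG,\F_p)$ in the correct degree and verify that its orientation datum is $p$-locally trivial. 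For $p$-compact groups this is handled by the dualizing-spectrum analysis of Dwyer--Wilkerson, but careful bookkeeping through Koszul duality is needed to reconcile cohomological and homological conventions. Once the local-at-$\mathfrak{m}$ case is in place, the global statement follows formally from the stratification and Noetherian hypotheses feeding into the abstract machinery.
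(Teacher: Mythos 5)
Your proposal takes a genuinely different route from the paper, and it has a gap in the propagation step. The paper proves this result by \emph{Gorenstein ascent}: it chooses a unitary embedding $B\iota\colon B\cG \to BSU(n)^\wedge_p$, verifies via \Cref{prop:unitaryembbedingfinite} that $f = B\iota^*\colon C^*(BSU(n),\F_p) \to C^*(B\cG,\F_p)$ is finite, uses that $C^*(BSU(n),\F_p)$ is \emph{algebraically} Gorenstein (hence \nsGorenstein), computes the relative dualizing module $\omega_f \simeq \Sigma^{n^2-1-w}C^*(B\cG,\F_p)$ (\Cref{thm:dualizing}, which is where Bauer's dualizing-spectrum duality is used), and then applies the ascent theorem \Cref{thm:bc} (= \cite[Thm.~4.27]{bhv2}). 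The conclusion at every prime $\fp$ comes out of the ascent theorem all at once; no separate propagation argument is required.

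Your strategy instead tries to establish the $\fp = \mathfrak{m}$ case directly from Poincar\'e duality for the $\F_p$-finite loop space $\cG$ via Koszul duality, and then propagate to all primes. The first step is fine and well known (it is the DGI Gorenstein condition for the augmentation $C^*(B\cG,\F_p) \to \F_p$). The second step is the gap. You cite an ``abstract absolute Gorenstein criterion of \cite{bhv2}'' asserting that Gorenstein duality at the closed point together with (co)stratification implies the \nsGorenstein{} condition with the $w+d$ shift at every $\fp$. No such criterion appears in \cite{bhv2}; the only abstract mechanism there for producing an \nsGorenstein{} ring spectrum that is not already algebraically Gorenstein is precisely the ascent theorem \Cref{thm:bc}, which needs a \nsNormalization{} --- a finite map from an ambient \nsGorenstein{} ring spectrum with invertible relative dualizing module --- not a Gorenstein condition at $\mathfrak{m}$. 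Going from Gorenstein duality at the residue field to the \nsGorenstein{} condition at arbitrary primes is exactly the hard part of Benson's conjecture, and the existence of the local cohomology spectral sequence with the $w+d$ shift (\Cref{cor:pgroupss}) is a \emph{consequence} of the \nsGorenstein{} condition, not a tool for verifying it. As stated your argument is therefore circular at this point, and you would need to reproduce a non-formal propagation argument in the spirit of \cite{bg_localduality} rather than invoke an off-the-shelf theorem. The paper's choice of Gorenstein ascent is specifically designed to sidestep this difficulty, at the cost of needing \Cref{thm:dualizing} to control $\omega_f$.
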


As an immediate corollary this implies the existence of a local cohomology spectral sequence for $p$-compact groups, see \Cref{cor:pgroupss}.

\subsection*{Outline of the document}

After establishing the conventions that will be in place throughout the document, we start in \Cref{sec:recollections} by revisiting the relevant aspects of support theory and stratifications. In particular, we state the main consequences of stratification and costratification as proven by \cite{bik11}. The next section contains our general base-change formulae for support and cosupport, leading to the proof of our main abstract descent result. \Cref{sec:homotopicalgroups} begins with a review of some background material on $p$-local compact groups and some of their basic properties.  Motivated by the examples coming from homotopical groups, we introduce the notion of a geometric discrete $p$-toral group and prove that these satisfy Chouinard's theorem. We then establish a general criterion which allows us to produce stable transfers for arbitrary $p$-local compact groups, a key step in generalizing Chouinard's theorem to this setting. The main theorems about $p$-local compact groups are then proven in \Cref{sec:stratification}, namely the $F$-isomorphism theorem, Quillen stratification of the cohomology ring, and the stratification of the category $\Mod_{C^*(B\cG,\F_p)}$. This section also contains a discussion of finiteness properties of homotopical groups, leading to a characterization of $p$-compact groups among all $p$-local compact groups. In the final section, we discuss  duality properties of homotopical groups and show that $p$-compact groups satisfy a homotopical version of Gorenstein duality.

\subsection*{Conventions}
Throughout this paper we work in setting of stable $\infty$-categories \cite{ha}. A subcategory $\cT \subseteq \cC$ of a stable $\infty$-category is called thick if it is closed under finite colimits, retracts, and desuspensions, and $\cT$ is called localizing (respectively colocalizing) if it is closed under all filtered colimits (respectively all filtered limits) as well. For a collection of objects $\cS \subseteq \cC$, we denote the smallest localizing (respectively colocalizing) subcategory of $\cC$ containing $\cS$ by $\Loc_{\cC}(\cS)$ (respectively $\Coloc_{\cC}(\cS)$). Upon passage to homotopy categories, these definitions correspond to the analogous notions in the setting of triangulated categories. If the ambient category is clear from the context, we will omit the subscript $\cC$.

In particular, we work with the stable $\infty$-category of spectra $\Sp$ \cite{ha}. However, our work is not sensitive to a particular choice of model; a good symmetric monoidal point set model of spectra, such as $S$-algebras \cite{ekmm} or symmetric spectra \cite{sym_spectra} would also suffice. 

Given such a model, we refer to a commutative monoid object as a commutative ring spectrum. For a commutative ring spectrum $R$ we write $\Mod_R$ for the category of $R$-modules and $\Mod_R^{\omega}$ for the full subcategory of compact objects, i.e., the smallest stable subcategory of $\Mod_R$ containing $R$ which is closed under retracts. For $M, N \in \Mod_R$ we write $M \otimes_R N$ for the symmetric monoidal product, and $\Hom_R(M,N)$ for the spectrum of $R$-module morphism between $M$ and $N$. 	

Given a space $X$ we write $X_+$ for the suspension spectrum $\Sigma^\infty_+X$ and $F(X_+,R)$ for the mapping spectrum of $X_+$ and $R$. If $R$ is a commutative ring spectrum, then so is $F(X_+,R)$. Often $R = Hk$ will be the Eilenberg--MacLane spectrum of a discrete commutative ring $k$; in this case we simply write $C^*(X,k)$ for $F(X_+,Hk)$, the spectrum of $k$-valued cochains on $X$. 

The $p$-completion of a space $X$, denoted $\pc{X}$, always refers to Bousfield--Kan $p$-completion \cite{bousfield_homotopy_1972}. A space $X$ is called $p$-complete if the $p$-completion map $X \to \pc{X}$ is a homotopy equivalence, and is called $\F_p$-finite if $H^*(X,\F_p)$ is finite. A space is $p$-good if the natural completion map $X \to \pc{X}$ induces an isomorphism on mod $p$ cohomology.

All discrete rings $R$ in this paper are assumed to be commutative and graded, and all ring-theoretic notions are implicitly graded. In particular, an $R$-module $M$ refers to a graded $R$-module and we write $\Mod_R$ for the abelian category of discrete graded $R$-modules. Prime ideals in $R$ will be denoted by fraktur letters $\fp,\fq,\fr$ and are always homogeneous, so that $\Spec^h(R)$ refers to the Zariski spectrum of homogeneous prime ideals in $R$. 

Finally, our grading conventions are homological; differentials always decrease degree. Thus, for example, $\pi_{-i}C^*(BG,k) \cong H^{i}(BG,k)$, i.e., $C^*(BG,k)$ is a coconnective commutative ring spectrum. 

\subsection*{Acknowledgments}

We would like to thank John Greenlees and Wolfgang Pitsch for helpful conversations, Tilman Bauer for allowing us to include his appendix in this paper, and James Cameron for pointing out a mistake in an earlier version of this manuscript. We also thank the referees for many useful comments. The first-named author was partially supported by the DNRF92, the second-named author was partially supported by FEDER-MEC grant MTM2016-80439-P and acknowledges financial support from the Spanish Ministry of Economy and Competitiveness, through the  ``Mar\'ia de Maeztu'' Programme for Units of Excellence in R\&D (MDM-2014-0445),
and the third-named author was partially supported by the SPP 1786. The fourth-named author would like to thank the Max Planck Institute for Mathematics for its hospitality.

\section{Recollections on support theory and stratifications}\label{sec:recollections}

\subsection{Preliminaries on ring spectra and modules}
In this short section we recall some basic properties of commutative ring spectra and modules over them, and collect several results that will be used throughout the document.

We start by introducing Noetherian ring spectra. By \cite[Thm.~1.1]{gy_noetheriangraded}, for a $\Z$-graded commutative ring $A_*$, the following conditions are equivalent:
\begin{enumerate}
  \item The underlying ungraded ring $A$ is Noetherian.
  \item The ring $A_0$ is Noetherian and $A$ is finitely generated as an $A_0$-algebra.
  \item Every homogeneous ideal of $A$ is finitely generated.  
\end{enumerate}
Therefore, it is not necessary to distinguish between Noetherian and graded Noetherian. 

\begin{defn}
A commutative ring spectrum $R$ is called Noetherian if $\pi_*R$ is Noetherian.
\end{defn}\sloppy
 A morphism $f\colon R \to S$ of commutative ring spectra induces a triple of adjoints $(\Ind_R^S,\Res_R^S,\Coind_R^S)$ between $\Mod_{R}$ and $\Mod_S$, where $\Ind_R^S \colon \Mod_R \to \Mod_S$ is induction $ -\otimes_R S$, $\Res_R^S\colon \Mod_S \to \Mod_R$ is restriction along $f$, and $\Coind_R^S\colon \Mod_R \to \Mod_S$ is coinduction, given by $\Hom_R(S,-)$. We often denote these adjoints as follows:
\[
\Ind = \Ind_R^S \quad  \Res = \Res_R^S \quad \Coind = \Coind_R^S.
\]

We recall that a functor $F\colon \cC \to \cD$ of stable $\infty$-categories is said to be conservative if it reflects equivalences. By stability, this is equivalent to the statement that, for any $X \in \cC$, $FX \simeq 0$ if and only if $X \simeq 0$.
\begin{lem}[Projection formula]\label{lem:projformula}
For any morphism $f\colon R \to S$ of commutative ring spectra there exists a canonical and natural equivalence
\[
\xymatrix{M \otimes_R \Res(N) \ar[r]^-{\sim} & \Res(\Ind (M) \otimes_S N)}
\]
for $M \in \Mod_R$ and $N \in \Mod_S$. In particular, $\Ind (M) \otimes_S N\simeq 0$ if and only if $M \otimes_R \Res(N)\simeq 0$.
\end{lem}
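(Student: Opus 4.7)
The plan is to build the natural map from the associativity and unit isomorphisms of the relative tensor product, verify it is an equivalence by a direct symmetric-monoidal argument (with a devissage as a backup), and then deduce the ``in particular'' clause from conservativity of restriction.

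First, I would construct the natural map. By definition $\Ind(M) \otimes_S N = (M \otimes_R S) \otimes_S N$ as an $S$-module. Associativity of the relative tensor product in the symmetric monoidal $\infty$-category $\Mod_R$, combined with the unit equivalence $S \otimes_S N \simeq N$ (viewed on the $R$-side via $f$), gives a chain of natural equivalences of $R$-modules
\[
\Res\bigl(\Ind(M) \otimes_S N\bigr) \;=\; \Res\bigl((M \otimes_R S) \otimes_S N\bigr) \;\simeq\; M \otimes_R (S \otimes_S N) \;\simeq\; M \otimes_R \Res(N).
\]
Reading this composite from right to left provides the natural map asserted in the statement; one checks by a straightforward coherence argument that it agrees with the map induced at the level of the adjunction $(\Ind,\Res)$ via the unit $M \to \Res\Ind(M)$.

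Second, the display above already expresses the map as a composite of equivalences coming from the symmetric monoidal structure on $\Mod_R$, so no further argument is needed in principle. As a sanity check (or an alternative proof), I would verify that the functors $M \mapsto M \otimes_R \Res(N)$ and $M \mapsto \Res(\Ind(M) \otimes_S N)$ both preserve arbitrary colimits in $M$ and agree on $M = R$, where the map reduces to the canonical identification $\Res(N) \simeq \Res(S \otimes_S N)$. Since $\Mod_R = \Loc(R)$ is generated under colimits by the unit $R$, the equivalence then propagates to all $M$.

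Third, the final clause follows from the conservativity of $\Res \colon \Mod_S \to \Mod_R$, which holds because $\Res$ is the forgetful functor along $f$: an $S$-module is contractible precisely when its underlying $R$-module is. Applying this to $X = \Ind(M) \otimes_S N$ and combining with the natural equivalence just established, $\Ind(M) \otimes_S N \simeq 0$ if and only if $\Res(\Ind(M) \otimes_S N) \simeq 0$ if and only if $M \otimes_R \Res(N) \simeq 0$.

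I do not anticipate any real obstacle: projection formulas of this shape are formal consequences of associativity in a symmetric monoidal stable $\infty$-category, and the conservativity of $\Res$ is immediate. The only point requiring care is ensuring that the natural map one writes down at the spectrum level coincides with the one induced by the $(\Ind, \Res)$-adjunction, but this is a routine coherence verification in $\Mod_R$.
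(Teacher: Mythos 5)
Your proof is correct and lands in the same formal territory as the paper's, but your primary argument is genuinely more direct. The paper builds the map as the adjoint of $\Ind(M \otimes_R \Res N) \simeq \Ind M \otimes_S \Ind\Res N \to \Ind M \otimes_S N$ (using that $\Ind$ is symmetric monoidal), and then has to \emph{prove} it is an equivalence by checking $M = R$ and arguing by d\'evissage, since $\Ind$ and $\Res$ preserve colimits. Your main route instead invokes associativity of the relative tensor product together with the unit equivalence $S \otimes_S N \simeq N$, so that the chain $\Res((M \otimes_R S) \otimes_S N) \simeq M \otimes_R (S \otimes_S N) \simeq M \otimes_R \Res N$ is an equivalence by coherence alone, with no colimit argument required. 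What the paper's adjunction-based construction buys is that the map is manifestly the canonical one coming from the $(\Ind,\Res)$ adjunction; what your coherence construction buys is that you get the equivalence immediately. You handle the reconciliation by noting the two constructions agree by a routine coherence check, and you also include the paper's d\'evissage as a backup, so both perspectives are covered. The deduction of the final clause from conservativity of $\Res$ is identical in both.
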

\begin{proof}
The proof is standard, see for example \cite[Prop.~2.15]{bds_wirth}; to wit: The required natural transformation is adjoint to
\[
\xymatrix{\Ind(M \otimes_R \Res(N)) \simeq \Ind (M) \otimes_S \Ind\Res(N) \ar[r] & \Ind (M) \otimes_S N,}
\] 
where the first map uses that $\Ind$ is symmetric monoidal. This map is an equivalence for $M=R$ and arbitrary $N$, so for all $M$ because both $\Ind$ and $\Res$ preserve colimits. The final claim follows because $\Res$ is conservative.
\end{proof}

The following lemma shows that if $f$ admits an $R$-module retract, then $\Ind$ and $\Coind$ are always conservative functors. 
\begin{lem}\label{lem:retract}
Assume $f\colon R \to S$ is a morphism of commutative ring spectra which admits an $R$-module retract, i.e., a map $g\colon S \to R$ of $R$-modules such that $g\circ f \simeq \id_R$. If $M \in \Mod_R$, then $M$ is an $R$-module retract of $\Res\Ind M$ and $\Res\Coind M$. In particular, $\Ind$ and $\Coind$ are conservative.
\end{lem}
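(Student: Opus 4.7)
The plan is to exhibit explicit $R$-module retractions using the retract $g\colon S \to R$ in both the induction and coinduction cases, and then to deduce conservativity formally from the fact that a retract of a zero object is zero.

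First I would handle induction. Unwinding definitions, $\Res\Ind M \simeq M \otimes_R S$, where the $R$-module structure is obtained by restriction along $f$. The unit of the adjunction, which on the level of the formula is $M \simeq M \otimes_R R \xrightarrow{\id \otimes f} M \otimes_R S$, provides a natural $R$-module map $\eta_M \colon M \to \Res\Ind M$. In the other direction, the retract $g$ induces $\id \otimes g \colon M \otimes_R S \to M \otimes_R R \simeq M$. Composing, we obtain $(\id \otimes g) \circ (\id \otimes f) = \id \otimes (g\circ f) \simeq \id_M$, exhibiting $M$ as an $R$-module retract of $\Res\Ind M$.

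For coinduction, the argument is formally dual. We have $\Res\Coind M \simeq \Hom_R(S, M)$ with $R$-acting through $f$. Precomposition with $f$ gives an evaluation map $\Hom_R(S,M) \to \Hom_R(R,M) \simeq M$, while precomposition with $g$ gives $M \simeq \Hom_R(R,M) \to \Hom_R(S,M)$. Their composite is precomposition with $g \circ f \simeq \id_R$, hence the identity, so $M$ is also an $R$-module retract of $\Res\Coind M$.

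Finally, conservativity follows immediately. If $\Ind M \simeq 0$ then $\Res\Ind M \simeq 0$ (restriction preserves zero objects), so its retract $M$ is also zero; the same argument applies to $\Coind$. Since $\Ind$ and $\Coind$ are exact functors between stable $\infty$-categories, reflecting zero objects is equivalent to reflecting equivalences, as noted in the paragraph preceding the lemma. I do not anticipate any genuine obstacle; the only mild subtlety is to make sure the retractions are constructed as morphisms of $R$-modules (rather than merely of spectra), which is automatic since $f$ and $g$ are $R$-module maps and both $-\otimes_R-$ and $\Hom_R(-,-)$ are functorial in $R$-module maps.
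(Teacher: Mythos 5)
Your proof is correct and takes essentially the same approach as the paper, which simply observes that $\Res\Coind M \simeq \Hom_R(S,M)$ admits $\Hom_R(R,M) \simeq M$ as an $R$-module retract (via pre-composition with $g$ and $f$) and that induction is handled similarly; you have merely written out the retraction maps explicitly in both cases.
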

\begin{proof}
Indeed, $\Res\Coind M \simeq \Res\Hom_R(S,M)$ admits $\Hom_R(R,M)\simeq M$ as an $R$-module retract, so we are done. The claim about induction is proven similarly. 
\end{proof}

\begin{lem}\label{lem:noethretract}
Suppose $f\colon R \to S$ is a morphism of commutative ring spectra which admits an $R$-module retract. If $S$ is Noetherian, then $R$ is Noetherian as well.
\end{lem}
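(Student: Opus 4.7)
The plan is to reduce the statement to a purely algebraic fact about graded commutative rings: if $\varphi\colon A\to B$ is a map of (graded) commutative rings and $A$ is a direct summand of $B$ as an $A$-module, then $B$ Noetherian implies $A$ Noetherian. Since $R$ and $S$ are Noetherian by definition precisely when $\pi_*R$ and $\pi_*S$ are Noetherian, and since applying $\pi_*$ to the retract $g\circ f\simeq \id_R$ yields a splitting of $\pi_*R$ off $\pi_*S$ as a $\pi_*R$-module, this reduction is immediate.

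The algebraic step I would carry out as follows. Write $\varphi=\pi_*f$, and for an ideal $I\subseteq\pi_*R$ consider the extended ideal $I\cdot\pi_*S\subseteq \pi_*S$. The key claim is that extension followed by contraction is the identity, i.e.
\[
\varphi^{-1}(I\cdot \pi_*S)=I.
\]
One inclusion is clear. For the other, let $x\in\pi_*R$ with $\varphi(x)=\sum_k \varphi(i_k)\,s_k$ for some $i_k\in I$ and $s_k\in\pi_*S$. Applying the $\pi_*R$-linear retraction $\pi_*g$ and using that $\pi_*g\circ \varphi=\id_{\pi_*R}$ gives
\[
x \;=\; \pi_*g(\varphi(x)) \;=\; \sum_k i_k\cdot \pi_*g(s_k)\;\in\; I,
\]
which establishes the claim.

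Given this, any ascending chain $I_1\subseteq I_2\subseteq\cdots$ of (homogeneous) ideals in $\pi_*R$ produces an ascending chain $I_1\pi_*S\subseteq I_2\pi_*S\subseteq\cdots$ in $\pi_*S$, which stabilizes since $\pi_*S$ is Noetherian (here one uses the equivalence, recalled above from \cite{gy_noetheriangraded}, of graded and ungraded Noetherianness). Contracting back along $\varphi$ and invoking the identity displayed above shows that the original chain stabilizes, so $\pi_*R$ is Noetherian and hence $R$ is Noetherian. The only mildly subtle point is bookkeeping the $R$-linearity of the retract to ensure the computation $\pi_*g(\varphi(i_k)s_k)=i_k\cdot\pi_*g(s_k)$ is valid; I do not expect any genuine obstacle.
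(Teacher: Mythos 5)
Your proof is correct. Like the paper, you first reduce the statement to the purely algebraic fact that a ring retract of a Noetherian graded-commutative ring is Noetherian; this reduction is the same (apply $\pi_*$, note that the definition of Noetherian ring spectrum is entirely in terms of $\pi_*$, and invoke the graded/ungraded Noetherian equivalence recalled earlier in the paper). Where you diverge is in the proof of the algebraic fact. You give the standard direct argument: the $\pi_*R$-linear retraction $\rho=\pi_*g$ forces $\rho$ to restrict the extension map to the identity, so $(\pi_*f)^{-1}(I\cdot\pi_*S)=I$ for every ideal $I$ of $\pi_*R$, and stabilization of an ascending chain of extended ideals in $\pi_*S$ then pulls back to stabilization in $\pi_*R$. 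The paper instead cites a reference (Dwyer--Wilkerson, Lem.~2.4) and sketches an alternative via the Bass--Papp criterion: coinduce injective $\pi_*R$-modules to $\pi_*S$, use Noetherianness of $\pi_*S$ to see the direct sum of coinductions is injective, then restrict and split off the original direct sum. Your route is, if anything, more robust: the Bass--Papp sketch uses that restriction carries injective $\pi_*S$-modules to injective $\pi_*R$-modules, which is not automatic (it holds when $\pi_*S$ is flat over $\pi_*R$, but the mere existence of an $R$-module retract does not give this; e.g.\ $\Z_{(p)}\to\Z_{(p)}\times\Z/p$ admits an $R$-module retract, yet $(0,\Z/p)$ is injective over the target and restricts to the non-injective $\Z_{(p)}$-module $\Z/p$). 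Your contraction-of-extended-ideals argument avoids this subtlety entirely and is the cleanest way to verify the underlying algebraic lemma.
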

\begin{proof}
By definition, the claim reduces to the analogous claim for (discrete) commutative rings, which is well-known, see for example \cite[Lem.~2.4]{dwyerwilkerson_finloop}. For the convenience of the reader, we give a quick alternative proof. Noetherian rings are characterized by the property that any direct sum of injective modules is injective. Let $(I_{\alpha})_{\alpha}$ be a set of injective $R$-modules. It follows that $\Coind I_{\alpha}$ is an injective $S$-module for all $\alpha$, hence $\bigoplus_{\alpha}\Coind I_{\alpha}$ is injective because $S$ is Noetherian. Therefore, $\Res\bigoplus_{\alpha}\Coind I_{\alpha} \cong \bigoplus_{\alpha}\Res\Coind I_{\alpha}$ is an injective $R$-module. Since $f$ is split as a map of $R$-modules, $\bigoplus_{\alpha}\Res\Coind I_{\alpha}$ admits $\bigoplus_{\alpha}I_{\alpha}$ as a retract by 
\Cref{lem:retract}, hence the latter object is injective as well. 
\end{proof}

We will also make repeated use of the following standard result. 

\begin{lem}\label{lem:preservecoloc}
Suppose $F\colon \cC \to \cD$ is a functor between presentable stable $\infty$-categories which preserves colimits or limits, respectively. If $X,Y \in \cC$ are objects such that $X \in \Loc(Y)$ or $X \in \Coloc(Y)$, then $FX \in \Loc(FY)$ or $FX \in \Coloc(FY)$, respectively.
\end{lem}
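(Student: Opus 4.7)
The plan is to use the standard subcategory trick. For the localizing case, define
\[
\cT = \{X \in \cC : FX \in \Loc(FY)\}
\]
and show that $\cT$ is itself a localizing subcategory of $\cC$. Since $Y \in \cT$ tautologically, it then follows that $\Loc(Y) \subseteq \cT$, which is exactly the desired conclusion. For the colocalizing case, I would argue symmetrically with $\cT' = \{X \in \cC : FX \in \Coloc(FY)\}$.

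To verify that $\cT$ is localizing, the first thing to recall is that a functor between stable $\infty$-categories which preserves finite colimits is automatically exact, since in the stable setting cofibers and fibers agree up to shift; in particular, such a functor preserves finite limits and desuspensions (cf.\ \cite{ha}). Hence the hypothesis that $F$ preserves colimits implies that $F$ preserves finite limits, finite colimits, retracts, desuspensions, and filtered colimits. Each of these operations preserves the localizing subcategory $\Loc(FY) \subseteq \cD$, so $\cT$ is closed under the same operations in $\cC$ and is therefore a localizing subcategory. Dually, the hypothesis that $F$ preserves limits makes $F$ exact and additionally continuous, whence $\cT'$ is closed under finite (co)limits, desuspensions, retracts, and filtered limits, so it is a colocalizing subcategory of $\cC$.

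The main (indeed only) conceptual point is the observation that in the stable $\infty$-categorical setting, preservation of colimits or limits alone forces exactness, which is what allows the same functor to respect the full set of closure properties defining a (co)localizing subcategory. After that, the proof is purely a bookkeeping argument about the preimage of a (co)localizing subcategory under an exact, continuous (respectively cocontinuous) functor.
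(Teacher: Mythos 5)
Your argument is correct and follows essentially the same route as the paper's proof: you define the preimage subcategory $\cT$, observe that exactness of $F$ (automatic for a colimit- or limit-preserving functor between stable $\infty$-categories) combined with (co)continuity makes $\cT$ a (co)localizing subcategory, and conclude since $Y\in\cT$. The paper's proof is precisely this argument, only spelled out slightly more explicitly for the closure under arbitrary coproducts.
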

\begin{proof}
First note that such a functor $F$ is necessarily exact, in the sense that it preserves both finite limits and colimits, see \cite[Prop.~1.1.4.1]{ha} for example. With this in mind, we will prove the claim about localizing subcategories, the one about colocalizing subcategories being proven similarly. Consider the full subcategory $\cL \subseteq \cC$ consisting of all objects $U \in \cC$ such that $FU \in \Loc(FY)$. Since $\Loc(FY)$ is a localizing subcategory of $\cD$ and $F$ preserves colimits, $\cL \subseteq \cC$ is localizing as well. In more detail, it is clear that $\cL$ is thick, so consider a collection of objects $U_i \in \cL$ for some indexing set $I$. By definition, $FU_i \in \Loc(FY)$, hence $F(\bigoplus_{i\in I}U_i) \simeq \bigoplus_{i \in I}FU_i \in \Loc(FY)$ as $F$ preserves colimits. It follows that $\bigoplus_{i \in I}U_i \in \cL$, so $\cL$ is closed under all colimits. By assumption, $Y \in \cL$, so $X \in \Loc(Y) \subseteq \cL$. 
\end{proof}

\subsection{Local cohomology and support}\label{sec:localcohom}
In this section we summarize some of the constructions of \cite{benson_local_cohom_2008}; for an alternative approach, one can apply the techniques of \cite{bhv2}. We refer to these sources for the basic terminology of localization and colocalization sequences. An endofunctor $F$ on $\Mod_R$ will be called smashing if it preserves colimits, or equivalently if $F(M) \simeq F(R) \otimes_R M$ for all $M$, see \cite[Def.~3.3.2]{hps_axiomatic}.

Let $R$ be a Noetherian commutative ring spectrum, and suppose that $A$ is a discrete graded-commutative Noetherian ring that acts on $\Mod_R$, in the sense of \cite[Sec.~4]{benson_local_cohom_2008}. Usually $A$ will be $\pi_*R$ itself, acting in the evident way, in which case we say that the action is canonical. Given a specialization closed\footnote{A subset of $\Spec^h(A)$ is called specialization closed if it is a union of Zariski closed subsets of $\Spec^h(A)$. }  subset $\cV \subset \Spec^h(A)$ we can construct local cohomology and homology functors, denoted $\Gamma^A_{\cV}$ and $\Lambda_A^{\cV}$, with the following properties, see \cite[Sec.~4]{benson_local_cohom_2008} or \cite[Thm.~3.9]{bhv2}. 
\begin{thm}\label{thm:localfunctors}
 	Let $\cV \subset \Spec^h(A)$ be specialization closed. Then there exist two pairs of  adjoint functors $(\Gamma_{\cV}^A,\Lambda^{\cV}_A)$ and $(L_{\cV}^A,\Delta^{\cV}_A)$, and cofiber sequences 
 	\[
\Gamma_{\cV}^AM \to M \to L^A_{\cV}M \quad \text{ and } \quad \Delta_A^{\cV}M \to M \to \Lambda_A^{\cV}M. 
 	\] 
 	Moreover, $\Gamma^A_{\cV}$ is a smashing colocalization functor, $L^A_{\cV}$ is a smashing localization functor, and $\Lambda_A^{\cV}$ is a localization functor. 
 \end{thm}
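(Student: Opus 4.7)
The plan is to build the smashing localization $L_{\cV}^A$ first as a Bousfield localization and then derive the remaining three functors formally. Since $A$ is Noetherian, the specialization closed subset $\cV$ admits a description as a union $\cV = \bigcup_\alpha V(I_\alpha)$, where each $I_\alpha = (a_1, \ldots, a_n)$ is a finitely generated homogeneous ideal of $A$. To each such ideal I would attach the Koszul object $\Kos(R; a_1, \ldots, a_n)$, obtained as the iterated cofiber of multiplication by the elements $a_i$ on $R$ (using the $A$-action on $\Mod_R$). These are compact $R$-modules, and I would let $\cS_{\cV} \subseteq \Mod_R$ denote the localizing subcategory they generate; this is the candidate subcategory of modules ``supported on $\cV$''.

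Next, I would define $L_{\cV}^A$ as the Bousfield localization of $\Mod_R$ away from $\cS_{\cV}$, which exists because $\Mod_R$ is compactly generated and $\cS_{\cV}$ is generated by a set of compact objects. The colocalization functor $\Gamma_{\cV}^A$ is then defined via the fiber sequence $\Gamma_{\cV}^A M \to M \to L_{\cV}^A M$, producing the first of the two claimed cofiber sequences. To see that both $\Gamma_{\cV}^A$ and $L_{\cV}^A$ are smashing, one checks that $L_{\cV}^A$ preserves colimits; this follows from the compactness of the Koszul generators of $\cS_{\cV}$ combined with the fact that $\cS_{\cV}$ is closed under colimits by construction. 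Smashness then yields $\Gamma_{\cV}^A M \simeq \Gamma_{\cV}^A R \otimes_R M$ for every $M \in \Mod_R$.

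Because $\Gamma_{\cV}^A$ and $L_{\cV}^A$ are colimit-preserving endofunctors of the presentable stable $\infty$-category $\Mod_R$, the adjoint functor theorem furnishes right adjoints $\Lambda^{\cV}_A$ and $\Delta^{\cV}_A$. Concretely, one may identify $\Lambda^{\cV}_A M \simeq \Hom_R(\Gamma_{\cV}^A R, M)$ and $\Delta^{\cV}_A M \simeq \Hom_R(L_{\cV}^A R, M)$. Applying $\Hom_R(-, M)$ to the first cofiber sequence $\Gamma_{\cV}^A R \to R \to L_{\cV}^A R$ then produces the desired second cofiber sequence $\Delta^{\cV}_A M \to M \to \Lambda^{\cV}_A M$. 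A short formal argument, using that the essential image of $\Lambda^{\cV}_A$ coincides with that of $\Gamma_{\cV}^A$ via the adjunction, shows $\Lambda^{\cV}_A$ is idempotent and therefore a localization functor.

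The main obstacle, and the reason the theorem is phrased as a recollection, is to verify that the construction via Koszul objects is independent of the choice of ideals $I_\alpha$ and their generating sets, and that the resulting functors are intrinsically determined by $\cV$ and compatible with the action of $A$ on $\Mod_R$. This invariance, together with the precise compatibilities of the $A$-action, is the technical core of \cite[Sec.~4]{benson_local_cohom_2008} and \cite[Thm.~3.9]{bhv2}, which I would cite rather than reproduce.
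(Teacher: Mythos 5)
Your proposal follows essentially the same route the paper takes: the paper treats the theorem as a recollection, citing \cite[Sec.~4]{benson_local_cohom_2008} and \cite[Thm.~3.9]{bhv2}, defines $\Gamma_{\cV}$ as the colocalization onto the torsion category $\Mod_R^{\cV-\text{tors}} = \Loc(\kos{R}{\fp} \mid \fp \in \cV)$, and produces the remaining three functors by the formal argument of \cite[Thm.~2.21]{bhv}. You build the same localization sequence, just entering from the $L_{\cV}^A$ end; the Koszul generators, the smashing argument from compactness of those generators, and the passage to right adjoints via the adjoint functor theorem all match the substance of the cited constructions, including the identifications $\Lambda^{\cV}_A \simeq \Hom_R(\Gamma_{\cV}^A R, -)$ and $\Delta^{\cV}_A \simeq \Hom_R(L_{\cV}^A R, -)$ and the derivation of the second cofiber sequence by applying $\Hom_R(-,M)$ to the first.

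One claim in your last paragraph is wrong as stated: the essential image of $\Lambda^{\cV}_A$ does \emph{not} coincide with that of $\Gamma^A_{\cV}$. The image of $\Gamma^A_{\cV}$ is the torsion category, while the image of $\Lambda^{\cV}_A$ is the category of $\cV$-complete objects, and these are genuinely different subcategories of $\Mod_R$ (for $R = H\Z$ and $\cV = V(p)$, the object corresponding to $\Z_p$ is $p$-complete but not derived $p$-torsion, while the one corresponding to $\Z/p^\infty$ is $p$-torsion but not $p$-complete). What is true is that $\Gamma^A_{\cV}$ and $\Lambda^{\cV}_A$ restrict to an inverse equivalence between the two subcategories (the Greenlees--May/MGM equivalence), but that is a \emph{consequence} of the setup, not a premise one can invoke to prove idempotence. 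The correct and equally short argument is purely formal: $\Gamma^A_{\cV}$ is an idempotent comonad (colocalization), so its counit $\Gamma^A_{\cV}\Gamma^A_{\cV} \to \Gamma^A_{\cV}$ is an equivalence; passing to right adjoints, the unit $\Lambda^{\cV}_A \to \Lambda^{\cV}_A\Lambda^{\cV}_A$ is an equivalence, i.e., $\Lambda^{\cV}_A$ is an idempotent monad, hence a localization. With that correction your argument is sound.
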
 

\begin{rem}\label{rem:orthogonal}
 	We will let $\Gamma_{\cal{V}}^A\Mod_R$ denote the essential image of $\Gamma_{\cal{V}}^A$, and similar for the other functors. By construction, we have that $M \in L_{\cal{V}}^A\Mod_R$ if and only if $\Hom_R(N,M) \simeq 0$ for all $N \in \Gamma_{\cal{V}}^A\Mod_R$, see for example, \cite[Thm.~2.4 and Thm.~3.9]{bhv2}.
 \end{rem}
In case the ring is clear from the context, we will drop the subscript or superscript. The functor $\Gamma_{\cV}$ is characterized by the property that $M \in \Gamma_{\cV}\Mod_R$ if and only if the algebraic $\fp$-localization $(\pi_*M)_{\fp} = 0$ for all $\fp \in \Spec^h(A) \setminus \cV$ \cite[Prop.~4.5]{benson_local_cohom_2008}. 

 Given $\fp \in \Spec^h(A)$ and $M \in \Mod_R$ we can define a Koszul object $\kos{M}{\fp} \in \Mod_R$ as in \cite[Sec.~5]{benson_local_cohom_2008} or \cite[Sec.~3]{bhv2} by modding out $\fp$ homotopically. More precisely, for any $x \in \pi_dR$ we define $\kos{R}{x}$ as the cofiber of the $R$-linear map $ R \xrightarrow{x} \Sigma^{-d}R$. Picking a sequence of generators $(x_1,\ldots,x_m)$ of the homogeneous prime ideal $\fp$, we then construct 
\[
\kos{M}{\fp} = M \otimes_R \kos{R}{x_1} \otimes_R \kos{R}{x_2} \otimes_R \ldots \otimes_R \kos{R}{x_m}.
\]
While this definition depends on the chosen sequence of generators, the thick subcategory generated by $\kos{M}{\fp}$ does not. If we consider the full subcategory $\Mod_R^{\cV-\text{tors}}$ of $\Mod_R$ given by $\Loc(\kos{R}{\fp} \mid \fp \in \cV)$, then the functor $\Gamma_{\cV}$ is the right adjoint to the inclusion of $\Mod_R^{\cV-\text{tors}}$ in $\Mod_R$, and the existence of the other functors follows from a formal argument as in \cite[Thm.~2.21]{bhv}. It follows from \cite[Prop.~3.21]{bhv2} that the functors constructed in this way agree with those constructed by Benson, Iyengar, and Krause in \cite{benson_local_cohom_2008}. 

The most important examples of specialization closed subsets we consider are 
\[
\cV(\fp) = \{  \fq \in \Spec^h(A) \mid \fq \supseteq \fp \} \quad \text{ and } \quad \cZ(\fp) = \{\fq \in \Spec^h(A) \mid \fq \not \subseteq \fp \}. 
\]
The latter gives rise to the localization functor $L_{\cZ(\fp)}$ satisfying $\pi_*L_{\cZ(\fp)}M \cong (\pi_*M)_{\fp}$, see \cite[Thm.~4.7]{benson_local_cohom_2008}. 
\begin{defn}
	For a homogeneous prime ideal $\fp \in \Spec^h(A)$ we define the local cohomology and homology functors with respect to $\fp$ to be 
	\[
\Gamma_{\fp} = \Gamma_{\cV(\fp)}L_{\cZ(\fp)} \quad \text{ and } \quad \Lambda^{\fp} = \Lambda^{\cV(\fp)}\Delta^{\cZ(\fp)}. 
	\]
\end{defn}

\begin{rem}\label{rem:localcohomfunctors}
	By \cite[Thm.~6.2]{benson_local_cohom_2008} one can replace $\cV(\fp)$ and $\cZ(\fp)$ with any specialization closed subsets $\cV$ and $\cW$ such that $\cV \setminus \cW = \{ \fp \}$. 
\end{rem}

This leads to the definition of the support and cosupport of a module $M \in \Mod_R$. 
\begin{defn}
	The support of an $R$-module $M$ is defined by
	 \[\supp_R(M) = \{ \fp \in \Spec^h(A) \mid \Gamma_{\fp}M \not \simeq 0 \}.\]
	 Similarly, the cosupport of $M$ is defined to be 
	 \[
	 \cosupp_R(M) = \{ \fp \in \Spec^h(A) \mid \Lambda^{\fp}M \not \simeq 0 \}.\] 
If $\cD \subseteq \Mod_R$ is a subcategory, then we set $\supp_R(\cD) = \bigcup_{M \in \cD}\supp_R(M)$, and similarly for $\cosupp_R(\cD)$.  
\end{defn}

Support and cosupport give useful characterizations of the images of $\Gamma_{\cV}$ and $\Lambda^{\cV}$. For the following, see \cite[Cor.~5.7]{benson_local_cohom_2008} and \cite[Cor.~4.8]{bik12}. 

\begin{prop}[Benson--Iyengar--Krause]
	Let $\cV \subseteq \Spec^h(A)$ be specialization closed subset and $M \in \Mod_R$. Then $M$ is in $ \Gamma_{\cV}\Mod_R$ if and only $\supp_R(M) \subseteq \cV$, and $M$ is in $\Lambda^{\cV}\Mod_R$ if and only if $\cosupp_R(M) \subseteq \cV$. 
\end{prop}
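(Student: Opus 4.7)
The plan is to prove the two equivalences (for support and cosupport) separately, treating the support case first and then indicating how to dualize. In both cases the forward direction will be essentially formal, while the backward direction will rely on a local-to-global detection principle, which I expect to be the main obstacle.

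Consider first the support half. Recall the characterization, stated just before the proposition, that $M \in \Gamma_{\cV}\Mod_R$ if and only if $(\pi_*M)_{\fp} = 0$ for every $\fp \notin \cV$. For the forward direction, assume $M \in \Gamma_{\cV}\Mod_R$ and pick any $\fp \notin \cV$. Using the computation $\pi_*L_{\cZ(\fp)}M \cong (\pi_*M)_{\fp}$, the hypothesis gives $L_{\cZ(\fp)}M \simeq 0$, and thus $\Gamma_{\fp}M = \Gamma_{\cV(\fp)}L_{\cZ(\fp)}M \simeq 0$, so $\fp \notin \supp_R(M)$.

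For the backward direction, given $\supp_R(M)\subseteq \cV$, apply the cofiber sequence $\Gamma_{\cV}M\to M\to L_{\cV}M$ and aim at $L_{\cV}M\simeq 0$. Combining the triangle with the forward direction applied to $\Gamma_{\cV}M$ yields $\supp_R(L_{\cV}M)\subseteq \supp_R(M)\cup\supp_R(\Gamma_{\cV}M)\subseteq \cV$. In the other direction, $L_{\cV}M$ is $\Hom_R$-orthogonal to $\Gamma_{\cV}\Mod_R$ by \Cref{rem:orthogonal}, and each Koszul object $\kos{R}{\fp}$ with $\fp\in\cV$ lies in $\Gamma_{\cV}\Mod_R$; since $\Gamma_{\fp}$ is computed (up to thick closure) by $\kos{R}{\fp}$, it follows that $\Gamma_{\fp}L_{\cV}M\simeq 0$ for all $\fp\in\cV$, i.e.\ $\supp_R(L_{\cV}M)\cap\cV=\emptyset$. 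Together these give $\supp_R(L_{\cV}M)=\emptyset$, and the proof then reduces to the \emph{local-to-global principle}: any $R$-module with empty support is zero. This principle is the main obstacle and is proved by exploiting that $\Mod_R$ is generated as a localizing subcategory by the collection of Koszul objects $\{\kos{R}{\fp}\mid \fp\in\Spec^h(A)\}$, combined with a Noetherian induction on the poset of primes that detects any nontrivial $N$ at a minimal prime in the support of $\pi_*N$.

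The cosupport half proceeds by a dual argument. One uses the analogous characterization of $\Lambda^{\cV}\Mod_R$ in terms of vanishing of algebraic completions at primes outside $\cV$ for the forward direction, and the cofiber sequence $\Delta^{\cV}M\to M\to \Lambda^{\cV}M$ for the backward direction, with the goal of showing $\Delta^{\cV}M\simeq 0$. The relevant auxiliary statement is the dual \emph{co-local-to-global principle}: if $\cosupp_R(N)=\emptyset$ then $\Hom_R(\kos{R}{\fp},N)\simeq 0$ for every $\fp$, and since the Koszul objects generate $\Mod_R$ as a localizing subcategory, this forces $\Hom_R(X,N)\simeq 0$ for all $X\in\Mod_R$, whence $N\simeq 0$ upon taking $X=N$. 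Once this dual detection principle is in hand, the same triangle-chasing used for support carries over verbatim, and the proof is complete.
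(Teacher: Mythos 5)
The paper does not prove this proposition; it simply cites \cite[Cor.~5.7]{benson_local_cohom_2008} and \cite[Cor.~4.8]{bik12}, so your attempt is necessarily a genuinely different route. The high-level plan for the support half is sound: the forward direction via the $\pi_*$-local characterization of $\Gamma_{\cV}\Mod_R$ and the identification $\pi_*L_{\cZ(\fp)}M\cong(\pi_*M)_\fp$ is correct, and reducing the backward direction to ``support detects zero objects'' (the paper's \Cref{thm:cosupptrivialobjects}) is the right move. However, the step establishing $\supp_R(L_\cV M)\cap\cV=\emptyset$ is not correct as written. Knowing $\Hom_R(\kos{R}{\fp},L_\cV M)\simeq 0$ does not directly give $\Gamma_\fp L_\cV M\simeq 0$, and the phrase ``$\Gamma_\fp$ is computed (up to thick closure) by $\kos{R}{\fp}$'' is not an accurate description of that functor. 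The fixable version of the argument: for $\fp\in\cV$ one has $\cV(\fp)\subseteq\cV$, hence $\Gamma_{\cV(\fp)}\Mod_R=\Loc(\kos{R}{\fq}\mid\fq\supseteq\fp)\subseteq\Gamma_\cV\Mod_R$ and therefore $L_\cV M\in L_{\cV(\fp)}\Mod_R$, i.e.\ $\Gamma_{\cV(\fp)}L_\cV M\simeq 0$; then, because $\Gamma_{\cV(\fp)}$ and $L_{\cZ(\fp)}$ are both smashing and hence commute, $\Gamma_\fp L_\cV M=\Gamma_{\cV(\fp)}L_{\cZ(\fp)}L_\cV M\simeq L_{\cZ(\fp)}\Gamma_{\cV(\fp)}L_\cV M\simeq 0$.

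The cosupport half has two more serious problems. First, your ``co-local-to-global'' argument does not work as stated: $\cosupp_R(N)=\emptyset$ means $\Hom_R(\Gamma_\fp R,N)\simeq 0$ for all $\fp$, but this does not formally yield $\Hom_R(\kos{R}{\fp},N)\simeq 0$ (the objects $\Gamma_\fp R$ and $\kos{R}{\fp}$ generate very different localizing subcategories), so the inference that $N\simeq 0$ collapses. The actual fact that cosupport detects zero objects is \cite[Thm.~4.5]{bik12} and is a genuinely nontrivial theorem; the clean fix is to quote \Cref{thm:cosupptrivialobjects} of this paper rather than to reprove it. Second, the claim that ``the same triangle-chasing \dots carries over verbatim'' is not justified: in contrast to $\Gamma_\cV$ and $L_\cV$, the functor $\Lambda^\cV$ is a localization that is \emph{not} smashing (and neither is $\Delta^\cV$), so the smashing-commutation trick used for support does not dualize. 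One instead has to argue by adjunction: for $\fp\in\cV$, $\Lambda^\fp\Delta^\cV M\simeq\Hom_R(\Gamma_\fp R,\Delta^\cV M)$, and $\Gamma_\fp R\in\Gamma_{\cV(\fp)}\Mod_R\subseteq\Gamma_\cV\Mod_R$ while $\Delta^\cV M$ lies in the left orthogonal $(\Gamma_\cV\Mod_R)^\perp$ under $\Hom_R$; to see this one uses the characterization of the acyclization $\Delta^\cV M$ as the fiber of the $\Lambda^\cV$-localization together with the adjunction $(\Gamma_\cV,\Lambda^\cV)$. Finally, your appeal to ``an analogous characterization of $\Lambda^\cV\Mod_R$ in terms of vanishing of algebraic completions'' for the forward direction is not stated in the paper and is not a routine dual of the $\Gamma_\cV$ statement; the forward inclusion for cosupport should again come from the adjunction, not from a completion formula.
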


The next result shows that support and cosupport detect zero objects, and is proven in \cite[Thm.~2]{benson_local_cohom_2008} and \cite[Thm.~4.5]{bik12}.

\begin{thm}[Benson--Iyengar--Krause]\label{thm:cosupptrivialobjects}
	For any $M \in \Mod_R$, we have $M \simeq 0$ if and only if $\supp_R(M) = \varnothing$ if and only if $\cosupp_R(M) = \varnothing$. 
\end{thm}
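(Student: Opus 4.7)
Both forward implications are immediate: exactness of $\Gamma_{\fp}$ and $\Lambda^{\fp}$ forces $\supp_R(M) = \cosupp_R(M) = \varnothing$ whenever $M \simeq 0$. The substance is in the two converses, which I would handle separately.

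For the support direction I would argue the contrapositive via a Noetherian minimality argument on the classical support of $\pi_*M$. If $M \not\simeq 0$, then $\pi_*M \neq 0$, and since $A$ is Noetherian we may choose a prime $\fp$ minimal among those with $(\pi_*M)_{\fp} \neq 0$. Then $\pi_*(L_{\cZ(\fp)} M) \cong (\pi_*M)_{\fp} \neq 0$, so $L_{\cZ(\fp)} M \not\simeq 0$, and it suffices to show $L_{\cZ(\fp)} M \in \Gamma_{\cV(\fp)}\Mod_R$; for then $\Gamma_{\fp} M \simeq L_{\cZ(\fp)} M \not\simeq 0$. Using the characterization of $\Gamma_{\cV(\fp)}\Mod_R$ recalled above (vanishing of the classical $\fr$-localization of $\pi_*$ for $\fr \notin \cV(\fp)$), this reduces to checking $((\pi_*M)_{\fp})_{\fr} \simeq 0$ for $\fr \not\supseteq \fp$. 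A direct unwinding identifies $\Spec(A_{\fp} \otimes_A A_{\fr})$ with the primes $\fq$ of $A$ contained in $\fp \cap \fr$, and the localization of $(\pi_*M)_{\fp} \otimes_A A_{\fr}$ at such a $\fq$ agrees with $(\pi_*M)_{\fq}$. By minimality, any such $\fq$ is outside the classical support: if $\fq = \fp$ then $\fp \subseteq \fr$, contradicting $\fr \not\supseteq \fp$, while $\fq \subsetneq \fp$ forces $(\pi_*M)_{\fq} = 0$.

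For the cosupport direction I would first establish the duality $\Lambda^{\fp} M \simeq \Hom_R(\Gamma_{\fp} R, M)$, which follows by combining the adjunction $(\Gamma_{\cV(\fp)}, \Lambda^{\cV(\fp)})$ with the smashing property of $\Gamma_{\cV(\fp)}$ and $L_{\cZ(\fp)}$ (compare \Cref{rem:localcohomfunctors}). Granting this, $\cosupp_R(M) = \varnothing$ gives $\Hom_R(\Gamma_{\fp} R, M) \simeq 0$ for every $\fp$. It then suffices to show that $R$ lies in the localizing subcategory of $\Mod_R$ generated by $\{\Gamma_{\fp} R \mid \fp \in \Spec^h(A)\}$, because this will yield $\Hom_R(R, M) \simeq M \simeq 0$. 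The generation follows from a Noetherian reassembly along the filtration of $\Spec^h(A)$ by the specialization closed subsets $\cV_n = \{\fp \mid \dim A/\fp \leq n\}$: the cofiber sequences $\Gamma_{\cV_{n-1}} R \to \Gamma_{\cV_n} R \to L_{\cV_{n-1}}\Gamma_{\cV_n}R$ exhibit each graded piece as built by $\Loc$-operations from objects of the form $\Gamma_{\fp} R$ with $\fp$ of fixed coheight $n$, and $R \simeq L_{\varnothing} R$ sits at the top of this finite tower.

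The main obstacle, as expected, lies in the cosupport half: the support direction collapses to a clean classical localization computation once a minimal prime in $\supp_A(\pi_*M)$ is chosen, whereas cosupport requires both the duality formula $\Lambda^{\fp}(-) \simeq \Hom_R(\Gamma_{\fp} R, -)$ and the Noetherian generation statement for $R$ by the local cohomology pieces $\Gamma_{\fp} R$.
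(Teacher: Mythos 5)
The paper gives no in-text proof of this statement; it cites BIK08 and BIK12. Your reconstruction follows the BIK route in spirit, but the cosupport half has a genuine gap, and the support half has a smaller one.

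Support half: the central move --- pick $\fp$ minimal in $\supp_A(\pi_*M)$, observe $\pi_*L_{\cZ(\fp)}M\cong(\pi_*M)_\fp\neq0$, and check via the criterion for membership in $\Gamma_{\cV(\fp)}\Mod_R$ that $\Gamma_\fp M\simeq L_{\cZ(\fp)}M$ --- is the right one, and the tensor-of-localizations computation is correct. The caveat is that a minimal prime in the classical support of a non-finitely-generated module over a general Noetherian ring need not exist: Noetherian rings can have infinite strictly descending chains of primes. This is fixable (first localize at any $\fq_0\in\supp_A(\pi_*M)$, so that you work over the local Noetherian ring $A_{\fq_0}$, which has finite Krull dimension, and take $\fp$ minimal there), but as written the existence of $\fp$ is simply asserted.

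Cosupport half: after correctly establishing $\Lambda^\fp M\simeq\Hom_R(\Gamma_\fp R,M)$, you reduce to the claim $R\in\Loc(\Gamma_\fp R\mid\fp\in\Spec^h(A))$, which is precisely the local-to-global principle for $R$, i.e.\ \Cref{thm:localtoglobal}. There would be no circularity in citing it, since its proof in BIK rests on the support detection you establish first. However, your ``Noetherian reassembly'' does not prove it: in the coheight filtration, you assert that each graded piece $L_{\cV_{n-1}}\Gamma_{\cV_n}R$ is built by $\Loc$-operations from the objects $\Gamma_\fp R$ with $\fp$ of coheight exactly $n$. Knowing from \Cref{lem:6.1} that $\supp_R(L_{\cV_{n-1}}\Gamma_{\cV_n}R)$ lies in the discrete set of coheight-$n$ primes does not by itself yield a generation statement --- that implication is the actual content of the local-to-global principle and is where BIK invest real work. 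So either invoke \Cref{thm:localtoglobal} directly, or supply the missing argument; as written, the key step is asserted rather than proven. (There is also a minor point: for $A$ of infinite Krull dimension the tower is not finite, so you additionally need $\colim_n\Gamma_{\cV_n}R\simeq R$.)
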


Finally, we summarize some of the properties of the local cohomology and local homology functors $\Gamma_{\fp}$ and $\Lambda^{\fp}$ that will be used repeatedly below. From now on, we will restrict to the case that $A = \pi_*R$ acts canonically. 

\begin{prop}\label{prop:orthogonality}
Let $\fp, \fq \in \Spec^h(\pi_*R)$ be two homogeneous prime ideals.
	\begin{enumerate}
		\item For any object $M \in \Mod_R$ we have:
			\[
			\supp_R(\Gamma_{\fp}M) \subseteq \{\fp\} \quad \text{and} \quad \cosupp_R(\Lambda^{\fp}M) \subseteq \{\fp\}.
			\]
			If $M = R$, then $\supp_R(R) = \Spec^h(\pi_*R)$ and thus $\supp_R(\Gamma_{\fp}R) = \{\fp\}$. 
		\item The functor $\Gamma_{\fp}$ is left adjoint to $\Lambda^{\fp}$. 
		\item The following orthogonality relations are satisfied:
\[
\Gamma_{\fp}\Gamma_{\fq} \simeq 
\begin{cases}
\Gamma_{\fp} & \text{if } \fp = \fq, \\
0 & \text{otherwise,}
\end{cases}
\quad \text{and} \quad
\Lambda^{\fp}\Lambda^{\fq} \simeq 
\begin{cases}
\Lambda^{\fp} & \text{if } \fp = \fq, \\
0 & \text{otherwise.}
\end{cases}
\]
	\end{enumerate}
\end{prop}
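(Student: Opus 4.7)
The plan is to deduce all three items formally from the characterizations of $\Gamma_\cV$ and $L_\cZ$ collected in \Cref{sec:localcohom}, invoking the results of Benson--Iyengar--Krause and of \cite{bhv2}. None of this should require genuinely new input; it is all bookkeeping with the smashing/colocalization machinery of \Cref{thm:localfunctors} and \Cref{rem:localcohomfunctors}.

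For (1), I would start from the identification $\Gamma_\fp = \Gamma_{\cV(\fp)}L_{\cZ(\fp)}$ of \Cref{rem:localcohomfunctors}. Since $\Gamma_\fp M$ lies in the essential image of $\Gamma_{\cV(\fp)}$, the proposition preceding \Cref{thm:cosupptrivialobjects} gives $\supp_R(\Gamma_\fp M) \subseteq \cV(\fp)$. For the complementary containment, $\Gamma_{\cV(\fp)}$ is smashing (\Cref{thm:localfunctors}), so $\Gamma_\fp M \simeq \Gamma_{\cV(\fp)}(R) \otimes_R L_{\cZ(\fp)}M$; combining the tensor-product formula for support from \cite[Thm.~7.3]{benson_local_cohom_2008} with the fact that $\supp_R(L_{\cZ(\fp)} M) \subseteq \Spec^h(\pi_*R)\setminus\cZ(\fp)$ yields $\supp_R(\Gamma_\fp M) \subseteq \cV(\fp) \cap (\Spec^h(\pi_*R)\setminus\cZ(\fp)) = \{\fp\}$. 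The statement for $\Lambda^\fp$ is dual, using that $\Lambda^{\cV(\fp)}$ is a (non-smashing) localization and invoking the cosupport analogue of the tensor/hom formula from \cite[Cor.~4.8]{bik12}. The identity $\supp_R(R) = \Spec^h(\pi_*R)$ reduces to showing $\Gamma_\fp R\not\simeq 0$ for every homogeneous prime, which follows because the Koszul object $\kos{R}{\fp}$ is nonzero and lies in $\Gamma_\fp \Mod_R$.

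For (2), I would observe that the composite $\Gamma_\fp = \Gamma_{\cV(\fp)} L_{\cZ(\fp)}$ is a composition of left adjoints, so its right adjoint is $\Delta^{\cZ(\fp)} \Lambda^{\cV(\fp)}$. The remaining step is to identify this with $\Lambda^\fp = \Lambda^{\cV(\fp)} \Delta^{\cZ(\fp)}$, which amounts to verifying the commutation $\Delta^{\cZ(\fp)} \Lambda^{\cV(\fp)} \simeq \Lambda^{\cV(\fp)} \Delta^{\cZ(\fp)}$. This is the familiar commutation of a localization and a colocalization corresponding to disjoint layers; I would either check it directly from the defining cofiber sequences of \Cref{thm:localfunctors}, or cite the version recorded in \cite[Thm.~7.5]{benson_local_cohom_2008}.

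For (3), the orthogonality case $\fp\neq\fq$ is immediate from (1): by definition of support, $\Gamma_\fq \Gamma_\fp M\simeq 0$ for all $M$ precisely means $\fq \notin \supp_R(\Gamma_\fp M)$, and (1) forces this whenever $\fq \neq \fp$. The cosupport analogue handles $\Lambda^\fp\Lambda^\fq$ for $\fp\neq\fq$. For $\fp = \fq$, I would argue that $\Gamma_\fp$ is the smashing colocalization of $\Mod_R$ onto the subcategory of modules whose support is contained in $\{\fp\}$, so $\Gamma_\fp\Gamma_\fp\simeq \Gamma_\fp$ is just idempotency of this colocalization; the idempotency of $\Lambda^\fp$ then follows either directly or by adjunction from (2). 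The main (very mild) obstacle is keeping straight the commutation of the four functors $\Gamma_{\cV},L_\cV,\Lambda^\cV,\Delta^\cV$ in part (2); once that is settled, (1) and (3) are formal.
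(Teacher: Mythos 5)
Your overall strategy tracks the paper's, but there is one genuine gap in part (1), and a few places where you are more direct than the paper.

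The gap: your argument that $\Gamma_{\fp}R\not\simeq 0$ for every homogeneous prime $\fp$ rests on the assertion that the Koszul object $\kos{R}{\fp}$ lies in $\Gamma_{\fp}\Mod_R$. That is not true in general. The object $\kos{R}{\fp}$ lies in $\Gamma_{\cV(\fp)}\Mod_R$ by construction, but $\Gamma_{\fp}\Mod_R$ is the subcategory of modules whose support is contained in $\{\fp\}$, and $\supp_R(\kos{R}{\fp})$ can contain primes strictly above $\fp$. (Indeed, for $\fp=(0)$ one has $\kos{R}{(0)}=R$, whose support is all of $\Spec^h(\pi_*R)$ once we know the statement we are trying to prove.) Knowing $\kos{R}{\fp}\not\simeq 0$ and $\kos{R}{\fp}\in\Gamma_{\cV(\fp)}\Mod_R$ only produces some prime in $\cV(\fp)\cap\supp_R(R)$, not the specific prime $\fp$. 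The paper instead invokes the comparison between the triangulated-category support and the algebraic support of $\pi_*R$ as a module over itself, namely the chain $\supp_R(R)=\supp_R(\pi_*R)=\Spec^h(\pi_*R)$ from \cite[Thm.~5.5(1), Lem.~2.2(1)]{benson_local_cohom_2008}; this is the substantive input and is not replaced by the Koszul-object observation.

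On the other points, you are fine and in some respects more direct than the paper. For the inclusions in (1), reading off $\supp_R(\Gamma_{\fp}M)\subseteq\cV(\fp)\cap(\Spec^h(\pi_*R)\setminus\cZ(\fp))=\{\fp\}$ from the smashing identity and the unconditional tensor-support inclusion is correct; the paper instead cites \cite[Cor.~5.9]{benson_local_cohom_2008} directly (note also that your citation to a ``tensor product formula'' in \cite{benson_local_cohom_2008} should really be to the unconditional inclusion, which is what the paper establishes inside the proof of part (3) and uses — the equality version requires stratification). For (2), passing to right adjoints and then commuting $\Delta^{\cZ(\fp)}$ with $\Lambda^{\cV(\fp)}$ is a correct route; the paper simply cites \cite[Sec.~4]{bik12}. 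For (3), your observation that $\Gamma_{\fp}\Gamma_{\fq}M\simeq 0$ for $\fp\ne\fq$ is literally the statement $\fp\notin\supp_R(\Gamma_{\fq}M)$, so part (1) finishes it, is cleaner than the paper's detour through $\Gamma_{\fp}R\otimes_R\Gamma_{\fq}R$ and \Cref{thm:cosupptrivialobjects}; the idempotency argument for $\fp=\fq$ and the passage to $\Lambda$ by adjunction are both sound and match the paper's logic.
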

\begin{proof}
The two inclusions of the first item are proven in \cite[Cor.~5.9]{benson_local_cohom_2008} and \cite[Sec.~5, Page 13]{bik12}, respectively, while the equalities for $M = R$ require a short argument: Combining \cite[Thm.~5.5(1)]{benson_local_cohom_2008} and \cite[Lem.~2.2(1)]{benson_local_cohom_2008}, we get equalities
\[
\supp_R(R) = \supp_R(\pi_*R) = \Spec^h(\pi_*R),
\]
hence $\Gamma_{\fp}R \not\simeq 0$ for all $\fp \in \Spec^h(\pi_*R)$. Item (2) is \cite[Sec.~4]{bik12}.

The last item is also implicitly contained in the works of Benson, Iyengar, and Krause, but we add some details for the convenience of the reader. By \Cref{thm:localfunctors}, the functors $\Gamma_{\fp}$ and $\Gamma_{\fq}$ are smashing, so it is enough to show that $\Gamma_{\fp}R \otimes_R \Gamma_{\fq}R \simeq 0$ for $\fp \neq \fq$. Furthermore, the same result and the definition of support imply that 
\[
\supp_R(M \otimes_{R} N) \subseteq \supp_R(M) \cap \supp_R(N)
\]
for any $M,N \in \Mod_R$. We may then use Part (1) to compute
\[
\supp_R(\Gamma_{\fp}R \otimes_R \Gamma_{\fq}R) \subseteq \supp_R(\Gamma_{\fp}R) \cap \supp_R(\Gamma_{\fq}R) = \{\fp\} \cap \{\fq\} = \varnothing. 
\]
Therefore, $\Gamma_{\fp}R \otimes_R \Gamma_{\fq}R \simeq 0$
 by \Cref{thm:cosupptrivialobjects}. 
 
Finally, the orthogonality relations for the local homology functors $\Lambda^{\fp}$ and $\Lambda^{\fq}$ now follow by adjunction: By Part (2), for any $M \in \Mod_R$ we have
\[
\Lambda^{\fp}\Lambda^{\fq}M \simeq \Hom_R(R,\Lambda^{\fp}\Lambda^{\fq}M) \simeq \Hom_R(\Gamma_{\fq}\Gamma_{\fp}R,M),
\]
so the claim reduces to the orthogonality relation for $\Gamma_{\fp}$ and $\Gamma_{\fq}$.
\end{proof}

\subsection{Stratification and costratification}
In order to classify the localizing subcategories of the stable module category of a finite group, Benson, Iyengar, and Krause~\cite{bik11} introduced the notion of stratification for a tensor triangulated category with an action by a graded-commutative Noetherian ring $A$. Since we will only consider the case when $A = \pi_*R$ acts canonically, we specialize their definition to this case. 
\begin{defn}\label{defn:stratification}
For a commutative ring spectrum $R$, the module category $\Mod_R$ is said to be canonically stratified if it is stratified by the canonical action of $\pi_*R$ on $\Mod_R$, i.e., if the following two conditions are satisfied:
	\begin{enumerate}
		\item The local to global principle for localizing subcategories holds, that is, for each $M \in \Mod_R$ there is an equality
		\begin{equation}\label{eq:locgloballoc}
\Loc(M) = \Loc(\{\Gamma_{\fp}M \mid \fp \in \Spec^h( \pi_*R) \}).
		\end{equation}
		\item For any homogeneous prime ideal $\fp \in \Spec^h( \pi_*R)$, the category $\Gamma_{\fp}\Mod_R$ is minimal as a localizing subcategory of $\Mod_R$ (i.e., it has no proper non-zero localizing subcategories).  
	\end{enumerate}
\end{defn}
\begin{rem}\label{rem:nonzero}
	At first this may appear to be different to the definition given in \cite[Sec.~4]{bik11} for stratification of a triangulated category $\cT$ by a commutative Noetherian ring $R$, where they require that $\Gamma_{\fp}\cT$ is either zero or minimal, however they are in fact equivalent in our case. Indeed, in the case of a canonical ring action by $\pi_*R$ on $\Mod_R$, $\Gamma_{\fp}\Mod_R$ is non-zero by \Cref{prop:orthogonality}(1).
\end{rem}

In order to classify colocalizing subcategories, we have the dual notation of costratification. 

\begin{defn}\label{defn:costratification}
For a commutative ring spectrum $R$, the module category $\Mod_R$ is said to be canonically costratified if it is costratified by the canonical action of $\pi_*R$ on $\Mod_R$, i.e., if the following two conditions are satisfied:
	\begin{enumerate}
		\item The local to global principle for colocalizing subcategories holds, that is, for each $M \in \Mod_R$ there is an equality
		\begin{equation}\label{eq:locglocalcoloc}
\Coloc(M) = \Coloc(\{\Lambda^{\fp}M \mid \fp \in \Spec^h( \pi_*R) \}).
		\end{equation}
		\item For any homogeneous prime ideal $\fp \in \Spec^h( \pi_*R)$, the category $\Lambda^{\fp}\Mod_R$ is minimal as a colocalizing subcategory of $\Mod_R$. 
	\end{enumerate}
	\end{defn}
	Once again, in this case it is not possible that $\Lambda^{\fp}\Mod_R$ is zero, by \cite[Prop.~5.4]{bik12}. Moreover, the local to global principle for stratification and costratification always holds for Noetherian ring spectra, see \cite[Thm.~7.2]{bik11} and \cite[Rem.~8.8]{bik12}:
	
\begin{thm}[Benson--Iyengar--Krause]\label{thm:localtoglobal}
If $R$ is a Noetherian commutative ring spectrum, then the local to global principle for localizing and colocalizing subcategories holds.
\end{thm}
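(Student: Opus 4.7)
The statement is due to Benson, Iyengar, and Krause, and the plan is to sketch how both halves fit together. For the localizing case, I would let $\cL = \Loc(\Gamma_{\fp}M \mid \fp \in \Spec^h(\pi_*R))$; the inclusion $\cL \subseteq \Loc(M)$ is immediate because each $\Gamma_{\fp} = \Gamma_{\cV(\fp)}L_{\cZ(\fp)}$ is smashing by \Cref{thm:localfunctors}, so $\Gamma_{\fp}M$ is obtained from $M$ by colimit-preserving operations and lies in $\Loc(M)$. The substantive content is the reverse inclusion $M \in \cL$, which I would prove by induction on specialization closed subsets of $\Spec^h(\pi_*R)$.

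The main step is to establish $\Gamma_{\cV}M \in \cL$ for every specialization closed $\cV \subseteq \Spec^h(\pi_*R)$, since then $M \simeq \Gamma_{\Spec^h(\pi_*R)}M \in \cL$ (using that $L_{\Spec^h(\pi_*R)}M \simeq 0$). For non-empty $\cV$, pick a minimal prime $\fp \in \cV$ and set $\cW = \cV \setminus \{\fp\}$; minimality of $\fp$ guarantees that $\cW$ is still specialization closed. Applying the smashing colocalization $\Gamma_{\cV}$ to the defining cofiber sequence $\Gamma_{\cW}M \to M \to L_{\cW}M$, using $\Gamma_{\cV}\Gamma_{\cW}\simeq \Gamma_{\cW}$ together with \Cref{rem:localcohomfunctors}, yields
\[
\Gamma_{\cW}M \to \Gamma_{\cV}M \to \Gamma_{\fp}M.
\]
Since $\Gamma_{\fp}M \in \cL$ by construction and $\cL$ is a localizing subcategory (hence closed under cofibers and filtered colimits), Noetherian induction on the complement $\Spec^h(\pi_*R) \setminus \cV$ will give $\Gamma_{\cV}M \in \cL$ for all $\cV$, with the limit steps handled by the filtered colimit description of $\Gamma_{\cV}$ coming from the Koszul presentation in \Cref{sec:localcohom}.

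For the colocalizing case, the plan is to deduce it formally from the localizing case using the adjunction $\Gamma_{\fp} \dashv \Lambda^{\fp}$ of \Cref{prop:orthogonality}(2). Membership $M \in \Coloc(\Lambda^{\fp}M \mid \fp)$ is equivalent to the implication that any $N \in \Mod_R$ with $\Hom_R(N, \Lambda^{\fp}M) \simeq 0$ for every $\fp$ also satisfies $\Hom_R(N, M) \simeq 0$. By adjunction the hypothesis becomes $\Hom_R(\Gamma_{\fp}N, M) \simeq 0$ for all $\fp$, and the localizing case supplies $N \in \Loc(\Gamma_{\fp}N \mid \fp)$; applying the limit-preserving contravariant functor $\Hom_R(-,M)$ to this presentation of $N$ then forces $\Hom_R(N,M) \simeq 0$. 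The main obstacle is the Noetherian induction in the localizing half: specifically, one has to verify that the induction closes up at limit stages over the possibly infinite poset of specialization closed subsets, which is where the Noetherian hypothesis on $\pi_*R$ is essential. Once this is secured, the colocalizing statement falls out of the duality formally.
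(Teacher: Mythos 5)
The paper does not prove this theorem; it simply cites \cite[Thm.~7.2]{bik11} for the localizing half and \cite[Rem.~8.8]{bik12} for the colocalizing half, so there is no internal argument to compare against. Evaluated on its own terms, your sketch captures the right ingredients, but the induction scheme in the localizing half is not set up correctly as written. You propose ``Noetherian induction on the complement $\Spec^h(\pi_*R)\setminus\cV$,'' but neither the poset of specialization closed subsets under inclusion nor the poset of their complements is well-founded (e.g.\ for $\pi_*R = \mathbb{F}_p[x]$ the sets of cofinitely many closed points give an infinite strictly descending chain of specialization closed subsets), so there is no Noetherian/well-founded induction to run on those posets. The correct formulation is an ascending \emph{transfinite} filtration $\varnothing = \cW_0 \subset \cW_1 \subset \cdots \subset \cW_\beta = \Spec^h(\pi_*R)$ in which $\cW_{\alpha+1} = \cW_\alpha \cup \{\fp_\alpha\}$ for $\fp_\alpha$ a maximal element of the complement $\Spec^h(\pi_*R)\setminus\cW_\alpha$ (this is where Noetherianity enters, via ACC on homogeneous primes, to guarantee such a $\fp_\alpha$ exists and that $\cW_{\alpha+1}$ remains specialization closed), with unions at limit ordinals, and the induction is ordinary transfinite induction on the ordinal index $\alpha$, not on the poset of specialization closed sets. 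Your cofiber sequence $\Gamma_{\cW_\alpha}M \to \Gamma_{\cW_{\alpha+1}}M \to \Gamma_{\fp_\alpha}M$ is correct (your ``remove a minimal prime of $\cV$'' is the same move read backwards), and the claim $\Gamma_{\cW_\lambda}M \simeq \colim_{\alpha<\lambda}\Gamma_{\cW_\alpha}M$ at limit stages is true, but it deserves a real argument: one checks that $\colim_\alpha L_{\cW_\alpha}M$ is $L_{\cW_\lambda}$-local by computing $\Gamma_\fq$ of it for each $\fq\in\cW_\lambda$ using that $\Gamma_\fq$ is smashing and that the indices $\alpha$ with $\fq\in\cW_\alpha$ are cofinal; appealing to ``the Koszul presentation'' alone does not substitute for this.

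For the colocalizing half, the adjunction $\Gamma_\fp\dashv\Lambda^\fp$ argument is clean and, I believe, essentially how \cite[Rem.~8.8]{bik12} reduces the colocalizing LGP to the localizing one. However, it silently invokes the identification $\Coloc(\cS)=\bigl({}^\perp\cS\bigr)^\perp$, i.e.\ the existence of a Bousfield colocalization for a colocalizing subcategory cogenerated by a set; this requires Brown representability for the dual of a compactly generated category and should be cited explicitly rather than taken as obvious. You also only address the inclusion $M\in\Coloc(\Lambda^\fp M\mid\fp)$; the other inclusion $\Lambda^\fp M \in \Coloc(M)$ (paralleling your smashing argument for $\Gamma_\fp M \in \Loc(M)$) also needs a word, using $\Lambda^\fp M \simeq \Hom_R(\Gamma_\fp R, M)$ and the fact that $\{X : \Hom_R(X,M)\in\Coloc(M)\}$ is a localizing subcategory containing $R$.
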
	
	
We have the following consequences of (co)stratifications. For proofs, see \cite[Thm.~4.2]{bik11} and \cite[Cor.~9.2]{bik12}.
	\begin{thm}[Benson--Iyengar--Krause] Let $R$ be a Noetherian commutative ring spectrum.
		\begin{enumerate}
			\item If $\Mod_R$ is canonically stratified, then there is a bijection between localizing subcategories of $\Mod_R$ and subsets of $\Spec^h(\pi_*R)$, which takes a subset $\cal{U} \subseteq \Spec^h(\pi_*R)$ to the full subcategory of $R$-modules whose support is contained in $\cal{U}$. 
			\item If $\Mod_R$ is canonically costratified, then there is a bijection between colocalizing subcategories of $\Mod_R$ and subsets of $\Spec^h(\pi_*R)$, which takes a subset $\cal{U} \subseteq \Spec^h(\pi_*R)$ to the full subcategory of $R$-modules whose cosupport is contained in $\cal{U}$.  
			\item If $\Mod_R$ is canonically costratified, then the map sending a localizing subcategory $\cL$ of $\Mod_R$ to its left orthogonal $\cL^{\perp} = \{M \in \Mod_R\mid \Hom_R(L,M) =0 \text{ for all } L\in\cL\}$ induces a bijection between localizing and colocalizing subcategories of $\Mod_R$.
		\end{enumerate}
	\end{thm}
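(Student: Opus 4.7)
All three assertions share a common pattern: assemble the stated bijections from the (automatic) local-to-global principle of \Cref{thm:localtoglobal} together with the minimality hypothesis built into \Cref{defn:stratification} or \Cref{defn:costratification}, and then deduce part (3) by computing left orthogonals via the adjunction $\Gamma_\fp \dashv \Lambda^\fp$ of \Cref{prop:orthogonality}(2). The key technical ingredient, and the step I expect to be the main obstacle, is showing that each $\Gamma_\fp M$ lies in $\mathcal{L}$ whenever $\mathcal{L}$ is a localizing subcategory with $\fp \in \supp_R(\mathcal{L})$: this is where the minimality assumption enters, and it is the content that powers all three bijections simultaneously.

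For (1), the strategy is to check that the assignments
\[
\sigma\colon \mathcal{U} \longmapsto \{M \in \Mod_R \mid \supp_R(M) \subseteq \mathcal{U}\}, \qquad \tau\colon \mathcal{L} \longmapsto \supp_R(\mathcal{L})
\]
are mutually inverse. The relation $\tau\sigma = \id$ is immediate since $\Gamma_\fp R \in \sigma(\{\fp\})$ has support precisely $\{\fp\}$ by \Cref{prop:orthogonality}(1). For $\sigma\tau = \id$, fix $M$ with $\supp_R(M) \subseteq \supp_R(\mathcal{L})$; by \eqref{eq:locgloballoc} it suffices to show $\Gamma_\fp M \in \mathcal{L}$ for each $\fp \in \supp_R(M)$. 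Pick $N \in \mathcal{L}$ with $\Gamma_\fp N \not\simeq 0$, and note that every localizing subcategory of $\Mod_R$ is automatically a tensor ideal (because $\Mod_R = \Loc(R)$ and $(-)\otimes_R N$ is colimit-preserving), so $\Gamma_\fp N \simeq \Gamma_\fp R \otimes_R N \in \mathcal{L}$. The two nonzero objects $\Gamma_\fp M, \Gamma_\fp N$ of the minimal localizing subcategory $\Gamma_\fp \Mod_R$ must then generate the same localizing subcategory, whence $\Gamma_\fp M \in \Loc(\Gamma_\fp N) \subseteq \mathcal{L}$. Part (2) is formally dual, substituting $\cosupp_R$, $\Lambda^\fp$, $\Hom_R$, and \eqref{eq:locglocalcoloc} for their localizing counterparts; the cotensor-ideal property for colocalizing subcategories follows from the dual argument, using that $\Hom_R(M,-)$ preserves limits.

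For (3), the plan is to identify $\Phi\colon \mathcal{L} \mapsto \mathcal{L}^\perp$ with complementation of subsets under the bijections of (1) and (2). Applying the argument from (1) one more time shows that $\mathcal{L} = \sigma(\mathcal{U})$ is generated as a localizing subcategory by $\{\Gamma_\fp R \mid \fp \in \mathcal{U}\}$. Because $\Hom_R(-,M)$ converts colimits into limits, we obtain that $M \in \mathcal{L}^\perp$ if and only if $\Hom_R(\Gamma_\fp R, M) \simeq 0$ for all $\fp \in \mathcal{U}$, which the adjunction of \Cref{prop:orthogonality}(2) rewrites as $\Lambda^\fp M \simeq 0$ for all $\fp \in \mathcal{U}$, i.e., $\cosupp_R(M) \subseteq \Spec^h(\pi_*R) \setminus \mathcal{U}$. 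Via (1) and (2), $\Phi$ thereby becomes complementation of subsets of $\Spec^h(\pi_*R)$, and is therefore itself a bijection.
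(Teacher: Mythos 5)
Your reconstruction of parts (1) and (2) is essentially a sound re-derivation of the Benson--Iyengar--Krause arguments, which the paper itself does not prove but merely cites. The engine you isolate — that every localizing subcategory of $\Mod_R$ is automatically a tensor ideal because $\Mod_R = \Loc(R)$ and $(-)\otimes_R N$ preserves colimits, so that $\Gamma_{\fp}N \simeq \Gamma_{\fp}R \otimes_R N$ lands in $\cL$, after which minimality and the local-to-global principle finish the job — is exactly the right one. One small caveat in (2): the dual of the witness $\Gamma_{\fp}R$ is not $\Lambda^{\fp}R$, which is not obviously nonzero; you instead want any nonzero object of $\Lambda^{\fp}\Mod_R$ (nonempty by the remark after \Cref{defn:costratification}), such as the perfect cogenerator $T_{\fp}I_{\fp}$ discussed before \Cref{prop:finitedescentcostrat}, whose cosupport is $\{\fp\}$. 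Your phrase ``formally dual'' papers over this, but the fix is routine.

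Part (3), however, has a genuine gap. You write ``Applying the argument from (1) one more time shows that $\mathcal{L} = \sigma(\mathcal{U})$ is generated \ldots'' and then invoke ``the bijections of (1) and (2).'' But the hypothesis in (3) is only that $\Mod_R$ is canonically \emph{co}stratified, whereas (1) requires canonical \emph{stratification}: without it you do not know that $\Gamma_{\fp}\Mod_R$ is minimal, so you cannot conclude that every localizing subcategory is of the form $\sigma(\mathcal{U})$, and hence you cannot conclude that $\Phi\colon \cL \mapsto \cL^{\perp}$ is a bijection on \emph{all} localizing subcategories rather than merely surjective onto colocalizing ones. The missing ingredient is the nontrivial fact that costratification implies stratification, which the paper explicitly flags immediately after the theorem as an input (citing BIK's Theorem 9.7). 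You need to either prove or cite that implication before part (3) can legitimately appeal to part (1).
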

	In the last result we have used the fact that if $\Mod_R$ is costratified, then it is also stratified \cite[Thm.~9.7]{bik12}.
\subsection{Consequences}
Apart from the classification of localizing and colocalizing subcategories, there are further consequences of (co)stratification that have been determined by Benson, Iyengar, and Krause, which we review here. Another application, \Cref{prop:abstractsubgroupthm}, is obtained as a consequence of both stratification and  a base-change formula for support.

	The first result is the classification of all thick subcategories of compact objects, which follows from the classification of all localizing subcategories \cite[Thm.~6.1]{bik11}. 
\begin{thm}[Benson--Iyengar--Krause]
	Suppose $\Mod_R$ is canonically stratified by $\pi_*R$. Then, there is a bijection between thick subcategories of compact $R$-modules and specialization closed subsets of $\Spec^h(\pi_*R)$, which takes a specialization closed subset $\cal{U}$ to the full-subcategory of compact $R$-modules whose support is contained in $\cal{U}$. 
\end{thm}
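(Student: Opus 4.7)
The plan is to bootstrap this classification from the bijection between arbitrary localizing subcategories of $\Mod_R$ and subsets of $\Spec^h(\pi_*R)$ established by canonical stratification. The strategy follows the classical Hopkins--Neeman--Thomason pattern, with stratification providing the global classification that constrains the localizing subcategory generated by any family of compacts.

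First, I would verify two auxiliary facts about compact objects. (i) For $M \in \Mod_R^{\omega}$, $\pi_*M$ is a finitely generated $\pi_*R$-module, so its classical support $V(\Ann_{\pi_*R}(\pi_*M))$ is a Zariski closed subset of $\Spec^h(\pi_*R)$; combined with the identification of support via Koszul objects, one gets $\supp_R(M) = V(\Ann_{\pi_*R}(\pi_*M))$, hence closed. (ii) For each $\fp \in \Spec^h(\pi_*R)$, the Koszul object $\kos{R}{\fp}$ is compact (being an iterated cofibre of $\pi_*R$-linear maps on $R$) and satisfies $\supp_R(\kos{R}{\fp}) = \cV(\fp)$. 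Together, these show that the class of supports of compact $R$-modules is exactly the collection of Zariski closed subsets of $\Spec^h(\pi_*R)$, so arbitrary unions produce exactly the specialization closed subsets.

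The assignment on which I would build the bijection is the map $\cT \mapsto \supp_R(\cT) = \bigcup_{M \in \cT} \supp_R(M)$, with inverse $\cU \mapsto \cT_{\cU} := \{M \in \Mod_R^{\omega} \,:\, \supp_R(M) \subseteq \cU\}$. Specialization closedness of $\supp_R(\cT)$ follows from (i); that $\cT_{\cU}$ is thick is immediate because support sends cofibre sequences to unions and is preserved under retracts and desuspension. One composition is straightforward: $\supp_R(\cT_{\cU}) \subseteq \cU$ by definition, and equality holds because $\kos{R}{\fp} \in \cT_{\cU}$ whenever $\fp \in \cU$, contributing $\cV(\fp) \subseteq \supp_R(\cT_{\cU})$, and $\cU$ is a union of such closed sets.

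The main obstacle, and the step where stratification is essential, is the other composition: if $\cT \subseteq \Mod_R^{\omega}$ is thick with $\cU := \supp_R(\cT)$, then $\cT_{\cU} = \cT$. The inclusion $\cT \subseteq \cT_{\cU}$ is trivial. For the reverse, let $N \in \cT_{\cU}$ be compact. By canonical stratification, the localizing subcategory generated by $\cT$ and the one generated by $\{\kos{R}{\fp} : \fp \in \cU\}$ both consist of all modules supported in $\cU$, so in particular $N \in \Loc(\cT)$. Now I would invoke Neeman's classical principle that a compact object belonging to the localizing subcategory generated by a set of compacts already lies in the thick subcategory they generate; explicitly, $N \simeq \hocolim N_i$ with $N_i \in \Thick(\cT) = \cT$, and compactness of $N$ forces $N$ to be a retract of some $N_i$, whence $N \in \cT$. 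This completes the bijection, and the map is order-preserving by construction.
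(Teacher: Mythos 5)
Your proof is correct, and since the paper itself gives no proof of this statement but simply cites Benson–Iyengar–Krause [bik11, Thm.~6.1], you have in effect reproduced the argument underlying that citation: closedness of the support of a compact object, $\supp_R(\kos{R}{\fp}) = \cV(\fp)$, the stratification bijection at the level of localizing subcategories, and Neeman's theorem that a compact object in $\Loc(\cS)$ for $\cS$ a set of compacts lies in $\Thick(\cS)$. This is the same route as BIK's.
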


The following is the analog of the tensor product theorem in modular representation theory, as well as its hom-object version. Note that the inclusions $\subseteq$ in this theorem hold unconditionally as seen for the first one in the proof of \Cref{prop:orthogonality}.

\begin{thm}[Benson--Iyengar--Krause]\label{thm:tensor_hom_(co)supp}
Let $R$ be a Noetherian commutative ring spectrum and let $M$ and $N$ be $R$-modules. If $\Mod_R$ is canonically stratified, then \[
\supp_R(M \otimes_{R} N) = \supp_R(M) \cap \supp_R(N)\]
		and
		\[ \cosupp_R(\Hom_R(M,N)) = \supp_R(M) \cap \cosupp_R(N).\] 
In particular, $M \otimes_{R} N \simeq 0$ if and only if $\supp_R(M) \cap \supp_R(N) = \varnothing$, and similarly for $\Hom_R(M,N)$. 
	\end{thm}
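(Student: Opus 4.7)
The plan is to establish the nontrivial $\supseteq$ inclusions in each identity; the reverse inclusions hold unconditionally (as already noted in the proof of \Cref{prop:orthogonality} for the tensor product, and by a direct imitation of that argument in the $\Hom_R$ case once the formula $\Lambda^{\fp}\Hom_R(M,N) \simeq \Hom_R(\Gamma_{\fp}M, \Lambda^{\fp}N)$ derived below is in hand). Throughout, fix $\fp \in \Spec^h(\pi_*R)$; I will use repeatedly that $\Gamma_{\fp}$ is smashing and idempotent and that $\Lambda^{\fp}$ is its right adjoint (\Cref{prop:orthogonality}).

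For the support formula, the starting point is the computation
\[
\Gamma_{\fp}(M \otimes_R N) \simeq \Gamma_{\fp}R \otimes_R \Gamma_{\fp}R \otimes_R M \otimes_R N \simeq \Gamma_{\fp}M \otimes_R \Gamma_{\fp}N,
\]
using idempotence $\Gamma_{\fp}R \otimes_R \Gamma_{\fp}R \simeq \Gamma_{\fp}R$ in the first step. Suppose $\fp \in \supp_R(M) \cap \supp_R(N)$, so $\Gamma_{\fp}M$ and $\Gamma_{\fp}N$ are nonzero objects of the minimal localizing subcategory $\Gamma_{\fp}\Mod_R$. Canonical stratification gives $\Loc(\Gamma_{\fp}N) = \Gamma_{\fp}\Mod_R$, which contains $\Gamma_{\fp}R$ (nonzero by \Cref{prop:orthogonality}(1)). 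Applying the colimit-preserving functor $\Gamma_{\fp}M \otimes_R (-)$ and invoking \Cref{lem:preservecoloc} yields
\[
\Gamma_{\fp}M \simeq \Gamma_{\fp}M \otimes_R \Gamma_{\fp}R \in \Loc(\Gamma_{\fp}M \otimes_R \Gamma_{\fp}N).
\]
Since $\Gamma_{\fp}M \not\simeq 0$, the generating object $\Gamma_{\fp}M \otimes_R \Gamma_{\fp}N$ must be nonzero as well, so $\fp \in \supp_R(M \otimes_R N)$.

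For the $\Hom$ formula, the smash--hom adjunction identifies $\Lambda^{\fp} \simeq \Hom_R(\Gamma_{\fp}R, -)$, and combining this with idempotence of $\Gamma_{\fp}$ together with the adjunction $\Gamma_{\fp} \dashv \Lambda^{\fp}$ produces
\[
\Lambda^{\fp}\Hom_R(M, N) \simeq \Hom_R(\Gamma_{\fp}R \otimes_R M, N) \simeq \Hom_R(\Gamma_{\fp}M, N) \simeq \Hom_R(\Gamma_{\fp}M, \Lambda^{\fp}N).
\]
Specializing to $M = R$ gives $\Hom_R(\Gamma_{\fp}R, \Lambda^{\fp}N) \simeq \Lambda^{\fp}N$. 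Now let $\fp \in \supp_R(M) \cap \cosupp_R(N)$ and consider
\[
\cK = \{X \in \Gamma_{\fp}\Mod_R \mid \Hom_R(X, \Lambda^{\fp}N) \simeq 0\}.
\]
As $\Hom_R(-, \Lambda^{\fp}N)$ converts colimits into limits, $\cK$ is a localizing subcategory of $\Gamma_{\fp}\Mod_R$; by minimality it is either $0$ or all of $\Gamma_{\fp}\Mod_R$. The hypothesis $\Lambda^{\fp}N \not\simeq 0$ combined with $\Hom_R(\Gamma_{\fp}R, \Lambda^{\fp}N) \simeq \Lambda^{\fp}N$ exhibits $\Gamma_{\fp}R \notin \cK$, forcing $\cK = 0$. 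Since $\Gamma_{\fp}M \not\simeq 0$, it follows that $\Gamma_{\fp}M \notin \cK$, i.e., $\Hom_R(\Gamma_{\fp}M, \Lambda^{\fp}N) \not\simeq 0$, yielding $\fp \in \cosupp_R(\Hom_R(M, N))$.

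The ``in particular'' clauses then follow directly from \Cref{thm:cosupptrivialobjects}, as an $R$-module is zero precisely when its support (equivalently cosupport) is empty. The main technical obstacle is the $\Hom$ formula, where one must convert the purely categorical minimality property of $\Gamma_{\fp}\Mod_R$ into the nonvanishing of a specific mapping spectrum; the trick is to package the potential obstruction into a localizing subcategory $\cK$ and eliminate it using the canonical test object $\Gamma_{\fp}R$, whose mapping spectrum into $\Lambda^{\fp}N$ is computed by the adjunction.
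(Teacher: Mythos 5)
Your proof is correct. The paper's own ``proof'' is simply a pair of citations --- Thm.~7.3 of \cite{bik11} for the tensor formula and Thm.~9.5 of \cite{bik12} for the $\Hom$ formula --- plus the remark that the ``in particular'' clause follows from \Cref{thm:cosupptrivialobjects}, so there is no internal argument in this paper to compare against. Your write-up is a genuine self-contained reconstruction, and it is organized around a single mechanism: exploit minimality of $\Gamma_{\fp}\Mod_R$ using $\Gamma_{\fp}R$ as the test object, after reducing both statements to the computations $\Gamma_{\fp}(M\otimes_R N)\simeq\Gamma_{\fp}M\otimes_R\Gamma_{\fp}N$ and $\Lambda^{\fp}\Hom_R(M,N)\simeq\Hom_R(\Gamma_{\fp}M,\Lambda^{\fp}N)$. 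The tensor half essentially reproduces BIK's argument. For the $\Hom$ half, your device of packaging the potential obstruction into the localizing subcategory $\cK=\{X\in\Gamma_{\fp}\Mod_R : \Hom_R(X,\Lambda^{\fp}N)\simeq 0\}$ and killing it with $\Gamma_{\fp}R$ is a clean streamlining: it runs entirely off minimality of $\Gamma_{\fp}\Mod_R$ (i.e.\ stratification alone, matching the hypotheses of the theorem) and sidesteps the heavier cosupport machinery developed in \cite[Sec.~9]{bik12}. Two small points worth making explicit in a polished version: membership of $\Gamma_{\fp}M$ in $\Gamma_{\fp}\Mod_R$ rests on the idempotence $\Gamma_{\fp}\Gamma_{\fp}\simeq\Gamma_{\fp}$ from \Cref{prop:orthogonality}(3); and the identification $\Lambda^{\fp}\simeq\Hom_R(\Gamma_{\fp}R,-)$ comes from uniqueness of right adjoints applied to $\Gamma_{\fp}=\Gamma_{\fp}R\otimes_R(-)$ together with \Cref{prop:orthogonality}(2), which is what upgrades the adjunction to an equivalence of mapping $R$-module spectra rather than merely of hom-sets.
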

\begin{proof}
	The first is \cite[Thm.~7.3]{bik11}, while the second is \cite[Thm.~9.5]{bik12}. The final claim is then an immediate consequence of \Cref{thm:cosupptrivialobjects}.
\end{proof}
 
Finally, by \cite[Thm.~6.3]{bik11}, stratification implies that the telescope conjecture holds for $\Mod_R$; see also \cite[Thm.~11.13]{bik_finitegroups}.
\begin{thm}[Benson--Iyengar--Krause]\label{thm:tel_conj}
	Suppose $\Mod_R$ is canonically stratified by $\pi_*R$. Let $\cal{U}$ be a localizing subcategory of $\Mod_{R}$. Then the following conditions are equivalent. 
	\begin{enumerate}
		\item The localizing subcategory $\cal{U}$ is smashing, i.e., the associated localizing endofunctor on $\Mod_R$ preserves colimits.
		\item The localizing subcategory $\cal{U}$ is generated by compact objects in $\Mod_R$. 
		\item The support of $\cal{U}$ is specialization closed. 
	\end{enumerate}
\end{thm}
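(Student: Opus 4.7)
My approach is to prove the equivalences cyclically via $(3) \Rightarrow (2) \Rightarrow (1) \Rightarrow (3)$, leaning throughout on the stratification bijection between localizing subcategories of $\Mod_R$ and arbitrary subsets of $\Spec^h(\pi_*R)$.

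For $(3) \Rightarrow (2)$, write $\cW = \supp \cU$ and note that $\cW$ being specialization closed is equivalent to $\cV(\fp) \subseteq \cW$ for every $\fp \in \cW$. Since $\supp \kos{R}{\fp} = \cV(\fp)$, the classification theorem places each Koszul object $\kos{R}{\fp}$ with $\fp \in \cW$ inside $\cU$; these are compact $R$-modules by construction. The localizing subcategory they generate has support $\bigcup_{\fp \in \cW} \cV(\fp) = \cW$, so it agrees with $\cU$ by the bijection. The reverse $(2) \Rightarrow (3)$ is immediate: if $K$ is compact, then $\pi_*K$ is a finitely generated $\pi_*R$-module, so $\supp K$ is Zariski closed in $\Spec^h(\pi_*R)$; the tensor theorem together with the local-to-global principle then identifies $\supp \Loc(\{K_i\})$ with $\bigcup_i \supp K_i$, a union of closed sets and therefore specialization closed.

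The implication $(2) \Rightarrow (1)$ is a standard fact about Bousfield localizations in compactly generated stable $\infty$-categories: when $\cU$ is generated by a set of ambient-compact objects, the Verdier quotient $\Mod_R / \cU$ remains compactly generated and the inclusion of $\cU$-local objects preserves filtered colimits, so the associated localization is smashing. No use of stratification is needed at this step.

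The remaining implication $(1) \Rightarrow (3)$ is the telescope-conjecture-type statement and is the main obstacle. My strategy would be as follows: denote by $\Gamma$ the colocalization onto $\cU$, write $e = \Gamma R$ for the smashing idempotent and $LR$ for its cofiber, so that $\supp R = \supp e \sqcup \supp LR$ from the smashing localization sequence. For each $\fp \in \supp e$, the object $\Gamma_\fp e \simeq \Gamma_\fp R \otimes_R e$ is a nonzero element of $\cU$ (since smashing forces $\cU$ to be a tensor ideal), and the minimality of $\Gamma_\fp \Mod_R$ yields the inclusion $\Gamma_\fp \Mod_R \subseteq \cU$. The delicate point is to promote this pointwise information into the specialization closedness of $\supp e$; equivalently, to exhibit, for each $\fp \in \supp e$, a compact $R$-module in $\cU$ whose support contains the closure $\cV(\fp)$. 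Following Benson--Iyengar--Krause, one would produce such witnesses by tensoring $e$ against Koszul objects and leveraging the Hom-closure of $\cU^\perp$ coming from the tensor--Hom formula of \Cref{thm:tensor_hom_(co)supp}, and this is the step that genuinely requires the full strength of the stratification hypothesis.
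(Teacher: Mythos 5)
The paper does not actually prove this statement: it is attributed to Benson--Iyengar--Krause and cited to \cite[Thm.~6.3]{bik11}, so there is no in-paper argument to compare against. Judged on its own merits, your cycle $(3)\Rightarrow(2)\Rightarrow(1)$ is correct and well argued: $\supp(\kos{R}{\fp}) = \cV(\fp)$, the Koszul objects are compact, and the stratification bijection forces $\Loc(\kos{R}{\fp}\mid\fp\in\cW)=\cU$; and $(2)\Rightarrow(1)$ is indeed the classical Miller--Neeman fact that needs no stratification. The extra direction $(2)\Rightarrow(3)$ is also fine, though redundant once the cycle closes.

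However, the proof is not complete: you never actually establish $(1)\Rightarrow(3)$, which is the genuine ``telescope conjecture'' content of the theorem. You reduce correctly to the claim that $\supp e$ is specialization closed, observe (correctly) that smashingness makes $\cU$ a tensor ideal and that $\Gamma_\fp\Mod_R\subseteq\cU$ for all $\fp\in\supp e$, and then declare the remaining step ``the delicate point'' and ``the main obstacle'' without carrying it out. Worse, the specific strategy you gesture at is circular: you propose to exhibit a compact $K\in\cU$ with $\cV(\fp)\subseteq\supp K$, but the only natural candidate $\kos{R}{\fp}$ has $\supp\kos{R}{\fp}=\cV(\fp)$, so it lies in $\cU$ precisely when $\cV(\fp)\subseteq\supp\cU$ --- which is exactly the specialization-closedness you are trying to prove. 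And ``tensoring $e$ against Koszul objects'' does not produce compact objects, since $e=\Gamma R$ is not compact in general. To finish you would need a genuinely different mechanism (the BIK argument exploits that for a smashing localization $\cU^\perp$ is itself a localizing tensor ideal, so both $\supp\cU$ and its complement correspond to localizing tensor ideals under the stratification bijection, and one then analyzes the resulting partition of $\Spec^h(\pi_*R)$); as written, the key implication is a gap, not a proof.
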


\section{Base-change formulae and descent}\label{sec:basechange}
In this section we generalize the base-change formulae for support and cosupport given in \cite{bik12} to not necessarily finite morphisms of ring spectra. 
\subsection{Base-change for support and cosupport}

 We start by defining precisely what we mean by a finite morphism of ring spectra. 
\begin{defn}\label{defn:finite}
   Let $f \colon R \to S$ be a morphism of Noetherian commutative ring spectra. We say that $f$ is a finite morphism if $S$ is a compact $R$-module via $f$. 
 \end{defn}
 
 \begin{rem}\label{rem:finite_fg}
 If $f \colon R \to S$ is a finite morphism, then $\pi_*S$ is finitely generated over $\pi_*R$. This follows from the fact that in a cofiber sequence of $R$-modules, if two of them have finitely generated homotopy as $\pi_*(R)$-modules, the same holds for the third one. Since any compact $R$-module is built from finitely many cofiber sequences, it follows that its homotopy is a finitely generated $\pi_*(R)$-module (e.g. see the proof of \cite[Lem.~10.2(i)]{greenlees_hi}). But the converse fails in general.
 \end{rem}

  Under this finiteness assumption the results of Benson, Iyengar, and Krause \cite[Sec.~7]{bik12} specialize to give base-change results for support and cosupport under induction, coinduction, and restriction. The purpose of this section is to generalize these results to not necessarily finite morphisms of commutative ring spectra, which is crucial for our main theorem. We start with two results of Benson, Iyengar, and Krause, for the first see {\cite[Prop.~7.5]{bik12}}, while the second is a consequence of {\cite[Thm.~5.6 and Prop.~6.1]{benson_local_cohom_2008}} combined with \cite[Prop.~4.7]{bik12}.

\begin{lem}\label{lem:7.5}
Let $A$ be a graded commutative Noetherian ring, and $\alpha\colon A \to \pi_*S$ be a ring homomorphism with induced map $\alpha^*\colon \Spec^h(\pi_*S) \to \Spec^h(A)$, so that we may consider the induced $A$-linear action on $\Mod_S$. If $\cV \subseteq \Spec^h(A)$ is specialization closed and $\cW = (\alpha^*)^{-1}\cV$, then $\Gamma_{\cV}^S \simeq \Gamma_{\cW}^S$ and $L_{\cV}^S \simeq L_{\cW}^S$. 
\end{lem}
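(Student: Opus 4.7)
The plan is to identify the essential images of $\Gamma_{\cV}^S$ and $\Gamma_{\cW}^S$ as subcategories of $\Mod_S$, since both functors are right adjoint to the inclusion of the corresponding torsion subcategory and will therefore agree as soon as those subcategories coincide. Once $\Gamma_{\cV}^S \simeq \Gamma_{\cW}^S$ is established, the cofiber sequence $\Gamma \to \id \to L$ of \Cref{thm:localfunctors} immediately yields $L_{\cV}^S \simeq L_{\cW}^S$.

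First, I would invoke the characterization recalled in \Cref{sec:localcohom} (coming from \cite[Prop.~4.5]{benson_local_cohom_2008}): for a specialization closed subset $\cU$ of $\Spec^h(B)$ acting on $\Mod_S$, an $S$-module $M$ lies in the essential image of $\Gamma_{\cU}^S$ if and only if the algebraic localization $(\pi_*M)_{\mathfrak{u}}$ vanishes for every $\mathfrak{u} \in \Spec^h(B) \setminus \cU$, where $\pi_*M$ is regarded as a $B$-module. Applied in our setting this gives two conditions on $\pi_*M$: one via $A$-localizations at primes $\fp \notin \cV$, and one via $\pi_*S$-localizations at primes $\fq \notin \cW$.

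The heart of the argument is then the following commutative algebra observation: for a $\pi_*S$-module $N$ and a prime $\fp \in \Spec^h(A)$, the $A$-localization $N_{\fp}$ vanishes if and only if the $\pi_*S$-localization $N_{\fq}$ vanishes for every $\fq \in \Spec^h(\pi_*S)$ with $\alpha^*(\fq) \subseteq \fp$. This is because $N_{\fp}$ is naturally a module over the localization of $\pi_*S$ at the multiplicative set $\alpha(A \setminus \fp)$, whose homogeneous primes are precisely the $\fq$ satisfying $\alpha^*(\fq) \subseteq \fp$, and its further localization at such $\fq$ recovers $N_{\fq}$. Combining this with the hypothesis that $\cV$ is specialization closed and $\cW = (\alpha^*)^{-1}\cV$, I would then argue both implications directly: if $N_{\fq} = 0$ for all $\fq \notin \cW$ and $\fp \notin \cV$, then any $\fq$ with $\alpha^*(\fq) \subseteq \fp$ must satisfy $\fq \notin \cW$ (else $\alpha^*(\fq) \in \cV$ would force $\fp \in \cV$ by specialization closure), so $N_{\fp} = 0$; conversely, given $\fq \notin \cW$ set $\fp = \alpha^*(\fq) \notin \cV$, so that $N_{\fp} = 0$ forces $N_{\fq} = 0$.

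This shows that the two torsion subcategories of $\Mod_S$ coincide, and therefore so do their right adjoints, proving $\Gamma_{\cV}^S \simeq \Gamma_{\cW}^S$; the equivalence $L_{\cV}^S \simeq L_{\cW}^S$ then follows by taking cofibers in the identity map. The only real obstacle is the commutative algebra step, which is entirely routine; no new input beyond specialization closure of $\cV$ and the definition $\cW = (\alpha^*)^{-1}\cV$ is needed.
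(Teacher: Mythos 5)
Your proof is correct. Note that the paper does not actually prove this lemma — it is cited directly from \cite[Prop.~7.5]{bik12} — so you are supplying a self-contained argument where the paper relies on a reference. Your strategy (compare essential images via the characterization of $\Gamma_{\cU}$-acyclicity through vanishing of algebraic localizations, then pass to $L$ by taking cofibers) is the natural one and, as far as one can tell, the same kind of argument that underlies the cited result.

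The commutative-algebra step is right, and it is worth making explicit why it holds in the graded setting: $N_{\fp} := \alpha(A\setminus\fp)^{-1}N$ is a graded module over $R' = \alpha(A\setminus\fp)^{-1}\pi_*S$, whose homogeneous primes are exactly those $\fq \in \Spec^h(\pi_*S)$ with $\alpha^{-1}(\fq)\cap(A\setminus\fp)=\varnothing$, i.e.\ $\alpha^*(\fq)\subseteq\fp$; and $(N_{\fp})_{\fq}\cong N_{\fq}$ because $\alpha(A\setminus\fp)\subseteq \pi_*S\setminus\fq$. Then $N_{\fp}=0$ iff all these $N_{\fq}$ vanish, since a graded module over a graded Noetherian ring is zero iff its homogeneous localizations at all homogeneous primes vanish. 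Your two directions then use specialization closure of $\cV$ exactly where it is needed (only in showing $\Gamma^S_{\cW}\Mod_S\subseteq\Gamma^S_{\cV}\Mod_S$), and the passage from $\Gamma$ to $L$ by the cofiber sequence of \Cref{thm:localfunctors} is immediate once the counits are identified.
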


\begin{lem}\label{lem:6.1}
Let $\cV, \cW \subseteq \Spec^h(\pi_*S)$ be specialization closed and set $\cU = \cV \setminus \cW$. If $\fq \in \cU$, then $\Gamma_{\fq}^SL_{\cW}^S\Gamma_{\cV}^S \simeq \Gamma_{\fq}^S$, and zero otherwise. In particular, given $M \in \Mod_S$, then $\supp_S(L_{\cW}^S\Gamma_{\cV}^S M)=\supp_S(M)\cap \cU$. Dually, $\cosupp_S(\Lambda^{\cV}\Delta^{\cW}M) = \cosupp_S(M) \cap \cU$.
\end{lem}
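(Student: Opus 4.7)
My plan is to prove the main identity $\Gamma_\fq^S L_\cW^S \Gamma_\cV^S \simeq \Gamma_\fq^S$ for $\fq \in \cU$ by expanding $\Gamma_\fq^S \simeq \Gamma_{\cV(\fq)}^S L_{\cZ(\fq)}^S$ via \Cref{rem:localcohomfunctors} and rearranging using the standard composition identities $\Gamma_{\cV_1}^S \Gamma_{\cV_2}^S \simeq \Gamma_{\cV_1 \cap \cV_2}^S$ and $L_{\cW_1}^S L_{\cW_2}^S \simeq L_{\cW_1 \cup \cW_2}^S$, together with the commutativity of $\Gamma_\cV^S$ and $L_{\cZ(\fq)}^S$. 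The latter is immediate from the fact that both are smashing by \Cref{thm:localfunctors}, so $F(M) \simeq F(S) \otimes_S M$ and the two operations commute under the symmetry of the tensor product. The key combinatorial observation is that $\fq \in \cU = \cV \setminus \cW$ forces two inclusions: $\cV(\fq) \subseteq \cV$, which is immediate from $\fq \in \cV$ and specialization closedness of $\cV$; and $\cW \subseteq \cZ(\fq)$, since a hypothetical $\fp \in \cW$ with $\fp \subseteq \fq$ would force $\fq \in \cW$ by specialization closedness of $\cW$, contradicting $\fq \notin \cW$.

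With these inclusions in place, the computation is direct:
\[
\Gamma_\fq^S L_\cW^S \Gamma_\cV^S \simeq \Gamma_{\cV(\fq)}^S L_{\cZ(\fq)}^S L_\cW^S \Gamma_\cV^S \simeq \Gamma_{\cV(\fq)}^S L_{\cZ(\fq)}^S \Gamma_\cV^S \simeq \Gamma_{\cV(\fq)}^S \Gamma_\cV^S L_{\cZ(\fq)}^S \simeq \Gamma_{\cV(\fq)}^S L_{\cZ(\fq)}^S \simeq \Gamma_\fq^S,
\]
where the successive steps use the expansion of $\Gamma_\fq^S$, the inclusion $\cW \subseteq \cZ(\fq)$ applied to the composition rule for $L$, the commutativity of smashing functors, the inclusion $\cV(\fq) \subseteq \cV$ applied to the composition rule for $\Gamma$, and finally the definition of $\Gamma_\fq^S$ once more.

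For $\fq \notin \cU$, I would establish the support formula $\supp_S(L_\cW^S \Gamma_\cV^S M) = (\supp_S(M) \cap \cV) \setminus \cW = \supp_S(M) \cap \cU$ using the standard identities $\supp_S(\Gamma_\cV^S N) = \supp_S(N) \cap \cV$ and $\supp_S(L_\cW^S N) = \supp_S(N) \setminus \cW$, both of which follow from the characterization of the essential images of $\Gamma_\cV^S$ and $L_\cW^S$ in terms of support. Since $\fq \notin \cU$ means $\fq \notin \supp_S(L_\cW^S \Gamma_\cV^S M)$, the definition of support forces $\Gamma_\fq^S L_\cW^S \Gamma_\cV^S M \simeq 0$. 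The cosupport statement is proven by a formally dual argument using $\Lambda^\fq \simeq \Lambda^{\cV(\fq)} \Delta^{\cZ(\fq)}$ and the corresponding composition/commutation rules for the local homology functors; alternatively one can deduce it from the support version via the adjunction $(\Gamma_\fp, \Lambda^\fp)$ of \Cref{prop:orthogonality}(2). I do not anticipate any serious obstacle: the argument is essentially formal, and the only care needed is tracking how the prime-level sets $\cV(\fq)$ and $\cZ(\fq)$ relate to $\cV$ and $\cW$, which is entirely controlled by the two inclusions established at the outset.
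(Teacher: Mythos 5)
Your proof is correct, and it recovers exactly the argument behind the results the paper cites: the paper gives no proof of its own but defers to Benson--Iyengar--Krause (Theorem~5.6 and Proposition~6.1 of their first paper, and Proposition~4.7 of the sequel on cosupport), and your computation — expand $\Gamma_\fq=\Gamma_{\cV(\fq)}L_{\cZ(\fq)}$, note the inclusions $\cV(\fq)\subseteq\cV$ and $\cW\subseteq\cZ(\fq)$ forced by $\fq\in\cV\setminus\cW$ and specialization closedness, and absorb the extra functors via $\Gamma_{\cV_1}\Gamma_{\cV_2}\simeq\Gamma_{\cV_1\cap\cV_2}$, $L_{\cW_1}L_{\cW_2}\simeq L_{\cW_1\cup\cW_2}$, and commutativity of smashing functors — is precisely what those references establish.

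Two small remarks. The vanishing for $\fq\notin\cU$ can be read off directly from the same manipulation, avoiding the detour through the support formula: one has $\Gamma_\fq L_\cW\Gamma_\cV\simeq\Gamma_{\cV(\fq)\cap\cV}L_{\cZ(\fq)\cup\cW}$, and since every $\fp\supsetneq\fq$ lies in $\cZ(\fq)$, the inclusion $\cV(\fq)\cap\cV\subseteq\cZ(\fq)\cup\cW$ holds whenever $\fq\notin\cV$ or $\fq\in\cW$, which forces the composite to vanish. For the cosupport statement, the phrase ``formally dual argument'' deserves care: $\Lambda^\cV$ and $\Delta^\cW$ are not smashing, so the tensor-symmetry argument for commutation does not dualize literally (the dual rules do hold, but their proof runs through adjunction). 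The adjunction route you offer as an alternative is the clean one — passing to right adjoints in $(\Gamma_\fq,\Lambda^\fq)$, $(\Gamma_\cV,\Lambda^\cV)$, $(L_\cW,\Delta^\cW)$ turns $\Gamma_\fq L_\cW\Gamma_\cV\simeq L_\cW\Gamma_\cV\Gamma_\fq$ into $\Lambda^\fq\Lambda^\cV\Delta^\cW$, and uniqueness of adjoints gives $\Lambda^\fq\Lambda^\cV\Delta^\cW\simeq\Lambda^\fq$ or $0$ according to whether $\fq\in\cU$, from which the cosupport formula follows immediately.
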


The ring homomorphism $\pi_*f\colon \pi_*R \to \pi_*S$ induces an action of $\pi_*R$ on $\Mod_S$, so we can form the Koszul object $S \mm \fp$ for any $\fp \in \Spec^h(\pi_*R)$ (see \Cref{sec:localcohom}). 

\begin{lem}\label{lem:key}
With notation as above, there is an equivalence $\Ind(R\mm\fp) \simeq S\mm\fp$ of $S$-modules. 
\end{lem}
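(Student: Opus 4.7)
The plan is straightforward: exploit the fact that induction $\Ind = -\otimes_R S \colon \Mod_R \to \Mod_S$ is a symmetric monoidal, colimit-preserving (hence exact) functor, and then verify the claimed equivalence one Koszul factor at a time.

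First I would choose a homogeneous generating sequence $(x_1,\dots,x_m)$ of $\fp$ with $x_i \in \pi_{d_i}R$, so that by definition
\[
R\mm\fp \simeq (R\mm x_1) \otimes_R (R\mm x_2)\otimes_R \cdots \otimes_R (R\mm x_m),
\]
where $R\mm x_i = \cofib(R \xrightarrow{x_i} \Sigma^{-d_i}R)$. On the $S$-side, the induced action of $\pi_*R$ on $\Mod_S$ via $\pi_*f$ means that $S\mm\fp$ is built from the same sequence, but with each $x_i$ acting through its image $f(x_i) \in \pi_{d_i}S$; explicitly,
\[
S\mm\fp \simeq (S\mm x_1) \otimes_S \cdots \otimes_S (S\mm x_m),\qquad S\mm x_i \simeq \cofib(S \xrightarrow{f(x_i)} \Sigma^{-d_i}S).
\]

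Next I would handle a single factor. Since $\Ind$ is exact it preserves cofiber sequences, and clearly $\Ind$ sends the $R$-linear map $x_i\colon R \to \Sigma^{-d_i}R$ to the $S$-linear map $f(x_i)\colon S \to \Sigma^{-d_i}S$ under the identification $\Ind R \simeq S$. Therefore
\[
\Ind(R\mm x_i) \simeq \cofib\bigl(S \xrightarrow{f(x_i)} \Sigma^{-d_i}S\bigr) \simeq S\mm x_i.
\]

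Finally, I would assemble the pieces using that $\Ind$ is symmetric monoidal, i.e., $\Ind(M \otimes_R N) \simeq \Ind(M) \otimes_S \Ind(N)$ naturally in $M,N$. Applying this inductively to the Koszul presentation of $R\mm\fp$ gives
\[
\Ind(R\mm\fp) \simeq \Ind(R\mm x_1) \otimes_S \cdots \otimes_S \Ind(R\mm x_m) \simeq (S\mm x_1)\otimes_S\cdots\otimes_S(S\mm x_m) \simeq S\mm\fp,
\]
which is the desired equivalence of $S$-modules. There is no serious obstacle here: the only point worth being careful about is matching the two constructions of Koszul objects so that the $\pi_*R$-action on $S$ used to form $S\mm\fp$ agrees with the one inherited from $\pi_*f$, but this is precisely the convention fixed just before the lemma.
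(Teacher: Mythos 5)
Your proposal is correct and follows essentially the same route as the paper: reduce to the principal case using that $\Ind$ is symmetric monoidal, then check a single Koszul factor. The one place you write ``clearly'' (that under $\Ind R \simeq S$ the induced map $\Ind(x_i)$ is multiplication by $f(x_i)$) is precisely where the paper supplies an explicit diagram chase verifying both the identification and that the resulting equivalence is genuinely $S$-linear; your assertion is correct, but that is the step worth spelling out.
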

\begin{proof}
Because induction is symmetric monoidal, by construction of $R\mm\fp$ we can reduce to the case that $\fp$ is principal on an element $x\in\pi_nR$. Let $y=(\pi_nf)(x)\in\pi_nS$ and write $y\colon  S \to\Sigma^{-n} S$ for the corresponding $S$-module map. Explicitly, viewing $x$ as a map $R \to \Sigma^{-n}R$, the map $y$ is given by 
\[
\xymatrix{S \otimes_R R \ar[r]^-{1\otimes x} & S \otimes_R \Sigma^{-n}R \ar[r]^-{1\otimes \Sigma^{-n}f} & S\otimes_R\Sigma^{-n}S\simeq \Sigma^{-n}(S\otimes_RS)\ar[r]^-{\Sigma^{-n}\mu} & \Sigma^{-n}S,}
\]
where $\mu\colon S\otimes_R S \to S$ denotes the multiplication on $S$. It suffices to show that there is an equivalence $\Ind(R\mm(x)) \simeq S\otimes_R R\mm(x) \simeq S\mm(y)$ as $S$-modules. Consider the diagram of $R$-modules
\[
\xymatrix{S\otimes_R R \ar[r]^-{1\otimes x}\ar[d]_{\simeq} & S\otimes_R \Sigma^{-n}R \ar[r]\ar[d]|{\Sigma^{-n}\mu \circ (1\otimes \Sigma^{-n}f)} & S\otimes_R R\mm(x) \ar@{-->}[d]^{\phi}\\
S \ar[r]_{y} & \Sigma^{-n}S\ar[r] & S\mm(y).}
\]
Notice that the square on the left commutes by construction of $y$. Furthermore, both the left and the middle vertical maps in the diagram are equivalences as $f$ is the unit of $S$. It follows that the induced map $\phi\colon S\otimes_R R\mm(x)\to S\mm(y)$ is an equivalence. To conclude that $\phi$ is an equivalence of $S$-modules, observe that in fact all the maps in the diagram above are $S$-module maps.
\end{proof}

\begin{lem}\label{lem:inducedaction}
With respect to the induced $\pi_*R$-linear action on $\Mod_S$, there are equivalences
\[
\Ind\Gamma_{\cV}^R \simeq \Gamma_{\cV}^S\Ind \quad \text{and} \quad \Ind L_{\cV}^R \simeq L_{\cV}^S\Ind
\]
for any specialization closed subset $\cV \subseteq \Spec^h(\pi_*R)$. 
\end{lem}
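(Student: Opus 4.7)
The plan is to apply the induction functor $\Ind$ to the canonical colocalization--localization cofiber sequence $\Gamma_{\cV}^R M \to M \to L_{\cV}^R M$ in $\Mod_R$ and to identify the resulting cofiber sequence with the one $\Gamma_{\cV}^S \Ind M \to \Ind M \to L_{\cV}^S \Ind M$ in $\Mod_S$ attached to the induced $\pi_*R$-action. By the uniqueness of the colocalization--localization triangle associated to the pair $(\Gamma_\cV^S,L_\cV^S)$, this reduces to checking two claims: first, that $\Ind \Gamma_{\cV}^R M$ lies in the essential image of $\Gamma_{\cV}^S$ on $\Mod_S$, and second, that $\Ind L_{\cV}^R M$ lies in the essential image of $L_{\cV}^S$ on $\Mod_S$.

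For the first claim, observe that by construction $\Gamma_{\cV}^R M \in \Loc_R(R\mm\fp \mid \fp \in \cV)$. Since $\Ind$ preserves colimits, \Cref{lem:preservecoloc} combined with \Cref{lem:key} yields
\[
\Ind\Gamma_{\cV}^R M \in \Loc_S\bigl(\Ind(R\mm\fp) \mid \fp \in \cV\bigr) = \Loc_S(S\mm\fp \mid \fp \in \cV),
\]
and the right-hand side is exactly the essential image of $\Gamma_{\cV}^S$ with respect to the induced $\pi_*R$-action on $\Mod_S$.

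For the second claim, by \Cref{rem:orthogonal} it suffices to show that $\Hom_S(S\mm\fp, \Ind L_{\cV}^R M) \simeq 0$ for every $\fp \in \cV$. Combining \Cref{lem:key}, the $(\Ind,\Res)$-adjunction, and the compactness (hence dualizability) of the Koszul object $R\mm\fp \in \Mod_R^{\omega}$, which is built from finitely many cofibers of multiplication-by-$x_i$ maps on $R$, we compute
\[
\Hom_S(S\mm\fp, \Ind L_{\cV}^R M) \simeq \Hom_R(R\mm\fp, S \otimes_R L_{\cV}^R M) \simeq S \otimes_R \Hom_R(R\mm\fp, L_{\cV}^R M),
\]
and the last term vanishes because $L_{\cV}^R M$ is $\cV$-local in $\Mod_R$.

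I expect the main obstacle to be the second claim; unlike the torsion side, it does not follow directly from a colimit-preservation argument and instead rests crucially on the compactness of the Koszul object $R\mm\fp$ in $\Mod_R$. This is precisely what permits the internal $\Hom_R(R\mm\fp,-)$ to commute with extension of scalars along $f$, and it is what makes the result go through even when $f$ is not a finite morphism. Once both claims are verified, the two equivalences in the statement fall out simultaneously from the uniqueness of the colocalization--localization triangle.
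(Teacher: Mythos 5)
Your proof is correct and follows essentially the same approach as the paper: apply $\Ind$ to the localization triangle, show that the torsion and local pieces land in $\Gamma_{\cV}^S\Mod_S$ and $L_{\cV}^S\Mod_S$ respectively, and invoke uniqueness of the triangle. The only local variation is in the second claim, where the paper uses that $L_{\cV}^R$ is smashing to rewrite $S\otimes_R L_{\cV}^R M \simeq L_{\cV}^R(S\otimes_R M)$ before applying orthogonality, whereas you use dualizability of the Koszul object $R\mm\fp$ to commute $\Hom_R(R\mm\fp,-)$ with $S\otimes_R-$; the two variants rest on the same underlying facts (compactness of Koszul objects) and are interchangeable.
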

\begin{proof}
Consider the fiber sequence
\[
\xymatrix{\Ind\Gamma_{\cV}^R \ar[r] & \Ind \ar[r] & \Ind L_{\cV}^R.}
\]
By the construction of $\Gamma_{\cV}^S$ and $L_{\cV}^S$ as in \cite[Sec.~4]{benson_local_cohom_2008}, it suffices to show that $\Ind\Gamma_{\cV}^RM \in \Gamma_{\cV}^S\Mod_S$ and $\Ind L_{\cV}^RM \in L_{\cV}^S\Mod_S$ for all $M \in \Mod_R$: Indeed, consider the following solid diagram of fiber sequences
\[
\xymatrix{\Ind\Gamma_{\cV}^R \ar[r] \ar@{-->}[d]_{\alpha} & \Ind \ar[r] \ar[d]^{=} & \Ind L_{\cV}^R \ar@{-->}[d]^{\beta} \\
\Gamma_{\cV}^S\Ind \ar[r] & \Ind \ar[r] & L_{\cV}^S\Ind.}
\]
If $\Ind\Gamma_{\cV}^RM \in \Gamma_{\cV}^S\Mod_S$, then the canonical composite $\Ind \Gamma_{\cV}^R \to \Ind \to L_{\cV}^S\Ind$ is trivial, as there are no non-trivial maps from an object in $\Gamma_{\cV}^S\Mod_S$ to an object in $L_{\cV}^S\Mod_S$, see \Cref{rem:orthogonal}. It then follows from the universal property of the bottom fiber sequence that there exists a map $\alpha$, which in turn induces a map $\beta$ making the above diagram commute.

Applying $L_{\cV}^S$ to this diagram then induces an equivalence $L_{\cV}^S\beta\colon L_{\cV}^S\Ind L_{\cV}^R \to L_{\cV}^SL_{\cV}^S\Ind \simeq L_{\cV}^S\Ind$. If $\Ind L_{\cV}^RM \in L_{\cV}^S\Mod_S$ as well, then this means that $L_{\cV}^S\Ind L_{\cV}^R \simeq \Ind L_{\cV}^R$, so $\beta$ itself is an equivalence. Therefore, $\alpha$ has to be an equivalence, furnishing the claim.

Recall that $\Gamma_{\cV}^R\Mod_R = \Loc(R\mm\fp\mid \fp \in \cV)$ and similarly for $\Gamma_{\cV}^S\Mod_S$. By \Cref{lem:key} and because $\Ind$ preserves colimits, \Cref{lem:preservecoloc} implies that there exists a factorization
\[
\xymatrix{\Gamma_{\cV}^R\Mod_R \ar[r] \ar@{-->}_{\Ind}[d] & \Mod_R \ar[d]^{\Ind} \\
\Gamma_{\cV}^S\Mod_S \ar[r] & \Mod_S.}
\]
Therefore, $\Ind\Gamma_{\cV}^RM \in \Gamma_{\cV}^S\Mod_S$ for any $M \in \Mod_R$. For the second claim, recall that $L_{\cV}^S\Mod_S$ is by definition the full subcategory of $\Mod_S$ spanned by the $S$-modules $Y$ with the property that $\Hom_S(X,Y) \simeq 0$ for all $X \in \Gamma_{\cV}^S\Mod_S$, see \Cref{rem:orthogonal}. Let $M \in \Mod_R$ and $\fp \in \cV$, then \Cref{lem:key} and the fact that $L_{\cV}^R$ is smashing by \Cref{thm:localfunctors} give
\begin{align*}
  \Hom_S(S\mm\fp,\Ind L_{\cV}^RM) & \simeq \Hom_S(\Ind(R\mm\fp),\Ind L_{\cV}^RM) \\
  & \simeq \Hom_R(R\mm\fp,\Res\Ind L_{\cV}^RM) \\
  & \simeq \Hom_R(R\mm\fp,S\otimes_RL_{\cV}^RM) \\
  & \simeq \Hom_R(R\mm\fp,L_{\cV}^R(S\otimes_RM)) \\
  & \simeq 0, 
\end{align*}
hence $\Hom_S(W,\Ind L_{\cV}^RM) \simeq 0$ for all $W \in \Gamma_{\cV}^S\Mod_S$ by a localizing subcategory argument. It follows that $\Ind L_{\cV}^RM \in L_{\cV}^S\Mod_S$, thereby completing the proof.
\end{proof}

A morphism $f\colon R \to S$ of commutative ring spectra induces a map $\res_f = f^*\colon \Spec^h(\pi_*S)\to\Spec^h(\pi_*R)$, which we will often denote simply by $\res$ if the map $f$ is clear from context. 

\begin{prop}\label{prop:basicbasechange}
Suppose $f\colon R \to S$ is a morphism of Noetherian commutative ring spectra. Let $\cV \subseteq \Spec^h(\pi_*R)$ and $\widetilde{\cV} = \res^{-1}(\cV) \subseteq \Spec^h(\pi_*S)$, then there are natural equivalences 
\[
\Ind\Gamma_{\cV}^R \simeq \Gamma_{\widetilde{\cV}}^S\Ind \quad \text{and} \quad \Ind L_{\cV}^R \simeq L_{\widetilde{\cV}}^S\Ind
\]
as well as
\[
\Lambda^{\cV}_R\Res \simeq \Res\Lambda_S^{\widetilde{\cV}} \quad \text{and} \quad \Delta_R^{\cV}\Res \simeq \Res\Delta_S^{\widetilde{\cV}}.
\]
\end{prop}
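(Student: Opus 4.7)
The plan is to handle the four equivalences in two groups: first establish the pair of statements about $\Ind$, then deduce the pair of statements about $\Res$ by taking right adjoints.

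For the induction statements, I would apply \Cref{lem:inducedaction} to the $\pi_*R$-linear action on $\Mod_S$ coming from the ring map $\pi_*f\colon \pi_*R \to \pi_*S$. This immediately yields
\[
\Ind\Gamma_{\cV}^R \simeq \Gamma_{\cV}^{S,\mathrm{ind}}\Ind \quad \text{and} \quad \Ind L_{\cV}^R \simeq L_{\cV}^{S,\mathrm{ind}}\Ind,
\]
where I temporarily write $\Gamma_{\cV}^{S,\mathrm{ind}}$ and $L_{\cV}^{S,\mathrm{ind}}$ for the local cohomology and localization functors built on $\Mod_S$ relative to the induced $\pi_*R$-action and the subset $\cV \subseteq \Spec^h(\pi_*R)$. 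To translate these back to the canonical $\pi_*S$-action, I invoke \Cref{lem:7.5} with $A = \pi_*R$, $\alpha = \pi_*f$, and hence $\alpha^* = \res$; by assumption $\widetilde\cV = \res^{-1}(\cV)$, so the lemma gives $\Gamma_{\cV}^{S,\mathrm{ind}} \simeq \Gamma_{\widetilde\cV}^S$ and $L_{\cV}^{S,\mathrm{ind}} \simeq L_{\widetilde\cV}^S$. Composing these with the previous pair of equivalences proves the first two displayed equivalences of the proposition.

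For the restriction statements, I would use that $(\Gamma_{\cV}^R,\Lambda^{\cV}_R)$ and $(L_{\cV}^R,\Delta^{\cV}_R)$ are adjoint pairs of endofunctors on $\Mod_R$ by \Cref{thm:localfunctors}, and similarly for $\widetilde\cV$ on $\Mod_S$; combined with the adjunction $(\Ind,\Res)$, the right adjoint of $\Ind\circ\Gamma_{\cV}^R$ is $\Lambda^{\cV}_R\circ\Res$ and the right adjoint of $\Gamma_{\widetilde\cV}^S\circ\Ind$ is $\Res\circ\Lambda^{\widetilde\cV}_S$. Since equivalent left adjoints have equivalent right adjoints, the first equivalence from the previous paragraph dualizes to $\Lambda^{\cV}_R\Res \simeq \Res\Lambda^{\widetilde\cV}_S$, and the same argument with $(L_{\cV},\Delta^{\cV})$ in place of $(\Gamma_{\cV},\Lambda^{\cV})$ produces $\Delta^{\cV}_R\Res \simeq \Res\Delta^{\widetilde\cV}_S$.

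No genuine obstacle arises here: the entire content is bookkeeping between two $\pi_*R$-linear actions on $\Mod_S$ (the induced one and the canonical one restricted along $\pi_*f$) and the standard fact that adjunctions compose. The only mild subtlety is making sure that the identification in \Cref{lem:7.5} matches the induced action used in \Cref{lem:inducedaction} under the map $\alpha = \pi_*f$, which is automatic from the definitions.
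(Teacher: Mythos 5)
Your proposal is correct and follows the same route as the paper: it derives the induction equivalences from \Cref{lem:inducedaction} followed by \Cref{lem:7.5} applied to the induced $\pi_*R$-action on $\Mod_S$, and then obtains the restriction equivalences by passing to right adjoints using $(\Ind \dashv \Res)$, $(\Gamma_{\cV} \dashv \Lambda^{\cV})$, and $(L_{\cV} \dashv \Delta^{\cV})$. The only cosmetic difference is that you introduce explicit notation $\Gamma^{S,\mathrm{ind}}_{\cV}$ for the local cohomology functor built from the induced action, which the paper leaves implicit.
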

\begin{proof}
The first two equivalences follow from \Cref{lem:inducedaction} together with \Cref{lem:7.5}, for instance:
\[
\Ind\Gamma_{\cV}^R \simeq \Gamma_{\cV}^S\Ind \simeq \Gamma_{\widetilde{\cV}}^S\Ind.
\]
The second set of equivalences is a consequence of the first one, using the adjunctions $(\Ind \dashv \Res)$, $(\Gamma_{\cV} \dashv \Lambda^{\cV})$, and $(L_{\cV} \dashv \Delta^{\cV})$, as displayed in the following commutative diagram,
\[
\xymatrix{\Mod_R \ar@<0.5ex>[r]^-{\Ind} \ar@<0.5ex>@{-->}[d]^-{\Lambda^{\cV}} & \Mod_S \ar@<0.5ex>@{-->}[l]^-{\Res} \ar@<0.5ex>@{-->}[d]^-{\Lambda^{\widetilde{\cV}}} & & \Mod_R \ar@<0.5ex>[r]^-{\Ind} \ar@<0.5ex>@{-->}[d]^-{\Delta^{\cV}} & \Mod_S \ar@<0.5ex>@{-->}[l]^-{\Res}  \ar@<0.5ex>@{-->}[d]^-{\Delta^{\widetilde{\cV}}} \\
\Mod_R \ar@<0.5ex>[r]^-{\Ind} \ar@<0.5ex>[u]^-{\Gamma_{\cV}} & \Mod_S \ar@<0.5ex>@{-->}[l]^-{\Res} \ar@<0.5ex>[u]^-{\Gamma_{\widetilde{\cV}}} & & \Mod_R \ar@<0.5ex>[r]^-{\Ind} \ar@<0.5ex>[u]^-{L_{\cV}} & \Mod_S, \ar@<0.5ex>@{-->}[l]^-{\Res} \ar@<0.5ex>[u]^-{L_{\widetilde{\cV}}}}
\]
in which the dashed arrows indicate the right adjoints.
\end{proof}

\begin{cor}\label{prop:bcsupport}
  Let $\cV, \cW \subseteq \Spec^h(\pi_*R)$ be specialization closed subsets such that $\cV \setminus \cW = \{\fp\}$.\footnote{For example, we may take $\cV = \cV(\fp)$ and $\cW = \cZ(\fp)$.} Let $\widetilde{\cV} = \res^{-1}(\cV)$ and $\widetilde{\cW} = \res^{-1}(\cW)$, then there is an equivalence $\Ind \Gamma_\fp\simeq L_{\widetilde{\cW}}\Gamma_{\widetilde{\cV}}\Ind$. In particular, for any $N \in \Mod_S$, we have an equality 
  \[
  \supp_S(\Ind (\Gamma_\fp R) \otimes_S N)=\supp_S(N)\cap \cU,
  \]
where  $\cU := \widetilde{\cV}\setminus \widetilde{\cW} = \res^{-1}(\{\fp\})$. 
\end{cor}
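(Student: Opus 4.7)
The plan is to reduce the corollary to three ingredients already established in this section: the alternative description of $\Gamma_\fp$ from \Cref{rem:localcohomfunctors}, the base-change formula \Cref{prop:basicbasechange}, and the support computation in \Cref{lem:6.1}.

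First I would establish the equivalence $\Ind \Gamma_\fp \simeq L_{\widetilde{\cW}}\Gamma_{\widetilde{\cV}}\Ind$. Since $\cV \setminus \cW = \{\fp\}$, \Cref{rem:localcohomfunctors} lets me replace $\Gamma_\fp$ by $\Gamma_\cV L_\cW$. The two equivalences $\Ind \Gamma_\cV \simeq \Gamma_{\widetilde{\cV}} \Ind$ and $\Ind L_\cW \simeq L_{\widetilde{\cW}} \Ind$ furnished by \Cref{prop:basicbasechange} then yield
\[
\Ind\Gamma_\fp \simeq \Gamma_{\widetilde{\cV}} L_{\widetilde{\cW}} \Ind.
\]
Since both $\Gamma_{\widetilde{\cV}}$ and $L_{\widetilde{\cW}}$ are smashing by \Cref{thm:localfunctors}, they commute as endofunctors of $\Mod_S$ (each is given by tensoring with a fixed $S$-module), which produces the claimed equivalence.

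For the support statement I would specialise to $M = R$, and use $\Ind R \simeq S$ together with smashness to compute
\[
\Ind(\Gamma_\fp R) \otimes_S N \simeq L_{\widetilde{\cW}}\Gamma_{\widetilde{\cV}} S \otimes_S N \simeq L_{\widetilde{\cW}}\Gamma_{\widetilde{\cV}} N.
\]
Because $\res^{-1}$ preserves set-theoretic differences, $\widetilde{\cV}\setminus\widetilde{\cW} = \res^{-1}(\cV \setminus \cW) = \res^{-1}(\{\fp\}) = \cU$, so an application of \Cref{lem:6.1} to $L_{\widetilde{\cW}}\Gamma_{\widetilde{\cV}} N$ identifies its support with $\supp_S(N) \cap \cU$, finishing the argument.

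I do not expect any real obstacle here: the corollary is essentially a formal consequence of \Cref{prop:basicbasechange}, where the technical content of this section has already been carried out. The one minor subtlety to track is that the naive substitution produces $\Gamma_{\widetilde{\cV}}L_{\widetilde{\cW}}\Ind$ rather than $L_{\widetilde{\cW}}\Gamma_{\widetilde{\cV}}\Ind$, so the order has to be swapped using the smashing property of both functors; everything else is bookkeeping about specialization closed subsets and their preimages under $\res$.
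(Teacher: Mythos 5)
Your proof is correct and takes essentially the same route as the paper: replace $\Gamma_\fp$ by the composite of a (co)localization along $\cV,\cW$ via \Cref{rem:localcohomfunctors}, apply \Cref{prop:basicbasechange} twice, and then invoke smashing plus \Cref{lem:6.1}. The only cosmetic difference is that the paper writes $\Gamma_\fp \simeq L_\cW\Gamma_\cV$ outright (consistent with the order in \Cref{lem:6.1}), whereas you start from $\Gamma_\cV L_\cW$ and commute afterward; since both functors are smashing this is the same calculation.
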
 
\begin{proof}
For $M \in \Mod_R$, using \Cref{rem:localcohomfunctors} and  applying \Cref{prop:basicbasechange} twice shows
\[
\Ind\Gamma_{\fp}^RM  \simeq \Ind L_{\cW}^R\Gamma_{\cV}^RM  \simeq L_{\widetilde{\cW}}^S\Gamma_{\widetilde{\cV}}^S\Ind M.
\]
As a consequence, we obtain
\[
\supp_S(\Ind(\Gamma_{\fp}R)\otimes_S N) = \supp_S((L_{\widetilde{\cW}}\Gamma_{\widetilde{\cV}}S) \otimes_S N) = \supp_S(L_{\widetilde{\cW}}\Gamma_{\widetilde{\cV}}N),
\]
because the local cohomology functors $L_{\widetilde{\cV}}$ and $\Gamma_{\widetilde{\cW}}$ are smashing. The second part of the result thus follows from \Cref{lem:6.1}.
\end{proof}

\begin{cor}\label{cor:7.8}
Let $M \in \Mod_R$ and $N \in \Mod_S$. Then, we have
\[
\res\supp_S(\Ind M) \subseteq \supp_R (M) \quad \text{and} \quad  \res\cosupp_S(N) = \cosupp_R(\Res N).
\]
Additionally, if $\Ind$ is conservative, then the first inclusion is an equality. 
\end{cor}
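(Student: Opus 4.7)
The plan is to read both statements off from the base-change equivalences already established in \Cref{prop:basicbasechange} and \Cref{prop:bcsupport}, combined with \Cref{thm:cosupptrivialobjects} (vanishing detected by (co)support). The central set-theoretic remark is that, for the pair $\cV=\cV(\fp)$ and $\cW=\cZ(\fp)$ in $\Spec^h(\pi_*R)$, one has $\cV\setminus\cW=\{\fp\}$, hence $\widetilde{\cV}\setminus\widetilde{\cW}=\res^{-1}(\{\fp\})$ (continuity of $\res$ keeps both pieces specialization closed). This isolates an entire ``layer'' over $\fp$ inside $\Spec^h(\pi_*S)$, which is exactly what is needed to relate the two sides.

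For the support inclusion, I would fix $\fq \in \supp_S(\Ind M)$ and put $\fp=\res(\fq)$. Smashness of $\Gamma_{\fp}$ together with the symmetric monoidality of $\Ind$ identifies $\Ind\Gamma_{\fp}M\simeq(\Ind\Gamma_{\fp}R)\otimes_S\Ind M$, so \Cref{prop:bcsupport} gives
\[
\supp_S(\Ind\Gamma_{\fp}M)=\supp_S(\Ind M)\cap \res^{-1}(\{\fp\}).
\]
Since $\fq$ lies in this intersection, $\Ind\Gamma_{\fp}M\not\simeq 0$, which forces $\Gamma_{\fp}M\not\simeq 0$ (as $\Ind$ sends $0$ to $0$), i.e.\ $\fp\in\supp_R(M)$. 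When $\Ind$ is conservative the same chain of equivalences runs backwards: for $\fp\in\supp_R(M)$ we get $\Gamma_{\fp}M\not\simeq 0$, hence $\Ind\Gamma_{\fp}M\not\simeq 0$, hence the intersection above is nonempty by \Cref{thm:cosupptrivialobjects}, yielding some $\fq$ over $\fp$ inside $\supp_S(\Ind M)$.

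For the cosupport equality, combining the two equivalences of \Cref{prop:basicbasechange} applied to $\cV(\fp)$ and $\cZ(\fp)$ gives
\[
\Lambda^{\fp}_R\Res N \;\simeq\; \Lambda^{\cV(\fp)}_R\Delta^{\cZ(\fp)}_R\Res N \;\simeq\; \Res\bigl(\Lambda^{\widetilde{\cV(\fp)}}_S\Delta^{\widetilde{\cZ(\fp)}}_S N\bigr).
\]
The restriction functor $\Res$ is conservative since it is merely forgetful along $f$, so $\fp\in\cosupp_R(\Res N)$ iff the $S$-module $\Lambda^{\widetilde{\cV(\fp)}}_S\Delta^{\widetilde{\cZ(\fp)}}_S N$ is nonzero. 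By the cosupport half of \Cref{lem:6.1} (applied with $\cU=\widetilde{\cV(\fp)}\setminus\widetilde{\cZ(\fp)}=\res^{-1}(\{\fp\})$), its cosupport equals $\cosupp_S(N)\cap \res^{-1}(\{\fp\})$, and \Cref{thm:cosupptrivialobjects} then identifies nonvanishing with nonemptiness of this intersection. Thus $\fp\in\cosupp_R(\Res N)$ iff some $\fq\in\cosupp_S(N)$ satisfies $\res(\fq)=\fp$, which is the asserted equality.

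I do not anticipate a serious obstacle: the real work was already carried out in the base-change results \Cref{prop:basicbasechange} and \Cref{prop:bcsupport}, and the present argument is a formal consequence of those plus the vanishing criterion. The one point to be watchful about is the asymmetry between the two halves of the statement: the support side needs conservativity of $\Ind$ to upgrade the inclusion to equality, whereas the cosupport side gets equality for free because $\Res$ is automatically conservative.
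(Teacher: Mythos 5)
Your proof is correct and follows essentially the same route as the paper: both halves are derived from \Cref{prop:bcsupport} and \Cref{prop:basicbasechange} together with the vanishing criterion of \Cref{thm:cosupptrivialobjects}, with the support inclusion using that $\Ind(0)\simeq 0$ and the cosupport equality using conservativity of $\Res$ and \Cref{lem:6.1}. The one-sided conservativity hypothesis for the equality in the support statement is handled exactly as in the paper.
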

\begin{proof}
This is a formal consequence of \Cref{prop:basicbasechange} and the fact that $\Res$ is conservative, following the same argument as in \cite[Cor.~7.8]{bik12}. For the convenience of the reader, we outline the argument. By \Cref{prop:bcsupport}, for any $\fr \in \Spec^h(\pi_*R)$ and $M \in \Mod_R$ there are equalities
\[
\supp_S(\Ind\Gamma_{\fr}M) = \supp_S(\Ind(\Gamma_{\fr}R) \otimes_S \Ind M) = \supp_S(\Ind M) \cap \res^{-1}(\{\fr\}).
\]
Let $\fp \in \res\supp_S(\Ind M)$, then there exists $\fq \in \supp_S(\Ind M)$ with $\res(\fq) = \fp$, so $\supp_S(\Ind M) \cap \res^{-1}(\{\fp\})$ is non-empty. Therefore, $\Ind\Gamma_{\fp}M \not\simeq 0$ by \Cref{thm:cosupptrivialobjects}, hence $\fp \in \supp_R (M)$, which verifies the first claim. Assume now that $\Ind$ is conservative, then $\fp \in \supp_R (M)$ implies $\Ind\Gamma_{\fp}M \not\simeq 0$. As before, this is equivalent to the existence of a prime ideal $\fq \in \supp_S(\Ind M) \cap \res^{-1}(\{\fp\})$, thus $\fp \in \res\supp_S(\Ind M)$. 

As in the proof of \Cref{prop:bcsupport} and with notation as above, \Cref{prop:basicbasechange} gives an equivalence
\[
\Lambda^{\fp}\Res N \simeq \Res\Lambda^{\widetilde{\cV}}\Delta^{\widetilde{\cW}}N
\]
for any $\fp \in \Spec^h(\pi_*R)$. Therefore $\fp \in \cosupp_R\Res N$ holds if and only if $\Lambda^{\widetilde{\cV}}\Delta^{\widetilde{\cW}}N \not\simeq 0$ because $\Res$ is conservative. Since $ \widetilde{\cV}\setminus \widetilde{\cW} = \res^{-1}(\{\fp\})$, by \Cref{thm:cosupptrivialobjects} and \Cref{lem:6.1} this is equivalent to $\cosupp_S(N) \cap \res^{-1}(\{\fp\}) \neq \varnothing$, i.e., $\fp \in \res\cosupp_S(N)$.
\end{proof}

We say that a subset $\cU \subseteq \Spec^h(\pi_*S)$ is discrete if $\frak r \subseteq \fq$ implies $\frak r = \fq$ for each pair of primes $\fr,\fq \in \cU$. 
\begin{cor}\label{cor:inddecomposition}
  Suppose $\pi_*S$ is a finitely generated $\pi_*R$-module, and let $\cU = \res^{-1}(\{\fp\})$ for $\fp \in \Spec^h(\pi_*R)$. Then, there are equivalences
\[
  \Ind \Gamma_\fp \simeq \coprod_{\fq \in \cU} \Gamma_\fq \Ind \quad \text{ and } \quad \Lambda^\fp \Res \simeq \prod_{\fq \in \cU} \Res \Lambda^\fq.
\]
\end{cor}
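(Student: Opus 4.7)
The plan is to first establish the coproduct decomposition for $\Ind\Gamma_\fp$ by reducing to a decomposition of the smashing functor $L_{\widetilde\cW}\Gamma_{\widetilde\cV}$, and then to deduce the second equivalence by uniqueness of adjoints. Since $\pi_*S$ is a finitely generated $\pi_*R$-module, it is integral over $\pi_*R$, so standard commutative algebra (lying over, incomparability, and finiteness of fibers of integral morphisms) shows that $\cU = \res^{-1}(\{\fp\})$ is finite and consists of pairwise incomparable primes. Applying \Cref{prop:bcsupport} with $\cV = \cV(\fp)$ and $\cW = \cZ(\fp)$, and writing $\widetilde\cV = \res^{-1}(\cV)$, $\widetilde\cW = \res^{-1}(\cW)$, we obtain a natural equivalence $\Ind\Gamma_\fp \simeq L_{\widetilde\cW}\Gamma_{\widetilde\cV}\Ind$ with $\widetilde\cV \setminus \widetilde\cW = \cU$.

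The key step is then to establish $L_{\widetilde\cW}\Gamma_{\widetilde\cV} S \simeq \coprod_{\fq \in \cU}\Gamma_\fq S$ as $S$-modules. For each $\fq \in \cU$, the module $\Gamma_\fq S$ has support $\{\fq\}$ by \Cref{prop:orthogonality}(1), and therefore lies in $L_{\widetilde\cW}\Gamma_{\widetilde\cV}\Mod_S$ since $\{\fq\} \subseteq \widetilde\cV$ and $\{\fq\} \cap \widetilde\cW = \varnothing$. The natural maps $\Gamma_\fq S \to S$ thus assemble, after applying the idempotent functor $L_{\widetilde\cW}\Gamma_{\widetilde\cV}$, into a comparison map $\phi \colon \coprod_{\fq \in \cU}\Gamma_\fq S \to L_{\widetilde\cW}\Gamma_{\widetilde\cV}S$. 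To verify $\phi$ is an equivalence, apply $\Gamma_{\fq'}$ for arbitrary $\fq' \in \Spec^h(\pi_*S)$: when $\fq' \in \cU$ both sides evaluate to $\Gamma_{\fq'}S$ by orthogonality (\Cref{prop:orthogonality}(3)) and \Cref{lem:6.1}, and when $\fq' \notin \cU$ both sides vanish. The local-to-global principle (\Cref{thm:localtoglobal}) then implies $\phi$ is an equivalence; this is the main delicate point of the argument.

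Since $\Gamma_\fq$ and $L_{\widetilde\cW}\Gamma_{\widetilde\cV}$ are smashing by \Cref{thm:localfunctors}, tensoring $\phi$ with $\Ind M$ over $S$ upgrades the preceding equivalence to $L_{\widetilde\cW}\Gamma_{\widetilde\cV}\Ind M \simeq \coprod_{\fq \in \cU}\Gamma_\fq\Ind M$ naturally in $M$, which combined with the identification from the first paragraph yields the first claimed decomposition. For the second equivalence, observe that $\Lambda^\fp\Res$ is the right adjoint of $\Ind\Gamma_\fp$, while the right adjoint of a finite coproduct of left adjoints is the finite product of their right adjoints; hence $\coprod_{\fq \in \cU}\Gamma_\fq\Ind$ admits right adjoint $\prod_{\fq \in \cU}\Res\Lambda^\fq$. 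By uniqueness of right adjoints, $\Lambda^\fp\Res \simeq \prod_{\fq \in \cU}\Res\Lambda^\fq$, as desired.
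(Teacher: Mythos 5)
Your overall strategy is sound and is essentially what the paper intends: the paper's proof simply cites \cite[Cor.~7.10]{bik12} and notes that the hypothesis on $\pi_*S$ makes the fiber $\cU$ discrete, with \Cref{prop:basicbasechange} supplying the base-change input. Your reduction to a decomposition of the smashing functor $L_{\widetilde\cW}\Gamma_{\widetilde\cV}$, the detection of the equivalence by applying $\Gamma_{\fq'}$ together with \Cref{thm:cosupptrivialobjects}, and the derivation of the $\Lambda$-statement by uniqueness of adjoints are all the right moves.

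However, there is a genuine error in the construction of the comparison map $\phi$. You write that ``the natural maps $\Gamma_\fq S \to S$ assemble\ldots into a comparison map.'' No such natural transformation exists: $\Gamma_\fq = L_{\cZ(\fq)}\Gamma_{\cV(\fq)}$ is the composite of a colocalization followed by a localization, so there are natural maps $\Gamma_{\cV(\fq)}X \to X$ and $\Gamma_{\cV(\fq)}X \to \Gamma_\fq X$, but no natural map between $\Gamma_\fq X$ and $X$ in either direction. (This is the usual phenomenon for local cohomology at a non-closed point.) Consequently, the claimed map $\coprod_\fq \Gamma_\fq S \to L_{\widetilde\cW}\Gamma_{\widetilde\cV}S$ is not constructed.

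The fix is to build the comparison map in the opposite direction. For each $\fq \in \cU$ one has $\widetilde\cW \subseteq \cZ(\fq)$ (if $\fq' \subseteq \fq$ then $\res(\fq') \subseteq \fp$, so $\fq' \notin \widetilde\cW$), hence a natural transformation $L_{\widetilde\cW} \to L_{\cZ(\fq)}$; moreover $\widetilde\cV \setminus \cZ(\fq) = \{\fq\}$ by discreteness of $\cU$, so \Cref{rem:localcohomfunctors} gives $L_{\cZ(\fq)}\Gamma_{\widetilde\cV} \simeq \Gamma_\fq$. These assemble to $L_{\widetilde\cW}\Gamma_{\widetilde\cV}S \to \prod_{\fq\in\cU}\Gamma_\fq S$, and since $\cU$ is finite the product is a coproduct, after which your $\Gamma_{\fq'}$-detection step applies verbatim and the remainder of the argument (tensoring, then passing to right adjoints) goes through.
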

\begin{proof}
 Given \Cref{prop:basicbasechange}, this is the same argument as for \cite[Cor.~7.10]{bik12}, where our assumption on $\pi_*S$ implies that the subset $\cU \subseteq \Spec^h(\pi_*S)$ is discrete. 
\end{proof}

\begin{lem}\label{lem:cosuppcoind}
For $M \in \Mod_R$, there is an inclusion
\[
\res\cosupp_S (\Coind M) = \cosupp_R(\Res\Coind M) \subseteq \cosupp_R(M).
\]
If the map $R \to S$ admits an $R$-module retract, then this is an equality. 
\end{lem}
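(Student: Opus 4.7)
The plan is to decompose the statement into three separate assertions: the middle equality, the inclusion, and the promotion to an equality under the retract hypothesis.

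The middle equality $\res\cosupp_S(\Coind M) = \cosupp_R(\Res\Coind M)$ is immediate from the second identity of \Cref{cor:7.8} applied to $N = \Coind M \in \Mod_S$, so no additional work is needed.

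For the inclusion $\cosupp_R(\Res\Coind M) \subseteq \cosupp_R(M)$, the strategy is to identify $\Res\Coind M$ with $\Hom_R(S, M)$ as $R$-modules and then produce a natural equivalence
\[
\Lambda^\fp\Hom_R(S, M) \simeq \Hom_R(S, \Lambda^\fp M),
\]
from which the inclusion follows at once: if $\fp \notin \cosupp_R(M)$ then $\Lambda^\fp M \simeq 0$, hence $\Lambda^\fp \Res\Coind M \simeq 0$. To construct the displayed equivalence, I would first derive the representability formula $\Lambda^\fp(-) \simeq \Hom_R(\Gamma_\fp R, -)$; this is a purely formal consequence of the adjunction $\Gamma_\fp \dashv \Lambda^\fp$ from \Cref{prop:orthogonality}(2) combined with the smashing property of $\Gamma_\fp = \Gamma_{\cV(\fp)}L_{\cZ(\fp)}$ (as a composition of smashing functors from \Cref{thm:localfunctors}), via the Yoneda lemma. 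A subsequent application of the tensor-hom adjunction for $R$-modules then moves $S$ from one Hom slot to the other and yields the desired equivalence.

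For the upgrade to an equality under the assumption that $f$ admits an $R$-module retract, I would invoke \Cref{lem:retract}, which realizes $M$ as an $R$-module retract of $\Res\Coind M$. Since $\Lambda^\fp$ is a functor, it preserves retracts, so $\Lambda^\fp M$ is a retract of $\Lambda^\fp\Res\Coind M$, and the nonvanishing of the former forces that of the latter, giving the reverse inclusion $\cosupp_R(M)\subseteq\cosupp_R(\Res\Coind M)$.

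I do not anticipate any serious obstacle: the main conceptual step is the representability identity $\Lambda^\fp \simeq \Hom_R(\Gamma_\fp R, -)$, which promotes the abstract right adjoint to a concretely representable functor, and once that is in hand both the inclusion and the retract argument reduce to the tensor-hom adjunction and functoriality.
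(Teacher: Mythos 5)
Your proposal is correct, and its overall decomposition (middle equality from \cref{cor:7.8}, inclusion from the structure of $\Res\Coind M$, retract upgrade from \cref{lem:retract}) agrees with the paper's. The only point of divergence is the middle inclusion, where the paper argues more softly: it simply observes that $\Res\Coind M \simeq \Hom_R(S,M)$ lies in $\Coloc(M)$ (because $S$ is built from $R$ by colimits and $\Hom_R(-,M)$ turns these into limits), and that cosupport is monotone along colocalizing subcategories (using \cref{lem:preservecoloc} applied to the limit-preserving functor $\Lambda^{\fp}$). You instead make the underlying mechanism explicit by deriving the representability $\Lambda^{\fp} \simeq \Hom_R(\Gamma_{\fp}R,-)$ from the adjunction $\Gamma_{\fp}\dashv\Lambda^{\fp}$ plus the smashing property, and then performing a tensor-hom swap to obtain $\Lambda^{\fp}\Hom_R(S,M)\simeq\Hom_R(S,\Lambda^{\fp}M)$. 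Both arguments rest on the same fact---that $\Lambda^{\fp}$ is a limit-preserving right adjoint with the indicated representing object---and both are valid; your variant has the small advantage of recording the precise commutation formula, which the paper uses explicitly in nearby results (e.g.\ in the proofs of \cref{prop:cosuppcoind} and \cref{prop:abstractsubgroupthm}) but keeps implicit here.
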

\begin{proof}
The first equality is \Cref{cor:7.8}. By construction, $\Res\Coind M \simeq \Hom_R(S,M) \in \Coloc(M)$, which yields the inclusion. Assuming $R \to S$ is split, the reverse inclusion is a consequence of \Cref{lem:retract}.
\end{proof}

We are now ready to prove the key base-change formula.

\begin{prop}[Base-change formula for support]\label{thm:suppbasechange}
Suppose $f\colon R\to S$ is a morphism of commutative ring spectra, then $\supp_R(\Res N) = \res\supp_S(N)$ for any $N\in\Mod_S$. 
\end{prop}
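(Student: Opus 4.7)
The plan is to compute $\Gamma_{\fp}^R \Res N$ for an arbitrary homogeneous prime $\fp \in \Spec^h(\pi_*R)$ and an $S$-module $N$, and read off the support. The main observation is that because $\Gamma_{\cV(\fp)}^R$ and $L_{\cZ(\fp)}^R$ are both smashing by \Cref{thm:localfunctors}, the composite $\Gamma_{\fp}^R = \Gamma_{\cV(\fp)}^R L_{\cZ(\fp)}^R$ is smashing as well, so
\[
\Gamma_{\fp}^R \Res N \simeq (\Gamma_{\fp}^R R) \otimes_R \Res N.
\]

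Next I would apply the projection formula (\Cref{lem:projformula}) to rewrite this as
\[
(\Gamma_{\fp}^R R) \otimes_R \Res N \simeq \Res\bigl(\Ind(\Gamma_{\fp}^R R) \otimes_S N\bigr).
\]
Since $\Res$ is conservative, $\Gamma_{\fp}^R \Res N \not\simeq 0$ if and only if $\Ind(\Gamma_{\fp}^R R) \otimes_S N \not\simeq 0$, which by \Cref{thm:cosupptrivialobjects} is equivalent to $\supp_S(\Ind(\Gamma_{\fp}^R R) \otimes_S N)$ being non-empty.

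Now the base-change result \Cref{prop:bcsupport} identifies this support as
\[
\supp_S(\Ind(\Gamma_{\fp}^R R) \otimes_S N) = \supp_S(N) \cap \res^{-1}(\{\fp\}).
\]
Chaining the equivalences, $\fp \in \supp_R(\Res N)$ if and only if $\supp_S(N) \cap \res^{-1}(\{\fp\}) \neq \varnothing$, i.e., there exists $\fq \in \supp_S(N)$ with $\res(\fq) = \fp$, which is precisely the statement $\fp \in \res\supp_S(N)$.

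I do not anticipate a genuine obstacle here: the only subtle point is verifying that $\Gamma_{\fp}^R$ is smashing (so that one can pull $\Res$ through a tensor product), but this is immediate from \Cref{thm:localfunctors}. The rest is a direct chain of equivalences using tools already developed earlier in the section.
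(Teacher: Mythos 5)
Your proof is correct and follows essentially the same route as the paper: smashing property of $\Gamma_{\fp}$, then the projection formula, then conservativity of $\Res$ together with \Cref{thm:cosupptrivialobjects} and \Cref{prop:bcsupport}. The only difference is that you spell out explicitly that $\Gamma_{\fp}$ is smashing as a composite of two smashing functors, which the paper's proof leaves implicit.
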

\begin{proof}
By the projection formula \Cref{lem:projformula} and using that $\Gamma_{\fp}$ is smashing, there are equivalences
\[
\Gamma_{\fp}\Res N \simeq \Gamma_{\fp}R \otimes_R \Res N \simeq \Res(\Ind(\Gamma_{\fp}R) \otimes_S N). 
\]
Therefore, a homogeneous prime ideal $\fp \in \Spec^h(\pi_*R)$ is in $\supp_R(\Res N)$ if and only if $0 \not\simeq \Ind(\Gamma_{\fp}R)\otimes_S N$. \Cref{prop:bcsupport} and \Cref{thm:cosupptrivialobjects} show that this is equivalent to
\[
\varnothing \neq \supp_S(\Ind(\Gamma_{\fp}R)\otimes_S N) = \supp_S(N) \cap \res^{-1}(\{\fp\}),
\]
i.e., $\fp \in \res\supp_S(N)$.
\end{proof}

\begin{cor}\label{cor:1}
Suppose that $f \colon R \to S$ is a morphism of Noetherian commutative ring spectrum. For any $N \in \Mod_S$, we have $\Gamma_{\fp}^R\Res N \in \Loc(\Res\Gamma_{\fq}^SN\mid \res(\fq) = \fp)$. 
\end{cor}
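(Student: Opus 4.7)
The plan is to combine the base-change formula from \Cref{prop:bcsupport} with the local-to-global principle on $\Mod_S$. The key observation is that $\Gamma_{\fp}^R \Res N$ can be recognized as the restriction of an $S$-module whose support sits entirely over the fiber $\res^{-1}(\{\fp\})$, and then decomposed via local-to-global on $\Mod_S$.

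First, I would use that $\Gamma_{\fp}^R$ is smashing together with the projection formula (\Cref{lem:projformula}) to write
\[
\Gamma_{\fp}^R \Res N \simeq \Gamma_{\fp}^R R \otimes_R \Res N \simeq \Res\bigl(\Ind(\Gamma_{\fp}^R R) \otimes_S N\bigr).
\]
Setting $\widetilde{\cV} = \res^{-1}(\cV(\fp))$ and $\widetilde{\cW} = \res^{-1}(\cZ(\fp))$, \Cref{prop:bcsupport} applied to $M = R$ gives $\Ind(\Gamma_{\fp}^R R) \simeq L_{\widetilde{\cW}}^S \Gamma_{\widetilde{\cV}}^S S$. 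Since $L_{\widetilde{\cW}}^S$ and $\Gamma_{\widetilde{\cV}}^S$ are smashing, the tensor product with $N$ collapses to $L_{\widetilde{\cW}}^S \Gamma_{\widetilde{\cV}}^S N$, hence
\[
\Gamma_{\fp}^R \Res N \simeq \Res\bigl(L_{\widetilde{\cW}}^S \Gamma_{\widetilde{\cV}}^S N\bigr).
\]

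Next, since $S$ is Noetherian, the local-to-global principle (\Cref{thm:localtoglobal}) on $\Mod_S$ applied to $L_{\widetilde{\cW}}^S \Gamma_{\widetilde{\cV}}^S N$ yields
\[
L_{\widetilde{\cW}}^S \Gamma_{\widetilde{\cV}}^S N \in \Loc\bigl(\Gamma_{\fq}^S L_{\widetilde{\cW}}^S \Gamma_{\widetilde{\cV}}^S N \mid \fq \in \Spec^h(\pi_*S)\bigr).
\]
By \Cref{lem:6.1}, using that $\widetilde{\cV} \setminus \widetilde{\cW} = \res^{-1}(\{\fp\})$, we have $\Gamma_{\fq}^S L_{\widetilde{\cW}}^S \Gamma_{\widetilde{\cV}}^S \simeq \Gamma_{\fq}^S$ when $\res(\fq) = \fp$ and $0$ otherwise, so the generators reduce to $\{\Gamma_{\fq}^S N \mid \res(\fq) = \fp\}$.

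Finally, I would apply $\Res$, which preserves colimits as both a left adjoint (to $\Coind$) and a right adjoint (to $\Ind$). A straightforward extension of \Cref{lem:preservecoloc} to a family of generators, obtained by considering the localizing subcategory $\{U \in \Mod_S \mid \Res U \in \Loc(\Res \Gamma_{\fq}^S N \mid \res(\fq) = \fp)\}$ of $\Mod_S$, then yields
\[
\Gamma_{\fp}^R \Res N \simeq \Res\bigl(L_{\widetilde{\cW}}^S \Gamma_{\widetilde{\cV}}^S N\bigr) \in \Loc\bigl(\Res \Gamma_{\fq}^S N \mid \res(\fq) = \fp\bigr),
\]
as required. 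No substantial obstacle arises here: all ingredients are already in place, and the only subtlety is bookkeeping — running the local-to-global decomposition inside $\Mod_S$ rather than $\Mod_R$, so that the fiber $\res^{-1}(\{\fp\})$ appears naturally as the support locus of the intermediate object.
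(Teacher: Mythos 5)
Your proof is correct, and it takes a genuinely different (though closely related) route from the paper's. The paper applies the local-to-global principle in $\Mod_S$ directly to $N$, obtaining $N \in \Loc(\Gamma_{\fq}^S N \mid \fq \in \Spec^h(\pi_*S))$, then pushes this through the colimit-preserving composite $\Gamma_{\fp}^R \Res$ and identifies the resulting generators $\Gamma_{\fp}^R\Res\Gamma_{\fq}^S N$ by combining the support base-change formula of \Cref{thm:suppbasechange} with the fact that an object with support contained in $\{\fp\}$ is $\Gamma_{\fp}$-local. You instead first rewrite $\Gamma_{\fp}^R\Res N \simeq \Res(L_{\widetilde{\cW}}^S\Gamma_{\widetilde{\cV}}^S N)$ using the projection formula and \Cref{prop:bcsupport}, then run local-to-global on the intermediate $S$-module $L_{\widetilde{\cW}}^S\Gamma_{\widetilde{\cV}}^S N$, prune the generating set with \Cref{lem:6.1}, and push the result through $\Res$. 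In effect you reorder the operations: the paper decomposes first and identifies afterwards, you identify first and decompose afterwards. Your version bypasses \Cref{thm:suppbasechange} altogether and works purely with the functorial base-change identities, which is slightly more self-contained; the paper's is a bit shorter, trading on the already-proven support formula. Both routes implicitly (the paper) or explicitly (you) rely on the evident extension of \Cref{lem:preservecoloc} to a family of generators — your explicit note on this small point is a welcome piece of bookkeeping.
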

\begin{proof}
By \Cref{thm:suppbasechange}, for all $\fq \in \Spec^h(\pi_*S)$ we have 
\[
\supp_R(\Res \Gamma^S_{\fq}N) = \res \supp_S(\Gamma^S_{\fq}N) \subseteq \res(\fq)
\] 
and hence by \cite[Cor.~5.9]{benson_local_cohom_2008} for any $\fp \in \Spec^h(\pi_*R)$,
\[
\Gamma^R_{\fp}\Res\Gamma^S_{\fq}N \simeq 
\begin{cases}
  \Res \Gamma^S_{\fq}N & \text{if } \res(\fq) = \fp, \\
  0 & \text{otherwise}. 
\end{cases}
\]
The local to global principle for $\Mod_S$ (\Cref{thm:localtoglobal}) gives $N \in \Loc(\Gamma_{\fq}^S N \mid \fq \in \Spec^h(\pi_*S))$. The fact that $\Res$ and $\Gamma^R_{\fp}$ preserve colimits then implies by \Cref{lem:preservecoloc} that 
\[
\begin{split}
\Gamma^R_{\fp}\Res N &\in \Loc(\Gamma^R_{\fp}\Res \Gamma_{\fq}^S N \mid \fq \in \Spec^h(\pi_*S)) \\
&= \Loc(\Res\Gamma_{\fq}^SN\mid \res(\fq) = \fp),
\end{split}
\]
as required.
\end{proof}

Under the assumption that the base category is canonically stratified, we can use \Cref{thm:suppbasechange} to prove an abstract version of the subgroup theorem: Recall that as a consequence of stratification, for a subgroup $H$ of a finite group $G$, Benson, Iyengar, and Krause \cite[Thm.~11.2]{bik_finitegroups} establish a precise relationship between the support of $M \in \StMod(kG)$ and the support of the restriction of $M$ in $\StMod(kH)$. Similarly, in \cite[Thm.~11.11]{bik12}, they proved a cosupport version of this theorem. These results can be generalized in the following way, and will give rise to a version of the subgroup theorem for a large class of $p$-local compact groups. 

\begin{prop}\label{prop:abstractsubgroupthm}
Let $f\colon R \to S$ be a morphism of Noetherian commutative ring spectra and suppose $\Mod_R$ is canonically stratified. If $M \in \Mod_R$, then 
\[
\supp_{S}(\Ind M) = \res^{-1}\supp_{R}(M) \quad \mathrm{and} \quad  \cosupp_S(\Coind M) = \res^{-1}\cosupp_R (M).
\] 
\end{prop}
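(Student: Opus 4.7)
The strategy is to handle the two equalities separately, in each case combining the abstract base-change results of \Cref{prop:basicbasechange}--\Cref{cor:7.8} with the minimality of $\Gamma_{\fp}\Mod_R$ afforded by canonical stratification. The inclusion $\supp_S(\Ind M) \subseteq \res^{-1}\supp_R(M)$ is already contained in \Cref{cor:7.8}, and the analogous cosupport inclusion follows from \Cref{cor:7.8} together with \Cref{lem:cosuppcoind}, so in both cases it remains to establish the reverse inclusion.

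For the support, fix $\fq \in \res^{-1}\supp_R(M)$ and set $\fp = \res(\fq)$, so that $\Gamma_{\fp}M \not\simeq 0$. Canonical stratification implies that $\Gamma_{\fp}\Mod_R$ is minimal, hence $\Gamma_{\fp}R \in \Loc(\Gamma_{\fp}M)$; since $\Ind$ preserves colimits, \Cref{lem:preservecoloc} gives $\Ind\Gamma_{\fp}R \in \Loc(\Ind\Gamma_{\fp}M)$, and therefore $\supp_S(\Ind\Gamma_{\fp}R) \subseteq \supp_S(\Ind\Gamma_{\fp}M)$. Applying \Cref{prop:bcsupport} first with $N = S$ and then with $N = \Ind M$ (using that $\Gamma_{\fp}$ is smashing and $\Ind$ symmetric monoidal to rewrite $\Ind\Gamma_{\fp}M \simeq \Ind(\Gamma_{\fp}R)\otimes_S \Ind M$) identifies the left-hand side with $\res^{-1}(\{\fp\})$ and the right-hand side with $\supp_S(\Ind M)\cap\res^{-1}(\{\fp\})$. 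Hence $\res^{-1}(\{\fp\}) \subseteq \supp_S(\Ind M)$, which contains $\fq$ as required.

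For the cosupport, the key observation is that under canonical stratification, the subcategory $\cC := \{X \in \Mod_R \mid \Hom_R(X,M)\simeq 0\}$ is localizing, so minimality of $\Gamma_{\fp}\Mod_R$ yields the reformulation: for any nonzero $Y \in \Gamma_{\fp}\Mod_R$, one has $\Lambda^{\fp}M \not\simeq 0$ if and only if $\Hom_R(Y,M)\not\simeq 0$. I apply this with $Y = \Res\Gamma_{\fq}S$: by \Cref{thm:suppbasechange} and \Cref{prop:orthogonality}(1), $\supp_R(\Res\Gamma_{\fq}S) = \res\supp_S(\Gamma_{\fq}S) = \{\res(\fq)\}$, which places $\Res\Gamma_{\fq}S$ inside $\Gamma_{\res(\fq)}\Mod_R$, and it is nonzero by conservativity of $\Res$. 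Combining with the adjunction identity $\Lambda^{\fq}\Coind M \simeq \Hom_S(\Gamma_{\fq}S,\Coind M) \simeq \Hom_R(\Res\Gamma_{\fq}S,M)$ (from $\Res \dashv \Coind$ together with smashness of $\Gamma_{\fq}$) then yields the desired equivalence $\fq \in \cosupp_S(\Coind M)$ iff $\res(\fq) \in \cosupp_R(M)$.

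The main step requiring genuine new input is the reformulation of $\Lambda^{\fp}M\not\simeq 0$ in terms of nonvanishing of mapping spaces out of \emph{arbitrary} nonzero objects of $\Gamma_{\fp}\Mod_R$; this is where minimality, and not merely non-triviality, is essential, and it is what allows us to replace the canonical generator $\Gamma_{\fp}R$ by the more flexible object $\Res\Gamma_{\fq}S$ coming from $\Mod_S$. Once this reformulation is in hand, both halves of the proposition reduce to bookkeeping with the base-change formulae already established in \Cref{sec:basechange}.
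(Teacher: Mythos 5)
Your proof is correct and is essentially the same argument as the paper's, modulo packaging: both establish both equalities via the base-change formulas (\Cref{thm:suppbasechange}, \Cref{prop:bcsupport}), the adjunction identity $\Lambda^{\fq}\Coind M \simeq \Hom_R(\Res\Gamma_{\fq}S,M)$, the computation $\supp_R(\Res\Gamma_{\fq}S)=\{\res(\fq)\}$ (via \Cref{prop:orthogonality}(1)), and stratification of $\Mod_R$. The main stylistic difference is where the stratification hypothesis enters: the paper simply invokes \Cref{thm:tensor_hom_(co)supp} (the tensor and hom formulas for support and cosupport), whereas you rederive the needed special cases directly from minimality of $\Gamma_{\fp}\Mod_R$---showing $\Gamma_{\fp}R\in\Loc(\Gamma_{\fp}M)$ for the support half, and reformulating $\Lambda^{\fp}M\not\simeq 0$ as $\Hom_R(Y,M)\not\simeq 0$ for any nonzero $Y\in\Gamma_{\fp}\Mod_R$ for the cosupport half. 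Your rederivations are sound (the localizing subcategory argument for the cosupport reformulation is exactly what makes minimality bite, and it is the content hidden inside the proof of the hom theorem). There is also a small difference in the support computation: the paper works with $\supp_R(\Res\Gamma_{\fq}\Ind M)$ via the projection formula and the tensor theorem, while you work with $\supp_S(\Ind\Gamma_{\fp}M)$ via \Cref{prop:bcsupport}; these are equivalent routes to the same conclusion. Citing \Cref{thm:tensor_hom_(co)supp} directly, as the paper does, would shorten your argument, but the logical content is identical.
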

\begin{proof}
Let $\fq\in \Spec^h(\pi_*S)$ and $\fp = \res(\fq)$. By \Cref{thm:tensor_hom_(co)supp}, stratification of $\Mod_R$ implies that 
\[
\supp_R(\Res(\Gamma_{\fq}S)\otimes_RM) = \supp_R(\Res\Gamma_{\fq}S) \cap \supp_R(M).
\]
Because \Cref{thm:suppbasechange} and \Cref{prop:orthogonality}(1) give  equalities
\[
\supp_{R}(\Res \Gamma_{\fq}S) = \res\supp_S(\Gamma_{\fq}S)  = \{\res(\fq)\}= \{ \fp \},
\]
using the projection formula we see that 
\[
\Res\Gamma_{\fq}\Ind M \simeq \Res(\Gamma_{\fq}S)\otimes_RM \not\simeq 0
\]
if and only if $\fp \in \supp_R(M)$. Since $\Res$ is conservative, this is equivalent to $\fq \in \supp_S(\Ind M)$.

For the second claim, let $M$ be an $R$-module. By adjunction, for any $\fq \in \Spec^h(\pi_*S)$ there are equivalences
  \begin{align*}
    \Lambda^{\fq}\Coind M & \simeq \Hom_S(S,\Lambda^{\fq}\Coind M) \\
    & \simeq \Hom_S(\Gamma_{\fq}S,\Coind M) \\
    & \simeq \Hom_R(\Res\Gamma_{\fq}S,M).
  \end{align*}
Since $\Mod_R$ is stratified, it thus follows from \Cref{thm:tensor_hom_(co)supp} that $\fq \not\in \cosupp_S(\Coind M)$ if and only if 
\begin{equation}\label{eq:intersectioncondition}
\supp_R(\Res\Gamma_{\fq}S) \cap \cosupp_R(M) = \varnothing.
\end{equation}
The base-change formula \Cref{thm:suppbasechange} gives $\supp_R(\Res\Gamma_{\fq}S) = \{\res(\fq)\}$, so \eqref{eq:intersectioncondition} is in turn equivalent to $\res(\fq) \not\in \cosupp_R(M)$.
\end{proof}

\subsection{Abstract descent along finite morphisms}\label{ssec:finitedescent}

As a first application of our methods, in this subsection we will consider the case of descent of stratification along a finite morphism $f\colon R \to S$. This provides, in essence, an abstract version of the main results of \cite{bik_finitegroups} and \cite{bik12}. To that end, we will introduce a technical property which is the key ingredient needed in the arguments of \cite{bik_finitegroups} to descend stratifications, as we will see shortly. In all of our examples, this condition follows from a strong form of Quillen stratification for $\Spec^h(\pi_*R)$.

\begin{defn}\sloppy\label{defn:qlifting}
A morphism of Noetherian commutative ring spectra $f\colon R \to S$ is said to satisfy Quillen lifting if for any two modules $M,N \in \Mod_R$ such that there is $\medmuskip=2mu \fp \in \res\supp_S(\Ind M) \cap \res\cosupp_S(\Coind N)$, there exists a homogeneous prime ideal $\fq \in \res^{-1}(\{\fp\})$ with $\medmuskip=2mu \fq \in \supp_S(\Ind M) \cap \cosupp_S(\Coind N)$. 
\end{defn}

Here is the primordial example of a morphism of ring spectra satisfying Quillen lifting; we will exhibit other examples in \Cref{prop:quillenliftingfusion}.  

\begin{ex}\label{ex:groups}
It is a consequence of the strong form of Quillen stratification for group cohomology~\cite{quillen_stratification} that the morphism 
\[
\xymatrix{C^*(BG,k) \ar[r] &  \prod \limits_{E \in \cE(G)}C^*(BE,k)}
\]
satisfies Quillen lifting, see \Cref{prop:quillenliftingfusion}. Here, $G$ is a compact Lie group, $k$ is a field of characteristic $p$, and $\cE(G)$ is a set of representatives of conjugacy classes of elementary abelian $p$-subgroups of $G$. 
\end{ex}

Suppose that $f \colon R \to S$ is a finite morphism in the sense of \Cref{defn:finite}. Using, for example, \cite[Thm.~1.7]{bds_wirth}, this is equivalent to the existence of a left adjoint to $\Ind$ and a right adjoint to $\Coind$. Moreover, their results give

\begin{lem}\label{lem:ambichouinard}
If $f\colon R \to S$ is a finite morphism of commutative ring spectra, then $\Ind$ is conservative if and only of $\Coind$ is conservative.
\end{lem}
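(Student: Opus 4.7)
The plan is to reduce the statement to a symmetric claim about the $R$-module $S$ and its dual. First observe that $\Res$ is always conservative (it is conservative as a functor between module categories of ring spectra, since it is faithful on homotopy categories). Therefore $\Ind$ is conservative if and only if the composite $\Res\Ind \simeq -\otimes_R S$ is conservative on $\Mod_R$, and dually $\Coind$ is conservative if and only if $\Res\Coind \simeq \Hom_R(S,-)$ is conservative.

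Next I would use finiteness of $f$ to identify the right-hand sides more explicitly. Since $S$ is compact in $\Mod_R$ by \Cref{defn:finite}, it is dualizable with dual $S^{\vee} := \Hom_R(S,R)$, so that $\Hom_R(S,M) \simeq S^{\vee}\otimes_R M$ naturally in $M$, and $S^{\vee\vee}\simeq S$. The lemma therefore reduces to the purely $R$-linear assertion
\[
\ker(-\otimes_R S) \;=\; \ker(-\otimes_R S^{\vee})
\]
as full subcategories of $\Mod_R$. By the symmetry $S^{\vee\vee}\simeq S$, I only need to check one inclusion: if $M\otimes_R S \simeq 0$, then $M\otimes_R S^{\vee}\simeq 0$.

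The hard part is this last implication, but it follows directly from the triangle identities for dualizability. Writing $\eta_S\colon R\to S\otimes_R S^{\vee}$ for the coevaluation and $\epsilon_S\colon S^{\vee}\otimes_R S\to R$ for the evaluation, the triangle identity for $S^{\vee}$ exhibits the identity of $S^{\vee}$ as the composite
\[
S^{\vee} \xrightarrow{\,1\otimes\eta_S\,} S^{\vee}\otimes_R S\otimes_R S^{\vee} \xrightarrow{\,\epsilon_S\otimes 1\,} S^{\vee}.
\]
Tensoring with $M$, we see that $M\otimes_R S^{\vee}$ is a retract of $M\otimes_R S^{\vee}\otimes_R S\otimes_R S^{\vee}$. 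Using the symmetric monoidal structure, this middle term may be rewritten as $(M\otimes_R S)\otimes_R S^{\vee}\otimes_R S^{\vee}$, which vanishes whenever $M\otimes_R S\simeq 0$. Hence $M\otimes_R S^{\vee}\simeq 0$, and the proof is complete. The only subtlety worth double-checking is the identification $\Hom_R(S,M)\simeq S^{\vee}\otimes_R M$, which relies precisely on the dualizability of $S$ guaranteed by finiteness.
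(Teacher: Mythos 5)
Your proof is correct. Let me compare it with the paper's.

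The paper cites the Wirthm\"uller-type identities of Balmer--Dell'Ambrogio--Sanders: with $\omega_f = \Coind(R)$ the relative dualizing module, one has $\Coind \simeq \omega_f \otimes_S \Ind$ and $\Ind \simeq \Hom_S(\omega_f,\Coind)$, from which it is immediate that $\Ind M$ and $\Coind M$ vanish for exactly the same $M$. You instead restrict back down to $\Mod_R$ first, use conservativity of $\Res$ to replace $\Ind$ by $-\otimes_R S$ and $\Coind$ by $\Hom_R(S,-)\simeq S^\vee\otimes_R-$, and then prove $\ker(-\otimes_R S)=\ker(-\otimes_R S^\vee)$ directly from the triangle identity for the dualizable $R$-module $S$ (the retraction argument). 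Both arguments ultimately rest on the dualizability of $S$ in $\Mod_R$ guaranteed by finiteness, but yours is more self-contained: you use only the defining coherence of duality rather than the black-boxed BDS formulas, at the small cost of a somewhat longer argument. One minor remark: the identification of compact objects in $\Mod_R$ with dualizable ones is the key finiteness input, and since you flagged it yourself, you should be aware this is exactly the step where finiteness of $f$ enters; the paper's citation to BDS encapsulates the same use of finiteness.
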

\begin{proof}
By \cite[Eqns.~(3.4), (3.9)]{bds_wirth} and setting $\omega_f = \Coind(R)$, we have natural equivalences
\[
\xymatrix{\Coind \simeq \omega_f \otimes_S \Ind & \Ind \simeq \Hom_S(\omega_f,\Coind).}
\]
This implies that $\Ind(M) \simeq 0$ if and only if $\Coind(M) \simeq 0$ for any $M \in \Mod_R$.
\end{proof}

If $f\colon R \to S$ is a finite morphism which additionally satisfies $\Ind \simeq \Coind$, then as explained in \cite{bg_stratifyingcompactlie} the proof of \cite[Thm.~9.7]{bik_finitegroups} generalizes to give the next result. Conversely, applied to the morphism of \Cref{ex:groups} for a finite $p$-group $G$, we recover the situation studied in \cite{bik_finitegroups}. In the next subsection, we will see how our methods allow us to remove the finiteness hypothesis on the morphism, see \Cref{thm:stratdescent}.

\begin{prop}\label{prop:finitedescentstrat}
Let $f\colon R \to S$ be a finite morphism of Noetherian commutative ring spectra satisfying Quillen lifting and such that $\Ind$ or $\Coind$ are conservative. If $\Mod_S$ is canonically stratified, then $\Mod_R$ is canonically stratified as well. 
\end{prop}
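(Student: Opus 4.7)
The canonical stratification of $\Mod_R$ amounts to two claims: the local-to-global principle and minimality of $\Gamma_\fp\Mod_R$ for each $\fp \in \Spec^h(\pi_*R)$. Since $R$ is Noetherian the first is immediate from \Cref{thm:localtoglobal}, so the essential work is the minimality claim. To this end, I fix nonzero $M, N \in \Gamma_\fp\Mod_R$ and try to show $N \in \Loc(M)$ by contradiction: set $N' = L_{\Loc(M)} N$, assumed nonzero, which lies in $\Gamma_\fp\Mod_R \cap \Loc(M)^\perp$ since $\Gamma_\fp\Mod_R$ is thick.

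I plan to derive a contradiction by computing $\Hom_S(\Ind M, \Coind N')$ in two ways. On one hand, adjunction identifies it with $\Hom_R(\Res\Ind M, N')$; finiteness of $f$ forces $S \in \Thick(R)$, hence $\Res\Ind M \simeq S \otimes_R M \in \Thick(M) \subseteq \Loc(M)$, so that $\Hom_R(\Res\Ind M, N') \simeq 0$. On the other hand, \Cref{thm:tensor_hom_(co)supp} applied in the stratified category $\Mod_S$ gives
\[
\cosupp_S(\Hom_S(\Ind M, \Coind N')) = \supp_S(\Ind M) \cap \cosupp_S(\Coind N'),
\]
so by \Cref{thm:cosupptrivialobjects} it suffices to exhibit a prime in the right-hand intersection. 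Conservativity of $\Ind$ combined with \Cref{cor:7.8} supplies $\res\supp_S(\Ind M) = \supp_R(M) = \{\fp\}$, so Quillen lifting (\Cref{defn:qlifting}) will produce a common prime in $\res^{-1}(\fp)$ provided one also has $\fp \in \res\cosupp_S(\Coind N')$.

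This last cosupport statement is the main technical obstacle. The strategy is to split $\Coind N'$ as a finite coproduct indexed by $\res^{-1}(\fp)$ and use stratification of $\Mod_S$ to detect a nonvanishing summand with the correct cosupport. Dualizability of $S$ (from the finite morphism hypothesis) together with the smashing property of $\Gamma_\fp$ on $\Mod_R$ yields
\[
\Gamma_\fp R \otimes_R \Coind N' \simeq S^\vee \otimes_R \Gamma_\fp R \otimes_R N' \simeq S^\vee \otimes_R N' \simeq \Coind N',
\]
using $\Gamma_\fp N' \simeq N'$; hence $\Coind N'$ is fixed by $\Gamma_\fp R \otimes_R(-)$. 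Extending \Cref{cor:inddecomposition} from induced objects to arbitrary $Y \in \Mod_S$ via the projection formula,
\[
\Gamma_\fp R \otimes_R Y \simeq \Ind(\Gamma_\fp R) \otimes_S Y \simeq \coprod_{\fq \in \res^{-1}(\fp)}\Gamma_\fq Y,
\]
one obtains a splitting $\Coind N' \simeq \coprod_{\fq \in \res^{-1}(\fp)} \Gamma_\fq\Coind N'$. By conservativity of $\Coind$ (equivalent to that of $\Ind$ by \Cref{lem:ambichouinard}) and $N' \neq 0$, some summand $\Gamma_{\fq_0}\Coind N'$ is nonzero. Since it lies in the minimal subcategory $\Gamma_{\fq_0}\Mod_S$, applying \Cref{thm:tensor_hom_(co)supp} to its identity map forces $\fq_0 \in \cosupp_S(\Gamma_{\fq_0}\Coind N')$; monotonicity of cosupport under the retract finally yields $\fq_0 \in \cosupp_S(\Coind N')$, so $\fp = \res(\fq_0) \in \res\cosupp_S(\Coind N')$, as required.
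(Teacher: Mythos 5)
Your proof is correct, but it takes a genuinely different route from the paper's. The paper disposes of \Cref{prop:finitedescentstrat} in one line, invoking \Cref{lem:ambichouinard} to reduce it to the general descent result \Cref{thm:stratdescent}, whose proof establishes the crucial cosupport ingredient $\cosupp_S(\Coind N)\cap\res^{-1}(\{\fp\})\neq\varnothing$ via \Cref{prop:cosuppcoind}; that proposition in turn relies on \Cref{cor:1} and a local-duality computation, and makes no use of finiteness of $f$. You instead exploit the finiteness hypothesis directly: since $f$ is finite, $S$ is dualizable in $\Mod_R$, so $\Coind\simeq S^\vee\otimes_R(-)$ commutes with the smashing functor $\Gamma_\fp$, and combined with \Cref{cor:inddecomposition} (which needs $\pi_*S$ finitely generated over $\pi_*R$, cf.\ \Cref{rem:finite_fg}) this yields the concrete finite splitting $\Coind N'\simeq\coprod_{\fq\in\res^{-1}(\fp)}\Gamma_\fq\Coind N'$ from which the cosupport claim drops out. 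This is closer in spirit to the original arguments of \cite{bik_finitegroups} (where the ambidexterity $\Ind\simeq\Coind$ plays the analogous role), and is arguably more elementary within the finite setting, at the cost of not generalizing to the non-finite morphisms that \Cref{thm:stratdescent} is designed to handle. A cosmetic difference: you re-derive the minimality criterion by passing to $N'=L_{\Loc(M)}N$ and seeking a contradiction, whereas the paper directly invokes the equivalent Hom-criterion from [Lem.~4.1, bik11]. One small presentational point: when you write ``since it lies in the minimal subcategory $\Gamma_{\fq_0}\Mod_S$,'' what you actually use is only that $\supp_S(\Gamma_{\fq_0}\Coind N')\subseteq\{\fq_0\}$ together with nonvanishing of the self-Hom, not minimality per se; the argument as you lay it out is nonetheless valid.
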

\begin{proof}
By virtue of \Cref{lem:ambichouinard}, this is a special case of \Cref{thm:stratdescent}. 
\end{proof}

Turning to costratifications, the next result is an abstract version of \cite[Thm.~11.10]{bik12}, whose proof can be significantly simplified by virtue of our \Cref{prop:abstractsubgroupthm}. Before we can give the proof, we need one more construction. Recall that for any graded injective $\pi_*R$-module $I$ and compact object $C \in \Mod^\omega_R$ there is an $R$-module $T_C(I)$ with the property that 
\[
\pi_*\Hom_R(-,T_C(I)) \cong \Hom_{\pi_*R}(\pi_*\Hom_R(C,-),I), 
\]
see \cite[Sec.~6]{hps_axiomatic}, \cite{bk_pureinjectives}, or \cite[Sec.~4.1]{bhv2} for example. For $\fp \in \Spec^h(\pi_*R)$, let $I_{\fp}$ denote the injective hull of $\pi_*R/\fp$ in $\Mod_{\pi_*R}$. For simplicity, let us write $T_{\fp}I_{\fp}$ for $T_{\kos{R}{\fp}}(I_\fp)$. By \cite[Prop.~5.4]{bik12} this is a perfect cogenerator of $\Lambda^{\fp}\Mod_R$. In particular, $\Coloc(T_{\fp}I_{\fp}) = \Lambda^{\fp}\Mod_R$, see \cite[Prop.~5.2]{bik12}. 

\begin{prop}\label{prop:finitedescentcostrat}
	Suppose $f \colon R \to S$ is a finite morphism of Noetherian commutative ring spectra such that $\Ind$ or $\Coind$ are conservative. If $\Mod_S$ is canonically costratified and $\Mod_R$ is canonically stratified, then $\Mod_R$ is canonically costratified. 
\end{prop}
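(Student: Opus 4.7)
The proof reduces to showing the minimality of $\Lambda^{\fp}\Mod_R$ as a colocalizing subcategory for each $\fp \in \Spec^h(\pi_*R)$, since the local-to-global principle for colocalizing subcategories is automatic by \Cref{thm:localtoglobal}. Given a nonzero $M \in \Lambda^{\fp}\Mod_R$, it suffices to show that the perfect cogenerator $T_{\fp}I_{\fp}$ of $\Lambda^{\fp}\Mod_R$ lies in $\Coloc_R(M)$. The finite morphism assumption together with \Cref{lem:ambichouinard} lets us assume $\Coind$ is conservative, so $\Coind M \neq 0$; the abstract subgroup theorem (\Cref{prop:abstractsubgroupthm}), which uses the stratification of $\Mod_R$, then yields $\cosupp_S(\Coind M) = \res^{-1}(\{\fp\})$, and we fix a prime $\fq$ in this set.

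The plan is to establish two containments that chain together. \textbf{Step 1:} $\Res T_{\fq}I_{\fq} \in \Coloc_R(M)$. Since $\fq \in \cosupp_S(\Coind M)$, the module $\Lambda^{\fq}\Coind M$ is nonzero, and the minimality of $\Lambda^{\fq}\Mod_S$ furnished by the costratification of $\Mod_S$ gives $T_{\fq}I_{\fq} \in \Coloc_S(\Lambda^{\fq}\Coind M) \subseteq \Coloc_S(\Coind M)$. Applying $\Res$---which preserves colocalizing subcategories by \Cref{lem:preservecoloc} since it has both adjoints---and using $\Res\Coind M = \Hom_R(S,M) \in \Coloc_R(M)$ (since $\Mod_R = \Loc_R(R)$ and $\Hom_R(-,M)$ converts colimits to limits), yields the claim. \textbf{Step 2:} $T_{\fp}I_{\fp} \in \Coloc_R(\Res T_{\fq}I_{\fq})$. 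By the base-change formula \Cref{thm:suppbasechange}, $\supp_R(\Res \Gamma_{\fq}S) = \{\fp\}$, and $\Res \Gamma_{\fq}S$ is nonzero; the stratification of $\Mod_R$ (minimality of $\Gamma_{\fp}\Mod_R$) therefore forces $\Loc_R(\Res \Gamma_{\fq}S) = \Gamma_{\fp}\Mod_R$, so in particular $\Gamma_{\fp}R \in \Loc_R(\Res \Gamma_{\fq}S)$. Applying $\Hom_R(-, T_{\fp}I_{\fp})$ (which converts colimits to limits) and invoking the enriched $(\Res, \Coind)$-adjunction $\Hom_R(\Res X, Y) \simeq \Res \Hom_S(X, \Coind Y)$, I obtain $T_{\fp}I_{\fp} \simeq \Lambda^{\fp}T_{\fp}I_{\fp} \in \Coloc_R(\Res \Lambda^{\fq}\Coind T_{\fp}I_{\fp})$. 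A repeat of the Step 1 argument with $T_{\fp}I_{\fp}$ replacing $M$ places $\Lambda^{\fq}\Coind T_{\fp}I_{\fp}$ inside $\Coloc_S(T_{\fq}I_{\fq})$, and restricting produces $\Res \Lambda^{\fq}\Coind T_{\fp}I_{\fp} \in \Coloc_R(\Res T_{\fq}I_{\fq})$, finishing Step 2.

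Chaining the two steps gives $T_{\fp}I_{\fp} \in \Coloc_R(\Res T_{\fq}I_{\fq}) \subseteq \Coloc_R(M)$, as desired. The main obstacle is Step 2, which bridges the cogenerators of $\Mod_S$ and $\Mod_R$: the argument interlaces the stratification of $\Mod_R$ (applied to $\Res \Gamma_{\fq}S$ to recover $\Gamma_{\fp}R$) with the costratification of $\Mod_S$ (applied to $\Lambda^{\fq}\Coind T_{\fp}I_{\fp}$ to recover $T_{\fq}I_{\fq}$), using the $(\Res, \Coind)$-adjunction as the bridge between the two categories.
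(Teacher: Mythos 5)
Your proof is correct, and Step 2 is a genuinely different route from the paper's. The paper's proof of this proposition finishes by invoking \cite[Prop.~5.4 and Lem.~7.12]{bik12}, which say that for a \emph{finite} morphism $f$ the collection $\{\Res T_{\fq}I_{\fq} \mid \res(\fq) = \fp\}$ perfectly cogenerates $\Lambda^{\fp}\Mod_R$; this is precisely where finiteness enters. You replace that external input by a duality argument internal to the present framework: starting from $\Gamma_{\fp}R \in \Loc_R(\Res\Gamma_{\fq}S)$, which is a consequence of the base-change formula (\Cref{thm:suppbasechange}) together with the minimality of $\Gamma_{\fp}\Mod_R$ furnished by the stratification hypothesis on $\Mod_R$, and then applying $\Hom_R(-,T_{\fp}I_{\fp})$ together with the enriched adjunction $\Hom_R(\Res X, Y) \simeq \Res\Hom_S(X,\Coind Y)$ (which follows from the projection formula exactly as you use it) to obtain $T_{\fp}I_{\fp} \in \Coloc_R(\Res T_{\fq}I_{\fq})$ directly. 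One small imprecision: the inclusion $\Lambda^{\fq}\Coind T_{\fp}I_{\fp} \in \Coloc_S(T_{\fq}I_{\fq})$ is not really ``a repeat of the Step 1 argument'' — it is simpler, following immediately from the fact that $T_{\fq}I_{\fq}$ is a perfect cogenerator of $\Lambda^{\fq}\Mod_S$, so $\Coloc_S(T_{\fq}I_{\fq}) = \Lambda^{\fq}\Mod_S$, which trivially contains $\Lambda^{\fq}\Coind T_{\fp}I_{\fp}$; no costratification is needed for this direction. It is worth observing that your argument only uses finiteness to guarantee that $\Coind$ is conservative (via \Cref{lem:ambichouinard}); if one instead assumes conservativity of $\Coind$ directly, your proof goes through with no finiteness (and no retract) hypothesis, so it simultaneously recovers and sharpens both \Cref{prop:finitedescentcostrat} and \Cref{thm:costratdescent}. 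The trade-off is that your argument leans more heavily on the stratification of $\Mod_R$ (using the minimality of $\Gamma_{\fp}\Mod_R$ in Step 2), whereas the paper's route through \cite{bik12} deduces the needed cogeneration statement from finiteness alone, independently of any stratification input.
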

\begin{proof}
Let $\fp \in \Spec^h(\pi_*R)$, and let $M \in \Lambda^{\fp}\Mod_R$ be non-zero. Note that \Cref{prop:abstractsubgroupthm} and \Cref{prop:orthogonality}(1) give
	\[
	\cosupp_S(\Coind M) =  \res^{-1}\cosupp_R(M) = \res^{-1}(\{\fp\}),
	\]
so that $\Lambda^{\fq}(\Coind M) \not \simeq 0$ for each $\fq \in \Spec^h(\pi_*S)$ with ${\res(\fq) = \fp}$. 

	Now, as in \cite[Prop.~5.4]{bik12} we have $\cosupp_S(T_{\fq}I_{\fq}) = \{ \fq \}$. It follows from the paragraph above and costratification of $\Mod_S$ that for all $\fq \in \res^{-1}(\{\fp\})$:
\[
T_{\fq}I_{\fq} \in \Coloc(\Lambda^{\fq}(\Coind M)) \subseteq \Coloc(\Coind M).
\]
Since $\Res$ preserves limits, using \Cref{lem:preservecoloc} and \Cref{lem:cosuppcoind} we have
\[
\Res T_{\fq}I_{\fq} \in \Coloc(\Res \Coind M) \subseteq \Coloc(M).
\]
But the collection of objects $\{ \Res T_{\fq}I_{\fq} \mid \res(\fq) = \fp \}$ cogenerates the category $\Lambda^{\fp}\Mod_R$ by \cite[Prop.~5.4 and Lem.~7.12]{bik12}, so
\[
\Coloc(M) \subseteq \Lambda^{\fp}\Mod_R = \Coloc(\{ \Res T_{\fq}I_{\fq} \mid \res(\fq) = \fp \}) \subseteq \Coloc(M),
\]
where the middle equality uses \cite[Prop.~5.2]{bik12}. Therefore, $\Coloc(M) = \Lambda^{\fp}\Mod_R$ and thus $\Lambda^{\fp}\Mod_R$ is a minimal colocalizing subcategory. Since the local to global principle holds by \Cref{thm:localtoglobal}, $\Mod_R$ is canonically costratified.
\end{proof}

\begin{cor}
Suppose $f \colon R \to S$ is a finite morphism of Noetherian commutative ring spectra satisfying Quillen lifting and such that $\Ind$ or $\Coind$ are conservative. If $\Mod_S$ is canonically costratified, then $\Mod_R$ is canonically costratified. 
\end{cor}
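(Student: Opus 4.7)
The plan is to obtain this corollary by chaining together the two preceding propositions, using the fact that costratification is a strictly stronger condition than stratification. Concretely, by \cite[Thm.~9.7]{bik12} (which was recalled in the discussion following the classification theorem above), canonical costratification of $\Mod_S$ implies that $\Mod_S$ is canonically stratified. Thus all the hypotheses of \Cref{prop:finitedescentstrat} are satisfied: $f$ is a finite morphism, satisfies Quillen lifting, induction or coinduction along $f$ is conservative (which by \Cref{lem:ambichouinard} are equivalent for finite morphisms), and $\Mod_S$ is canonically stratified. Applying \Cref{prop:finitedescentstrat} then yields that $\Mod_R$ is canonically stratified.

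With $\Mod_R$ known to be canonically stratified, I can then invoke \Cref{prop:finitedescentcostrat}, whose hypotheses require precisely: $f$ finite, $\Ind$ or $\Coind$ conservative, $\Mod_S$ canonically costratified, and $\Mod_R$ canonically stratified. All of these are now in hand, so the conclusion is that $\Mod_R$ is canonically costratified.

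There is no real obstacle beyond tracking which implications provide which pieces of the hypothesis list. The only conceptual point worth highlighting is that the Quillen lifting assumption is used solely to pass from costratification of $\Mod_S$ to stratification of $\Mod_R$ via \Cref{prop:finitedescentstrat} (the descent of costratification in \Cref{prop:finitedescentcostrat} does \emph{not} itself require Quillen lifting, only conservativity of $\Ind$ or $\Coind$ together with prior stratification of $\Mod_R$). In other words, the corollary illustrates that once canonical stratification has been established downstairs — which is exactly the role played by the Quillen lifting hypothesis — costratification descends automatically under the mild finiteness and conservativity assumptions.
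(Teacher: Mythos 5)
Your proof is correct and matches the paper's approach exactly: the paper also proves this corollary by chaining \Cref{prop:finitedescentstrat} with \Cref{prop:finitedescentcostrat}, merely stating that the result is an immediate consequence of those two propositions. You have usefully spelled out the intermediate step that costratification of $\Mod_S$ implies stratification of $\Mod_S$ (via \cite[Thm.~9.7]{bik12}), which the paper leaves implicit.
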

\begin{proof}
This is an immediate consequence of \Cref{prop:finitedescentstrat} and \Cref{prop:finitedescentcostrat}. 
\end{proof}

\begin{rem}[Linearity of $\Coind$]
We claim that, if $R \to S$ is finite, then $\Coind$ is a linear functor $(\Mod_R,\pi_*R) \to (\Mod_S, \pi_*S)$ in the sense of \cite[Sec.~7]{bik12}. As a consequence, the proof of costratification given in \cite{bik12} also directly generalizes to give \Cref{prop:finitedescentcostrat}, where $\Coind$ plays the role of the functor they denote by $(-)\downarrow$.  

In order to verify the claim, must show that for any $R$-module $M$ the diagram
\[
\xymatrix@C=2cm{
\pi_*R \cong \End_R^*(R) \ar[d]_{M \otimes_R -} \ar[r]^-{\Ind} & \pi_*S \cong \End_S^*(S) \ar[d]^{\Coind (M) \otimes_S -} \\
\End_R^*(M) \ar[r]^-{\Coind} & \End_S^*(\Coind M)
}
\]
commutes, or in other words, for a morphism of $R$-modules $\alpha \colon \Sigma^iR \to R$, we must show that that 
\[\Sigma^i\Coind M \xr{\Coind(M \otimes_R \alpha)}  \Coind M\] 
is equivalent to 
\[
\Sigma^i \Coind (M) \otimes_S S \xr{\Coind (M) \otimes_S \Ind(\alpha)} \Coind (M) \otimes_S S. 
\]
Because of the finiteness assumption, we have a natural equivalence $\Coind(M \otimes_R N) \simeq \Coind(M) \otimes_S \Ind(N)$, see \cite[(3.6)]{bds_wirth}. It follows that $\Coind(M \otimes_R \alpha) \simeq \Coind(M) \otimes_S \Ind(\alpha)$, as required.
\end{rem}

If $G$ is a compact Lie group, then one would like to apply our abstract results to the morphism
\[
\xymatrix{f \colon C^*(BG,\F_p) \ar[r] & \prod \limits_{E \leq S} C^*(BE,\F_p),} 
\]
where $E$ runs over the collection of elementary abelian $p$-subgroups of $G$. Hence, we need to know about stratification and costratification for module spectra over $\Mod_{C^*(BE,\F_p)}$. This has previously been proved by Benson--Iyengar--Krause.
\begin{prop}[Benson--Iyengar--Krause]\label{prop:elementary}\sloppy
Let $E$ be an elementary abelian group, then $\Mod_{C^*(BE,\F_p)}$ is canonically stratified and costratified.
\end{prop}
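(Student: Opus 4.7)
The plan is to reduce the statement to Neeman's classification of localizing subcategories of the derived category of a graded Noetherian commutative ring. As a first step, I would observe that canonical (co)stratification of $\Mod_R$ for $R = C^*(BE,\F_p)$ depends only on the graded ring $\pi_{-*}R = H^*(BE,\F_p)$ and not on the finer $E_\infty$-coherent structure of $R$. Indeed, the functors $\Gamma_{\fp}$ and $\Lambda^{\fp}$ are built from the Koszul objects $\kos{R}{\fp}$ whose construction (as recalled in \Cref{sec:localcohom}) only references the action of $\pi_*R$ on $\Mod_R$, and $\Spec^h(\pi_*R)$ is likewise intrinsic to the coefficient ring. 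The cohomology ring is $\F_p[y_1,\ldots,y_n]$ for $p=2$ and $\F_p[y_1,\ldots,y_n]\otimes\Lambda(x_1,\ldots,x_n)$ for $p$ odd; in both cases $\Spec^h(\pi_*R)$ coincides with $\Spec^h(\F_p[y_1,\ldots,y_n])$, since the exterior generators are nilpotent and do not contribute to the homogeneous spectrum.

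Having reduced to the polynomial case, I would invoke Neeman's classification in the graded version, which establishes the bijection between localizing subcategories of $D(S)$ and subsets of $\Spec^h(S)$ for a graded Noetherian commutative ring $S = \F_p[y_1,\ldots,y_n]$. This classification is equivalent to canonical stratification: the local-to-global principle is already supplied by \Cref{thm:localtoglobal}, so the only substantive step is to verify minimality of $\Gamma_{\fp}\Mod_R$ for each homogeneous prime $\fp$. This follows because $\Gamma_{\fp}R$ generates a localizing subcategory equivalent to the derived category of the graded residue field at $\fp$, which is evidently minimal. Costratification follows by the dual argument using the perfect cogenerators $T_{\fp}I_{\fp}$ of $\Lambda^{\fp}\Mod_R$ introduced in the preamble to \Cref{prop:finitedescentcostrat}; over a graded Noetherian local ring these arise as Matlis duals of the corresponding Koszul objects, and their colocalizing subcategories are opposite to the minimal localizing strata at $\fp$.

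The main obstacle I anticipate is the reduction in the first step, namely justifying that the $E_\infty$-structure of $C^*(BE,\F_p)$ does not affect the support-theoretic data. While formality of $C^*(BE,\F_p)$ as a commutative ring spectrum is known at $p=2$ and can be arranged at odd primes with additional care, the cleanest route is to bypass formality and observe directly that the constructions of $\Gamma_{\fp}$, $\Lambda^{\fp}$, and the Koszul objects of \Cref{sec:localcohom}, as well as the definitions of canonical (co)stratification given in \Cref{defn:stratification} and \Cref{defn:costratification}, only ever see the graded ring $\pi_*R$ acting on $\Mod_R$. Granting this, the result follows from the classical polynomial-ring case via Neeman's theorem as adapted by Benson--Iyengar--Krause to the graded setting.
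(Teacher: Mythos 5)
The paper's proof goes via the Krause--Benson equivalence of symmetric monoidal stable categories $\Mod_{C^*(BE,\F_p)} \simeq K(\Inj \F_pE)$, compatible with the $H^*(BE,\F_p)$-actions, and then quotes the (co)stratification theorems of Benson--Iyengar--Krause for $K(\Inj kG)$. Your route is genuinely different and, unfortunately, has a gap at its central step.

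The pivotal claim is that ``canonical (co)stratification of $\Mod_R$ depends only on the graded ring $\pi_*R$ and not on the finer $E_\infty$-coherent structure of $R$.'' This is not true, and the reasoning offered in its support does not establish it. It is true that $\Gamma_{\fp}$, $\Lambda^{\fp}$, the Koszul objects, and $\Spec^h(\pi_*R)$ are all constructed by reference only to the $\pi_*R$-action; but stratification is a statement about minimality of the localizing subcategories $\Gamma_{\fp}\Mod_R$ inside the ambient category $\Mod_R$, and that ambient category is emphatically \emph{not} determined by $\pi_*R$. Two ring spectra with isomorphic Noetherian homotopy rings can have very different module categories and very different answers to the minimality question. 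So nothing in your first paragraph produces an equivalence $\Mod_{C^*(BE,\F_p)} \simeq D(\F_p[y_1,\dots,y_n])$, which is what you need before invoking Neeman's classification. Such an equivalence amounts precisely to formality of $C^*(BE,\F_p)$, i.e.\ the step you explicitly say you want to bypass.

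There is a second, smaller gap in the odd-primary case: even granting formality, so that $\Mod_{C^*(BE,\F_p)} \simeq D(H^*(BE,\F_p))$ with $H^*(BE,\F_p) = \F_p[y_1,\dots,y_n]\otimes\Lambda(x_1,\dots,x_n)$, the observation that the exterior generators do not contribute to $\Spec^h$ does not let you ``reduce to the polynomial ring.'' The homogeneous spectra agree, but the derived categories do not, so Neeman's theorem for $\F_p[y_1,\dots,y_n]$ does not directly apply; the minimality of $\Gamma_{\fp}D(\F_p[y_i]\otimes\Lambda(x_i))$ requires an argument of its own (and indeed this is essentially the content of the result for $kE$ that Benson--Iyengar--Krause prove). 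The cleanest way to fill both gaps is exactly the route the paper takes: transfer the problem by a concrete, monoidal, $H^*(BE,\F_p)$-linear equivalence to a category where the (co)stratification is already known.
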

\begin{proof}
By \cite[Thm.~4.2]{krausebenson_kg} and \cite[Thm.~7.8]{krausebenson_kg}, $\Mod_{C^*(BE,\F_p)}$ is equivalent as a symmetric monoidal stable category to $K(\Inj \F_p{E})$, the category of complexes of injective $\F_p{E}$-modules with homotopy equivalences as weak equivalences. Moreover, this equivalence is compatible with the canonical actions of $H^*(BE,\F_p) \cong \pi_{-*}C^*(BE,\F_p)$ on both sides. The stratification result then follows from \cite[Thm.~8.1]{bik_finitegroups}, while the costratification follows from \cite[Thm.~11.6]{bik12}.
\end{proof} 

\begin{rem}
If $G$ is a compact Lie group with $\pi_0G$ a finite $p$-group, then the arguments of \cite{bg_stratifyingcompactlie} show that the relevant induction and coinduction functors are naturally equivalent up to an invertible twist, so that the proof of costratification given in \cite{bik12} immediately yields canonical costratification for $\Mod_{C^*(BG,\F_p)}$ as well. However, in general, $f\colon C^*(BG,\F_p) \to C^*(BE,\F_p)$ is not finite, so that the corresponding induction functor does not admit a left adjoint. For example, as explained in \cite[Ex.~10.7]{greenlees_hi}, the homomorphism $C^*(BA_4,\F_2) \to C^*(B(\Z/2 \times \Z/2),\F_2)$ induced by the inclusion $\Z/2 \times \Z/2 \to A_4$ is not finite. Therefore, the methods of this subsection are insufficient to imply stratification for all compact Lie groups; the same restrictions apply to the arguments given in \cite{bg_stratifyingcompactlie}. We will return to the general case in \Cref{rem:compactliecostrat}.
\end{rem}

\subsection{Descent for stratification}
In this section, we give conditions on a morphism $f \colon R \to S$ such that we can descend a canonical stratification from $\Mod_S$ to $\Mod_R$. 

\begin{prop}\label{prop:cosuppcoind}
Assume that $\Coind$ is conservative and let $M \in \Gamma_{\fp}\Mod_R$ be a non-trivial module, then $\cosupp_S(\Coind M) \cap \res^{-1}(\{\fp\}) \neq \varnothing$.
\end{prop}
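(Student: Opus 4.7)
The plan is to reformulate the desired conclusion in terms of the local homology functor $\Lambda^{\fp}$ on $\Mod_R$ and then prove the two inclusions $\varnothing \neq \cosupp_R(\Res\Coind M) \subseteq \{\fp\}$. Indeed, by the cosupport equality in \Cref{cor:7.8} we have $\res\cosupp_S(\Coind M) = \cosupp_R(\Res\Coind M)$, so the stated intersection is non-empty precisely when $\fp \in \cosupp_R(\Res\Coind M)$, i.e., when $\Lambda^{\fp}\Res\Coind M \not\simeq 0$.

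Non-emptiness is the easy half: since $\Coind$ is conservative and $M\not\simeq 0$, we have $\Coind M \not\simeq 0$, hence $\cosupp_S(\Coind M) \neq \varnothing$ by \Cref{thm:cosupptrivialobjects}, and applying $\res$ gives $\cosupp_R(\Res\Coind M) \neq \varnothing$.

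For the containment $\cosupp_R(\Res\Coind M) \subseteq \{\fp\}$, I would fix $\fq \neq \fp$ and show $\Lambda^{\fq}\Res\Coind M \simeq 0$. Because $\Gamma_{\fq}$ is smashing, its right adjoint is $\Hom_R(\Gamma_{\fq}R,-)$, and combining this with $\Res\Coind M \simeq \Hom_R(S,M)$ and the tensor-Hom adjunction yields
\[
\Lambda^{\fq}\Res\Coind M \simeq \Hom_R(\Gamma_{\fq}R \otimes_R S, M) \simeq \Hom_R(\Res\Ind\,\Gamma_{\fq}R,\, M).
\]
The crucial observation is that $A := \Res\Ind\,\Gamma_{\fq}R$ lies in $\Gamma_{\fq}\Mod_R$: using smashing twice and idempotence of $\Gamma_{\fq}$,
\[
\Gamma_{\fq}A \simeq (\Gamma_{\fq}R \otimes_R \Gamma_{\fq}R)\otimes_R S \simeq \Gamma_{\fq}R \otimes_R S \simeq A.
\]

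Now I would invoke that $\Gamma_{\fq}$ is a colocalization, i.e., right adjoint to the inclusion $\Gamma_{\fq}\Mod_R \hookrightarrow \Mod_R$. This gives the natural isomorphism $\Hom_R(A,N) \simeq \Hom_R(A,\Gamma_{\fq}N)$ for any $N \in \Mod_R$ and $A \in \Gamma_{\fq}\Mod_R$. Applying it to $N = M$ and using both $M \simeq \Gamma_{\fp}M$ and the orthogonality relation $\Gamma_{\fq}\Gamma_{\fp} \simeq 0$ for $\fq \neq \fp$ from \Cref{prop:orthogonality}(3), we obtain
\[
\Hom_R(A,M) \simeq \Hom_R(A, \Gamma_{\fq}\Gamma_{\fp}M) \simeq 0.
\]
Hence $\Lambda^{\fq}\Res\Coind M \simeq 0$ for all $\fq \neq \fp$, completing the upper bound. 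Combining with the previous paragraph gives $\cosupp_R(\Res\Coind M) = \{\fp\}$, and so $\fp$ lies in $\res\cosupp_S(\Coind M)$, which is the desired conclusion. The main subtlety is the clean identification of $\Lambda^{\fq}\Res\Coind M$ as a Hom out of the $\Gamma_{\fq}$-torsion object $\Res\Ind\,\Gamma_{\fq}R$, since this is what converts the problem into a direct application of orthogonality of Koszul-type objects at distinct primes.
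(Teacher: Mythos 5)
Your argument breaks at the adjunction step. You assert that $\Gamma_{\fq}$ is ``a colocalization, i.e., right adjoint to the inclusion $\Gamma_{\fq}\Mod_R \hookrightarrow \Mod_R$'' and then use $\Hom_R(A,N)\simeq\Hom_R(A,\Gamma_{\fq}N)$. That is false: $\Gamma_{\fq}=\Gamma_{\cV(\fq)}L_{\cZ(\fq)}$ is a zig-zag composite and has no natural counit $\Gamma_{\fq}N\to N$, so it cannot be right adjoint to any inclusion. The functor $\Gamma_{\cV}$ for $\cV$ specialization closed is a colocalization, but the punctual $\Gamma_{\fq}$ is not. The adjunction you \emph{do} have, from \Cref{prop:orthogonality}(2), is $\Gamma_{\fq}\dashv\Lambda^{\fq}$, which for $A\simeq\Gamma_{\fq}A$ gives
\[
\Hom_R(A,M)\simeq\Hom_R(\Gamma_{\fq}A,M)\simeq\Hom_R(A,\Lambda^{\fq}M).
\]
This converts the vanishing problem to $\Lambda^{\fq}M\simeq 0$, i.e., to $\fq\notin\cosupp_R(M)$, which is a statement about \emph{cosupport}. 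But the hypothesis $M\in\Gamma_{\fp}\Mod_R$ controls only $\supp_R(M)\subseteq\{\fp\}$; it gives $\Gamma_{\fq}M\simeq 0$ for $\fq\neq\fp$, not $\Lambda^{\fq}M\simeq 0$. Support and cosupport are independent invariants in this theory, so the orthogonality relation $\Gamma_{\fq}\Gamma_{\fp}\simeq 0$ simply does not bear on $\Lambda^{\fq}M$. The argument therefore does not close.

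More globally, the target you set yourself, $\cosupp_R(\Res\Coind M)\subseteq\{\fp\}$, is stronger than what is needed and is almost certainly not provable at this stage. Via your own computation, $\Lambda^{\fq}\Res\Coind M\simeq\Hom_R(\Gamma_{\fq}\Res S,M)$, and with $\Mod_R$ stratified one would identify this cosupport as $\supp_R(\Res S)\cap\cosupp_R(M)$ (\Cref{thm:tensor_hom_(co)supp}) — in general strictly larger than $\{\fp\}$. But stratification of $\Mod_R$ is precisely what \Cref{thm:stratdescent}, of which this proposition is a lemma, sets out to establish, so one cannot assume it. The paper's proof sidesteps this by (i) invoking the nontrivial fact (from \cite[Thm.~4.13]{bik12}) that $\supp_R(M)=\{\fp\}$ forces $\fp\in\cosupp_R(M)$, which gives $\Coind\Lambda^{\fp}M\simeq\Hom_R(\Gamma_{\fp}\Res S,M)\not\simeq 0$; (ii) using the structural statement of \Cref{cor:1} that $\Gamma_{\fp}\Res S\in\Loc(\Res\Gamma_{\fq}S\mid\res(\fq)=\fp)$; and (iii) concluding by contradiction that some $\fq\in\res^{-1}(\{\fp\})$ satisfies $\Hom_S(\Gamma_{\fq}S,\Coind M)\not\simeq 0$. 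Your proposal omits both (i) and (ii), and these two ingredients are exactly what replaces the unavailable ``colocalization of $\Gamma_{\fq}$'' step.
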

\begin{proof}
First note that, for any $N \in \Mod_S$, the non-triviality of $\Hom_S(\Gamma_{\fq}S,N) \simeq \Lambda^{\fq}N$ (see \Cref{prop:orthogonality}(2)) is equivalent to $\fq \in \cosupp_S(N)$. Therefore, for $M \in \Gamma_{\fp}\Mod_R$, it suffices to show that 
\[
\Hom_S(\Gamma_{\fq}S, \Coind M) \not\simeq 0
\]
for at least one $\fq \in \res^{-1}(\{\fp\})$. On the one hand, by local duality, there are equivalences
\begin{align*}
  \Hom_R(\Gamma_{\fp}\Res S, M) & \simeq \Hom_R(\Res S, \Lambda^{\fp}M) \\
  & \simeq \Hom_S(S, \Coind\Lambda^{\fp}M) \\
  & \simeq \Coind\Lambda^{\fp}M.
\end{align*}
Because $\supp_R(M)= \{\fp\}$, we get $\fp \in \cosupp_R(M)$ by \cite[Thm.~4.13]{bik12}, so $\Coind\Lambda^{\fp}M \not\simeq 0$ as $\Coind$ is conservative. On the other hand, \Cref{cor:1} gives that
\[
\Gamma_{\fp}\Res S \in \Loc(\Res \Gamma_{\fq}S\mid \res(\fq) = \fp).
\]
Since there is an equivalence of $R$-modules
\[
\Hom_S(\Gamma_{\fq}S, \Coind M) \simeq \Hom_R(\Res\Gamma_{\fq}S,M),
\]
$\Hom_S(\Gamma_{\fq}S, \Coind M) \simeq 0$ for all $\fq \in \res^{-1}(\{\fp\})$ would force $\Hom_R(\Gamma_{\fp}\Res S, M) \simeq 0$ as well. This contradicts $\Coind\Lambda^{\fp}M \not\simeq 0$ as proven above, hence the result follows. 
\end{proof}

\begin{thm}\label{thm:stratdescent}
Suppose that $f \colon R \to S$ is a morphism of Noetherian ring spectra satisfying Quillen lifting and such that $\Ind$ and $\Coind$ are conservative. If $\Mod_S$ is canonically stratified, then so is $\Mod_R$. 
\end{thm}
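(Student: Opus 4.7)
The local-to-global principle for $\Mod_R$ holds automatically by \Cref{thm:localtoglobal}, so the core of the argument is to show that $\Gamma_\fp \Mod_R$ is minimal as a localizing subcategory for each $\fp \in \Spec^h(\pi_*R)$. Fixing a nonzero $M \in \Gamma_\fp \Mod_R$, I claim it suffices to prove the following: any $N \in \Gamma_\fp \Mod_R$ with $\Hom_R(M, N) \simeq 0$ must be zero. Indeed, if $\Loc_R(M) \subsetneq \Gamma_\fp\Mod_R$, one could choose $N' \in \Gamma_\fp \Mod_R$ not in $\Loc_R(M)$; the colocalization fibre sequence $N'_1 \to N' \to N'_2$ with $N'_1 \in \Loc_R(M)$ and $N'_2 \in \Loc_R(M)^\perp$ would produce a nonzero $N'_2 \in \Gamma_\fp\Mod_R \cap \Loc_R(M)^\perp$ (using that $\Gamma_\fp\Mod_R$ is closed under cofibres and contains $\Loc_R(M)$), contradicting the claim because $\Loc_R(M)^\perp = \{N \in \Mod_R : \Hom_R(M, N) \simeq 0\}$.

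Assume for contradiction that a nonzero $N \in \Gamma_\fp \Mod_R$ with $\Hom_R(M, N) \simeq 0$ exists. The first step is to locate a prime $\fq \in \res^{-1}(\{\fp\})$ that is simultaneously visible through the support of $\Ind M$ and the cosupport of $\Coind N$. Since $\Ind$ is conservative, \Cref{cor:7.8} gives $\res\supp_S(\Ind M) = \supp_R(M) = \{\fp\}$, so in particular $\fp \in \res\supp_S(\Ind M)$. Since $\Coind$ is conservative and $N \in \Gamma_\fp\Mod_R$ is nonzero, \Cref{prop:cosuppcoind} produces a prime in $\cosupp_S(\Coind N) \cap \res^{-1}(\{\fp\})$, so $\fp \in \res\cosupp_S(\Coind N)$. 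Quillen lifting then supplies $\fq \in \res^{-1}(\{\fp\})$ with $\fq \in \supp_S(\Ind M) \cap \cosupp_S(\Coind N)$.

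With this $\fq$ in hand, the assumed stratification of $\Mod_S$ combined with the tensor-hom formula of \Cref{thm:tensor_hom_(co)supp} gives
\[
\cosupp_S(\Hom_S(\Ind M, \Coind N)) = \supp_S(\Ind M) \cap \cosupp_S(\Coind N) \ni \fq,
\]
so $\Hom_S(\Ind M, \Coind N) \not\simeq 0$. Adjunction rewrites this as $\Hom_R(\Res\Ind M, N) \not\simeq 0$. However, $\Res\Ind M \simeq M \otimes_R S$, and the full subcategory $\{X \in \Mod_R : M \otimes_R X \in \Loc_R(M)\}$ is localizing (because $M \otimes_R -$ preserves colimits) and contains $R$, hence equals $\Mod_R$; thus $\Res\Ind M \in \Loc_R(M)$. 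Since $N \in \Loc_R(M)^\perp$ by assumption, this forces $\Hom_R(\Res\Ind M, N) \simeq 0$, a contradiction, completing the proof.

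The delicate point is the production of the prime $\fq$ in step two: the support side for $\Ind M$ is an immediate consequence of \Cref{cor:7.8}, but the cosupport side for $\Coind N$ genuinely requires \Cref{prop:cosuppcoind}, where the conservativity of $\Coind$ enters essentially. Once $\fq$ is secured, the contradiction is a short manipulation of the tensor-hom formula together with the elementary observation that $M \otimes_R S \in \Loc_R(M)$.
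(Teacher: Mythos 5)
Your proof is correct and essentially identical to the paper's. The chain of deductions — $\fp \in \res\supp_S(\Ind M)$ via \Cref{cor:7.8}, $\fp \in \res\cosupp_S(\Coind N)$ via \Cref{prop:cosuppcoind}, producing $\fq$ via Quillen lifting, then applying \Cref{thm:tensor_hom_(co)supp} in $\Mod_S$ and the adjunction $\Hom_S(\Ind M, \Coind N) \simeq \Hom_R(\Res\Ind M, N)$, concluding via $\Res\Ind M \in \Loc_R(M)$ — is exactly the paper's argument. The only cosmetic difference is that the paper invokes the minimality criterion of Benson--Iyengar--Krause (``$\Gamma_\fp\Mod_R$ is minimal iff $\Hom_R(M,N) \neq 0$ for all nonzero $M,N \in \Gamma_\fp\Mod_R$'') directly as a citation, whereas you re-derive the criterion in-line via the colocalization triangle against $\Loc_R(M)$; both routes, of course, reduce to proving the same non-vanishing of mapping spectra.
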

\begin{proof}
  Since the local to global principle holds for any Noetherian commutative ring spectrum by \Cref{thm:localtoglobal}, it remains to verify that $\Gamma_{\fp}\Mod_R$ is minimal for any homogeneous prime ideal $\fp \subseteq \pi_*R$. To this end, we will apply the criterion given in \cite[Lem.~4.1]{bik11}: it thus suffices to show that $\Hom_R(M,N) \not\simeq 0$ for all non-trivial modules $M,N \in \Gamma_{\fp}\Mod_R$. 

By \Cref{cor:7.8} and \Cref{prop:cosuppcoind} we have 
\[
\supp_S(\Ind M) \cap \res^{-1}(\{\fp\}) \ne \varnothing \ne \cosupp_S(\Coind N) \cap \res^{-1}(\{\fp\}). 
\]
  It follows that 
\[
\fp \in \res(\supp_S(\Ind M)) \cap \res(\cosupp_S(\Coind N)),
\]
  and so Quillen lifting yields a homogeneous prime ideal $\fq \in \res^{-1}(\{\fp\})$ such that 
\[
\fq \in \supp_S(\Ind M) \cap \cosupp_S(\Coind N). 
\]
  Since $\Mod_S$ is canonically stratified, \Cref{thm:cosupptrivialobjects} and \Cref{thm:tensor_hom_(co)supp} imply that 
\[
 0 \not\simeq \Hom_S(\Ind M,\Coind N) \simeq \Hom_R(\Res\Ind M,N) .
\]
Because $\Res\Ind M \in \Loc(M)$, it follows that $\Hom_R(M,N) \not\simeq 0$, as required.  
\end{proof}

\subsection{Descent for costratification}\label{sec:costrat}

For the following, we remind the reader of the $R$-module $T_{\fp}I_{\fp}$ from \Cref{ssec:finitedescent}. 

\begin{thm}\label{thm:costratdescent}
Suppose that $f\colon R \to S$ is a morphism of Noetherian commutative ring spectra which admits an $R$-module retract. If $\Mod_S$ is canonically costratified and $\Mod_R$ is canonically stratified, then $\Mod_R$ is also canonically costratified. 
\end{thm}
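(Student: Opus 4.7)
The strategy is to mimic the approach of \Cref{prop:finitedescentcostrat}, replacing the finiteness of $f$ by the retract hypothesis. Since $R$ is Noetherian, the local to global principle for colocalizing subcategories holds by \Cref{thm:localtoglobal}, so it suffices to show that $\Lambda^{\fp}\Mod_R$ is minimal as a colocalizing subcategory for every $\fp \in \Spec^h(\pi_*R)$. Since $T_{\fp}I_{\fp}$ cogenerates $\Lambda^{\fp}\Mod_R$, the task reduces to showing that for every nonzero $M \in \Lambda^{\fp}\Mod_R$ one has $T_{\fp}I_{\fp} \in \Coloc_R(M)$.

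By \Cref{lem:retract}, the retract guarantees that both $\Ind$ and $\Coind$ are conservative. \Cref{prop:abstractsubgroupthm}, which uses the canonical stratification of $\Mod_R$, gives
\[
\cosupp_S(\Coind M) = \res^{-1}(\cosupp_R(M)) = \res^{-1}(\{\fp\}),
\]
and the same computation applied to $T_{\fp}I_{\fp}$ yields $\cosupp_S(\Coind T_{\fp}I_{\fp}) = \res^{-1}(\{\fp\})$. The canonical costratification of $\Mod_S$ identifies colocalizing subcategories with subsets of $\Spec^h(\pi_*S)$ via cosupport, so $\Coind T_{\fp}I_{\fp} \in \Coloc_S(\Coind M)$. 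Applying $\Res$ and \Cref{lem:preservecoloc} yields $\Res\Coind T_{\fp}I_{\fp} \in \Coloc_R(\Res\Coind M)$.

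The main obstacle is passing from $\Coloc_R(\Res\Coind M)$ to $\Coloc_R(M)$, since without a finiteness hypothesis we cannot appeal to compactness of $\Res S$. The observation that resolves it is that $\Res\Coind N \simeq \Hom_R(\Res S, N) \in \Coloc_R(N)$ for any $N \in \Mod_R$. Indeed, $\Mod_R = \Loc_R(R)$ because $R$ is a compact generator, so $\Res S \in \Loc_R(R)$. The full subcategory of those $X \in \Mod_R$ such that $\Hom_R(X, N) \in \Coloc_R(N)$ is closed under colimits, retracts and desuspensions, because $\Hom_R(-, N)$ converts colimits to limits and $\Coloc_R(N)$ is closed under limits, retracts, and suspensions. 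Since it contains $R$, it coincides with $\Mod_R$. This is the same fact implicit in the proof of \Cref{lem:cosuppcoind} and in the finite case of \Cref{prop:finitedescentcostrat}.

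It follows that $\Coloc_R(\Res\Coind M) \subseteq \Coloc_R(M)$ and hence $\Res\Coind T_{\fp}I_{\fp} \in \Coloc_R(M)$. Applying \Cref{lem:retract} to $T_{\fp}I_{\fp}$ shows that $T_{\fp}I_{\fp}$ is an $R$-module retract of $\Res\Coind T_{\fp}I_{\fp}$, so $T_{\fp}I_{\fp} \in \Coloc_R(\Res\Coind T_{\fp}I_{\fp}) \subseteq \Coloc_R(M)$. Consequently $\Lambda^{\fp}\Mod_R = \Coloc_R(T_{\fp}I_{\fp}) \subseteq \Coloc_R(M) \subseteq \Lambda^{\fp}\Mod_R$, which establishes minimality and completes the proof.
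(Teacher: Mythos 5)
Your proof is correct and follows essentially the same strategy as the paper: reduce to minimality of $\Lambda^{\fp}\Mod_R$, apply $\Coind$ and use the $S$-side costratification to capture $\Coind T_{\fp}I_{\fp}$ in $\Coloc_S(\Coind M)$, then pass back with $\Res$ and the retract. The only noteworthy stylistic difference is that on the $S$-side the paper reasons through the local-to-global principle and the subcategory $\Coloc(\Lambda^{\fq}S \mid \fq \in \res^{-1}\{\fp\})$, whereas you invoke the classification of colocalizing subcategories by cosupport directly (and use \Cref{prop:abstractsubgroupthm} rather than the weaker \Cref{lem:cosuppcoind} to compute $\cosupp_S(\Coind T_{\fp}I_{\fp})$); you also spell out the justification for $\Res\Coind N \in \Coloc_R(N)$ that the paper leaves implicit. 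These are equivalent routes to the same chain of inclusions.
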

\begin{proof}
Since the local to global principle holds for Noetherian ring spectra (see \Cref{thm:localtoglobal}), it suffices to show that, given $\fp \in \Spec^h(\pi_*R)$, any non-trivial $M \in \Lambda^{\fp}\Mod_R$ satisfies $\Coloc(M) = \Lambda^{\fp}\Mod_R$. The inclusion $\subseteq$ holds because $\Lambda^{\fp}\Mod_R$ is colocalizing, so it remains to verify that a cogenerator for $\Lambda^{\fp}\Mod_R$ is contained in $\Coloc(M)$.

To this end, note that $\Lambda^{\fq}\Coind M \not\simeq 0$ for all $\fq \in \res^{-1}(\{\fp\})$ by \Cref{prop:abstractsubgroupthm}, hence the colocalizing subcategory generated by $\Lambda^{\fq}\Coind M$ coincides with $\Lambda^{\fq}\Mod_S$ for all $\fq\in\res^{-1}(\{\fp\})$, using that $\Mod_S$ is costratified. By the local to global principle, we thus obtain an inclusion
\[
\Coloc(\Lambda^{\fq}S\mid \fq \in \res^{-1}(\{\fp\})) = \Coloc(\Lambda^{\fq}\Coind M\mid \fq \in \res^{-1}(\{\fp\})) \subseteq \Coloc(\Coind M).
\]
Since $\cosupp_R(T_{\fp}I_{\fp}) = \{\fp\}$, \Cref{lem:cosuppcoind} shows that 
\[
\cosupp_S(\Coind T_{\fp}I_{\fp}) \subseteq \res^{-1}\cosupp_R(T_{\fp}I_{\fp}) = \res^{-1}(\{\fp\}),
\]
so $\Coind T_{\fp}I_{\fp} \in \Coloc(\Lambda^{\fq}S\mid \fq \in \res^{-1}(\{\fp\})) \subseteq \Coloc(\Coind M)$. It thus follows from the fact that $\Res$ preserves limits, \Cref{lem:preservecoloc}, and \Cref{lem:cosuppcoind} that
\[
\Res\Coind T_{\fp}I_{\fp}  \in \Coloc(\Res\Coind M) \subseteq \Coloc(M).
\]
The perfect cogenerator $T_{\fp}I_{\fp}$ of $\Lambda^{\fp}\Mod_R$ is a retract of $\Res\Coind T_{\fp}I_{\fp}$ by assumption on $f$, so $\Lambda^{\fp}\Mod_R \subseteq \Coloc(M)$, as desired. 
\end{proof}

\section{Homotopical groups}\label{sec:homotopicalgroups}
In this section we provide the background material on homotopical groups, introducing the concepts of $p$-compact groups, 
$p$-local finite groups, and $p$-local compact groups. We then prove a version of Chouinard's theorem for a large class of $p$-local compact groups 
by constructing a stable transfer.

\subsection{The homotopy theory of compact Lie groups and $p$-compact groups}\label{ssec:recollectionshomgps}
In order to study the homotopical properties of compact Lie groups, Rector suggested studying their classifying spaces; however, he observed that there are uncountably many distinct loop space structures on $S^3$  \cite{rector_loops}. 
It was then shown by Dwyer, Miller, and Wilkerson that this problem goes away after $p$-completion: there is a unique loop space 
structure on $\pc{(S^3)}$ \cite{dwyermillerwilk}.

This example suggests that the right category to isolate the homotopical properties of compact Lie groups is the category of 
$p$-complete loop spaces. Based on this, Dwyer and Wilkerson \cite{dwyerwilkerson_finloop} introduced the notion of a $p$-compact group. 

\begin{defn}\label{defn:pcompactgroup}
	A $p$-compact group is a triple $(X,BX,e)$ where $X$ is an $\F_p$-finite space, $BX$ is a $p$-complete space, and $e\colon X \to \Omega BX$ is an equivalence. 
\end{defn}
  
\begin{rem}\label{rem:defpcg}
	By \cite[Lem.~2.1]{dwyerwilkerson_finloop}, $BX$ is $p$-complete if and only if $X$ is $p$-complete and $\pi_0X$ is a finite $p$-group.  
\end{rem}

The notion of a $p$-compact group gives a homotopy theoretic version of connected compact Lie groups. If $G$ is a compact Lie group with $\pi_0G$ a finite $p$-group , then $(\pc{G},\pc{BG},\pc{e_G})$ is a $p$-compact group (see \Cref{rem:defpcg}). However, there are exotic examples of $p$-compact groups which are not the $p$-completion  of any compact Lie group. Connected $p$-compact groups were classified in \cite{AG_classification2} and \cite{AGMV_classification1}. 

Most of the geometric structure of a connected compact Lie group can be translated into this homotopy theoretic setting, see \cite{dwyerwilkerson_finloop}. A homomorphism $f \colon X \to Y$ of $p$-compact groups is a pointed map $Bf \colon BX \to BY$, and $f$ is said to be a monomorphism if 
$H^*(Y/f(X),\F_p)$ is finite where $Y/f(X)$ denotes the homotopy fiber of $Bf$ (equivalently, if $H^*(BX,\F_p)$ is finitely generated as a $H^*(BY,\F_p)$-module, 
see \cite[Prop.~9.11]{dwyerwilkerson_finloop}). In the latter case we say that the pair $(X,f)$ is a subgroup of $Y$, and we 
write $X \leq_f Y$.  

 A $p$-compact torus $T$ is the $p$-completion of a torus and a $p$-compact toral group $\widehat{S}$ is a $p$-compact group  that is a finite extension of a $p$-compact torus by a finite $p$-group. Especially relevant is the fact that $p$-compact groups admit a maximal torus,  an associated Weyl group, and a maximal $p$-compact toral subgroup (see \cite{dwyerwilkerson_finloop}).  We outline the description of $\widehat{S}$ in terms of the structure of the $p$-compact group.  In \cite{dwyerwilkerson_finloop}, the authors show that every $p$-compact group $\cG$ has a maximal torus $T_{\cG} \simeq K(\Z_p^{n},1)$ with classifying space $BT_{\cG}\simeq K(\Z_p^n,2)\simeq K((\Z/p^\infty)^n,1)^\wedge_p$, i.e., there is a homomorphism of $p$-compact groups $T_{\cG} \to \cG$ such that the homotopy fiber $\cG/T_{\cG}$ of  $BT_{\cG} \to B\cG$ is $\F_p$-finite. The rank of $\cG$ is defined then to be $n$. Replacing this map with an equivalent fibration, the Weyl space $W_{\cG} = W_{\cG}(T_{\cG})$ is defined as the space of all fiber maps over the identity, i.e., $(\cG/T_{\cG})^{hT_{\cG}}$.  By \cite[Prop.~9.5]{dwyerwilkerson_finloop} this is a homotopically discrete monoid, and the Weyl group $\pi_0W_{\cG}$ is a finite group whose order is the Euler characteristic  $\chi(\cG/T_{\cG})$ (see \cite[Sec.~4.3]{dwyerwilkerson_finloop}).

The normalizer of the maximal torus $N(T_{\cG})$ is the loop space whose classifying space $BN(T_{\cG})$ is the Borel construction of the action of $W_{\cG}$ on $BT_{\cG}$. The $p$-normalizer is then the loop space of $BN_p(T_{\cG})$ which is obtained by applying the Borel construction with respect to the action of $(W_{\cG})_p$, the union of components of $W_{\cG}$ corresponding to a $p$-Sylow subgroup of $W_{\cG}$. This fits into a fibration sequence
\[
BT_{\cG} \to BN_p(T_{\cG}) \to B(W_{\cG})_p.
\]

Then $\widehat{S}=N_p(T_{\cG})$ is a $p$-compact toral group being an extension of a $p$-complete torus by a finite $p$-group, and it is  the $\F_p$-completion of a discrete $p$-toral group $S$---that is, a discrete group which contains a normal subgroup of the form $(\Z/p^\infty)^r$ of $p$-power index, for finite $r \ge 0$, \cite[Prop.~6.9]{dwyerwilkerson_finloop}.  In \cite{dwyerwilkerson_finloop}, Dwyer and Wilkerson prove that $\widehat{S}=S^\wedge_p$ is a maximal $p$-compact toral subgroup and we call it a $p$-compact toral Sylow subgroup of $\cG$.

As noted, Lie groups in general do not give rise to $p$-compact groups unless $\pi_0(G)$ is a $p$-group. When $G$ is finite, based on the observation that the homotopy type of $\pc{BG}$ is strongly determined by the conjugacy data of $p$-subgroups in $G$, Broto, Levi, and Oliver \cite{blo_fusion} introduced the notion of a $p$-local finite group. Later, Broto, Levi, and Oliver \cite{blo_pcompact} defined the more general notion of a $p$-local compact group in order to recover the homotopy type of compact Lie groups. 

To define these, recall that a saturated fusion system $\mathcal F$ associated to a finite $p$-group $P$ is a small subcategory of the category of groups which encodes conjugacy data between 
subgroups of a fixed finite $p$-group $P$, as formalized by Puig \cite{puig_fusion}. The standard example is given by the fusion category of a finite group: 
Given a finite group $G$ with a fixed Sylow $p$-subgroup $S$, let $\mathcal F_S(G)$ be the category with 
 $\textrm{Mor}_{\mathcal F_S(G)} (P,Q) = \Hom_G (P,Q)$ for all $P,Q \leq S$, where $\Hom_G (P,Q) = \{\varphi \in \Hom(P,Q) \mid \varphi = c_g \mbox{ for some } g \in G\}$
 is the set of homomorphisms induced by subconjugation by an element in $G$.  This category $\mathcal F_S(G)$ gives rise to a saturated fusion system, see \cite[Proposition 1.3]{blo_fusion}.  Exotic examples (e.g., saturated fusion systems which are not equivalent to $\mathcal F_S(G)$ for any finite group $G$) are also known, see \cite{blo_fusion}, \cite{drv_exotic}, or \cite{lo_solomon}, for example.

In \cite{blo_pcompact} Broto, Levi, and Oliver extended the definition of fusion systems over finite $p$-groups to fusion systems defined over a discrete $p$-toral group. However, in order to recover the homotopy type of $\pc{BG}$ more structure is needed. To do so, Broto, Levi, and Oliver axiomatized a new category $\mathcal L$, the centric linking system,  containing the information needed to construct the classifying space.  Packing all this information together, we arrive at the definition of a $p$-local compact group. 

 \begin{defn}[Broto--Levi--Oliver]
 	A $p$-local compact group is a triple $\cG = (S,\cF,\mathcal{L})$ where $\cF$ is a saturated fusion system over a discrete $p$-toral group $S$ and $\mathcal{L}$ is a centric linking system associated to $\cF$. The classifying space $B\cG$ is defined as $|\mathcal{L}|^{\wedge}_{p}$, the $p$-completion of the nerve of the associated centric linking system. 
 \end{defn} 
 
A $p$-local finite group is then the special case where $S$ is a finite $p$-group. More recently, Chermak \cite{chermak_existence} proved the existence and uniqueness of centric linking systems associated to saturated fusion systems over a finite $p$-group (see also \cite{boboliver_existence}) and Levi--Libman \cite{ll_uniqueness} in general, and therefore there is a unique $p$-complete classifying space associated to a saturated fusion system over a discrete $p$-toral group. Hence, we will often denote a $p$-local compact group either as a pair $\cG = (S,\cF)$, or even just as $\cG$.  
 
 Among all the structure associated to a centric linking system, there is a canonical morphism $\theta \colon BS \to B\cG$, see the discussion on p.~826 of \cite{blo_fusion}, which can be thought of as the inclusion of a Sylow $p$-subgroup.  
 
\begin{ex}\label{rem:pcompactmodel}
For any compact Lie group $G$, there exists a maximal discrete $p$-toral subgroup $S$, and a $p$-local compact group $\cG = (S,\cF_S(G),\mathcal{L}_S(G))$ with $|\cal{L}_{S}(G)|^\wedge_{p}\simeq BG^\wedge_{p}$ \cite[Thm.~9.10]{blo_pcompact}. We say that the $p$-local compact group models the compact Lie group $G$. 
\end{ex}

\begin{ex}\label{rem:plocalmodels}
    Given a $p$-compact group $X$, a discrete $p$-toral subgroup is a pair $(S,f)$ where $S$ is a discrete $p$-toral subgroup and $f\colon \widehat{S} \to X$ is a monomorphism in the sense of Dwyer--Wilkerson.  By \cite{dwyerwilkerson_finloop} any $p$-compact group has a maximal discrete $p$-toral subgroup which corresponds to a discrete approximation of a maximal $p$-compact toral subgroup. We recall that a discrete approximation to a $p$-compact toral group $\widehat{S}$ is a pair $(S,f)$ where $S$ is a discrete $p$-toral group and $Bf \colon BS \to B\widehat{S}$ a morphism which induces an isomorphism in mod $p$ cohomology. Such discrete approximations always exist \cite[Prop.~6.9]{dwyerwilkerson_finloop}. Conversely, every discrete $p$-toral group $S$ is a discrete approximation of $(\Omega(BS^{\wedge}_p),BS^{\wedge}_p,\text{id})$. 

Moreover, there exists a $p$-local compact group $\cG = (S,\cF_{S,f}(X),\cal{L}_{S,f})$ with $|\cal{L}_{S,f}|^\wedge_{p} \simeq BX$ \cite[Thm.~10.7]{blo_pcompact}. We say that the $p$-local compact group $\cG$ models the $p$-compact group $X$.  More generally, finite loop spaces at a prime $p$ can also be modeled by $p$-local compact groups \cite{BLO_finiteloopspaces}.
\end{ex} 

Finally, the following definition will be important in the sequel. 
\begin{defn}
  Given a saturated fusion system $(S,\cF)$, we say that two subgroups $P,Q \leqslant S$ are $\cF$-conjugate if they are isomorphic in the category $\cF$. 
\end{defn}
In the case that the $p$-local compact group models a compact Lie group or a $p$-compact group, the notion of $\cF$-conjugacy has a familiar interpretation.
\begin{lem}
\begin{enumerate}
    \item  If a $p$-local compact group $\cG = (S,\cF)$ models a compact Lie group $G$, then two subgroups $P,Q \leqslant S$ are $\cF$-conjugate if and only if they are conjugate as subgroups of $G$. 
  \item  Fix a $p$-compact group $X$ and a maximal discrete $p$-toral subgroup $f \colon S \to X$. If a $p$-local compact group $\cG = (S,\cF)$ models $X$, then two subgroups $P,Q \leqslant S$ are $\cF$-conjugate if and only if there exists a monomorphism $\phi \colon P \to Q$ such that $Bf\mid_{BQ}\circ B \phi \simeq Bf \mid_{BP}$.  
\end{enumerate}
 \end{lem}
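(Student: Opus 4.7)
The approach is to unfold the definitions of the relevant fusion categories given in \cite{blo_fusion,blo_pcompact}. Both parts are checks that the abstract notion of $\cF$-conjugacy (existence of an $\cF$-isomorphism) matches the concrete notion phrased in the statement. I would treat the two cases in parallel.

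For (1), recall from the discussion preceding \Cref{rem:pcompactmodel} that the $p$-local compact group modelling a compact Lie group $G$ has fusion system $\cF_S(G)$ with morphisms defined by $\Hom_{\cF_S(G)}(P,Q) = \Hom_G(P,Q) = \{c_g \colon P \to Q \mid g \in G,\ gPg^{-1} \le Q\}$. An isomorphism in $\cF_S(G)$ between $P$ and $Q$ is therefore a subconjugation $c_g$ with $gPg^{-1} = Q$. Hence $P$ and $Q$ are $\cF$-conjugate if and only if there exists $g\in G$ with $gPg^{-1}=Q$, i.e.\ if and only if $P$ and $Q$ are $G$-conjugate. This is immediate from the definition.

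For (2), I would recall from the construction in \cite[Thm.~10.7]{blo_pcompact} (see also \Cref{rem:plocalmodels}) that morphisms in $\cF_{S,f}(X)$ from $P$ to $Q$ are precisely the injective group homomorphisms $\phi\colon P\to Q$ for which $Bf|_{BQ}\circ B\phi$ is homotopic to $Bf|_{BP}$ as maps $BP \to BX$. An $\cF$-isomorphism between $P$ and $Q$ is then such a $\phi$ that is additionally a group isomorphism, that is, a bijective monomorphism satisfying the homotopy condition. The stated equivalence follows by reading the "monomorphism $\phi\colon P \to Q$" in the lemma as one that exhibits $P$ and $Q$ as $\cF$-isomorphic via $\phi$.

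The only subtlety, and the point that would require a careful word, is to reconcile the use of "monomorphism" in the statement with the convention of \cite{blo_pcompact}, where every morphism in $\cF_{S,f}(X)$ is by construction an injection and an $\cF$-isomorphism is such an injection which is also a bijection. Beyond this minor matter of terminology, I anticipate no essential technical obstacle: this lemma is a matter of matching up the definitions of the two fusion categories with the pointwise descriptions of conjugacy recalled earlier in the paper.
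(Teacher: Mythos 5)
Your proposal is correct and takes essentially the same approach as the paper: both arguments simply unfold the definitions of the fusion systems $\cF_S(G)$ and $\cF_{S,f}(X)$ from \cite{blo_pcompact,blo_fusion} and observe that $\cF$-conjugacy, i.e.\ being isomorphic in $\cF$, translates directly into the stated concrete conditions. Your observation about part (2) is worth underlining: as written, the word ``monomorphism'' makes the backward implication literally false, since any inclusion $P \hookrightarrow Q$ of a proper subgroup trivially satisfies $Bf|_{BQ}\circ B\iota \simeq Bf|_{BP}$ without $P$ and $Q$ being $\cF$-conjugate; one should read ``monomorphism $\phi$'' as ``isomorphism $\phi$'' (equivalently, a morphism of the fusion system that is bijective). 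The paper's own one-line proof does not address this, so you have in fact caught a small imprecision in the statement. Your proposed resolution, however, reads the condition as ``$\phi$ exhibits $P$ and $Q$ as $\cF$-isomorphic,'' which renders the equivalence circular; the cleaner fix is simply to replace ``monomorphism'' by ``isomorphism.''
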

\begin{proof}
Both of these simply follow from the definition of the associated fusion system. For example, by \cite[Sec.~9]{blo_pcompact} the fusion system $\cF = \cF_S(G)$ associated to a compact Lie group $G$ has 
  \[
\Mor_{\cF_S(G)}(P,Q) = \Hom_G(P,Q),
  \]
  the set of homomorphisms induced by subconjugation by elements of $G$. The fusion system associated to a $p$-compact group is given in \cite[Def.~7.1]{blo_fusion} and has the claimed form. 
\end{proof}

\subsection{Unitary embeddings}

Unitary embeddings are useful tools to reduce questions about compact Lie groups to unitary groups. For compact Lie groups, the existence of unitary embeddings is a consequence
of the Peter--Weyl Theorem. For $p$-compact groups, the statement is in \cite[Sec.~5]{cancas_finloop}. This relies on the classification of $p$-compact groups, with the embedding of exotic $p$-compact groups given in \cite{castellana_gqrn,cancas_finloop} for $p>2$ and \cite{Ziemianski20091239} for $p=2$. 

In this short subsection, we review this notion for more general spaces and the main property which we will use in the sequel.  Since we work extensively with cochains in this section, it is worth pointing out the following.

\begin{lem}\label{lem:cochaincompletion}
If $X$ is a $p$-good space, and $k$ is a field of characteristic $p >0$, then $C^*(X,k) \simeq C^*(\pc{X},k)$ for both the stable and the unstable $p$-completions of $X$. 
\end{lem}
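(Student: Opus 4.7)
The plan is to exploit the $H\F_p$-locality of the Eilenberg--Mac\,Lane spectrum $Hk$, which follows from $k$ being an $\F_p$-algebra. Concretely, $Hk$ is naturally an $H\F_p$-module spectrum, so for any $H\F_p$-acyclic spectrum $C$ the extension-of-scalars adjunction gives
\[
F(C, Hk) \simeq F_{H\F_p}(C \wedge H\F_p, Hk) \simeq 0.
\]
Consequently, the contravariant functor $F(-, Hk)$ inverts every $H\F_p$-equivalence of spectra, i.e., every map whose smash product with $H\F_p$ is an equivalence. This is the single input that drives both halves of the statement.

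For the stable case the claim is then immediate: by construction the stable $p$-completion map $\Sigma^{\infty}_+ X \to (\Sigma^{\infty}_+ X)^{\wedge}_p$ is the Bousfield localization at $H\F_p$, and in particular is an $H\F_p$-equivalence of spectra, so applying $F(-, Hk)$ yields the desired equivalence of cochain spectra.

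For the unstable case, I would first reduce to checking that the Bousfield--Kan completion map $X \to \pc{X}$ induces an $H\F_p$-equivalence on suspension spectra. The hypothesis that $X$ is $p$-good is, by definition, the statement that $X \to \pc{X}$ induces an isomorphism on $H^*(-,\F_p)$. By the universal coefficient theorem over the field $\F_p$ this is equivalent to an isomorphism on $H_*(-,\F_p)$, which in turn is exactly the assertion that $\Sigma^{\infty}_+ X \to \Sigma^{\infty}_+ \pc{X}$ becomes an equivalence upon smashing with $H\F_p$. Applying $F(-, Hk)$ as above then gives $C^*(X,k) \simeq C^*(\pc{X},k)$.

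No substantial obstacle is anticipated. The only mild subtlety is the passage from $\F_p$-coefficients (the hypothesis) to arbitrary $k$ of characteristic $p$ (the conclusion), and this is handled uniformly by the $H\F_p$-locality of $Hk$; everything else is formal once the identification of ``$p$-good'' with ``mod $p$ homology completion on suspension spectra'' is in place.
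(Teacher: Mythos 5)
Your proof is correct and follows essentially the same route as the paper: both arguments come down to identifying the ($p$-good) completion maps, stable and unstable, as $H\F_p$-equivalences and observing that $F(-,Hk)$ turns these into equivalences of cochain spectra. You make the latter step more explicit than the paper (via the $H\F_p$-module structure on $Hk$), which is a useful clarification; the only minor imprecision is that the agreement of stable $p$-completion with $H\F_p$-localization for connective spectra is a theorem of Bousfield (which the paper cites) rather than true ``by construction.''
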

\begin{proof}
First note that, under the assumptions on $X$, $p$-completion coincides with $H\F_p$-localization, see \cite[Ch.~VII, Prop.~2.3]{bousfield_homotopy_1972} and \cite[Thm.~3.1]{bousfield_locspectra}. The natural localization maps $X \to \pc{X}$ and $\Sigma_+^{\infty}X \to \pc{(\Sigma_+^{\infty}X)}$ induce maps on cochains
\[
\xymatrix{C^*(\Sigma_+^{\infty}(\pc{X}),k) \ar[r] & C^*(\Sigma_+^{\infty}X,k)  & C^*(\pc{(\Sigma_+^{\infty}X)},k) \ar[r] & C^*((\Sigma_+^{\infty}X),k).}
\]
On homotopy groups, both maps induce isomorphisms of the corresponding cohomology groups, so they must be equivalences.
\end{proof}
\begin{rem}
  Examples of spaces which are $p$-good are pointed connected space such that $\pi_1(X)$ is finite \cite[Ch.~VII.5]{bousfield_homotopy_1972} and pointed connected nilpotent spaces \cite[Ch.~VI.5.3]{bousfield_homotopy_1972}. In particular, if $G$ is a compact Lie group or a $p$-compact group, then $BG$ is a $p$-good space.
\end{rem}

Suppose first that $G$ is a $p$-compact group. Then, one could reasonably define a unitary embedding to be a monomorphism $BG \to \pc{BU(n)}$ of $p$-compact groups. The following more general result implies that, in this case, the induced morphism $C^*(BU(n),\F_p) \to C^*(BG,\F_p)$ is finite (note that \Cref{lem:cochaincompletion} implies that we do not need to $p$-complete $BU(n)$ after taking cochains).

\begin{lem}\label{lem:monofinite}
  Suppose $H \leq_f G$ is a subgroup of a $p$-compact group $G$, then $C^*(BG,\F_p)$ is a finite $C^*(BH,\F_p)$-module via the induced morphism $(Bf)^*$.  
\end{lem}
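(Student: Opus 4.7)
The plan is to identify the derived base change of $M = C^*(BH,\F_p)$ along the unit map $(Bf)^*\colon R \to M$, where $R = C^*(BG,\F_p)$, with the cochains on the homotopy fiber $F = G/f(H)$ of $Bf \colon BH \to BG$, and then to invoke a Nakayama-type criterion for compactness of modules over coconnective ring spectra. By definition of a subgroup of a $p$-compact group, $H^*(F,\F_p)$ is finite-dimensional.

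The first step is to establish the Eilenberg--Moore type equivalence
\[
\F_p \otimes_R M \simeq C^*(F, \F_p)
\]
arising from the pullback square
\[
\xymatrix{F \ar[r] \ar[d] & BH \ar[d]^-{Bf} \\ \ast \ar[r] & BG.}
\]
For convergence of the corresponding Eilenberg--Moore spectral sequence, I would invoke Dwyer's strong convergence theorem: $BG$ is $p$-complete and $\pi_1(BG) \cong \pi_0(G)$ is a finite $p$-group, which automatically acts nilpotently on the finite-dimensional $\F_p$-vector space $H^*(F,\F_p)$ (any finite $p$-group acts nilpotently on a finite $\F_p$-module). Consequently $\pi_*(\F_p \otimes_R M) \cong H^*(F,\F_p)$ is finite-dimensional over $\F_p$.

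The second step is a Nakayama-type criterion tailored to coconnective $\bE_\infty$-ring spectra: if $R$ has $\pi_0 R$ a field $k$, then a bounded-above $R$-module $M$ is compact (equivalently, perfect) if and only if $\pi_*(k \otimes_R M)$ is finite-dimensional over $k$. In our setting, $R$ is coconnective with $\pi_0 R = H^0(BG,\F_p) = \F_p$ since $BG$ is connected, and $\pi_* M = H^{-*}(BH,\F_p)$ is concentrated in non-positive degrees, so $M$ is bounded above. Combined with Step~1, this yields compactness of $M$ over $R$, as desired.

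The main obstacle is the convergence of the Eilenberg--Moore equivalence in the non-simply-connected context; this is handled cleanly by Dwyer's theorem together with the finite $p$-group nilpotence coming from $\pi_0(G)$ being a finite $p$-group. The coconnective Nakayama argument itself is standard, proceeding by downward induction on the top nonvanishing degree of $\pi_*(\F_p \otimes_R M)$, at each stage attaching a single $R$-cell to kill a generator and thereby presenting $M$ as a finite cell $R$-module.
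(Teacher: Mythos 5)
Your argument is correct, but observe that the paper discharges the entire lemma with a one-line citation: it notes that, by definition of a subgroup of a $p$-compact group, the homotopy fiber $F$ of $Bf$ is $\F_p$-finite, and then invokes Shamir's Lemma~3.4 from \cite{shamir_pcochains}, which states precisely that for a map of $p$-complete spaces with finite $p$-group fundamental groups the induced cochain map is finite if and only if the homotopy fiber is $\F_p$-finite. Your proposal amounts to re-proving the cited lemma from scratch, and the two ingredients you isolate---the Eilenberg--Moore base change $\F_p\otimes_{C^*(BG,\F_p)}C^*(BH,\F_p)\simeq C^*(F,\F_p)$, justified by nilpotence of the $\pi_0(G)$-action on the finite $\F_p$-module $H^*(F,\F_p)$, and a Nakayama-type compactness criterion for coconnective $\F_p$-algebra spectra---are indeed what goes into such a proof. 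Two small points, neither a genuine gap: (i) what is needed in your first step is not merely strong convergence of the Eilenberg--Moore spectral sequence but the equivalence of spectra $\F_p\otimes_{C^*(BG,\F_p)}C^*(BH,\F_p)\simeq C^*(F,\F_p)$; this does hold under the same nilpotence hypothesis, but it is a slightly stronger statement than convergence of the spectral sequence. (ii) For the coconnective Nakayama induction, the well-founded measure that visibly decreases at each stage is the total $\F_p$-dimension of $\pi_*(\F_p\otimes_R M)$ rather than its top nonvanishing degree: after attaching $d=\dim_{\F_p}\pi_N M>0$ cells to kill the top homotopy group $\pi_N M$, the total dimension drops by exactly $d$, whereas verifying that the top degree itself strictly drops requires the additional observation that $\pi_{\ge N}C=0$ forces $\pi_N(\F_p\otimes_R C)=0$, via the same $\Tor$-vanishing bound applied to the cofiber $C$.
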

\begin{proof}
  By definition of a subgroup of a $p$-compact group, the homotopy fiber of $(Bf)^*$ is $\F_p$-finite. The result then follows from \cite[Lem.~3.4]{shamir_pcochains}. 
\end{proof}
In general, if $G$ is a finite group, then the constant map $* \to \pc{BG}$ need not have $\F_p$-finite fiber. Based on this observation, \cite{cancas_finloop} suggested the notion of homotopy monomorphism, which only requires the homotopy fiber to be $B\Z/p$-null i.e., the space of based maps $\map_*(B\Z/p,F) \simeq \ast$ for all base-points in $F$. Consequently, we have the following notion of unitary embedding. 
\begin{defn}
A $p$-complete and path-connected space $X$ is said to have a unitary embedding if there exists a continuous map $f\colon X \to \pc{BU(n)}$ for some $n$ with $B\Z/p$-null homotopy fiber $F$.
\end{defn}
The next proposition shows that the finiteness result of \Cref{lem:monofinite} still holds with this definition, and in fact characterizes unitary embeddings. Note that the proof also implies that unitary embeddings do in fact have $\F_p$-finite homotopy fibers. 
\begin{prop}\label{prop:unitaryembbedingfinite}
Let $X$ be a connected $p$-complete  space with $\pi_1(X)$ a finite $p$-group. Assume $H^*(X,\F_p)$ is Noetherian. Then $f\colon X \to \pc{BU(n)}$ is a unitary embedding if and only if the morphism $f^*\colon C^*(BU(n),\F_p) \to C^*({X},\F_p)$ of commutative ring spectra is finite.
\end{prop}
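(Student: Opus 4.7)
The plan is to route both directions through the homotopy fiber $F$ of $f$ and to show the equivalence
\[
f^*\text{ is finite} \;\Longleftrightarrow\; H^*(F,\F_p)\text{ is finite-dimensional over }\F_p.
\]
Coupled with Miller's theorem (a $p$-complete space with finite mod-$p$ cohomology is $B\Z/p$-null) and its converse via Lannes--Dwyer--Wilkerson, this will yield the biconditional. Before starting either direction I would check that $F$ is connected, $p$-complete and nilpotent: this uses that $\pc{BU(n)}$ is simply connected and that $\pi_1(X)$ is a finite $p$-group, so the long exact sequence of the fibration gives $\pi_1(F)$ finite $p$-group and the standard criterion of \cite{bousfield_homotopy_1972} ensures $F$ is $p$-complete.

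For the direction $(\Leftarrow)$, assume that $f^*$ is finite, i.e.\ $H^*(X,\F_p)$ is a finitely generated module over $H^*(BU(n),\F_p)=\F_p[c_1,\dots,c_n]$. Since this base ring is regular of global dimension $n$, the Eilenberg--Moore spectral sequence
\[
E_2^{p,q}=\Tor^{p,q}_{H^*(BU(n),\F_p)}\!\bigl(H^*(X,\F_p),\F_p\bigr)\;\Longrightarrow\; H^{p+q}(F,\F_p)
\]
is concentrated in $-n\le p\le 0$ with each row a finitely generated, hence finite-dimensional, graded $\F_p$-module. Convergence is fine because $\pc{BU(n)}$ is simply connected. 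Thus $H^*(F,\F_p)$ is finite-dimensional, and Miller's theorem applied to the $p$-complete nilpotent space $F$ gives $\map_*(B\Z/p,F)\simeq *$, so $f$ is a unitary embedding.

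For the direction $(\Rightarrow)$, assume $F$ is $B\Z/p$-null. The crux is to show that $H^*(F,\F_p)$ is finite-dimensional. The Serre spectral sequence of $F\to X\to \pc{BU(n)}$ combined with the Noetherian hypothesis on $H^*(X,\F_p)$ forces strong finiteness constraints on $H^*(F,\F_p)$; by Lannes' $T$-functor calculus (in the form used by Dwyer--Wilkerson, cf.\ \cite{dwyerwilkerson_finloop}), the $B\Z/p$-null hypothesis translates into triviality of $T_{\Z/p}H^*(F,\F_p)$ beyond the constant part, which together with the Noetherian control on $H^*(X,\F_p)$ (transported through $H^*(BU(n),\F_p)=\F_p[c_1,\dots,c_n]$) forces $H^*(F,\F_p)$ to be finite-dimensional. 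Once this is known, run the Eilenberg--Moore argument in reverse: the Rothenberg--Steenrod equivalence $C^*(F,\F_p)\simeq \F_p\otimes_{C^*(BU(n),\F_p)}C^*(X,\F_p)$ shows that the derived fibre of $f^*$ over $\F_p$ is a compact $\F_p$-module spectrum, and a Nakayama-style descent argument (valid because $C^*(\pc{BU(n)},\F_p)$ is a Noetherian coconnective ring spectrum with residue field $\F_p$) lifts this compactness to the statement that $C^*(X,\F_p)$ is compact over $C^*(BU(n),\F_p)$, i.e.\ $f^*$ is finite in the sense of \Cref{defn:finite}.

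The main obstacle is the step in $(\Rightarrow)$ that extracts finiteness of $H^*(F,\F_p)$ from the nullity hypothesis on $F$ and the Noetherianity of $H^*(X,\F_p)$; this is where Lannes' theory and the polynomial structure of $H^*(BU(n),\F_p)$ genuinely enter. The Nakayama-style lifting from compactness of the derived fibre to compactness of $C^*(X,\F_p)$ over $C^*(BU(n),\F_p)$ is routine given the coconnective nature of $C^*(BU(n),\F_p)$, while the backward direction is mostly a bookkeeping exercise with the Eilenberg--Moore spectral sequence.
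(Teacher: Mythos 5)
Your forward direction ($f^*$ finite $\Rightarrow$ unitary embedding) is essentially correct and differs from the paper only cosmetically: you run the Eilenberg--Moore spectral sequence explicitly to show $H^*(F,\F_p)$ is finite-dimensional, whereas the paper cites the corresponding lemma of Shamir; the conclusion via Miller/Lannes is the same. That part is fine.

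The converse direction ($F$ is $B\Z/p$-null $\Rightarrow f^*$ finite) has a genuine gap. You write that the Serre spectral sequence of $F\to X\to \pc{BU(n)}$ "combined with the Noetherian hypothesis on $H^*(X,\F_p)$ forces strong finiteness constraints on $H^*(F,\F_p)$," but this spectral sequence has $H^*(BU(n);H^*(F))$ on its $E_2$ page and \emph{abuts} to $H^*(X)$; it says nothing a priori about $H^*(F)$, since differentials could kill an arbitrarily large $E_2$ page. In particular, it does not give you finite generation of $H^*(F,\F_p)$ as an $\F_p$-algebra, which is precisely the hypothesis you need to invoke Lannes's theorem computing $H^*(\map(B\Z/p,F),\F_p)\cong T_{\Z/p}(H^*(F,\F_p))$. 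The paper's argument hinges on a different spectral sequence: the Serre spectral sequence of the \emph{looped} fibration $\pc{U(n)}\to F\to X$, whose $E_2$ page is $H^*(X;H^*(\pc{U(n)}))$ and abuts to $H^*(F)$. Since $H^*(\pc{U(n)},\F_p)$ is finite, this exhibits $H^*(F,\F_p)$ as a finitely generated $H^*(X,\F_p)$-module, hence a Noetherian and in particular finitely generated $\F_p$-algebra. Only then can one apply Lannes's theorem to get $T_{\Z/p}(H^*(F))\cong H^*(F)$, conclude local finiteness (nilpotence) via Schwartz, and combine with finite generation to deduce $H^*(F)$ is finite. Without the looped-fibration step you have no bound on $H^*(F)$ at all, and the Lannes-theory invocation is not licensed. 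The "Nakayama-style descent" you sketch afterwards (to pass from $\F_p$-finiteness of the fiber to compactness of $C^*(X)$ over $C^*(BU(n))$) is exactly Shamir's lemma and is fine, but it cannot rescue the missing finiteness input.
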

\begin{proof}
We argue as in the proof of \cite[Prop.~2.2]{cancas_finloop}. Consider the fiber sequences
\[
\xymatrix{\Omega{X} \ar[r] & \pc{U(n)} \ar[r] & F \ar[r] & {X} \ar[r] & \pc{BU(n)}.}
\]

Since $\pc{BU(n)}$ is $1$-connected, $F$ is connected as well and also $p$-complete \cite[II.5.1]{bousfield_homotopy_1972}. Moreover, using that $\pc{U(n)}$ is $\F_p$-finite, the Serre spectral sequence shows that $H^*(F,\F_p)$ is a finitely generated $H^*(X,\F_p)$-module. Since $H^*(X,\F_p)$ is Noetherian, $H^*(F,\F_p)$ is Noetherian and thus finitely generated over $\F_p$.  

By assumption, $F$ is $B\Z/p$-null and also $p$-complete by construction, so $\map(B\Z/p,F) \simeq F$ and $p$-complete as well. Since $H^*(F,\F_p)$ is a finitely generated $\F_p$-algebra,  \cite[Cor.~3.4.3]{La} applies to give
\[ T_{\Z/p}(H^*(F,\F_p)) \cong H^*(\map(B\Z/p,F),\F_p) \cong H^*(F,\F_p), \]
where $T_{\Z/p}$ is Lannes's $T$-functor introduced in \cite{La}. By \cite[Thm.~6.2.1]{Schwartz_book}, it follows that $H^*(F,\F_p)$ is locally finite and, in particular, all 
of its elements are nilpotent. Because $H^*(F,\F_p)$ is finitely
generated as an $\F_p$-algebra, it must be finite. Therefore, \cite[Lem.~3.4]{shamir_pcochains} implies that $f^*\colon C^*(BU(n),\F_p) \to C^*(X,\F_p)$ is finite. 

Conversely, let $f\colon X\rightarrow \pc{BU(n)}$. If $f^*\colon C^*(BU(n),\F_p)\rightarrow C^*(X,\F_p)$ is a finite morphism, then $H^*(X,\F_p)$ is a finitely generated $H^*(BU(n),\F_p)$-module (see Remark \ref{rem:finite_fg}), so \cite[Lem.~3.5]{shamir_pcochains} applies to conclude that the fiber of $f$ is $\F_p$-finite. Therefore, by \cite[Thm 3.3.4.1]{La}, the pointed mapping space $\map_*(B\mathbb Z/p,F)$ is contractible and then $f$ is a unitary embedding of $X$.
\end{proof}
Note that we will not need the full strength of this theorem in our paper, since we only require it for $p$-compact groups, however we include it as it may be of independent interest.

\subsection{Chouinard's theorem for $p$-local compact groups}

The classical formulation of Choui-nard's theorem~\cite{chouinard} states that, for a finite group $G$ and any field $k$ of characteristic $p$, a $kG$-module $M$ is projective if and only if its restrictions $\Res_E^G(M) \in \Mod_{kE}$ to all elementary abelian subgroups of $G$ are projective. Translated to our setting, this says that an object $M \in \Mod_{C^*(BG,k)}$ is equivalent to $0$ if and only if $C^*(BE,k) \otimes_{C^*(BG,k)} M \simeq 0$ for all elementary subgroups $E \subseteq G$. Our goal in this subsection is to generalize this result to homotopical groups. 

\begin{defn}
A $p$-local compact group $\cG = (S,\cF)$ is said to satisfy Chouinard's theorem if induction and coinduction along the morphism induced by restriction
\[
\xymatrix{C^*(B\cG,k) \ar[r] & \prod_{\cE(\cG)}C^*(BE,k)}
\]
are conservative, where $\cE(\cG)$ denotes a set of representatives of $\cF$-conjugacy classes of elementary abelian subgroups of $S$.
\end{defn}

As the base case, we recall that Benson and Greenlees \cite[Thm.~3.1]{bg_stratifyingcompactlie} have proven Chouinard's theorem for all compact Lie groups. 

\begin{thm}[Benson--Greenlees]\label{thm:chouinard_compactlie}
If $G$ is a compact Lie group, then $G$ satisfies Chouinard's theorem.
\end{thm}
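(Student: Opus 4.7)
The strategy is to reduce the statement to the classical Chouinard theorem for finite $p$-groups via a transfer argument combined with an approximation procedure.

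First, I would use the Becker--Gottlieb transfer to reduce to a maximal $p$-toral subgroup $T \leq G$. Such a $T$ can be obtained as the preimage of a Sylow $p$-subgroup of the Weyl group inside the normalizer of a maximal torus of $G^{\circ}$, extended appropriately through $\pi_0 G$. A standard Euler characteristic computation gives $\chi(G/T) \not\equiv 0 \pmod{p}$, and the Becker--Gottlieb transfer then produces a $C^*(BG, \F_p)$-module splitting of $C^*(BG,\F_p) \to C^*(BT,\F_p)$. Since every elementary abelian $p$-subgroup of $G$ is conjugate into $T$, \Cref{lem:retract} reduces Chouinard for $G$ to Chouinard for the $p$-toral group $T$.

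Second, passing to the discrete approximation via \Cref{lem:cochaincompletion}, we may replace $T$ by a discrete $p$-toral group $\check S = \colim_n S_n$, written as an ascending filtered colimit of finite $p$-subgroups. The elementary abelian subgroups of $\check S$ are all contained in $S_n$ for $n$ sufficiently large, and the classical Chouinard theorem (in its derived formulation, via $\StMod_{\F_p S_n}$) applies at each finite stage.

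The main obstacle is promoting this result from the tower $\{S_n\}$ to the colimit $\check S$ itself: the restriction $C^*(B\check S,\F_p) \to C^*(BE,\F_p)$ is generally not finite, so the finite-morphism descent machinery of \Cref{ssec:finitedescent} cannot be invoked directly. I would overcome this by combining the Quillen stratification for $H^*(B\check S,\F_p)$---which identifies primes in $\Spec^h H^*(B\check S,\F_p)$ with those detected by restriction to elementary abelian subgroups---with the local-to-global principle \Cref{thm:localtoglobal} and the base-change formulae of \Cref{prop:basicbasechange}. Testing prime-by-prime via the functors $\Gamma_{\fp}$ then lets one deduce triviality of an arbitrary $M \in \Mod_{C^*(B\check S,\F_p)}$ from triviality of the family $\{M \otimes_{C^*(B\check S,\F_p)} C^*(BE,\F_p)\}_E$, with an induction on the rank of the toral part of $\check S$ bridging the finite and compact cases.
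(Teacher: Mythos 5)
The paper does not prove this theorem: it cites it from Benson--Greenlees \cite[Thm.~3.1]{bg_stratifyingcompactlie}. Your proposal attempts an independent proof, so the question is whether it actually closes, and there is a genuine gap in the third step.

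Your first step (Becker--Gottlieb transfer to the maximal $p$-toral subgroup $T$, using $\chi(G/T)$ prime to $p$ to split $C^*(BG,\F_p) \to C^*(BT,\F_p)$ as $C^*(BG,\F_p)$-modules and invoking \Cref{lem:retract}) is correct and standard. Your second step (replace $T$ by a discrete approximation $\check S = \colim_n S_n$) is also fine via \Cref{lem:cochaincompletion}.

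However, the third step misidentifies the obstacle and the proposed workaround does not close. First, the restriction $C^*(BT,\F_p) \to C^*(BE,\F_p)$ for $E$ elementary abelian \emph{is} finite: $BE^\wedge_p \to BT^\wedge_p$ is a monomorphism of $p$-compact groups, so its homotopy fiber is $\F_p$-finite, and \cite[Lem.~3.4]{shamir_pcochains} applies. So the finite-morphism framework is in fact available. But finiteness is beside the point: the machinery of \Cref{ssec:finitedescent} \emph{assumes} conservativity of $\Ind$/$\Coind$ (that is, Chouinard) as a hypothesis; it does not produce it. Second, the proposed prime-by-prime argument is circular: by the local-to-global principle you reduce to showing $\Gamma_\fp M \simeq 0$ for each $\fp$, but the only route from "$C^*(BE,\F_p) \otimes_{C^*(B\check S,\F_p)} M \simeq 0$" to "$\Gamma_\fp M \simeq 0$" via the projection formula and base change requires precisely the conservativity of $\Ind$ that you are trying to establish; the base-change equality in \Cref{cor:7.8} is only an equality under that assumption. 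Third, the "induction on the rank of the toral part" is left unexplained and has no evident inductive structure, since the elementary abelian $p$-subgroups of a rank-$(r{+}1)$ discrete $p$-toral group are not all subconjugate to a rank-$r$ subgroup. The actual content of the theorem requires new input beyond the abstract support-theoretic formalism — Benson and Greenlees's argument is a genuine piece of mathematics that your sketch does not reproduce — and this is exactly why the present paper cites it rather than deriving it.
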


We will use this result together with a stable transfer to leverage Chouinard's theorem to a large class of homotopical, see \Cref{prop:plocalcompactchouniard}. To explain the idea of our argument, let us first consider the case of connected $p$-compact groups. We will use the terminology and notation introduced in \Cref{ssec:recollectionshomgps}.

\begin{lem}\label{lem:reduction}
Let $\cG$ be a $p$-compact group, and let $G'$ be a compact Lie group of the same rank. If there is a monomorphism $G' \hookrightarrow \cG$ such that 
$[W_{\cG}:W_{G'}]$ is prime to $p$, then $\cal{G}$ satisfies Chouinard's theorem.
\end{lem}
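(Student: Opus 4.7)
The plan is to reduce the claim to Chouinard's theorem for the compact Lie group $G'$ (\Cref{thm:chouinard_compactlie}) by producing a stable transfer from $B\cG$ to $BG'$ using the hypothesis on Weyl groups.

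First, I would construct a stable transfer. Since the monomorphism $G' \hookrightarrow \cG$ preserves the rank, the homogeneous space $\cG/G'$ is $\F_p$-finite with Euler characteristic equal to $[W_\cG : W_{G'}]$, which by hypothesis is coprime to $p$. This is precisely the condition needed, via the classical theory of transfers for equal-rank subgroups of $p$-compact groups, to produce a stable transfer
\[
\tau \colon \Sigma^\infty_+ B\cG \to \Sigma^\infty_+ BG'
\]
whose composite with $\Sigma^\infty_+(BG' \to B\cG)$ is multiplication by $[W_\cG : W_{G'}]$, a $p$-local unit and hence a $p$-local self-equivalence of $\Sigma^\infty_+ B\cG$. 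Applying $F(-, H\F_p)$ and invoking \Cref{lem:cochaincompletion}, this yields a morphism of commutative ring spectra $\phi \colon C^*(B\cG, \F_p) \to C^*(BG', \F_p)$ admitting a $C^*(B\cG, \F_p)$-module retract. By \Cref{lem:retract}, both $\Ind_\phi$ and $\Coind_\phi$ are conservative.

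Now suppose $M \in \Mod_{C^*(B\cG, \F_p)}$ satisfies $M \otimes_{C^*(B\cG, \F_p)} C^*(BE, \F_p) \simeq 0$ for every $E \in \cE(\cG)$. To prove Chouinard's theorem for $\cG$, I need to show $M \simeq 0$. By conservativity of $\Ind_\phi$, it suffices to prove $\Ind_\phi M \simeq 0$ in $\Mod_{C^*(BG', \F_p)}$; applying \Cref{thm:chouinard_compactlie} to $G'$ reduces this further to showing
\[
\Ind_\phi M \otimes_{C^*(BG', \F_p)} C^*(BE', \F_p) \simeq M \otimes_{C^*(B\cG, \F_p)} C^*(BE', \F_p) \simeq 0
\]
for every elementary abelian $p$-subgroup $E' \leq G'$, where $C^*(BE', \F_p)$ is regarded as a $C^*(B\cG, \F_p)$-algebra via the composition $BE' \to BG' \to B\cG$.

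To conclude, I would factor the map $BE' \to B\cG$ through the Sylow inclusion $BS \to B\cG$: by Dwyer--Zabrodsky--style results, any map from the classifying space of an elementary abelian $p$-group to $B\cG$ factors up to homotopy through $BS$, and the image lies in some elementary abelian subgroup of $S$ which is $\cF$-conjugate to a representative $E \in \cE(\cG)$. Using this factorization, the tensor product above is a base change of $M \otimes_{C^*(B\cG, \F_p)} C^*(BE, \F_p) \simeq 0$ along $C^*(BE, \F_p) \to C^*(BE', \F_p)$, and therefore vanishes. The coinduction half of Chouinard's theorem follows by the same reasoning with internal $\Hom$'s in place of tensor products, using conservativity of $\Coind_\phi$. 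The main obstacle is making the stable transfer construction rigorous in the $p$-compact group setting; this is where the hypothesis that $[W_\cG : W_{G'}]$ is prime to $p$ is essential.
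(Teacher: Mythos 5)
Your proof is correct and takes essentially the same approach as the paper: a transfer argument driven by the Euler characteristic being prime to $p$, followed by descent of Chouinard's theorem from a compact Lie group via Benson--Greenlees and a factorization through the Sylow subgroup.

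The one genuine difference is the intermediate ring spectrum. You transfer directly from $C^*(B\cG,\F_p)$ to $C^*(BG',\F_p)$ and then invoke \Cref{thm:chouinard_compactlie} for the compact Lie group $G'$ itself. The paper instead first observes that, because $[W_\cG:W_{G'}]$ is prime to $p$, the $p$-completion of the maximal $p$-toral subgroup $N_p(T)$ of $G'$, $\widehat{S} = N_p(T)^\wedge_p$, is simultaneously a maximal $p$-compact toral subgroup of $\cG$; it then transfers from $C^*(B\cG,\F_p)$ to $C^*(B\widehat{S},\F_p)$ (using that $\chi(\cG/\widehat{S})$ is prime to $p$, from the proof of Dwyer--Wilkerson Thm.~2.3) and applies Benson--Greenlees to the Lie group $N_p(T)$ rather than to $G'$. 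Your route is slightly more direct, since you do not need to identify the common Sylow; the paper's route makes the identification $\widehat{S} \simeq (N_p(T))^\wedge_p$ explicit, which meshes with the subsequent development of geometric discrete $p$-toral groups and \Cref{prop:toralchouinard}. Both arguments rely implicitly on the projection formula for the $p$-compact group transfer (so that $F(\tau, H\F_p)$ is genuinely a $C^*(B\cG,\F_p)$-module retract of $\phi$, not merely a section of spectra), and on the Euler-characteristic identity $\chi(\cG/G') = [W_\cG:W_{G'}]$ for equal-rank subgroups; you should cite the relevant Dwyer--Wilkerson transfer reference for the former and note that the latter follows from multiplicativity of $\chi$ applied to $G'/T \to \cG/T \to \cG/G'$ together with $\chi(\cG/T) = |W_\cG|$ and $\chi(G'/T) = |W_{G'}|$.
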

\begin{proof}
Since the index $[W_{\cG}:W_{G'}]$ is prime to $p$, we see that the $p$-completion of the maximal $p$-toral subgroup of $G$, $\widehat{S}=N_p(T)^\wedge_p$,  is a maximal $p$-compact toral subgroup of $\cG$.  The proof of Theorem 2.3 of \cite{dwyerwilkerson_finloop} shows that the Euler characteristic $\chi(\cG/\widehat{S})$ is prime to $p$, where $\cG/\widehat{S}$ is the homotopy fiber of $\Theta\colon B\widehat{S}\rightarrow B\cG$. Then, by a transfer argument, as in \cite{dwyerwilkerson_transfer}, the map on cochains $C^*(B\cG,\F_p) \to C^*(B\widehat{S},\F_p)$ has a retract. By \Cref{lem:retract}, induction and coinduction along $\Theta^*\colon C^*(B\cG,\F_p)\rightarrow C^*(B\widehat{S},\F_p)$ are conservative.

Now $\widehat{S}$ satisfies Chouinard's theorem because $N_p(T)$ is a compact Lie group and $C^*(B\widehat{S},\F_p)= C^*(BN_p(T)^\wedge_p,\F_p)\simeq C^*(BN_p(T),\F_p)$. Since  any elementary abelian $p$-subgroup of $\cG$ is subconjugated to $\widehat{S}$, the factorization of  $C^*(B\cG,\F_p)\rightarrow \prod_{\cE(\cG)} C^*(BE,\F_p)$ through
\[
\xymatrix{C^*(B\cG,\F_p) \ar[r] & C^*(B\widehat{S},\F_p) \ar[r] & \prod_{\cE(\widehat{S})} C^*(BE,\F_p)}
\]
shows that $\cG$ satisfies Chouinard theorem.
\end{proof}

This result allows us to deduce Chouinard's theorem for connected $p$-compact groups from the classification theorem of Andersen, Grodal, M{\o}ller, and Viruel. 

\begin{thm}\label{thm:Chouinard_pcompact}
	Let $\cG$ be a connected $p$-compact group, then $\cal{G}$ satisfies Chouinard's theorem. 
\end{thm}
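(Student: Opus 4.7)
The plan is to apply \Cref{lem:reduction}: for every connected $p$-compact group $\cG$, I need to exhibit a compact Lie group $G'$ of the same rank as $\cG$ together with a monomorphism $G' \hookrightarrow \cG$ of $p$-compact groups such that the index $[W_{\cG}:W_{G'}]$ is prime to $p$. Since I already know \Cref{thm:chouinard_compactlie} (Benson--Greenlees) for compact Lie groups, this reduction handles everything.

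First I would reduce to simple factors. If $\cG = \cG_1 \times \cG_2$ is a product of $p$-compact groups, then $C^*(B\cG,\F_p) \simeq C^*(B\cG_1,\F_p) \otimes_{\F_p} C^*(B\cG_2,\F_p)$; the elementary abelian subgroups of a Sylow of $\cG$ are, up to $\cF$-conjugacy, products of elementary abelians in each factor; and Chouinard factorwise implies Chouinard for $\cG$. In particular, if $G'_i \hookrightarrow \cG_i$ satisfy the hypotheses of \Cref{lem:reduction} for each $i$, then $G'_1 \times G'_2 \hookrightarrow \cG_1 \times \cG_2$ works for the product. By the classification theorem of Andersen--Grodal--M\o ller--Viruel~\cite{AGMV_classification1,AG_classification2}, every connected $p$-compact group decomposes as a product of a finite list of simple factors, each of which is either the $p$-completion of a simply connected simple compact Lie group, or one of finitely many exotic simple $p$-compact groups ($DI(4)$ at $p=2$, and the Clark--Ewing examples at odd primes).

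For a simple factor that is the $p$-completion of a compact Lie group $G$, I take $G' = G$; the canonical map $G \to G^\wedge_p$ is a monomorphism of $p$-compact groups of the same rank with identical Weyl group, so the index $[W_{\cG}:W_{G'}]$ equals $1$ and \Cref{lem:reduction} applies directly. For each of the finitely many exotic simple factors, I would then exhibit by hand a compact connected Lie subgroup $G' \hookrightarrow \cG$ of matching rank whose Weyl group has index prime to $p$ in $W_\cG$: for $DI(4)$ at $p=2$, one can use the known monomorphism from $\mathrm{Spin}(7)$; for the Clark--Ewing families at odd primes, each $p$-adic reflection group $W_\cG$ arising in the classification contains, up to prime-to-$p$ index, the Weyl group of a suitable classical compact Lie group of the same rank acting on the same $\Z_p$-lattice, giving the desired monomorphism.

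The main obstacle is the exotic case: one has to verify in each entry of the Shephard--Todd list appearing in the Andersen--Grodal--M\o ller--Viruel classification that a compact Lie subgroup with the required prime-to-$p$ Weyl-index property actually exists. This is a finite but delicate check; the uniform input one has is the existence of unitary embeddings of connected $p$-compact groups, from which one can try to produce the desired compact Lie subgroup as the preimage, at the level of maximal torus normalizers, of a suitable compact Lie subgroup of $U(n)$ whose Weyl group lies with prime-to-$p$ index in $W_{\cG}$.
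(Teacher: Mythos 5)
Your overall strategy coincides with the paper's: reduce to Chouinard for compact Lie groups (\Cref{thm:chouinard_compactlie}) via \Cref{lem:reduction}, and run through the classification of connected $p$-compact groups \cite{AGMV_classification1,AG_classification2}. The factor that is the $p$-completion of a compact Lie group and the $DI(4)$ case at $p=2$ (via $\Spin(7)\subset DI(4)$, Weyl index $7$) are handled the same way. However, there is a real gap, which you yourself flag: you do not actually carry out the verification for the exotic simple factors at odd primes, and you also mislabel them. Not all exotic simple $p$-compact groups at odd primes are Clark--Ewing: the Clark--Ewing construction covers precisely the \emph{non-modular} case $p\nmid |W|$, but the classification also contains \emph{modular} exotics (the Aguad\'e--Zabrodsky groups $G_{12},G_{29},G_{31},G_{34}$ and Notbohm--Oliver's generalized Grassmannians $G(m,n,s)$), where $p$ does divide $|W|$. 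The non-modular case is easy because the discrete $p$-toral Sylow is then just a $p$-discrete torus, so one can take $G'=(S^1)^r$ (any index in $W_\cG$ is automatically prime to $p$), but the modular case genuinely requires producing a specific compact Lie subgroup of full rank with Weyl index prime to $p$. Your proposal to conjure such a subgroup by passing through unitary embeddings at the level of torus normalizers is speculative and not justified; the paper instead cites explicit subgroups and Weyl index computations: for the Aguad\'e--Zabrodsky groups one has $SU(p)\subset\cG$ with $[W_\cG:\Sigma_p]$ prime to $p$ (from \cite{aguade}), and for $G(m,n,s)$ one has $U(n)\subset\cG$ with $[W_\cG:\Sigma_n]=q^{n-1}r$ where $r\mid q\mid p-1$ (from \cite{notbohm_grassmanian}). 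Until these modular cases are pinned down, the argument is incomplete, since \Cref{lem:reduction} is being invoked without exhibiting the required subgroup.
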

\begin{proof}
	By the classification of connected $p$-compact groups \cite{AGMV_classification1,AG_classification2}, $\cG$ can be written as a product $\cal{G}_1 \times \cal{G}_2$ where $\cal{G}_1$ is the $\F_p$-completion of a compact connected Lie group, and $\cal{G}_2$ is a product of exotic $p$-compact groups. It thus suffices to prove the proposition for each exotic $p$-compact group, and we will use  \cref{lem:reduction} in every case.

	If $p=2$, then the only exotic $2$-compact group is the Dwyer--Wilkerson group $DI(4)$ \cite{dw_di4}. Let $\Spin(7)$ be the double cover of $SO(7)$, the construction of $DI(4)$ shows that $\Spin(7) \subset DI(4)$, both of rank $3$. The Weyl group index $[W_{DI(4)} : W_{\Spin(7)}]$ is odd (in fact, it is equal to 7 \cite[p.~168]{notbohm_di(4)}), and so, $\Spin(7)$ and $DI(4)$ have the same $2$-Sylow subgroup $S$, hence the result holds in this case. 

	When $p$ is odd, we split the result into two further cases: the non-modular $p$-compact groups (the Clark--Ewing $p$-compact groups), and the modular $p$-compact groups (Aguad\'e and Zabrodsky). Here non-modular means that $p$ does not divide the order of the Weyl group. In particular, the Sylow $p$-compact toral subgroup $N \cong (\Z_p)^r$, so that in particular $C^*(BS,\F_p) \simeq C^*(B(S^1)^r,\F_p)$, and we can apply \cref{lem:reduction} with $G'=(S^1)^r$.

	The final case is then that of the modular exotic connected $p$-compact groups for $p$ odd. These are precisely the $p$-compact groups constructed by Aguad\'e \cite{aguade} and Zabrodsky (named $G_{12},G_{29},G_{31}$, and $G_{34}$) and the generalized Grassmannians $G(m,n,s)$ of Notbohm--Oliver \cite{notbohm_grassmanian}. 
	
	For the examples constructed by Aguad\'e and Zabrodsky (see page 37 of \cite{aguade}) we see that, at the corresponding prime, there is a subgroup $SU(p)\subset \cG$ of the same rank such that $[W_{\cG}: \Sigma_p]$ is prime to $p$ (Table 1 in  \cite{aguade}).
	
	For the family of generalized Grassmannians, Notbohm in \cite{notbohm_grassmanian} gives a unified construction of the classifying space as a colimit of classifying spaces of unitary groups with a subgroup $U(n)\subset \cG$ of the same rank $n$ 
such that $[W_{\cG}: \Sigma_n]=q^{n-1}r$ where $r|q|p-1$.	
\end{proof}

It is worth pointing out that in all the cases considered in the proof of \cref{thm:Chouinard_pcompact}, we describe the $\F_p$-completion of $S$ as the $\F_p$-completion of a compact $p$-toral group which in particular is a compact Lie group. Therefore, using the classification, for any connected $p$-compact group $\cG$ with discrete $p$-toral Sylow $S$, there exists a $p$-toral group $N$ such that $BN^\wedge_p\simeq BS^\wedge_p$. This observation motivates the following definition and the subsequent proposition.

\begin{defn}\label{defn:geometric}
A discrete $p$-toral group $S$ is called geometric if there exists a $p$-toral group $S'$ together with an equivalence $BS^{\wedge}_p \simeq (BS')^{\wedge}_p$ of $p$-completed classifying spaces. That is, if $S$ is the discrete approximation of a $p$-toral group $S'$.
\end{defn}

\begin{prop}\label{prop:toralchouinard}
Suppose $S$ is a geometric discrete $p$-toral group, then $S$ satisfies Chouinard's theorem.
\end{prop}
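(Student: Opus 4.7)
The strategy is to reduce to the case of a genuine $p$-toral group, where Chouinard's theorem already holds by the compact Lie case, and then transport the conservativity along an equivalence of cochain algebras.

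First, by hypothesis there exists a $p$-toral group $S'$ and an equivalence $BS^{\wedge}_p \simeq (BS')^{\wedge}_p$. Since both $BS$ and $BS'$ are $p$-good spaces (their fundamental groups are $p$-groups), \Cref{lem:cochaincompletion} yields an equivalence of commutative ring spectra
\[
C^*(BS,\F_p) \simeq C^*(BS^{\wedge}_p,\F_p) \simeq C^*((BS')^{\wedge}_p,\F_p) \simeq C^*(BS',\F_p).
\]
Consequently, the categories $\Mod_{C^*(BS,\F_p)}$ and $\Mod_{C^*(BS',\F_p)}$ are equivalent as symmetric monoidal stable $\infty$-categories.

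Next I would match up the elementary abelian subgroups. Writing $S_0 \cong (\Z/p^{\infty})^r$ for the identity component of $S$ and $T \cong (S^1)^r$ for the maximal torus of $S'$, the discrete approximation identifies $S$ with the subgroup of $S'$ generated by $T_p \subset T$ and a set of representatives of $\pi_0 S' = \pi_0 S$, and this inclusion $S \hookrightarrow S'$ induces the given equivalence on $p$-completed classifying spaces. Any elementary abelian $p$-subgroup $E \le S'$ is $p$-torsion, so it lies inside $T_p \cdot \pi_0 S' \subseteq S$, and conversely every $E \le S$ is also a subgroup of $S'$. The action of $\pi_0 S = \pi_0 S'$ on $T_p$ agrees with its action on the $p$-torsion of $T$, so $\cF$-conjugation classes of elementary abelian subgroups of $S$ and $S'$ coincide and we may choose a common set of representatives $\cE = \cE(S) = \cE(S')$. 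For each $E \in \cE$ the inclusions $E \hookrightarrow S$ and $E \hookrightarrow S'$ induce maps $BE \to BS^{\wedge}_p \simeq (BS')^{\wedge}_p$ that are homotopic, so the induced morphisms on cochains assemble into a commutative square
\[
\xymatrix{
C^*(BS,\F_p) \ar[r] \ar[d]_{\simeq} & \prod_{E \in \cE} C^*(BE,\F_p) \ar[d]^{=} \\
C^*(BS',\F_p) \ar[r] & \prod_{E \in \cE} C^*(BE,\F_p).
}
\]

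Finally, the $p$-toral group $S'$ is a compact Lie group, so Chouinard's theorem for compact Lie groups (\Cref{thm:chouinard_compactlie}) asserts that induction and coinduction along the bottom horizontal arrow are conservative. The commutative square above together with the equivalence of module categories induced by the left vertical arrow then yield the same conclusion for the top horizontal arrow, proving that $S$ satisfies Chouinard's theorem. The only genuinely delicate point is the identification of $\cF$-conjugation classes of elementary abelian subgroups between $S$ and $S'$, which reduces to the elementary fact that all elementary abelian $p$-subgroups of a $p$-toral group $S'$ are contained in the torsion subgroup $T_p \cdot \pi_0 S'$ of $S'$, hence in the discrete approximation $S$.
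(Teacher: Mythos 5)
Your overall strategy — transport the Chouinard morphism along the equivalence $C^*(BS,\F_p)\simeq C^*(BS',\F_p)$ and cite the compact Lie case — is the same as the paper's, but your method for matching elementary abelian subgroups has a real gap.

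The hypothesis of being geometric only provides an equivalence $BS^{\wedge}_p\simeq(BS')^{\wedge}_p$ of $p$-completed classifying spaces; it does not hand you a group monomorphism $S\hookrightarrow S'$, nor does it say that the given equivalence is realized by such an inclusion. Your argument quietly converts a homotopy equivalence of spaces into a group-theoretic embedding ("the discrete approximation identifies $S$ with the subgroup of $S'$ generated by $T_p$ and a set of representatives of $\pi_0 S'$, and this inclusion $\dots$ induces the given equivalence"), which is not licensed by \Cref{defn:geometric} as stated. Deducing a group homomorphism or a classification of subgroups from a homotopy equivalence of $p$-complete classifying spaces is precisely the kind of rigidity question that requires Lannes's $T$-functor machinery, and that is exactly what the paper's proof invokes: by Lannes's theorem one has
\[
\Rep(E,P)\cong[BE,BP^{\wedge}_p]\cong\Hom_{\mathcal K}\bigl(H^*(BP,\F_p),H^*(BE,\F_p)\bigr),
\]
so the isomorphism $H^*(BS,\F_p)\cong H^*(BS',\F_p)$ of unstable algebras over $\cA_p$ immediately gives a bijection of conjugacy classes of elementary abelian subgroups compatible with the restriction maps, with no need to realize the equivalence by a group map.

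Even granting a compatible inclusion $S\hookrightarrow S'$, the remaining step of your argument is incomplete: you would need to show that two elementary abelian subgroups of $S$ are $S$-conjugate if and only if they are $S'$-conjugate. You address the torus direction informally ("the action of $\pi_0 S$ on $T_p$ agrees with its action on the $p$-torsion of $T$"), but this does not control conjugation by arbitrary elements of the identity component $T$ acting on subgroups that are not contained in $T_p$, nor does it account for the extension $1\to T\to S'\to\pi_0 S'\to 1$ being possibly non-split (so that "$T_p\cdot\pi_0 S'$" is not literally a subgroup). Again, Lannes's theory settles this at one stroke. I recommend adding the $T$-functor argument, after which the commutative square and \Cref{thm:chouinard_compactlie} finish the proof as you outlined.
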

\begin{proof}
By assumption, there exists a $p$-toral group $S'$ and a mod $p$ equivalence $BS\rightarrow BS^{\wedge}_p \simeq (BS')^{\wedge}_p$, which in particular induces an equivalence $f^*\colon C^*(BS,\F_p) \simeq C^*(BS',\F_p)$ by passing to cochains, as well as an isomorphism $H^*(BS,\F_p) \cong H^*(BS',\F_p)$ of unstable algebras over the mod $p$ Steenrod algebra. By \cite[Thm.~3.1.1]{La}, elementary abelian $p$-subgroups of $p$-toral groups or $p$-discrete groups $P$ are detected by morphisms of algebras over the Steenrod algebra in mod $p$ cohomology,
\begin{equation}
\Rep(E,P) \cong [BE,BP^\wedge_p]\cong \Hom_{\mathcal K}(H^*(BP,\F_p), H^*(BE,\F_p)),
\end{equation}
then  $f$ provides a bijection between elementary abelian subgroups of $S$ and $S'$ up to conjugacy, $\cE(S) \cong \cE(S')$. It consequently suffices to verify that $S'$ satisfies Chouinard's theorem, which has been established in \Cref{thm:chouinard_compactlie}.
\end{proof}

In \Cref{prop:plocalcompactchouniard} below, we will use this result to show that $\cG = (S,\cF)$ satisfies Chouinard's theorem whenever $S$ is geometric. In particular, we have:

\begin{cor}\label{cor:geometricexamples}
Let $\cG = (S,\cF)$ be a $p$-local compact group. The discrete $p$-toral subgroup $S$ is geometric if $\cG$ belongs to any of the following classes of homotopical groups:
	\begin{enumerate}
		\item compact Lie groups, or
		\item connected $p$-compact groups, or
		\item $p$-local finite groups.
	\end{enumerate}
\end{cor}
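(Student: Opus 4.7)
The plan is to address the three classes of homotopical groups in turn. For case (1), let $\cG = (S, \cF_S(G), \cL_S(G))$ model the compact Lie group $G$ as in \Cref{rem:pcompactmodel}. By construction in \cite[Sec.~9]{blo_pcompact}, $S$ is chosen as a discrete approximation of the maximal $p$-toral subgroup $N \leq G$, so by definition $BS^{\wedge}_p \simeq BN^{\wedge}_p$ with $N$ a $p$-toral group; hence $S$ is geometric. For case (3), when $\cG$ is a $p$-local finite group, $S$ is a finite $p$-group, which is itself $p$-toral (with trivial identity component), so it suffices to take $S' = S$.

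The main content is therefore case (2). The idea is to invoke the classification of connected $p$-compact groups of Andersen--Grodal--M{\o}ller--Viruel \cite{AGMV_classification1, AG_classification2}: any connected $p$-compact group splits as $\cG = \cG_1 \times \cG_2$, where $\cG_1$ is the $\F_p$-completion of a compact connected Lie group and $\cG_2$ is a product of exotic connected $p$-compact groups. Since products of $p$-toral groups are $p$-toral and $p$-completion commutes with finite products of classifying spaces, it suffices to treat the factors separately. The factor $\cG_1$ reduces immediately to case (1). For each exotic factor, the case analysis carried out in the proof of \Cref{thm:Chouinard_pcompact} exhibits a compact Lie group $G'$ of the same rank as $\cG$ together with a monomorphism $G' \hookrightarrow \cG$ whose Weyl group index $[W_{\cG} : W_{G'}]$ is prime to $p$. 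As explained there, this forces the $p$-compact toral Sylow of $\cG$ to coincide with $N_p(T_{G'})^{\wedge}_p$. Consequently, the maximal discrete $p$-toral subgroup $S$ of $\cG$ is a discrete approximation of the $p$-toral group $N_p(T_{G'}) \leq G'$, which exhibits $S$ as geometric.

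The main obstacle will be the exotic subcase of (2), which genuinely depends on the classification theorem and the case-by-case exhibition of compact Lie subgroups with prime-to-$p$ Weyl group index (covering the Dwyer--Wilkerson group $DI(4)$ at $p=2$, the Clark--Ewing examples, the Aguad\'e--Zabrodsky groups $G_{12}, G_{29}, G_{31}, G_{34}$, and the Notbohm--Oliver generalized Grassmannians). However, this work has already been assembled in the proof of \Cref{thm:Chouinard_pcompact}, so the present corollary amounts to extracting the geometric conclusion implicit in the constructions given there.
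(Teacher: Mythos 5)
The proposal is correct and takes essentially the same approach as the paper: cases (1) and (2) are resolved by the preceding discussion (the maximal discrete $p$-toral subgroup being a discrete approximation of the $p$-toral $N_p(T)$ for compact Lie groups, and the remark that in every branch of the proof of \Cref{thm:Chouinard_pcompact} the Sylow $S$ is exhibited as a discrete approximation of a $p$-toral group), and case (3) follows because a finite $p$-group is $p$-toral. Your treatment of case (3) is actually slightly cleaner than the paper's one-line remark, which somewhat confusingly invokes Chouinard's theorem when the relevant observation is simply that one may take $S' = S$.
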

\begin{proof}
The first two examples have been covered above. The claim for finite $p$-local groups follows from the classical version of Chouinard's theorem, because in this case $S$ is a finite $p$-group.
\end{proof}

\subsection{Finiteness and a stable transfer for $C^*(B\cG,\F_p)$.}\label{ssec:splitting}
In order to produce a stable transfer for $p$-local compact groups, we rely on theorems of Gonzalez and Ragnarsson as well as a general result on limits of retracts whose proof is postponed to the end of this section.

In \cite{gonzalez_approxpcompact} Gonzalez proves that any $p$-local compact group $\cG = (S,\cF)$ can be approximated by a sequence of $p$-local finite groups $\cG_i = (S_i,\cF_i)_{i \ge 0}$ in such a way that the homotopy colimit over the associated sequence of classifying spaces is equivalent to the classifying space of $\cG = (S,\cF)$ after $p$-completion. Unpacking his construction, the following is \cite[Thm.~1]{gonzalez_approxpcompact}.

\begin{thm}[Gonzalez]\label{thm:approximation}
If $\cG = (S,\cF)$ is a $p$-local compact group, then there is a commutative diagram
\[
\xymatrix{\ldots \ar[r] & BS_i \ar[r] \ar[d] &  BS_{i+1} \ar[r] \ar[d] & \ldots \\
\ldots \ar[r] & B\cG_i \ar[r] & B\cG_{i+1} \ar[r] & \ldots}
\]
of $p$-local finite groups $\cG_i = (S_i,\cF_i)$ indexed on $i \in \N$ whose colimits recovers the map $BS \to B\cG$ after $p$-completion. 
\end{thm}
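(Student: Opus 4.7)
The plan is to recover Gonzalez's approximation \cite{gonzalez_approxpcompact} by constructing the diagram directly from the structure of $(S,\cF,\cL)$. Since $S$ is discrete $p$-toral, it sits in a short exact sequence $1 \to T \to S \to \pi_0 S \to 1$, where $T \cong (\Z/p^\infty)^r$ is the identity component and $\pi_0 S$ is a finite $p$-group. Setting $T_i = T[p^i] \cong (\Z/p^i)^r$ and letting $S_i \le S$ be the preimage of $\pi_0 S$ under the pullback of this extension along $T_i \hookrightarrow T$ gives an exhaustive filtration $S_0 \subseteq S_1 \subseteq \cdots$ of $S$ by finite $p$-subgroups whose union is $S$, each of which is strongly $\cF$-closed.

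Next, I would define each $\cF_i$ to be the restriction of $\cF$ to $S_i$: namely, $\Hom_{\cF_i}(P,Q) \subseteq \Hom_{\cF}(P,Q)$ would consist of those morphisms that extend to an $\cF$-isomorphism between $\cF$-centric subgroups contained in $S_i$. The main technical step---and the principal obstacle---is verifying that $\cF_i$ is a saturated fusion system on the finite $p$-group $S_i$. This is the heart of Gonzalez's argument and requires a careful analysis of the extension and Sylow axioms for morphisms between subgroups meeting $T_i$ nontrivially, using the saturation of $\cF$ together with the fact that centric subgroups of $\cF$ are controlled, up to conjugation, by finitely many combinatorial invariants. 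Granted saturation, the existence and uniqueness results for centric linking systems \cite{chermak_existence, boboliver_existence, ll_uniqueness} produce associated $\cL_i$, fitting into natural inclusions $\cL_i \hookrightarrow \cL_{i+1}$ over $S_i \hookrightarrow S_{i+1}$, yielding $p$-local finite groups $\cG_i = (S_i,\cF_i,\cL_i)$ as desired.

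Finally, to realize the colimit statement, I would show on nerves that $|\cL| \simeq \hocolim_i |\cL_i|$ and $BS \simeq \hocolim_i BS_i$, by arguing that every object of $\cL$ lies in some $S_i$ and every morphism between such objects appears already in some $\cL_j$ for $j$ sufficiently large; the same holds tautologically for $S$ itself. Applying Bousfield--Kan $p$-completion and invoking its compatibility with homotopy colimits in the relevant range then yields $B\cG \simeq (\hocolim_i B\cG_i)^{\wedge}_p$ and the analogous identification for the top row, producing the commutative square of the theorem. A posteriori one could also verify this colimit at the level of cochains via the stable elements formula for $H^*(B\cG,\F_p)$ recorded later in the paper.
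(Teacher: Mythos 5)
The paper does not prove this result: it is attributed to Gonzalez and imported verbatim as \cite[Thm.~1]{gonzalez_approxpcompact}, so there is no in-paper argument to compare against. I can, however, assess whether your sketch would succeed as a reconstruction of Gonzalez's proof, and I think it has a real gap at exactly the point you flag as ``the heart'' of the argument. Your proposal builds $\cF_i$ by naively restricting $\cF$-morphisms to the finite subgroups $S_i = T[p^i]\cdot \pi_0 S$. But it is far from clear---and in general false---that such a restriction yields a \emph{saturated} fusion system: the saturation axioms involve Sylow and extension conditions on normalizers and automizers, and truncating $T$ to its $p^i$-torsion can easily destroy them (fully normalized/centralized representatives need not stay inside $S_i$, $\mathrm{Aut}_{\cF_i}(P)$ need not contain a Sylow given by $\mathrm{Aut}_{S_i}(P)$, etc.). You cannot ``grant'' this; it is the entire content of the theorem.

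What Gonzalez actually does is quite different and the missing ingredient is the theory of unstable Adams operations on $p$-local compact groups, due to Junod--Levi--Libman and Levi--Libman. One fixes an unstable Adams operation $\Psi$ of suitable degree acting on the linking system $\cL$, and takes fixed-point subcategories $\cL_i = C_{\cL}(\Psi^{p^i})$; the finite objects and the saturated finite fusion systems $\cF_i$ emerge from this fixed-point construction, with saturation proved by exploiting the $\Psi$-equivariance rather than by an ad hoc restriction. The compatibility of the tower maps $\cL_i \hookrightarrow \cL_{i+1}$ and the identification $|\cL| \simeq \hocolim_i |\cL_i|$ up to $p$-completion also depend on this Adams-operation structure (it is not merely a statement that every object and morphism of $\cL$ eventually appears in some $\cL_j$). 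Since your sketch omits the Adams operations entirely and instead asserts saturation of a naive restriction, it does not reconstruct the proof and, as written, would not go through.
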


The next theorem of Ragnarsson shows that at each level of the induced tower of cochain ring spectra there is a transfer $C^*(BS_i,\F_p) \to C^*(B\cG_i,\F_p)$; however, note we do not know whether these transfer maps can be constructed in a way that is compatible with the structure maps in the corresponding towers. Recall that the suspension spectrum of an unbased space $X$ will be denoted by $X_{+} = \Sigma_{+}^{\infty}X$. The following is a combination of \cite[Thm.~D]{ragnarsson_transfer} and \cite[Cor.~6.7]{rag_transfer_diagram}.

\begin{thm}[Ragnarsson]\label{thm:transfer}
For any $p$-local finite group $\cG = (S,\cF)$, there is a natural transfer $s\colon B\cG_+ \to BS_+$. In particular, the canonical map $f\colon BS_+ \to B\cG_+$ is split, i.e.,  $f \circ s \simeq \id_{B\cG_+}$. Moreover, the following diagram commutes up to homotopy
\[
\xymatrix@C=3.55em{
  \ar[d]_s B\cG_+ \ar[r]^-{\Delta} & B\cG_+ \otimes B\cG_+ \ar[d]^{1 \otimes s} \\
BS_+ \ar[r]_-{(f \otimes 1)\circ \Delta} & B\cG_+ \otimes BS_+.
}
\]
In particular, the map $C^*(B\cG,\F_p) \to C^*(BS,\F_p)$ is split as a map of $C^*(B\cG,\F_p)$-modules.
\end{thm}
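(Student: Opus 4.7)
The plan is to produce $s$ as the inclusion of a wedge summand of $BS_+$ after $p$-completion, cut out by a stable idempotent arising from the fusion system. Broto--Levi--Oliver proved that any saturated fusion system $\cF$ on a finite $p$-group $S$ admits a \emph{characteristic biset}: a virtual $(S,S)$-biset $\Omega$ whose transitive summands have the form $S \times^{(P,\varphi)} S$ for some morphism $\varphi\colon P \to S$ in $\cF$, which is both left and right $\cF$-stable, and satisfies $|\Omega|/|S| \equiv 1 \pmod p$. Via the Segal--tom Dieck description of stable maps between classifying spaces of $p$-groups in terms of the double Burnside category, $\Omega$ defines a stable self-map of $BS_+$; rescaling by $|\Omega|$ and $p$-completing yields an idempotent $\omega_{\cF}$ in the endomorphism ring of $BS_+^{\wedge}_p$.

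I would then split $\omega_{\cF}$ and identify the resulting summand: a mod $p$ cohomology comparison using the stable elements formula for $H^*(B\cG,\F_p)$ proved by Broto--Levi--Oliver identifies the image of $\omega_{\cF}$ with $B\cG_+$. Let $i\colon B\cG_+ \to BS_+^{\wedge}_p$ denote the splitting inclusion and $\pi\colon BS_+^{\wedge}_p \to B\cG_+$ the retraction, so that $\pi \circ i \simeq \id$ and $i \circ \pi \simeq \omega_{\cF}$. A comparison via the canonical map $\theta\colon BS \to B\cG$ arising from the linking system shows that $\pi$ coincides with the $p$-completion of $f$; one then defines $s := i$. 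The splitting $f \circ s \simeq \id_{B\cG_+}$ is immediate, and naturality in the $p$-local finite group follows from functoriality of the characteristic idempotent in the double Burnside category.

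The most delicate step is the Frobenius reciprocity square. After substituting $s = i$, commutativity reduces to compatibility of $\omega_{\cF}$ with the coproduct, namely that $\Delta\circ \omega_{\cF}$ and $(\omega_{\cF} \otimes 1)\circ \Delta$ agree as stable maps $BS_+ \to BS_+ \otimes BS_+$. Unpacked in the Burnside ring, this is a diagonal-compatibility identity for each transitive component $S\times^{(P,\varphi)} S$ of $\Omega$, which follows from the right $\cF$-stability of $\Omega$ combined with the Mackey double coset formula applied to the diagonal. I expect this to be the main obstacle, since translating an identity of virtual bisets into a genuine homotopy requires careful work in the stable double Burnside category; this is precisely the refinement recorded in \cite[Cor.~6.7]{rag_transfer_diagram}, which upgrades Ragnarsson's original existence theorem to incorporate the coaction structure. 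Once established, the final assertion that $C^*(B\cG,\F_p) \to C^*(BS,\F_p)$ is split as $C^*(B\cG,\F_p)$-modules follows by dualizing: $s^*$ is the desired retract of $f^*$, and $C^*(B\cG,\F_p)$-linearity is exactly the content of the Frobenius square passed through $F(-, H\F_p)$.
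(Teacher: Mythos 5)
The paper does not prove this statement: the passage preceding the theorem reads ``The following is a combination of \cite[Thm.~D]{ragnarsson_transfer} and \cite[Cor.~6.7]{rag_transfer_diagram}.'' Your proposal is therefore a reconstruction of Ragnarsson's proof rather than an alternative to anything in the paper, and your strategy matches Ragnarsson's: split the characteristic idempotent $\omega_{\cF}$ of Broto--Levi--Oliver in the $p$-completed stable double Burnside category, identify the wedge summand with $B\cG_+$ via the stable elements formula, and take $s$ to be the inclusion. That is precisely the mechanism behind \cite[Thm.~D]{ragnarsson_transfer}, and you correctly single out \cite[Cor.~6.7]{rag_transfer_diagram} as the input for the Frobenius square, which is the second ingredient the paper combines.

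One technical point to flag in your reduction. You claim the Frobenius square reduces to the identity $\Delta\circ\omega_{\cF}\simeq(\omega_{\cF}\otimes 1)\circ\Delta$. Granting that $\pi = f$ is a map of coalgebras (being a $p$-completed space-level map), one can indeed show that this identity implies the Frobenius square: applying $i\otimes 1$ and using $\Delta_{B\cG}\simeq(\pi\otimes\pi)\Delta_{BS}\, i$ reduces the square to $(\omega_{\cF}\otimes\omega_{\cF})\Delta_{BS}\, i\simeq(\omega_{\cF}\otimes 1)\Delta_{BS}\, i$, which follows from $\Delta\circ\omega_{\cF}\simeq(\omega_{\cF}\otimes 1)\circ\Delta$ together with cocommutativity of $\Delta_{BS}$. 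However, running the same manipulations in reverse only yields the weaker relation $(\omega_{\cF}\otimes\omega_{\cF})\Delta_{BS}\simeq(\omega_{\cF}\otimes 1)\Delta_{BS}\,\omega_{\cF}$; the Frobenius square does not obviously give back the stronger coaction-compatibility. It is not clear that $\Delta\circ\omega_{\cF}\simeq(\omega_{\cF}\otimes 1)\circ\Delta$ is literally what Ragnarsson's stability axioms buy you, as opposed to the Frobenius square itself, so your intermediate reduction may be importing a stronger statement than is available. Since you ultimately punt to \cite[Cor.~6.7]{rag_transfer_diagram} for the homotopy-level realization, and that reference establishes the square directly, the conclusion is unaffected, but the intermediate identity should be checked against what Ragnarsson actually proves rather than asserted.
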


\begin{cor}\label{cor:degreewisefinite}
For any $p$-local compact group $\cG = (S,\cF)$, the homotopy groups $\pi_*C^*(B\cG,\F_p)$ are degreewise finite. 
\end{cor}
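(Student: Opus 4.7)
The plan is to leverage Gonzalez's approximation of $\cG$ by a tower of $p$-local finite groups in order to reduce the desired finiteness of $\pi_*C^*(B\cG,\F_p) = H^{-*}(B\cG,\F_p)$ to a uniform bound on the cohomology of the finite approximants. By \Cref{thm:approximation}, we have $B\cG \simeq (\hocolim_i B\cG_i)^{\wedge}_p$ where each $\cG_i = (S_i,\cF_i)$ is a $p$-local finite group. Since $B\cG$ is $p$-good, \Cref{lem:cochaincompletion} identifies $C^*(B\cG,\F_p) \simeq \holim_i C^*(B\cG_i,\F_p)$, and the Milnor $\lim^1$-sequence yields
\[
0 \to \lim\nolimits^1_i H^{-n-1}(B\cG_i,\F_p) \to H^{-n}(B\cG,\F_p) \to \lim_i H^{-n}(B\cG_i,\F_p) \to 0
\]
for every $n$.

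Next, I would apply Ragnarsson's transfer theorem (\Cref{thm:transfer}) at each finite level: it exhibits $C^*(B\cG_i,\F_p)$ as a retract of $C^*(BS_i,\F_p)$, so $H^*(B\cG_i,\F_p)$ is a graded summand of $H^*(BS_i,\F_p)$. Since $S_i$ is a finite $p$-group, the latter is degreewise finite over $\F_p$, and hence so is the former. In particular, each term of the tower $\{H^{-n-1}(B\cG_i,\F_p)\}_i$ is a finite $\F_p$-vector space, so Mittag--Leffler applies and the $\lim^1$-term vanishes. It then suffices to show that $\lim_i H^{-n}(B\cG_i,\F_p)$ is finite-dimensional for each $n$.

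To control this limit, I would establish a uniform upper bound $\dim_{\F_p} H^n(BS_i,\F_p) \leq C_n$ independent of $i$. In Gonzalez's construction, each $S_i$ fits in an extension $1 \to T_i \to S_i \to \pi_0(S) \to 1$ with $T_i \cong (\Z/p^{k_i})^r$ a finite abelian $p$-group of fixed rank approximating the toral identity component of $S$. The key elementary calculation is that $H^*(B(\Z/p^k)^r,\F_p)$ is, as a graded $\F_p$-vector space, independent of $k \geq 1$; the Lyndon--Hochschild--Serre spectral sequence then gives the desired uniform bound. Since $H^n(B\cG_i,\F_p)$ is a summand of $H^n(BS_i,\F_p)$, the same bound applies to the former.

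Finally, a tower $\{A_i\}_i$ of finite-dimensional $\F_p$-vector spaces with uniformly bounded dimensions has finite-dimensional inverse limit: the tower of eventual images $I_i = \bigcap_{j \geq i} \im(A_j \to A_i)$ consists of surjections between finite-dimensional spaces of bounded dimension, so the dimensions $\dim I_i$ stabilize and $\lim A_i = \lim I_i = I_{i_0}$ for $i_0$ sufficiently large. Applied to our situation, this yields the required finiteness. The main obstacle is establishing the uniform bound on $\dim H^n(BS_i,\F_p)$, which relies on unpacking the structure of Gonzalez's tower and bears some similarity to arguments that will later be used when constructing the stable transfer for $\cG$.
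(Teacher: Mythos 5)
Your argument is correct in outline, and the early steps (Gonzalez's tower, Ragnarsson's transfer at each finite level, Milnor sequence, Mittag--Leffler vanishing of $\lim^1$) agree with the paper's. Where you diverge is in the final step. Instead of establishing a uniform bound on $\dim_{\F_p} H^n(BS_i,\F_p)$ via the structure of $S_i$ and the Lyndon--Hochschild--Serre spectral sequence, the paper observes that the Sylow inclusions $\theta_i^\ast\colon C^*(B\cG_i,\F_p) \to C^*(BS_i,\F_p)$ are compatible with the structure maps of both towers (unlike Ragnarsson's transfers, which need not be), and that each $\pi_*\theta_i^\ast$ is injective because $\theta_i^\ast$ is split. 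Left-exactness of $\lim$ then embeds $\lim_i \pi_* C^*(B\cG_i,\F_p)$ into $\lim_i \pi_* C^*(BS_i,\F_p) \cong \pi_* C^*(BS,\F_p)$, and the latter is Noetherian (hence degreewise finite) by Dwyer--Wilkerson. This sidesteps the need to understand the group-theoretic structure of the $S_i$, and in particular avoids the point you flag as ``the main obstacle'': your claim that each $S_i$ fits in an extension $1 \to (\Z/p^{k_i})^r \to S_i \to \pi_0(S) \to 1$ is plausible for Gonzalez's construction (at least for large $i$), but it requires a careful reading of that paper, whereas the paper's argument needs no such input. Both approaches give the result; yours is more hands-on and elementary once the structural claim about the tower is verified, while the paper's is shorter and reduces to a result (Noetherianity of $H^*(BS,\F_p)$) that is needed elsewhere anyway.
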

\begin{proof}
Let $\cG_i = (S_i,\cF_i)_{i\ge 0}$ be an approximation to $\cG$ as in \Cref{thm:approximation}. We follow Gonzalez's proof of \cite[Prop.~4.3 and Thm.~4.4]{gonzalez_approxpcompact}. Since $H^{-*}(BS_i,\F_p) \cong \pi_{*}C^*(BS_i,\F_p)$ is degreewise finite for all $i$, the same is true for $\pi_*C^*(B\cG_i,\F_p)$ by \Cref{thm:transfer}. It follows that both $(\pi_{*}C^*(BS_i,\F_p))_i$ and $(\pi_{*}C^*(B\cG_i,\F_p))_i$ are towers satisfying the Mittag-Leffler condition in each degree, so that the corresponding $\lim^1$-terms vanish. Using the Milnor sequence and \Cref{thm:transfer} again, we see that  
\[
\pi_*C^*(B\cG,\F_p)  \cong \pi_*\lim_iC^*(B\cG_i,\F_p) \cong \lim_i\pi_*C^*(B\cG_i,\F_p) \]
is a subgroup of $\lim_i\pi_*C^*(BS_i,\F_p) \cong \pi_*C^*(BS,\F_p)$, because $\lim$ is left-exact. By \cite[Thm.~12.1]{dwyerwilkerson_finloop}, $\pi_*C^*(BS,\F_p)$ is Noetherian, hence degreewise finite, and the claim follows.
\end{proof}

\begin{prop}\label{thm:cochainsretract}
Let $\cG=(S,\cF)$ be a $p$-local compact group and let $\theta \colon BS \to B\cG$ denote the canonical map. Then $\theta^*\colon C^*(B\cG,\F_p) \to C^*(BS,\F_p)$ is split as a map of $C^*(B\cG,\F_p)$-modules.  In particular, if $M\in \Mod_{C^*(B\cG,\F_p)}$ then $M$ is a retract of $\Res \Ind (M)$ and of $\Res \Coind M$.
\end{prop}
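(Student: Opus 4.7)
The plan is to assemble Ragnarsson's finite-group transfers along Gonzalez's approximation tower into a single splitting of $\theta^*$. First, I would apply \Cref{thm:approximation} to write $BS$ and $B\cG$ as (completions of) homotopy colimits of an approximating tower of $p$-local finite groups $\cG_i = (S_i, \cF_i)$. Passing to $\F_p$-cochains turns these colimits into limit presentations
\[
C^*(B\cG, \F_p) \simeq \lim_i C^*(B\cG_i, \F_p), \qquad C^*(BS, \F_p) \simeq \lim_i C^*(BS_i, \F_p),
\]
and identifies $\theta^*$ with the limit of the Sylow pullback maps $\theta_i^*\colon C^*(B\cG_i, \F_p) \to C^*(BS_i, \F_p)$.

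At each finite stage \Cref{thm:transfer} provides a $C^*(B\cG_i, \F_p)$-module retract $s_i$ of $\theta_i^*$. Restricting along the ring map $C^*(B\cG, \F_p) \to C^*(B\cG_i, \F_p)$ promotes these to $C^*(B\cG, \F_p)$-module maps, and precomposing with the canonical projection $C^*(BS, \F_p) \to C^*(BS_i, \F_p)$ yields maps $t_i\colon C^*(BS, \F_p) \to C^*(B\cG_i, \F_p)$ satisfying $t_i \circ \theta^* = p_i$, where $p_i$ denotes the $i$-th projection out of $C^*(B\cG,\F_p)$. If the $t_i$ were compatible along the tower, they would assemble into the desired global retract $s$; as emphasized in the introduction, the difficulty is that Ragnarsson's transfers are not a priori natural in the tower, so the $t_i$ need not agree up to homotopy after projection.

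To deal with this, I would analyze the Milnor short exact sequence
\[
\begin{aligned}
0 &\to {\lim}^1_i\, \pi_1 \Hom_{C^*(B\cG,\F_p)}(C^*(BS,\F_p), C^*(B\cG_i,\F_p)) \\
&\to \pi_0 \Hom_{C^*(B\cG,\F_p)}(C^*(BS,\F_p), C^*(B\cG,\F_p)) \\
&\to \lim_i \pi_0 \Hom_{C^*(B\cG,\F_p)}(C^*(BS,\F_p), C^*(B\cG_i,\F_p)) \to 0
\end{aligned}
\]
and show that the discrepancy of the family $(t_i)$ lies in the $\lim^1$ term and can be killed by adjusting the individual retracts $s_i$ within the affine space of all splittings of $\theta_i^*$. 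The required vanishing of the $\lim^1$ obstruction translates, via Brown--Comenetz duality, into the statement that certain phantom maps between the relevant mapping spectra vanish: the degreewise finiteness of $\pi_* C^*(B\cG_i, \F_p)$ and $\pi_* C^*(B\cG, \F_p)$ established in \Cref{cor:degreewisefinite}, combined with the reflexivity of $\F_p$-Brown--Comenetz duality on spectra with degreewise finite homotopy, should supply the Mittag--Leffler condition that annihilates these phantoms. I expect this phantom-map/Mittag--Leffler step to be the core technical obstacle; it is presumably encoded in the general lemma on limits of retracts alluded to at the start of the section.

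The second assertion, that any $M \in \Mod_{C^*(B\cG,\F_p)}$ is a retract of both $\Res\Ind M$ and $\Res\Coind M$, then follows at once from \Cref{lem:retract} applied to the split morphism $\theta^*$.
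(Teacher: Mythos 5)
Your top-level plan coincides with the paper's: feed Gonzalez's approximation tower (\Cref{thm:approximation}) and Ragnarsson's levelwise transfers (\Cref{thm:transfer}) into the general limits-of-retracts lemma (\Cref{prop:splitting}), and deduce the second assertion from \Cref{lem:retract}. What diverges, and where a genuine gap appears, is your reconstruction of how that lemma works. You propose to modify the levelwise retracts so that they form a compatible family in $\lim_i\pi_0\Hom_R(C^*(BS,\F_p),C^*(B\cG_i,\F_p))$ and then lift along the Milnor surjection, killing the remaining ambiguity with a $\lim^1$ vanishing. But the $\lim^1$ term does not house the ``discrepancy'' of an incompatible family --- it measures only the failure of a compatible family to lift uniquely --- so it cannot be used to repair incompatibility, and you offer no argument that the $s_i$ can actually be chosen compatibly: there is no reason the structure maps of the tower should carry a splitting of $\theta_{i+1}^*$ to a splitting of $\theta_i^*$, and the affine space of splittings at each level does not obviously provide the required coherence.

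The paper's proof of \Cref{prop:splitting} sidesteps compatibility entirely. It forms the fiber $F = \fib(\theta^*)$ as the limit of the levelwise fibers $F_i = \fib(\theta_i^*)$ and observes that $\theta^*$ admits a retraction if and only if the map $f\colon F \to C^*(B\cG,\F_p)$ is null. Since $\pi_*C^*(B\cG,\F_p)$ is degreewise finite (\Cref{cor:degreewisefinite}), \Cref{lem:mbcphantom} reduces nullity of $f$ to showing that $f$ is $R$-phantom. For that one only needs two easy facts: for any compact $C$ with $R$-dual $D$, each map $\pi_*(F_i\otimes_R D)\to\pi_*(C^*(B\cG_i,\F_p)\otimes_R D)$ is zero, which is immediate because each $\theta_i^*$ is split; and the tower $(\pi_*(C^*(B\cG_i,\F_p)\otimes_R D))_i$ is degreewise finite, hence Mittag--Leffler, so this levelwise vanishing propagates to the limit. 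No attempt is ever made to align the individual transfers --- avoiding precisely the obstruction you correctly identify but do not resolve. If you replace your ``adjust the $s_i$'' step with this fiber-and-phantom argument, your outline becomes the paper's proof.
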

\begin{proof}
  Applying $C^*(-,\F_p)$ to Gonzalez's approximation from \Cref{thm:approximation} yields a map of towers $(g_i)_i\colon (C^*(B\cG_i,\F_p))_i \to (C^*(BS_i,\F_p))_i$ with limit $g\colon C^*(B\cG,\F_p) \to C^*(BS,\F_p)$. Since the structure maps of both towers as well as all $g_i$ are maps of commutative ring spectra, we can work entirely in $\Mod_{C^*(B\cG,\F_p)}$. Let $R = C^*(B\cG,\F_p)$, then \Cref{thm:transfer} and \Cref{cor:degreewisefinite} imply that the conditions of \Cref{prop:splitting} below are satisfied, implying the claim. The second statement follows from applying $- \otimes_{C^*(B\cG,\F_p)} M$ and $\Hom_{C^*(B\cG,\F_p)}(-, M)$ to $\id_{C^*(B\cG,\F_p)}\simeq t\circ \theta^*$, see \Cref{lem:retract}.
\end{proof}
As a consequence, we obtain a version of Chouinard's theorem for $\Mod_{C^*(B\cG,\F_p)}$ for suitable homotopical groups $\cG$, cf.~\Cref{cor:geometricexamples}.  

\begin{thm}\label{prop:plocalcompactchouniard}
Let $\cG=(S,\cF)$ be a $p$-local compact group such that $S$ is geometric and let $\theta \colon BS \to B\cG$ denote the canonical map. Then $\cG$ satisfies Chouinard's theorem, i.e., induction and coinduction along the morphism induced by restriction
\[
\xymatrix{f \colon C^*(B\cG,\F_p) \ar[r] &  \prod \limits_{\cE(S)}C^*(BE,\F_p)}
\]
are conservative, where $\cE(\cS)$ denotes a set of representatives of conjugacy classes of elementary abelian subgroups of $S$. 
\end{thm}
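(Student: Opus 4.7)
The plan is to reduce the question to Chouinard's theorem for the geometric discrete $p$-toral Sylow $S$ by factoring $f$ through the canonical map $\theta^*\colon R\to T$, where I abbreviate $R=C^*(B\cG,\F_p)$ and $T=C^*(BS,\F_p)$, and exploiting the splitting of $\theta^*$ provided by \Cref{thm:cochainsretract}. Since $S$ is a discrete $p$-toral group, the set $\cE(S)$ of conjugacy classes of elementary abelian subgroups of $S$ is finite, so $P := \prod_{\cE(S)} C^*(BE,\F_p)$ coincides with the corresponding coproduct as an $R$-module. Consequently, for any $R$-module $M$, the induction $\Ind_R^P M$ is trivial if and only if $M\otimes_R C^*(BE,\F_p)\simeq 0$ for each $E\in\cE(S)$, and analogously for coinduction.

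For each $E \in \cE(S)$, the map $R\to C^*(BE,\F_p)$ factors as $R\xrightarrow{\theta^*} T \to C^*(BE,\F_p)$, where the second map comes from the inclusion $BE\to BS$. First I would use the projection formula (\Cref{lem:projformula}) to rewrite $M\otimes_R C^*(BE,\F_p)\simeq (\Ind_R^T M)\otimes_T C^*(BE,\F_p)$ and the composite description of coinduction to rewrite $\Hom_R(C^*(BE,\F_p),M)\simeq \Hom_T(C^*(BE,\F_p),\Coind_R^T M)$. Thus, the hypothesis that induction (resp.\ coinduction) along $f$ sends $M$ to zero translates to the statement that induction (resp.\ coinduction) along the morphism $T\to P$ annihilates $\Ind_R^T M$ (resp.\ $\Coind_R^T M$).

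Because $S$ is geometric, \Cref{prop:toralchouinard} applies and ensures that induction and coinduction along $T\to P$ are conservative. Hence $\Ind_R^T M\simeq 0$ in the first case, and $\Coind_R^T M\simeq 0$ in the second. To conclude, I would invoke \Cref{thm:cochainsretract}: since $\theta^*$ admits an $R$-module retract, \Cref{lem:retract} shows that $M$ is a retract of both $\Res\Ind_R^T M$ and $\Res\Coind_R^T M$. Triviality of either of these objects therefore forces $M\simeq 0$, which establishes the theorem.

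No step should present real difficulty once the splitting of \Cref{thm:cochainsretract} is in place; the only subtlety worth double-checking is the finiteness of $\cE(S)$, which guarantees that the product $P$ is well-behaved as an $R$-module and that the two formulations of Chouinard's theorem (one factor at a time versus the product) are equivalent. This follows from the fact that a discrete $p$-toral group is an extension of $(\Z/p^\infty)^r$ by a finite $p$-group, so its elementary abelian $p$-subgroups are contained in the finite subgroup generated by the $p$-torsion of $(\Z/p^\infty)^r$ together with the finite quotient and thus fall into only finitely many conjugacy classes.
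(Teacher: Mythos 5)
Your proof is correct and follows essentially the same approach as the paper: factor $f$ as $\zeta\circ\theta^*$ through the Sylow cochain algebra, use the $C^*(B\cG,\F_p)$-module retract of $\theta^*$ coming from \Cref{thm:cochainsretract} and \Cref{lem:retract} to get conservativity along $\theta^*$, and invoke \Cref{prop:toralchouinard} for conservativity along $\zeta$. Your extra care in unwinding conservativity along the composite via the projection formula and the finiteness of $\cE(S)$ is harmless, but the paper's shorter justification — that $\Ind_R^P \simeq \Ind_T^P\circ\Ind_R^T$ and $\Coind_R^P\simeq\Coind_T^P\circ\Coind_R^T$ by transitivity, and composites of conservative functors are conservative — avoids needing to discuss the finiteness of $\cE(S)$ at all.
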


\begin{proof}
The morphism $f$ factors as the composite 
\[
\xymatrix{C^*(B\cG,\F_p)\ar[r]^-{\theta^*} & C^*(BS,\F_p) \ar[r]^-{\zeta} & \prod \limits_{\cE(S)}C^*(BE,\F_p),}
\]
so it suffices to show that induction and coinduction along both $\theta^\ast$ and $\zeta$ are conservative. By \Cref{thm:cochainsretract} and \Cref{lem:retract} this is true for $\theta^*$, while it holds for $\zeta$ by assumption on $S$ and \Cref{prop:toralchouinard}.
\end{proof}

Furthermore, we can deduce from the existence of a retract that the cohomology ring of any $p$-local compact group is Noetherian. 

\begin{cor}\label{cor:noetherian}
For any $p$-local compact group $\cG = (S,\cF)$, the ring spectrum $C^*(B\cG,\F_p)$ is Noetherian.
\end{cor}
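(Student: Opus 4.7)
The plan is to deduce the corollary by combining the splitting of $\theta^*$ established in \Cref{thm:cochainsretract} with the abstract descent of the Noetherian property under retracts recorded in \Cref{lem:noethretract}. The only additional input required is the Noetherianness of the target $C^*(BS,\F_p)$.

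First, I would invoke \Cref{thm:cochainsretract} to obtain a retract $t \colon C^*(BS,\F_p) \to C^*(B\cG,\F_p)$ of the canonical map $\theta^* \colon C^*(B\cG,\F_p) \to C^*(BS,\F_p)$ in the category of $C^*(B\cG,\F_p)$-modules. This is precisely the finiteness-free, stable transfer produced from Ragnarsson's transfers at each level of Gonzalez's approximation tower, so no further work is needed here.

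Next, I would observe that $\pi_* C^*(BS,\F_p) \cong H^{-*}(BS,\F_p)$ is Noetherian. Since $S$ is a discrete $p$-toral group, this is precisely \cite[Thm.~12.1]{dwyerwilkerson_finloop}, which was already used in the proof of \Cref{cor:degreewisefinite}; in fact, one can view $H^{*}(BS,\F_p)$ as the mod $p$ cohomology of a finite extension of $K((\Z/p^\infty)^r,1)$ by a finite $p$-group, which is a finitely generated $\F_p$-algebra. Hence $C^*(BS,\F_p)$ is a Noetherian commutative ring spectrum in the sense of \Cref{sec:recollections}.

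Finally, applying \Cref{lem:noethretract} to the split morphism $\theta^*$ yields that $C^*(B\cG,\F_p)$ is Noetherian, completing the argument. There is no substantive obstacle: all of the work has been carried out in \Cref{thm:cochainsretract}, and the present corollary is essentially the formal consequence that a module retract of a Noetherian ring spectrum is itself Noetherian.
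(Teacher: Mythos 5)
Your proposal is correct and matches the paper's own proof exactly: the retract from \Cref{thm:cochainsretract}, Noetherianness of $C^*(BS,\F_p)$ via Dwyer--Wilkerson, and \Cref{lem:noethretract} for descent along the split morphism. No further comment is needed.
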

\begin{proof}
The ring spectrum $C^*(BS,\F_p)$ is Noetherian by \cite[Thm.~12.1]{dwyerwilkerson_finloop}. Since $C^*(B\cG,\F_p)$ is a module retract of $C^*(BS,\F_p)$ by \Cref{thm:cochainsretract}, we conclude by \Cref{lem:noethretract}.
\end{proof}

\subsection{Phantom maps and limiting retracts}
Let $R$ be a (not necessarily Noetherian) commutative ring spectrum and $\Mod_R$ be the associated symmetric monoidal category of $R$-modules. In this section we always assume that $R$ is $p$-local for some fixed prime $p$. Our goal in this subsection is to prove a general result about when a tower of split $R$-linear maps $(g_i) \colon (M_i) \to (N_i)$ gives a splitting on the induced map $g \colon M \to N$ obtained under limits. 
 
We begin with a mild generalization of ($p$-local) Brown--Comenetz duality. Since $\Q/\Z_{(p)}$ is an injective abelian group, the functor
\[
\xymatrix{\pi_{-*}(-)^{\vee} = \Hom_{\Z_{(p)}}(\pi_{-*}(-),\Q/\Z_{(p)})\colon \Mod_R^{\op} \ar[r] & \Mod_{\Z_{(p)}}^{\mathrm{graded}}}
\]
is cohomological and hence representable by a spectrum $I_R \in \Mod_R$, the ($R$-linear) Brown--Comenetz dualizing module. By construction, it satisfies the universal property
\[
\xymatrix{\pi_*\Hom_R(M,I_R) \ar[r]^-{\simeq} & \Hom_{\Z_{(p)}}(\pi_{-*}(M),\Q/\Z_{(p)})}
\]
for any $M \in \Mod_R$. In particular, this implies that $I_R \simeq \Hom_{S^0}(R,I_{S^0})$, the coinduction of the usual Brown--Comenetz dual of the sphere spectrum. 

\begin{defn}
The ($R$-linear) Brown--Comenetz dual of an $R$-module $M$ is defined as the $R$-module $I_RM = \Hom_R(M,I_R)$.
\end{defn}

By adjunction, the evaluation map 
\[
\xymatrix{M \otimes_R I_RM = M \otimes_R \Hom(M,I_R) \ar[r] & I_R}
\]
corresponds to a canonical and natural $R$-linear duality map $\phi_M\colon M \to I_R^2M$. The next lemma gives a sufficient criterion for when $\phi_M$ is an equivalence. 

\begin{lem}\label{lem:bcdoubledual}
Suppose that $M \in \Mod_R$ has degreewise finite homotopy groups, then $\phi_M\colon M \to I_R^2M$ is an equivalence. 
\end{lem}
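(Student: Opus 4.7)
\textit{Proof plan.} The plan is to identify $\phi_M$ with the classical Brown--Comenetz double duality map of spectra, and then to reduce the statement to Pontryagin double duality of finite abelian groups applied in each degree.

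First, I would observe that the identification $I_R \simeq \Hom_{S^0}(R, I_{S^0})$ recalled just before the lemma, together with the $(\Ind \dashv \Res)$-adjunction, yields a natural equivalence
\[
I_R M = \Hom_R(M, I_R) \simeq \Hom_{S^0}(M, I_{S^0})
\]
for every $M \in \Mod_R$; in other words, the $R$-linear Brown--Comenetz dual of $M$ coincides with its underlying Brown--Comenetz dual of spectra, and under this identification $\phi_M$ agrees with the classical double duality map.

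Next, I would compute $\pi_*$ of both sides. Applying the defining property of $I_R$ twice gives
\[
\pi_n(I_R^2 M) \cong \Hom_{\Z_{(p)}}\!\bigl(\pi_{-n}(I_R M),\, \Q/\Z_{(p)}\bigr) \cong \Hom_{\Z_{(p)}}\!\bigl(\Hom_{\Z_{(p)}}(\pi_n M, \Q/\Z_{(p)}),\, \Q/\Z_{(p)}\bigr).
\]
Unwinding the adjunction defining $\phi_M$, the induced map $\pi_n(\phi_M)$ sends $x \in \pi_n M$ to the functional $f \mapsto f(x)$; that is, it is precisely the canonical Pontryagin double duality map.

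Finally, I would invoke classical Pontryagin duality: since $\Q/\Z_{(p)}$ is an injective cogenerator of the category of finite $\Z_{(p)}$-modules, the natural evaluation map $A \to \Hom_{\Z_{(p)}}(\Hom_{\Z_{(p)}}(A, \Q/\Z_{(p)}), \Q/\Z_{(p)})$ is an isomorphism whenever $A$ is a finite $\Z_{(p)}$-module (equivalently, a finite abelian $p$-group). Because $R$ is $p$-local, each $\pi_n M$ is a $\Z_{(p)}$-module, and it is finite by hypothesis. Hence $\pi_n(\phi_M)$ is an isomorphism for every $n$, and so $\phi_M$ is an equivalence. The only delicate point---and my expected main obstacle, mild as it is---is the naturality check in Step~2, namely verifying that the abstractly defined map $\phi_M$ really induces the canonical Pontryagin double dual on $\pi_*$; this amounts to a routine but careful unwinding of the adjunctions and universal properties defining $I_R$ and the evaluation pairing.
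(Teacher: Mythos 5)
Your proof is correct and follows essentially the same route as the paper: identify $\pi_*\phi_M$ with the $p$-local Pontryagin double duality map (the paper cites the sphere case and says the general case is similar, whereas you make the reduction explicit via the coinduction description of $I_R$) and then conclude from finiteness of $\pi_*M$. One minor slip: the equivalence $\Hom_R(M, I_R) = \Hom_R(M, \Hom_{S^0}(R, I_{S^0})) \simeq \Hom_{S^0}(M, I_{S^0})$ is an instance of the $(\Res \dashv \Coind)$-adjunction, not $(\Ind \dashv \Res)$.
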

\begin{proof}
We first claim that the induced map $\pi_d \phi_M \colon \pi_d M \to \pi_d I_R^2M$ coincides with the map from $\pi_d(M)$ to its double $p$-local Pontryagin dual $\pi_d(M)^{\vee\vee}$. In the case that $R$ is the sphere spectrum, this is shown in \cite[Thm.~2.4]{bc_dual} or also follows from \cite[Prop.~5.1.4]{margolis}, while the general case can be proven similarly.

To finish the proof, it suffices to show that $\pi_*\phi_M$ is an isomorphism in each degree. Fix a degree $d \in \Z$. Because $\pi_dM$ is finite, its double $p$-local Pontryagin dual $\pi_d(M)^{\vee\vee}$ is canonically isomorphic to $\pi_dM$. By the universal property of $I_R$, it follows that $\phi_M$ is an equivalence.
\end{proof}

Recall that an $R$-module map $f\colon M \to N$ is called $R$-phantom if the composite $C \to M \xr{f} N$ is null for any $R$-linear map $C \to M$ with $C\in \Mod_R^{\omega}$. The next lemma is a straightforward generalization of a result due to Margolis~\cite[Prop.~5.1.2]{margolis}.

\begin{lem}[Margolis]
For any $M \in \Mod_R$, there are no non-trivial $R$-phantom maps with target $I_RM$. 
\end{lem}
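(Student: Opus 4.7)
The plan is to adapt Margolis's classical argument for the sphere spectrum to the $R$-linear setting, exploiting the cohomological nature of $I_R$. The main input is the universal property
\[
\pi_0\Hom_R(X,I_R) \cong \Hom_{\Z_{(p)}}(\pi_0X,\Q/\Z_{(p)}),
\]
which implies that any $R$-module map with target $I_R$ is determined (up to homotopy) by the induced homomorphism on $\pi_0$.

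Let $f\colon N \to I_RM$ be an $R$-phantom map. First I would translate the problem via the tensor-hom adjunction: the map $f$ corresponds to an $R$-module map $\tilde f\colon N\otimes_R M \to I_R$, and under the universal property above this in turn is classified by a single homomorphism $\phi\colon \pi_0(N\otimes_R M) \to \Q/\Z_{(p)}$. The goal reduces to showing $\phi = 0$.

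Next I would resolve $N$ by compact objects: since $\Mod_R$ is compactly generated, write $N \simeq \hocolim_\alpha C_\alpha$ with each $C_\alpha \in \Mod_R^\omega$ and the colimit filtered. Tensoring with $M$ preserves colimits, so $N\otimes_R M \simeq \hocolim_\alpha (C_\alpha \otimes_R M)$; because $\pi_0$ commutes with filtered homotopy colimits, we obtain
\[
\pi_0(N\otimes_R M) \cong \colim_\alpha \pi_0(C_\alpha \otimes_R M).
\]
Thus $\phi$ vanishes iff its restriction to each $\pi_0(C_\alpha \otimes_R M)$ vanishes.

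Finally, for each structure map $g_\alpha\colon C_\alpha \to N$, the composite $f\circ g_\alpha$ is null by the phantom hypothesis (since $C_\alpha$ is compact). Under the adjunction, $f\circ g_\alpha$ corresponds to $\tilde f\circ (g_\alpha\otimes \id_M)\colon C_\alpha\otimes_R M \to I_R$, so this map is null, i.e., the restriction of $\phi$ to $\pi_0(C_\alpha\otimes_R M)$ is zero. Combined with the previous step this gives $\phi = 0$, hence $\tilde f \simeq 0$ and therefore $f \simeq 0$. There is no serious obstacle here; the only point requiring care is that $C_\alpha \otimes_R M$ is in general \emph{not} compact (as $M$ need not be), which is precisely why one cannot argue that $\tilde f$ itself is phantom, and instead must pass through the universal property of $I_R$ to detect nullity at the level of $\pi_0$.
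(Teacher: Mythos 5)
Your proof is correct and takes essentially the same approach as the paper: both reduce to the universal property $\pi_*\Hom_R(-,I_R) \cong \Hom_{\Z_{(p)}}(\pi_{-*}(-),\Q/\Z_{(p)})$, express the source as a filtered colimit of compacts, and invoke that $\otimes_R M$ and $\pi_*$ preserve filtered colimits so that vanishing can be checked compactly. The only difference is cosmetic — the paper phrases this as the natural map $\pi_*\Hom_R(Z,I_RM) \to \lim_U \pi_*\Hom_R(U,I_RM)$ being an isomorphism, whereas you check the single classifying homomorphism $\phi$ vanishes on each piece of the colimit; both come to the same thing.
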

\begin{proof}
For the convenience of the reader, we reproduce Margolis's argument in the context of $R$-modules. Any $R$-module $Z$ can be written canonically as a filtered colimit over all compact $R$-modules mapping to $Z$; more precisely, $Z \simeq \colim_{U \in \Lambda(Z)}U$, where $\Lambda(Z)$ is a set of representatives for maps $U \to Z$ with $U \in \Mod_R^{\omega}$. By the universal property of $I_R$, we thus obtain a commutative diagram
\[
\xymatrix{\pi_*\Hom_R(Z,I_RM) \ar[r]^-{\simeq} \ar[d]_-{\alpha} & \Hom_{\Z_{(p)}}(\pi_{-*}(Z\otimes_RM),\Q/\Z_{(p)}) \ar[d]^-{\beta} \\
\lim_{U \in \Lambda(Z)}\pi_*\Hom_R(U,I_RM) \ar[r]_-{\simeq} &\lim_{U \in \Lambda(Z)}\Hom_{\Z_{(p)}}(\pi_{-*}(U\otimes_RM),\Q/\Z_{(p)}).}
\]
Since $\otimes_R$ and $\pi_*$ commute with filtered colimits and $\Hom_{\Z_{(p)}}(-,\Q/\Z_{(p)})$ turns filtered colimits into cofiltered limits, the map $\beta$ is an isomorphism. It follows that $\alpha$ is an isomorphism as well, so we are done. 
\end{proof}

Combining the previous two lemmata, we obtain:

\begin{lem}\label{lem:mbcphantom}
If $M \in \Mod_R$ such that $\pi_*M$ is degreewise finite, then there are no non-trivial $R$-phantom maps with target $M$. 
\end{lem}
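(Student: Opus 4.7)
The plan is to simply combine the two preceding lemmas. By \Cref{lem:bcdoubledual}, the hypothesis that $\pi_*M$ is degreewise finite implies that the canonical duality map $\phi_M\colon M \to I_R^2M$ is an equivalence, so $M \simeq I_R(I_RM)$ as $R$-modules.

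Now applying the previous (Margolis-type) lemma to the $R$-module $N := I_RM$, we deduce that there are no non-trivial $R$-phantom maps into $I_RN = I_R^2M$. Transporting this property along the equivalence $\phi_M$ yields the conclusion: any $R$-phantom map $g\colon X \to M$ with $X \in \Mod_R$ corresponds under $\phi_M$ to a map $\phi_M \circ g \colon X \to I_R^2M$, and the latter is $R$-phantom as well, since precomposition with any map $C \to X$ from a compact $R$-module $C$ yields a null composite (because $\phi_M$ is an equivalence and $g$ is phantom). By the Margolis-type lemma $\phi_M \circ g \simeq 0$, and then $g \simeq 0$.

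There is no real obstacle here; this is a formal consequence of the two lemmas, and the only thing to verify is that being $R$-phantom is preserved under postcomposition with an equivalence, which is immediate from the definition. The substance of the argument is already contained in \Cref{lem:bcdoubledual} (computing the double Brown--Comenetz dual via finiteness of homotopy groups) and the Margolis-type lemma (showing Brown--Comenetz duals receive no phantoms, by the universal property of $I_R$ and the fact that $\Hom_{\Z_{(p)}}(-, \Q/\Z_{(p)})$ converts filtered colimits to limits).
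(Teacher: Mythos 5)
Your proof is correct and follows exactly the paper's argument: apply the Margolis-type lemma to $I_R M$ to see that $I_R^2 M$ receives no non-trivial phantoms, then transport this along the equivalence $\phi_M \colon M \xrightarrow{\sim} I_R^2 M$ furnished by \Cref{lem:bcdoubledual}. You spell out the (routine) verification that phantomness is preserved under postcomposition with an equivalence, which the paper leaves implicit.
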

\begin{proof}
Indeed, Margolis's result shows that there are no non-trivial $R$-phantom maps with target $I_R(I_RM)$, which is equivalent to $M$ by \Cref{lem:bcdoubledual}. 
\end{proof}

For the proof of the next proposition, we also need the following standard lemma.

\begin{lem}\label{lem:finiteness}
If $M \in \Mod_R$ has degreewise finite homotopy groups, then so does $M \otimes_R C$ for any $C \in \Mod_R^{\omega}$.
\end{lem}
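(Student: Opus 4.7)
The plan is to argue by a thick subcategory induction on $C$. Recall that $\Mod_R^{\omega}$ is by definition the smallest stable subcategory of $\Mod_R$ containing $R$ which is closed under retracts. Let $\mathcal{T} \subseteq \Mod_R$ denote the full subcategory of $R$-modules $C$ such that $M \otimes_R C$ has degreewise finite homotopy groups. It suffices to show that $\mathcal{T}$ is a thick subcategory of $\Mod_R$ containing $R$, for then $\Mod_R^{\omega} \subseteq \mathcal{T}$ and the lemma follows.

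First, $R \in \mathcal{T}$ because $M \otimes_R R \simeq M$ has degreewise finite homotopy groups by hypothesis. For closure under suspensions and desuspensions, observe that $M \otimes_R \Sigma^{\pm 1} C \simeq \Sigma^{\pm 1}(M \otimes_R C)$, so this merely shifts the degree of the homotopy groups. For closure under cofibers, given a cofiber sequence $C_1 \to C_2 \to C_3$ with $C_1, C_2 \in \mathcal{T}$, the functor $M \otimes_R -$ is exact (as a colimit-preserving functor between stable $\infty$-categories), yielding a long exact sequence on homotopy groups
\[
\cdots \to \pi_n(M\otimes_R C_1) \to \pi_n(M \otimes_R C_2) \to \pi_n(M\otimes_R C_3) \to \pi_{n-1}(M \otimes_R C_1) \to \cdots
\]
Since $\pi_n(M \otimes_R C_3)$ sits as an extension of a subgroup of $\pi_{n-1}(M \otimes_R C_1)$ by a quotient of $\pi_n(M\otimes_R C_2)$, and extensions of finite abelian groups by finite abelian groups are finite, it follows that $\pi_n(M \otimes_R C_3)$ is finite, so $C_3 \in \mathcal{T}$.

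Finally, for closure under retracts, if $C$ is a retract of some $C' \in \mathcal{T}$, then $M \otimes_R C$ is a retract of $M \otimes_R C'$ in $\Mod_R$, and in particular $\pi_n(M \otimes_R C)$ is a subgroup of the finite group $\pi_n(M\otimes_R C')$, hence finite. This establishes that $\mathcal{T}$ is thick and contains $R$, completing the argument. There is no substantial obstacle here; the only mild care is in handling the extension step, which is entirely formal.
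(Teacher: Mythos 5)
Your proof is correct and takes essentially the same route as the paper: the paper argues by induction on the number of $R$-cells of $C$ using the long exact sequence of homotopy groups, and your thick-subcategory formulation is the standard formal packaging of exactly that induction (with the added benefit of explicitly handling the retract step, which the paper leaves implicit in the phrase ``finitely built from $R$'').
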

\begin{proof}
Any compact $R$-module is finitely built from $R$, so we can argue by induction on the number of $R$-cells of $C$. The claim is true by assumption for $R$ itself, and the induction step follows from the long exact sequence of homotopy groups. 
\end{proof}

In the next result, note that we do not assume that the splittings are compatible with the structure maps. 

\begin{prop}\label{prop:splitting}
Suppose that $(g_i)\colon (M_i) \to (N_i)$ is an $R$-linear map of towers of $R$-modules such that $\pi_*M_i$ and $\lim_i\pi_*M_i$ are degreewise finite for all $i$. If $g_i$ is split as a map of $R$-modules for all $i$, then the induced map $g\colon \lim_iM_i \to \lim_iN_i$ is split in $\Mod_R$ as well. 
\end{prop}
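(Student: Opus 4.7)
The approach is to reduce the splitting problem for $g \colon M \to N$ to showing that the boundary map $\partial \colon N \to \Sigma F$ of the fiber sequence $F \to M \xrightarrow{g} N$ is null, where $F = \fib(g)$. Equivalently, $g$ admits a section if and only if $\partial \simeq 0$. Since $\fib$ commutes with limits, $F \simeq \lim_i F_i$ for $F_i = \fib(g_i)$. The splittings $s_i$ give equivalences $M_i \simeq N_i \oplus F_i$, so $\pi_* F_i$ is a direct summand of $\pi_* M_i$ and is therefore degreewise finite. Moreover, the exact sequences $0 \to \pi_* F_i \to \pi_* M_i \to \pi_* N_i \to 0$ remain exact after taking inverse limits (the relevant $\lim^1$ terms vanish by Mittag--Leffler, since towers of finite abelian groups are automatically Mittag--Leffler), exhibiting $\lim_i \pi_* F_i$ as a subgroup of the degreewise finite group $\lim_i \pi_* M_i$. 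Applying the Milnor sequence to $F \simeq \lim_i F_i$ then shows $\pi_* F \cong \lim_i \pi_* F_i$, which is degreewise finite. Hence by \Cref{lem:mbcphantom}, there are no nontrivial $R$-phantom maps with target $\Sigma F$, so it suffices to show that $\partial$ is a phantom map.

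To verify that $\partial$ is phantom, fix a compact $R$-module $C$ together with a map $c \colon C \to N$, and consider the obstruction $\partial \circ c \in \pi_{-1}\Hom_R(C, F)$ to lifting $c$ through $g$. The Milnor sequence applied to $F \simeq \lim_i F_i$ yields the short exact sequence
\[
0 \to \lim\nolimits^1_i \pi_0 \Hom_R(C, F_i) \to \pi_{-1} \Hom_R(C, F) \to \lim_i \pi_{-1} \Hom_R(C, F_i) \to 0.
\]
At each level, the composite $p_i \circ c \colon C \to N_i$ lifts through $g_i$ via $s_i \circ p_i \circ c$, so the image of $\partial \circ c$ on the right vanishes and $\partial \circ c$ lies in $\lim^1_i \pi_0 \Hom_R(C, F_i)$. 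Since $C$ is compact and $\pi_* F_i$ is degreewise finite, an induction along a finite cell structure for $C$ shows that $\pi_0 \Hom_R(C, F_i)$ is a finite abelian group. Any tower of finite abelian groups is Mittag--Leffler, so the $\lim^1$ term vanishes and $\partial \circ c \simeq 0$. This proves $\partial$ is a phantom map, hence $\partial \simeq 0$ and $g$ admits an $R$-linear section.

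The main obstacle is that the given sections $s_i$ are not assumed to be compatible with the structure maps of the towers, so the naive attempt of defining $s = \lim_i s_i$ fails; compatibility only holds up to a discrepancy living in $\pi_0\Hom_R(N_{i+1}, F_i)$, and these discrepancies do not obviously vanish or fit into a coherent system. The argument bypasses this issue by packaging the total obstruction as a single phantom map, which must vanish by the Brown--Comenetz style duality of \Cref{lem:mbcphantom} under the given degreewise finiteness hypotheses. This also explains why both hypotheses on $\pi_* M_i$ and $\lim_i \pi_* M_i$ are needed: the former controls the homotopy of each $F_i$ (and hence ensures that the groups $\pi_0\Hom_R(C,F_i)$ are finite), while the latter is what guarantees that $\pi_* F$ itself is degreewise finite so that the phantom map lemma applies.
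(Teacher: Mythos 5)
Your proof is correct and takes essentially the same route as the paper: reduce to a phantom map via \Cref{lem:mbcphantom}, then kill the obstruction using a Milnor sequence and a Mittag--Leffler argument enabled by degreewise finiteness. The one (minor) reorganization is that you target the boundary map $\partial\colon N \to \Sigma F$ and verify that $\pi_*F$ is degreewise finite, whereas the paper targets the fiber inclusion $f\colon F \to M$ and verifies that $\pi_*M$ is degreewise finite (which is immediate from the Milnor sequence, without the subgroup argument you use for $\pi_*F$); correspondingly, the paper phrases the final step via Spanier--Whitehead duality and vanishing of the level-wise maps $\pi_*(F_i\otimes_R D)\to\pi_*(M_i\otimes_R D)$, while you phrase it as $\partial\circ c$ lying in a vanishing $\lim^1$ term of $\Hom_R(C,F)$. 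Both are correct and of equal difficulty.
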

\begin{proof}
We fit the map $(g_i)$ into a fiber sequence of towers of $R$-modules,
\begin{equation}\label{eq:towerseq}
\xymatrix{(F_i)_i \ar[r]^-{(f_i)} & (M_i)_{i} \ar[r]^-{(g_i)} & (N_i)_{i}.}
\end{equation}
Since limits commute with limits, this induces a fiber sequence 
\[
\xymatrix{F \ar[r]^-{f} & M \ar[r]^-g & N}
\]
by passing to limits. It is easy to see that the map $g$ is split in $\Mod_R$ if and only if $f$ is null-homotopic in $\Mod_R$, and so we prove the latter. 

The Milnor sequence (see e.g.,~\cite[Prop.~A.5.15]{orangebook}) shows that $\pi_*M \cong \lim_i\pi_*M_i$, which is degreewise finite by assumption, so $M$ satisfies the hypothesis of \Cref{lem:mbcphantom}, i.e., there can be no non-trivial phantom maps with target $M$. It therefore suffices to show that $f$ is phantom, for then $f=0$.

In order to prove the claim, let $C \to F$ be an $R$-linear map with $C \in \Mod_R^{\omega}$; we need to prove that $C \to F \xr{f} M$ is null. Recall that $R$-linear Spanier--Whitehead duality $D_R=\Hom_R(-,R)$ establishes a natural equivalence of $R$-modules
\[
\Hom_R(C,X) \simeq \Hom_R(R,\Hom_R(C,X)) \simeq \Hom_R(R,X \otimes_RD_RC)
\]
for any compact $R$-module $C$ and any $R$-module $X$, see for example \cite[Ch.~IV, Cor.~1.8]{ekmm}. It follows that the composite $C \to F \xr{f} M$ is adjoint to the $R$-linear map
\[
\xymatrixcolsep{3.5pc}
\xymatrix{R \ar[r] & F\otimes_R D_R(C) \ar[r]^-{f\otimes_R D_R(C)} & M \otimes_R D_R(C),}
\]
so equivalently we have to show that this composite is null. Let us write $D=D_R(C)$ for simplicity. Since an $R$-linear map out of $R$ is just an element in $\pi_*$, it thus suffices to prove that
\[
\xymatrix{{f_*^D = \pi_*(f\otimes_R D)}\colon \pi_*(F \otimes_R D) \ar[r] & \pi_*(M \otimes_R D)}
\]
is zero. 

Consider the tower $(\pi_*(M_i \otimes_R D))_i$. This is a tower of graded abelian groups, which in each degree is a tower of finite abelian groups by \Cref{lem:finiteness}. Consequently, the Mittag-Leffler condition is satisfied, giving
\begin{equation}\label{eq:ml}
\lim_i^1\pi_*(M_i \otimes_R D) = 0. 
\end{equation}
Smashing the fiber sequence of towers \eqref{eq:towerseq} with $D$, the map $(f_i)$ induces a morphism of Milnor sequences associated to the first two towers (omitting the $\lim^1$-terms):
\[
\xymatrix{\pi_*(F\otimes_R D) \cong \pi_*\lim_i(F_i\otimes_R D) \ar[r] \ar[d]_{f_*^D} & \lim_i\pi_*(F_i\otimes_R D) \ar[r] \ar[d] & 0 \\
\pi_*(M \otimes_R D) \cong \pi_*\lim_i(M_i \otimes_R D) \ar[r]^-{\simeq} & \lim_i\pi_*(M_i \otimes_R D) \ar[r] & 0.}
\]
The bottom horizontal map is an isomorphism by \eqref{eq:ml}. This diagram implies $f_*^D = 0$ provided we can show that the right vertical map is zero. To this end, for fixed $i$, consider a segment of the long exact sequence in homotopy:
\begin{equation}\label{eq:finiteseq}
\xymatrix{\pi_*(F_i\otimes_R D) \ar[r] & \pi_*(M_i \otimes_R D) \ar[r] & \pi_*(N_i \otimes_R D).}
\end{equation}
By assumption, $M_i \to N_i$ is split for all $i$, so $M_i \otimes_R D \to N_i \otimes_R D$ is split as well, thus the right map in \eqref{eq:finiteseq} is injective, hence $\pi_*(F_i\otimes_R D) \to \pi_*(M_i \otimes_R D)$ is zero. This finishes the proof. 
\end{proof}

\section{Stratification}\label{sec:stratification}

In this section, we prove our main theorems about $p$-local compact groups.

\subsection{$F$-isomorphism and Quillen stratification}

In order to verify that $p$-local compact groups satisfy Quillen lifting, we will first establish an $F$-isomorphism theorem in this context. For a $p$-local compact group $\cG=(S,\cF)$, we let $\cal{F}^e$ denote the full subcategory of $\cal{F}$ consisting of elementary abelian $p$-groups. 

\begin{thm}\label{thm:fiso}
Let $\cG=(S,\cF)$ be a $p$-local compact group, then restriction to elementary abelian subgroups of $S$ induces an $F$-isomorphism in mod $p$ cohomology
\[
\xymatrix{
\lambda_{\cF}\colon H^*(B\cG,\F_p) \ar[r]^-{\sim} & \varprojlim \limits_{\cF^e} H^*(BE,\F_p).}
\]
\end{thm}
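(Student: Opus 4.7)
The plan is to combine four ingredients: the Noetherianness of $H^*(B\cG,\F_p)$ established in \Cref{cor:noetherian}, Gonzalez's approximation (\Cref{thm:approximation}) of $\cG$ by a tower of $p$-local finite groups $\cG_i=(S_i,\cF_i)$, the $F$-isomorphism theorem of Broto--Levi--Oliver \cite{blo_fusion} for each $\cG_i$, and the Rector--Broto--Zarati framework of $F$-isomorphism for Noetherian unstable algebras over the mod $p$ Steenrod algebra \cite{rector_quillenstrat,brotozarati_steenrod}.

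\textbf{Construction of $\lambda_\cF$.} Applying $H^*(-,\F_p)$ to $B\cG\simeq \pc{(\hocolim_i B\cG_i)}$ and using the Milnor short exact sequence together with the degreewise finiteness furnished by \Cref{cor:degreewisefinite} yields
\[
H^*(B\cG,\F_p)\cong\lim_i H^*(B\cG_i,\F_p).
\]
On the target side, every elementary abelian subgroup of $S$ lies in some $S_i$ (being finite) and every morphism in $\cF$ between finite subgroups already comes from some $\cF_i$, so $\cF^e$ is the filtered colimit of the $(\cF_i)^e$. Consequently
\[
\lim_{\cF^e}H^*(BE,\F_p)\cong\lim_i\lim_{(\cF_i)^e}H^*(BE,\F_p).
\]
The $F$-isomorphisms $\lambda_{\cF_i}\colon H^*(B\cG_i,\F_p)\to\lim_{(\cF_i)^e}H^*(BE,\F_p)$ of Broto--Levi--Oliver then assemble into the desired candidate map $\lambda_\cF$ upon taking $\lim_i$.

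\textbf{Main obstacle.} The delicate point is that an inverse limit of $F$-isomorphisms need not itself be an $F$-isomorphism: the nilpotency indices of kernel elements and the Frobenius exponents needed to hit image elements may blow up with $i$. To circumvent this, I would invoke Rector's theorem together with the Broto--Zarati unstable algebra machinery, which together produce an intrinsic $F$-isomorphism from any suitable Noetherian unstable algebra $K$ over the Steenrod algebra to the inverse limit of $H^*(BV,\F_p)$ over its category of elementary-abelian points. Applied to $K=H^*(B\cG,\F_p)$, which is Noetherian by \Cref{cor:noetherian}, this yields the required uniform bounds and bypasses the need to pass $F$-isomorphisms through $\lim_i$ directly. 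The remaining task is then to identify Rector's indexing category with $\cF^e$; this uses Lannes's identification $\Rep(E,P)\cong\Hom_\cK(H^*(BP,\F_p),H^*(BE,\F_p))$ (as recalled in the proof of \Cref{prop:toralchouinard}) transported from $S$ to $\cG$ via the module retract \Cref{thm:cochainsretract}, which exhibits $H^*(B\cG,\F_p)$ as a summand of $H^*(BS,\F_p)$. The approximation by the $\lambda_{\cF_i}$ is then used to verify that the resulting map agrees with $\lambda_\cF$.
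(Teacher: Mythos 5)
Your proposal takes a genuinely different route from the paper's proof. The paper applies the Rector--Broto--Zarati $F$-isomorphism theorem only to $\Lambda = H^*(BS,\F_p)$, identifying Rector's category $\cC(\Lambda)$ with $(\cF_S(S)^e)^{\op}$ via Lannes's theorem for $BS^\wedge_p$, and then extends the $F$-isomorphism to $\cG$ using Gonzalez's stable elements formula $H^*(B\cG,\F_p)\cong\varprojlim_{P\in\cF}H^*(BP,\F_p)$, restricted to the finite skeleton $\cF^\bullet$ of \cite{blo_pcompact}, after which the formal argument of \cite[Prop.~5.1]{blo_fusion} applies. You instead want to apply Rector--Broto--Zarati directly to $\Lambda = H^*(B\cG,\F_p)$ (legitimate, since this is Noetherian by \Cref{cor:noetherian}) and then identify Rector's category $\cC(H^*(B\cG,\F_p))$ with $(\cF^e)^{\op}$. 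This is precisely the "alternative proof" sketched in the remark immediately following \Cref{thm:qstratification}.

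However, there is a genuine gap in the step where you identify Rector's category with $\cF^e$. You propose to "transport" Lannes's identification $\Rep(E,S)\cong\Hom_\cK(H^*(BS,\F_p),H^*(BE,\F_p))$ from $S$ to $\cG$ via the module retract of \Cref{thm:cochainsretract}. This mechanism does not work: that retract is a splitting of $\theta^*\colon C^*(B\cG,\F_p)\to C^*(BS,\F_p)$ in the category of $C^*(B\cG,\F_p)$-module spectra. On homotopy it gives an $\cA_p$-linear $H^*(B\cG,\F_p)$-module splitting of the inclusion, but it is not multiplicative and hence not a morphism in the category $\cK$ of unstable $\cA_p$-algebras, so it cannot transport a Lannes-type statement from $BS^\wedge_p$ to $B\cG$. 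What you actually need --- and what the paper's remark cites --- is the homotopical classification $[BE,B\cG]\cong\Rep(E,\cG)$ from \cite[Thm.~6.3]{blo_pcompact}, combined with the finiteness of the restriction morphisms $H^*(B\cG,\F_p)\to H^*(BE,\F_p)$ (\Cref{prop:elabcohomfingen}). Without this input, essential surjectivity of the functor $(\cF^e)^{\op}\to\cC(H^*(B\cG,\F_p))$ is not established: a priori a finite $\cA_p$-algebra morphism $H^*(B\cG,\F_p)\to H^*(BE,\F_p)$ need not arise from restriction along an elementary abelian subgroup of $S$.

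A further remark: the construction of $\lambda_\cF$ via Gonzalez's approximation tower and the Milnor sequence is unnecessary. The restriction map $\lambda_\cF$ is defined intrinsically by restriction along the canonical maps $BE\to B\cG$, and the paper never invokes the approximation tower in this proof (it is used elsewhere, for \Cref{cor:degreewisefinite} and \Cref{thm:cochainsretract}); where you reach for the tower, the paper instead invokes the stable elements formula, which sidesteps entirely the delicate issue you identified of whether $F$-isomorphisms survive passage to $\varprojlim_i$.
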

\begin{proof} The proof follows the strategy used for the finite case in \cite[Prop.~5.1]{blo_fusion}.
We first prove that the morphism 
\begin{equation}\label{eq:fisobs}
\xymatrix{R_{\Lambda}\colon H^*(BS,\F_p)\ar[r] & \varprojlim \limits_{{\cF_S(S)^e}} H^*(BE,\F_p)}
\end{equation}
is an $F$-isomorphism for any discrete $p$-toral group $S$, where the morphism is induced by restriction to elementary abelian $p$-subgroups and the inverse limit is taken with respect to subgroups and conjugacy relations. Here, the category ${\cF_S(S)^e}$ is the full subcategory of elementary abelian $p$-groups in the fusion category $\cF_S(S)$ of $S$. 

To prove this we use an abstract form of Quillen stratification due to Rector. Let $\cal{A}_p$ denote the mod $p$ Steenrod algebra. Given a Noetherian unstable algebra $\Lambda$ over $\mathcal{A}_p$, let $\cC(\Lambda)$ denote the category with objects pairs $(E,\Phi)$ where $E$ is a non-trivial elementary abelian $p$-group and $\Phi \colon \Lambda \to H^*(BE,\F_p)$ is a finite $\cal{A}_p$-algebra homomorphism. A morphism $f\colon (E,\Phi) \to (E',\Phi')$ in $\cC(\Lambda)$ is a $\mathcal{A}_p$-algebra homomorphism $H^*(BE,\F_p) \to H^*(BE',\F_p)$ making the obvious triangle commute. In this case, Rector \cite{rector_quillenstrat} for $p=2$, and Broto--Zarati \cite[Thm.~1.3 and Prop.~3.2]{brotozarati_steenrod} for $p$ odd, prove that 
\[
\xymatrix{r_{\Lambda}\colon \Lambda \ar[r] & \varprojlim\limits_{\cC(\Lambda)} H^*(BE,\F_p)}
\] 
is an $F$-isomorphism.

We wish to apply this result with $\Lambda = H^*(BS,\F_p)$. In \cite[Prop.~12.1]{dwyerwilkerson_finloop} Dwyer and Wilkerson showed that $H^*(BS,\F_p)$ is Noetherian unstable algebra over $\cal{A}_p$, and so \eqref{eq:fisobs} is an $F$-isomorphism if we can show that Rector's category $\cC(H^*(BS,\F_p))$ and $\cF_S(S)^e$ are equivalent. More precisely, we claim that $H^*(B-,\F_p)$ induces an equivalence of categories between $(\cF_S(S)^e)^{\op}$ and $\cC(H^*(BS,\F_p))$.

To this end, let $\Rep(E,S)$ denote the set of homomorphisms from $E$ to $S$, modulo conjugacy in $S$. Then, there are equivalences $\Rep(E,S)\cong [BE,BS]\cong[BE,BS^\wedge_p]$ where the first equivalence is just because $S$ and $E$ are discrete groups, and the second follows from the proof of \cite[Prop.~3.1]{dwyerzabrodsky_classifyingspaces}.  Lannes's theory applies to show that 
\begin{equation}\label{eq:morphisms}
\Rep(E,S) \cong [BE,BS^\wedge_p]\cong \Hom_{\mathcal K}(H^*(BS,\F_p), H^*(BE,\F_p)),
\end{equation}
where $\cal K$ is the category of unstable algebras over $\cal{A}_p$, see \cite[Thm.~3.1.1]{La}.

In order to see that $H^*(B-,\F_p)$ induces a functor as claimed, it remains to show that an algebra morphism $Bf^*\colon H^*(BS,\F_p)\rightarrow H^*(BE,\F_p)$ is finite if and only if $f \colon E \to S$ is injective. Both conditions are equivalent to the statement that $Bf^\wedge_p$ is a monomorphism, this is, the homotopy fiber of $Bf^\wedge_p$ is $\mathbb F_p$-finite. This together with \eqref{eq:morphisms} shows that $H^*(B-,\F_p)\colon (\cF_S(S)^e)^{\op} \to \cC(H^*(BS,\F_p))$ is well-defined and essentially surjective. If $E,E' \subseteq S$ are elementary abelian, then again Lannes's theory provides  isomorphisms
\[
\Hom(E,E') \cong [BE,BE'] \cong \Hom_{\cal K}(H^*(BE',\F_p),H^*(BE,\F_p)),
\]
so $H^*(B-,\F_p)$ is faithful. This also shows that any map $\phi\colon H^*(BE',\F_p) \to H^*(BE,\F_p)$ in $\cC(H^*(BS,\F_p))$ lifts to a unique map $f\colon E \to E'$ with $Bf^* = \phi$, and it follows easily from \eqref{eq:morphisms} that $f$ is given by subconjugation in $S$. Therefore Rector's category is equivalent to $\cal{F}_s(S)^e$, so that Quillen's $F$-isomorphism theorem holds for discrete $p$-toral groups, i.e., that \eqref{eq:fisobs} is an $F$-isomorphism. 

We now show that the $F$-isomorphism theorem holds for $B\cG$ itself.  The stable elements theorem holds for $p$-local compact groups by \cite[Thm.~2]{gonzalez_approxpcompact}, therefore we have 
\[
H^*(B\cG,\F_p)\cong \varprojlim_{P \in \cF} H^*(BP,\F_p).
\]

In \cite[Def.~3.1 and Lem.~3.2(a)]{blo_pcompact} Broto--Levi--Oliver  describe a full subcategory $\cF^\bullet$ of $\cF$ with finitely many isomorphism classes of objects and isomorphic limits over the corresponding orbit categories, see \cite[Prop.~5.2]{blo_pcompact}. By \cite[Rem.~4.5]{gonzalez_approxpcompact} we have that
 \[
H^*(B\cG,\F_p)\cong \varprojlim_{P \in \cF^\bullet} H^*(BP,\F_p).\] At this point, after replacing $\cF$ by $\cF^\bullet$, the proof of \cite[Prop.~5.1]{blo_fusion} applies formally. 
\end{proof}

We will now deduce the Quillen stratification of the cohomology ring $H^*(B\cG,\F_p)$ for any $p$-local compact group $\cG$, again utilizing the abstract approach developed by Rector~\cite{rector_quillenstrat}. In fact, we will deduce a strong version of stratification analogous to Quillen's decomposition of the prime ideal spectrum of the variety associated to $H^*(BG,\F_p)$ for $G$ a compact Lie group. 

Let $\cG = (\cF,S)$ be a $p$-local compact group, and denote by $\cV_{\cG}$ the homogeneous prime ideal spectrum of $H^*(B\cG,\F_p)$. For an elementary abelian subgroup $E \leqslant S$, let $\cV_{E}$ denote the homogeneous prime ideal spectrum of $H^*(BE,\F_p)$, and let $\cV_{E}^+ = \cV_{E} \setminus \bigcup_{E' < E} \res_{E}^{E'}\cV_{E'}$. Finally, we write $\cV_{\cG,E}^{+}= \res_{\cG}^{E} \cV_{E}^+$, where restriction is taken along the map induced by the composite $H^*(B\cG,\F_p) \hookrightarrow H^*(BS,\F_p) \to H^*(BE,\F_p)$. 
\begin{defn}\label{defn:varieties}
 We say that $\cG$ satisfies strong Quillen stratification if the variety $\cV_{\cG}$ is the disjoint union of the locally closed subvarieties $\cal{V}_{\cG,E}^{+}$, where $E$ runs over a set of representatives of $\cF$-conjugacy classes of elementary abelian subgroups of $S$.
\end{defn}

The $F$-isomorphism theorem is the key tool in \cite[Prop.~5.2]{blo_fusion} used to prove that the inclusion of the Sylow subgroup $S$ into a $p$-local finite group $(S,\cG)$ induces a finite algebra morphism, which implies that the induced map $H^*(B\cG,\F_p)\rightarrow H^*(BP,\F_p)$ is a finite morphism for any $P\leq S$. The next result generalizes this to $p$-local compact groups.

\begin{prop}\label{prop:elabcohomfingen}
Let $\cG = (\cF,S)$ be a $p$-local compact group, then the map $C^*(B\cG,\F_p) \to C^*(BS,\F_p)$ exhibits $H^*(BS,\F_p)$ as a finitely generated module over $H^*(B\cG,\F_p)$. Moreover, if $E$ is an elementary abelian subgroup of $S$, then $H^*(BE,\F_p)$ is a finitely generated module over $H^*(B\cG,\F_p)$ via the composite $C^*(B\cG,\F_p) \to C^*(BS,\F_p) \to C^*(BE,\F_p)$.
\end{prop}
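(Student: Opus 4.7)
The strategy follows the proof of \cite[Prop.~5.2]{blo_fusion}, adapted to the $p$-local compact setting, and relies crucially on the $F$-isomorphism of \Cref{thm:fiso} together with the Noetherianness of $H^*(B\cG,\F_p)$ already established in \Cref{cor:noetherian}. The principal technical tool is the following elementary lemma: if $\phi\colon A \to B$ is an $F$-isomorphism of graded commutative $\F_p$-algebras and $B$ is finitely generated as an $\F_p$-algebra, then $B$ is finitely generated as an $A$-module. Indeed, picking algebra generators $b_1,\ldots,b_n$ of $B$ and integers $m_i$ with $b_i^{p^{m_i}}\in\phi(A)$, the finitely many monomials $b_1^{j_1}\cdots b_n^{j_n}$ with $0\le j_i<p^{m_i}$ generate $B$ as a module over $\phi(A)$.

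I would then show that $T:=\varprojlim_{\cF^e}H^*(BE,\F_p)$ is a finitely generated $\F_p$-algebra. Since $S$ is discrete $p$-toral of finite rank, the set $\cE(\cG)$ of $\cF$-conjugacy classes of elementary abelians is finite, and $T$ sits inside the finite product $\prod_{E\in\cE(\cG)}H^*(BE,\F_p)^{\mathrm{Aut}_{\cF}(E)}$, each factor being finitely generated by Noether's theorem applied to the finite group $\mathrm{Aut}_\cF(E)$ acting on the finitely generated $\F_p$-algebra $H^*(BE,\F_p)$. Verifying that the equalizer subring $T$ itself is finitely generated as an $\F_p$-algebra may be done either directly, or by invoking Gonzalez's approximation (\Cref{thm:approximation}) to reduce to the $p$-local finite case (where the result is classical) and then passing to a level at which all elementary abelian subgroups of $S$ have appeared. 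Applying the lemma above to $\lambda_\cF\colon H^*(B\cG,\F_p)\to T$ then yields that $T$ is finitely generated as an $H^*(B\cG,\F_p)$-module.

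For the first claim, the same argument applied to the trivial fusion system $\cF_S(S)$ (the discrete $p$-toral case built into the proof of \Cref{thm:fiso}) shows that $T_S:=\varprojlim_{\cF_S(S)^e}H^*(BE,\F_p)$ is a finitely generated $\F_p$-algebra. The inclusion $T\hookrightarrow T_S$ exhibits $T_S$ as a finitely generated $T$-module, because each $\mathrm{Aut}_\cF(E)$-orbit of $\cF_S(S)$-conjugacy classes inside a single $\cF$-conjugacy class is finite, so $T_S$ is finitely generated as an $H^*(B\cG,\F_p)$-module. The $F$-isomorphism $\lambda_S\colon H^*(BS,\F_p)\to T_S$ has a nilpotent kernel and so yields an inclusion $H^*(BS,\F_p)/\sqrt{0}\hookrightarrow T_S$; Noetherianness of $H^*(B\cG,\F_p)$ then forces $H^*(BS,\F_p)/\sqrt{0}$ to be finitely generated over $H^*(B\cG,\F_p)$. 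The nilradical of $H^*(BS,\F_p)$ is a finitely generated nilpotent ideal (as $H^*(BS,\F_p)$ is Noetherian), and successive quotients in its power filtration are finitely generated over $H^*(BS,\F_p)/\sqrt{0}$, hence over $H^*(B\cG,\F_p)$, yielding the first claim. The claim for $H^*(BE,\F_p)$ then follows either by observing that $H^*(BE,\F_p)$ is a finite module over $H^*(BE,\F_p)^{\mathrm{Aut}_\cF(E)}$ which is a quotient of $T$, or alternatively from the $H^*(BS,\F_p)$ statement combined with the fact that $H^*(BE,\F_p)$ is finitely generated over $H^*(BS,\F_p)$ via restriction.

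The main obstacle I anticipate is the finite generation of $T$ as an $\F_p$-algebra: although each factor $H^*(BE,\F_p)^{\mathrm{Aut}_\cF(E)}$ is finitely generated, verifying that the equalizer cut out by the non-isomorphism morphisms in $\cF^e$ remains finitely generated requires either careful equational bookkeeping or a reduction to the $p$-local finite case via \Cref{thm:approximation}. A secondary subtlety is ensuring the step from $T$ to $T_S$ does not accidentally invoke the strong Quillen stratification of \Cref{defn:varieties}, which is proven later in the paper.
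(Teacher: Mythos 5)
Your overall strategy is the same as the paper's (both follow \cite[Prop.~5.2]{blo_fusion}): reduce via the $F$-isomorphism theorem to a question about the Rector limits $\Lambda(\cF)=\varprojlim_{\cF^e}H^*(BE,\F_p)$ and $\Lambda(\cF_S(S))=\varprojlim_{\cF_S(S)^e}H^*(BE,\F_p)$, and then use Noetherianness to convert integrality into module-finiteness. Your key lemma about $F$-isomorphisms is correct as stated. However, there are two genuine gaps. First, you leave open the finite generation of $T=\Lambda(\cF)$ as an $\F_p$-algebra, offering only a gesture toward Gonzalez's approximation or "careful equational bookkeeping"; the paper instead cites Rector's structure theorems (\cite[Prop.~2.5 and Thm.~2.6]{rector_quillenstrat}, with Broto--Zarati for $p>2$), which directly give that $\Lambda(\cF)$ is reduced Noetherian. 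A bare subring of a finitely generated $\F_p$-algebra need not be finitely generated, so this step cannot be omitted. Second, and more seriously, your justification for the claim that the inclusion $T\hookrightarrow T_S$ is a finite extension --- "each $\mathrm{Aut}_\cF(E)$-orbit of $\cF_S(S)$-conjugacy classes inside a single $\cF$-conjugacy class is finite" --- is not a valid argument. Finiteness of orbit counts does not produce integral dependence relations; there is no finite group acting on $T_S$ with $T$ inside its fixed points, so the usual invariant-theory trick does not apply directly. The paper produces the required integrality by choosing a maximal-rank elementary abelian $V\leq S$ and exhibiting the Dickson invariants $H^*(BV,\F_p)^{GL(V)}$ as a common finitely generated subalgebra over which both $\Lambda(\cF)$ and $\Lambda(\cF_S(S))$ are finite modules; finiteness of $T\hookrightarrow T_S$ then follows from a finite tower.

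Once those two points are supplied, your nilradical-filtration argument in the last step is correct but unnecessarily roundabout. Since $H^*(BS,\F_p)$ is already known to be a finitely generated $\F_p$-algebra (\cite[Thm.~12.1]{dwyerwilkerson_finloop}), it suffices to show that $\theta^*$ is integral, and integrality descends across the two $F$-isomorphisms; this is exactly the shortcut your key lemma already encodes, and it is how the paper proceeds. Your concern about circularity with the later strong Quillen stratification is well placed, but the relevant input here is Rector's abstract structure theory, not the stratification of the variety itself.
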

\begin{proof}
We have seen in \Cref{thm:fiso} that the $F$-isomorphism theorem holds for $p$-local compact groups. Furthermore, \cite[Thm.~12.1]{dwyerwilkerson_finloop}  and the remark after Theorem 2.3 in the same paper show that $H^*(BS,\F_p)$ is a finitely generated algebra. The argument then proceeds as in the proof of \cite[Prop.~5.2]{blo_fusion}. 

We sketch the main ideas in their proof for completeness. Since $H^*(BS,\F_p)$ is a finitely generated $H^*(B\cG,\F_p)$-algebra, it is enough to show that $\theta^*\colon H^*(B\cG,\F_p)\rightarrow H^*(BS,\F_p)$ is integral.  Given $(S,\cF)$, let $\Lambda(\cF)$ be $\varprojlim_{E \in \cF^e} H^*(BE,\F_p)$. By  \cite[Prop.~2.5 and Thm.~2.6]{rector_quillenstrat}, $\Lambda(\cF)$ is a reduced Noetherian unstable $\cA_p$-algebra. Because of the $F$-isomorphism \Cref{thm:fiso}, it is enough to show that the inclusion $\Lambda(\cF_S(S))\rightarrow \Lambda(\cF)$ is integral. Let $V$ be a maximal rank elementary abelian $p$-subgroup
in $S$, then the algebra of invariants $H^*(BV,\F_p)^{GL(V)}$ is a finitely generated algebra and there is a sequence of inclusions $H^*(BV,\F_p)^{GL(V)}\rightarrow \Lambda(\cF_S)\rightarrow \Lambda(\cF)$ which exhibits both $\Lambda(\cF_S)$ and  $\Lambda(\cF)$ as finite $H^*(BV,\F_p)^{GL(V)}$-modules. Therefore, $\Lambda(\cF_S)\rightarrow \Lambda(\cF)$ is integral.

The final claim then follows from this together with \Cref{lem:monofinite}.
\end{proof}

The next theorem generalizes the strong Quillen stratification for $p$-local finite groups due to Linckelmann~\cite{linckelman_stratification}, and also provides an alternative argument for his result.

\begin{thm}\label{thm:qstratification}
A $p$-local compact group $\cG=(S,\cF)$ satisfies strong Quillen stratification, that is, the variety $\cal{V}_{\cG}$ admits a decomposition
\[
\cal{V}_{\cG} \cong \coprod_{E \in \cal{E}{(\cG)}}\cal{V}^+_{\cG,E},
\]
where $\cE(\cG)$ is a set of representatives of $\cF$-conjugacy classes of elementary abelian subgroups of $S$.  
\end{thm}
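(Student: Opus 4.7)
My plan is to follow Quillen's original strategy \cite{quillen_stratification}, now applied to the $F$-isomorphism already established in \Cref{thm:fiso}. The key input is that, by \Cref{thm:fiso}, the restriction map
\[
\lambda_{\cF}\colon H^*(B\cG,\F_p) \to \varprojlim_{\cF^e} H^*(BE,\F_p)
\]
is an $F$-isomorphism, hence induces a homeomorphism on homogeneous prime ideal spectra. Moreover, by \Cref{prop:elabcohomfingen}, each $H^*(BE,\F_p)$ is finitely generated over $H^*(B\cG,\F_p)$, and there are only finitely many $\cF$-isomorphism classes of elementary abelian subgroups of $S$ (because the rank of any elementary abelian subgroup of the discrete $p$-toral group $S$ is bounded).

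First, I would argue that $\cV_{\cG} = \bigcup_{E} \res^E_{\cG}(\cV_E)$, where $E$ runs over elementary abelian subgroups of $S$. This follows because the $F$-isomorphism identifies $\cV_{\cG}$ with the homogeneous prime ideal spectrum of $\varprojlim_{\cF^e} H^*(BE,\F_p)$, and by the finiteness properties above every homogeneous prime of the inverse limit is the contraction of a prime ideal from some $H^*(BE,\F_p)$ (since the category $\cF^e$ has finitely many isomorphism classes and the maps are integral).

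Second, I would refine this to the "plus" parts. Any non-injective morphism $\phi\colon E' \to E$ in $\cF^e$ factors through its image $\phi(E') \subsetneq E$, so the corresponding map on varieties factors through $\cV_{\phi(E')}$. Combined with the fact that the isomorphisms in $\cF^e$ between $E$ and $E'$ correspond to $\cF$-conjugations, this shows that $\res^E_{\cG}(\cV_E \setminus \cV_E^+) \subseteq \bigcup_{E'' \subsetneq E} \res^{E''}_{\cG}(\cV_{E''})$, so the union can be restricted to the $\cV_E^+$. Furthermore, two points $v \in \cV_E^+$ and $v' \in \cV_{E'}^+$ are identified in the inverse limit if and only if they are connected by a zigzag in $\cF^e$; on the "plus" parts such a zigzag must consist entirely of isomorphisms, i.e., of $\cF$-isomorphisms between $E$ and $E'$. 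This yields the disjoint union decomposition indexed by $\cE(\cG)$.

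The main obstacle will be the second step: carefully justifying, in the general $p$-local compact setting, that the spectrum of the inverse limit $\varprojlim_{\cF^e} H^*(BE,\F_p)$ is the coequalizer (as a topological space) of the diagram of varieties indexed by $\cF^e$. This requires combining the finite-type properties from \Cref{prop:elabcohomfingen}, the finiteness of $\cF^e$ up to isomorphism, and standard facts about integral extensions and going-up; once these are in place, the argument parallels Quillen's \cite{quillen_stratification} and Linckelmann's \cite{linckelman_stratification} arguments in the finite and $p$-local finite cases, respectively.
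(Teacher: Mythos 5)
Your proposal is correct in outline, but it takes a genuinely different route from the paper's proof. The paper does not reprove Quillen's argument from scratch; instead, it verifies that the pair $((\cF^e)^{\op}, H^*(B-,\F_p))$ satisfies conditions (1)--(5) of Rector's abstract axiomatization \cite[Prop.~2.3]{rector_quillenstrat} (with the $p$-odd version supplied by Broto--Zarati), then invokes Rector's \cite[Prop.~2.5 and Thm.~2.6]{rector_quillenstrat} as a black box to obtain both the Noetherian property of $\Lambda=\varprojlim_{\cF^e}H^*(BE,\F_p)$ and an equivalence $\cF^e\simeq\cC(\Lambda)$ with Rector's category, after which the stratification of $\Spec^h(\Lambda)$ is part of Rector's package and the $F$-isomorphism of \Cref{thm:fiso} transports it to $\cV_{\cG}$. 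Your plan replaces that appeal to Rector's formalism by a direct recapitulation of Quillen's original geometric argument: show every prime of $\Lambda$ comes from some $\cV_E$ via lying-over for integral extensions, then refine to the plus-parts by analyzing factorizations of morphisms in $\cF^e$. This is a legitimate alternative (the paper itself remarks on a related one after the theorem), and the ingredients you cite---the $F$-isomorphism, the finiteness of $H^*(BE,\F_p)$ over $H^*(B\cG,\F_p)$ from \Cref{prop:elabcohomfingen}, and the boundedness of $\cF^e$---are the right ones. But be aware that the step you flag as the ``main obstacle,'' namely that $\Spec^h(\Lambda)$ is the topological coequalizer of the diagram of varieties over $\cF^e$ and that identifications on the plus-parts come only from isomorphisms in $\cF^e$, is precisely the content packaged by Rector's verification of axioms (2)--(3) and (5); carrying this out directly would amount to reproducing a substantial part of Rector's Section 2, so the paper's choice to cite it is not a shortcut but the intended use of that abstraction. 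Your proposal is thus correct in strategy but would need that step fully executed before it could be considered a complete proof.
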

\begin{proof}
Throughout this proof, we will write $\cC(\Lambda)$ for Rector's category of a Noetherian unstable $\cA_p$-algebra $\Lambda$ and $\cE(\Lambda)$ for a set of representatives of isomorphism classes of objects in $\cC(\Lambda)$. Some of his results require minor modification for $p>2$, which are proven in \cite{brotozarati_steenrod}. Recall that $\cF^e$ is the full subcategory of $\cF$ on the elementary abelian subgroups of $S$. We start by checking that the pair $((\cF^{e})^{\op},H^*(B-,\F_p))$ satisfies the conditions (1)--(5) listed in \cite[Prop.~2.3]{rector_quillenstrat}. 

Indeed, it is clear that $\cF^e$ has finite skeleton, so Property (1) holds. The second property is satisfied by construction and the fact that all morphisms in $\cF^e$ are monomorphisms of elementary abelian groups.  The fourth property holds for $\cF_S(S)^e$ and hence also for $\cF^e$. The fifth property uses that every morphism in $\cF^e$ can be factored into an isomorphism followed by an inclusion, with the trivial group giving the final object. It remains to show Property (3); by the remark at the end of \cite[Sec.~2]{rector_quillenstrat}, it suffices to verify the factorization claim. To this end, let $E' \to E \gets E''$ be a span in $\cF^e$ such that $\ker(H^*(BE,\F_p) \to H^*(BE',\F_p)) \subseteq \ker(H^*(BE,\F_p) \to H^*(BE'',\F_p))$. Without loss of generality, we may assume that the maps $E' \to E$ and $E'' \to E$ are inclusions, again by the factorization of morphisms in fusion systems. The assumption on the cohomologies then forces $E'' \subseteq E'$, so the claim follows.

It thus follows from \cite[Prop.~2.5 and Thm.~2.6]{rector_quillenstrat} that $\Lambda = \varprojlim_{\cF^e}H^*(BE,\F_p)$ is a reduced Noetherian unstable $\cA_p$-algebra and that there is a natural equivalence $\cF^e \simeq \cC(\Lambda)$. As shown in \cite[Sec.~2]{rector_quillenstrat}, these properties suffice to establish a Quillen stratification for $\Lambda = \varprojlim_{\cF^e}H^*(BE,\F_p)$. Furthermore, the $F$-isomorphism $\Lambda \sim H^*(B\cG,\F_p)$ of \Cref{thm:fiso} induces an isomorphism of varieties. We thus obtain strong Quillen stratification
\[
\cV_{\cG} \cong \cV_{\Lambda} \cong \coprod_{E \in \cE{(\Lambda)}}\cal{V}^+_{\Lambda,E} \cong \coprod_{E \in \cE(\cG)}\cal{V}^+_{\cG,E},
\]
where the last isomorphism uses \Cref{prop:elabcohomfingen} to identify Rector's varieties $\cal{V}^+_{E,\Lambda}$ with the varieties $\cal{V}^+_{\cG,E}$ as constructed above \Cref{defn:varieties}.
\end{proof}

\begin{rem}
Since $H^*(B\cG,\F_p)$ is Noetherian by \Cref{cor:noetherian} and the morphisms $H^*(B\cG,\F_p)\rightarrow H^*(BE,\F_p)$ are finite for any $E\leq S$ by \Cref{prop:elabcohomfingen}, Rector's theorems \cite[Thm.~1.4, Thm.~1.7]{rector_quillenstrat} provide an alternative proof of \Cref{thm:qstratification} under the assumption that Rector's category $\cE{(H^*(B\cG,\F_p))}$ is equivalent to the full subcategory of elementary abelian $p$-subgroups $\cF^e$. This equivalence can be obtained by using Lannes's $T$-functor theory \cite[Thm.~3.1.1]{La} (following the proof of \Cref{thm:fiso}) and the identification of homotopy classes of maps $[BE,B\cG]$ with $\cF$-conjugacy classes of elementary abelian $p$-subgroups \cite[Thm.~6.3]{blo_pcompact}.
\end{rem}

\Cref{prop:elabcohomfingen} should be contrasted with the next proposition, which characterizes $p$-compact groups as those $p$-local compact groups for which this finiteness condition can be lifted to a homotopical statement. We first need an auxiliary result. 

\begin{lem}\label{lem:finitepartial2outof3}
Suppose $R \xrightarrow{f} S \xrightarrow{g} T$ are maps of commutative ring spectra such that:
  \begin{enumerate}
    \item $g$ is finite, and
    \item $\Coind_S^T\colon \Mod_S \to \Mod_T$ is conservative,
  \end{enumerate}
then $f$ is finite if and only if $gf$ is finite.
\end{lem}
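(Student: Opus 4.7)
The plan is to prove the two directions separately; only the converse direction uses the conservativity hypothesis.

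For the forward direction ($f$ finite $\Rightarrow$ $gf$ finite), a standard thick-subcategory argument suffices, and in fact only the hypothesis that $g$ is finite is needed. Since $g$ is finite, $T\in\Thick_S(S)$; since $\Res_R^S\colon\Mod_S\to\Mod_R$ is exact, restricting along $f$ yields
\[
  \Res_R^T T \;=\; \Res_R^S\Res_S^T T \;\in\; \Thick_R(\Res_R^S S).
\]
If $f$ is finite, then $\Res_R^S S \in \Mod_R^{\omega}$, and the thick subcategory it generates lies in $\Mod_R^{\omega}$, so $gf$ is finite.

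For the converse, I would test compactness of $S$ as an $R$-module by applying $\Coind_S^T$, which is conservative by hypothesis. Given a filtered diagram $(M_i)_i$ in $\Mod_R$, consider the canonical comparison map in $\Mod_S$
\[
  \phi\colon \Hom_R(S,\colim_i M_i)\longrightarrow \colim_i \Hom_R(S,M_i).
\]
The two key identifications are: (a) because restrictions compose as $\Res_R^T=\Res_R^S\circ\Res_S^T$, passing to right adjoints gives $\Coind_S^T\circ\Coind_R^S\simeq \Coind_R^T$, hence $\Coind_S^T\Hom_R(S,N)\simeq \Hom_R(T,N)$ for every $N\in\Mod_R$; and (b) because $g$ is finite, $T\in\Mod_S^{\omega}$, so $\Coind_S^T\simeq \Hom_S(T,-)$ preserves filtered colimits. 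Combining these, $\Coind_S^T(\phi)$ is naturally identified with the comparison map
\[
  \Hom_R(T,\colim_i M_i)\longrightarrow \colim_i \Hom_R(T,M_i),
\]
which is an equivalence because $gf$ is finite, i.e.\ $T\in\Mod_R^{\omega}$. Conservativity of $\Coind_S^T$ then forces $\phi$ to have been an equivalence already, so $\Hom_R(S,-)$ preserves filtered colimits and $S\in\Mod_R^{\omega}$.

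I do not anticipate a real obstacle: the argument is a formal consequence of conservativity of $\Coind_S^T$, of the composition-of-right-adjoints identity $\Coind_S^T\circ\Coind_R^S\simeq \Coind_R^T$, and of the fact that $g$ being finite makes $\Coind_S^T$ preserve filtered colimits. The only point that merits explicit verification is checking the colimit interchange for $\Coind_S^T$ at the level of the underlying spectra, which follows immediately from $T\in\Mod_S^{\omega}$.
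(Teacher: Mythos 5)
Your proof is correct and follows essentially the same route as the paper's: the forward direction by composability of compactness through thick subcategories, and the converse by applying the conservative functor $\Coind_S^T$ to the comparison map and identifying the result with the comparison map for $\Coind_R^T$ using transitivity of coinduction together with compactness of $T$ over both $S$ and $R$. One minor slip: the canonical comparison map runs $\colim_i \Hom_R(S,M_i) \to \Hom_R(S,\colim_i M_i)$, the reverse of the direction you wrote, though this does not affect the substance of the argument.
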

\begin{proof}
The first implication holds because finite morphisms are closed under composition. Indeed, assume that $f$ is finite, then $S \in \Thick(R)$. Since $g$ is finite, this implies that $T \in \Thick(S) \subseteq \Thick(R)$ as claimed. 

Conversely, let $(M_i)_{i\in I}$ be a filtered system of $R$-modules, then we have to prove that the canonical map
\[
\xymatrix{\phi\colon \colim_{i \in I}\Hom_R(S,M_i) \ar[r] & \Hom_R(S,\colim_{i \in I}M_i)}
\]
is an equivalence. By Condition (2), it suffices to show $\Coind_S^T(\phi)$ is an equivalence; this latter map fits into a commutative diagram:
\[
\xymatrixcolsep{3pc}
\xymatrix{\Hom_S(T,\colim_{i \in I}\Hom_R(S,M_i)) \ar[r]^-{\Coind_S^T(\phi)} &\Hom_S(T,\Hom_R(S,\colim_{i \in I}M_i)) \ar[dd]^-{\sim} \\
\colim_{i \in I}\Hom_S(T,\Hom_R(S,M_i)) \ar[u]^-{\sim} \ar[d]_-{\sim} \\
\colim_{i \in I}\Hom_R(T,M_i) \ar[r] & \Hom_R(T,\colim_{i \in I}M_i).} 
\]
The top left vertical map is an equivalence by Condition (1), while the other two vertical maps are equivalences by the transitivity of coinduction. If $gf$ is finite, then the bottom horizontal map is an equivalence, which shows that $\Coind_S^T(\phi)$ is an equivalence as well. 
\end{proof}

\begin{prop}\label{prop:pcompactgroupchar}
The following statements are equivalent for a $p$-local compact group $\cG=(S,\cF)$:
  \begin{enumerate}
    \item For every elementary abelian $p$-subgroup $E$ of $S$, the canonical restriction map $C^*(B\cG,\F_p) \to C^*(BE,\F_p)$ is finite. 
    \item The canonical map $\theta^* \colon C^*(B\cG,\F_p) \to C^*(BS,\F_p)$ is finite.
    \item The space $B\cG$ is the classifying space of a $p$-compact group.
  \end{enumerate}
\end{prop}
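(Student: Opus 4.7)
The plan is to establish $(2) \Leftrightarrow (3)$ first and then $(1) \Leftrightarrow (2)$, since these pairs of implications decouple cleanly.

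For $(2) \Leftrightarrow (3)$: the direction $(3) \Rightarrow (2)$ should be direct. Invoking Example~\ref{rem:plocalmodels} identifies $B\widehat{S} \simeq BS^{\wedge}_p$ as a maximal $p$-compact toral subgroup of $\cG$, so that $B\widehat{S} \to B\cG$ is a monomorphism of $p$-compact groups in the Dwyer--Wilkerson sense; Lemma~\ref{lem:monofinite} together with the cochain equivalence of Lemma~\ref{lem:cochaincompletion} then yields finiteness of $\theta^{*}$. For the converse $(2) \Rightarrow (3)$, which is the heart of the argument, I would apply the Eilenberg--Moore spectral sequence to the fibration $F' \to B\widehat{S} \to B\cG$. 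This converges because $\pi_1 B\cG$ is a finite $p$-group, and it identifies $C^*(F',\F_p) \simeq \F_p \otimes^L_{C^*(B\cG,\F_p)} C^*(B\widehat{S},\F_p)$. Finiteness of (2) then forces $F'$ to be $\F_p$-finite, and extending the fibration backward produces a fiber sequence $\widehat{S} \to \Omega B\cG \to F'$. Since $\widehat{S}$ (being a $p$-compact toral group) and $F'$ are both $\F_p$-finite, the Serre spectral sequence delivers $\F_p$-finiteness of $\Omega B\cG$; together with $B\cG$ being $p$-complete by construction, this realizes $\cG$ as a $p$-compact group.

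For $(1) \Leftrightarrow (2)$: the direction $(2) \Rightarrow (1)$ follows by composition, since each restriction map $C^*(BS,\F_p) \to C^*(BE,\F_p)$ is finite by Lemma~\ref{lem:monofinite} applied to the monomorphism $BE \to B\widehat{S}$ of $p$-compact groups (elementary abelian subgroups of a $p$-compact toral group are automatically subgroups in the Dwyer--Wilkerson sense). For the reverse direction $(1) \Rightarrow (2)$, I would apply Lemma~\ref{lem:finitepartial2outof3} to the factorization
\[
C^*(B\cG,\F_p) \xrightarrow{\theta^{*}} C^*(BS,\F_p) \xrightarrow{g} \prod_{E \in \cE(S)} C^*(BE,\F_p),
\]
where $\cE(S)$ is the (finite) set of $\cF_S(S)$-conjugacy class representatives of elementary abelian subgroups of $S$. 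The composite is finite by (1) and $g$ is finite by the argument just given; conservativity of coinduction along $g$ follows from Proposition~\ref{prop:toralchouinard} once $S$ is recognized as geometric, which can be arranged by realizing the $p$-compact toral group $\widehat{S}$ as the $p$-completion of a compact $p$-toral Lie group.

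The main obstacle I anticipate is the convergence and correct identification of the Eilenberg--Moore spectral sequence at the level of ring spectra, since this equivalence underpins the whole $(2) \Rightarrow (3)$ argument; this is standard for $p$-good spaces with pro-$p$ fundamental group but requires careful bookkeeping and consistent use of Lemma~\ref{lem:cochaincompletion} to pass between $BS$ and $B\widehat{S}$. A secondary technical point is the verification that any discrete $p$-toral group is geometric, needed to apply Chouinard's theorem in the final implication.
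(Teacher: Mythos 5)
Your argument follows essentially the same overall structure as the paper's proof: $(3) \Leftrightarrow (2)$ via the Dwyer--Wilkerson theory of monomorphisms and the criterion of Shamir, and $(1) \Leftrightarrow (2)$ via \Cref{lem:finitepartial2outof3} applied to the factorization through $C^*(BS,\F_p)$. The one genuine difference is in $(2) \Rightarrow (3)$: the paper composes at the cochain level, observing that $C^*(B\cG,\F_p) \to C^*(BS,\F_p) \to \F_p$ is finite and applying Shamir's criterion a single time to conclude $\Omega B\cG$ is $\F_p$-finite; you instead identify the fiber $F'$ of $B\widehat{S} \to B\cG$ as $\F_p$-finite first and then run a second spectral sequence over the fibration $\widehat{S} \to \Omega B\cG \to F'$. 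This works, but the base $F'$ need not be simply-connected or nilpotent, so you would owe some care with local coefficients; the paper's route avoids this entirely.

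The issue you flag at the end is substantive: \Cref{prop:toralchouniard,prop:plocalcompactchouniard} require $S$ to be geometric, and your proposed remedy --- that every $p$-compact toral group is the $p$-completion of a compact $p$-toral Lie group --- is not obviously true and amounts to a nontrivial statement about $\Z_p[\pi]$-lattices being induced from $\Z[\pi]$-lattices; you should not take it for granted. But the entire appeal to Chouinard's theorem for $(1) \Rightarrow (2)$ can be sidestepped. Since each nontrivial elementary abelian $E \le S$ is a finite group, the augmentation $C^*(BE,\F_p) \to \F_p$ is finite. If $(1)$ holds, pick any such $E$; the composite $C^*(B\cG,\F_p) \to C^*(BE,\F_p) \to \F_p$ is then a composite of finite morphisms, hence finite, so by Shamir's criterion $\Omega B\cG$ is $\F_p$-finite. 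Combined with the fact that $B\cG$ is $p$-complete with $\pi_1 B\cG$ a finite $p$-group, this is exactly $(3)$, and $(3) \Rightarrow (2)$ then closes the cycle. This yields $(1) \Rightarrow (2)$ without any appeal to Chouinard or geometricity, and is a cleaner route than the one in your proposal and in the paper.
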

\begin{proof}
The canonical restriction map in (1) factors through $\theta^*$ so that we have the following morphisms of commutative ring spectra:
\[
\xymatrix{C^*(B\cG,\F_p) \ar[r]^-{\theta^*} & C^*(BS,\F_p) \ar[r]^-g & \prod_{E \in \cF^e}C^*(BE,\F_p),}
\]
where $E$ runs through the finite set of elementary abelian subgroups of $S$. The morphism $g$ is finite and induction along $g$ is conservative by Chouinard's theorem for discrete $p$-toral groups (see \Cref{prop:plocalcompactchouniard}), so \Cref{lem:finitepartial2outof3} applies. This shows the equivalence of (1) and (2); namely, $g\theta^*$ is finite if and only if the composite $C^*(B\cG,\F_p) \to C^*(BS,\F_p) \to C^*(BE,\F_p)$ is finite for all $E \in \cF^e$.

To prove that (2) is equivalent to (3), we shall use the criterion in \cite[Lem.~3.4]{shamir_pcochains}: Let $f\colon X\rightarrow Y$ be a map between $p$-complete spaces whose fundamental groups are finite $p$-groups, then $f^*\colon C^*(Y,\F_p)\rightarrow C^*(X,\F_p)$ is finite if and only if the homotopy fiber of $f$ is $\mathbb F_p$-finite.  

In order to apply this statement, note that by \cite[Prop.~4.4]{blo_pcompact}, the classifying space $B\cG$ is $p$-complete and $\pi_1(B\cG)$ is a finite $p$-group. For any discrete $p$-toral group, the canonical augmentation $C^*(BS,\F_p) \to \F_p$ is induced by $*\rightarrow (BS)^\wedge_p$ since $C^*(BS,\F_p) \simeq C^*(BS^\wedge_p,\F_p)$. It is then finite, because $\Omega (BS)^\wedge_p\simeq S^\wedge_p$ is $\F_p$-finite. 

If $C^*(B\cG,\F_p) \to C^*(BS,\F_p)$ is finite, then by \Cref{lem:finitepartial2outof3} so is the composite
\[
\xymatrix{C^*(B\cG,\F_p) \ar[r] & C^*(BS,\F_p) \ar[r] & \F_p,}
\]
hence $\Omega (B\cG)$ is $\mathbb F_p$-finite and therefore $B\cG$ is equivalent to the classifying space of a $p$-compact group, by definition. Conversely, suppose $B\cG$ is the classifying space of a $p$-compact group. By \cite[Prop. 9.9]{dwyerwilkerson_finloop} $\theta^\wedge_p\colon (BS)^\wedge_p \rightarrow B\cG$ is a monomorphism of $p$-compact groups, that is, the homotopy fiber of $\theta^\wedge_p$ is $\F_p$-finite and both spaces are $p$-complete spaces whose fundamental groups are finite $p$-groups. Therefore $C^*(B\cG,\F_p) \to C^*(BS^\wedge_p,\F_p)\simeq C^*(BS,\F_p)$ is finite.
\end{proof}

\subsection{Quillen lifting}
In \Cref{defn:qlifting} we introduced the notion of Quillen lifting for a morphism $f \colon R \to S$. In this section we show that for a saturated fusion system over a geometric discrete $p$-toral group $S$ in the sense  of \Cref{defn:geometric}, the morphism $C^*(B\cG,\F_p) \to \prod_{E\in\cE(\cG)} C^*(BE,\F_p)$ satisfies Quillen lifting. This uses the strong form of Quillen stratification proven in the previous subsection. It also relies on the proof of stratification and costratification for elementary abelian subgroups, given in \Cref{prop:elementary}. 

In the following we will deal with products of ring spectra. We now briefly explain how the support and cosupport of modules over such product rings decompose. To that end, let $S$ be a product of ring spectra $S\simeq \prod_{i\in I} S_i$ where $I$ is finite. Then, the category of modules are related by a canonical equivalence $\Mod_S\simeq \prod_i \Mod_{S_i}$ which we use implicitly in the following. For any $M\in \Mod_R$ there are equivalences of $S$-modules $\Ind_R^SM \simeq (\prod_i S_i) \otimes_R M \simeq \prod_i (S_i \otimes_R M)$. Similarly, there are equivalences of $S$-modules $\Coind_R^SM \simeq \prod_i\Coind_R^{S_i}M \simeq \prod_i \Hom_R(S_i,M)$.  By the definition of support and cosupport, this gives rise to decompositions $\supp_S(\Ind_{R}^{S}M)=\bigcup_i \supp_{S_i}( \Ind_{R}^{S_i}M)$ and $\supp_S( \Coind_{R}^{S}M)=\bigcup_i \supp_{S_i} (\Coind_{R}^{S_i}M)$, where the unions are taken in $\Spec^h(\pi_*S)$. 

  Given a prime ideal $\fp \in \cV_{\cG}$, we say that $\fp$ originates in $E \leqslant S$ if $\fp$ is in the image of $\res_{\cG}^{E}$ but not of $\res_{\cG}^{E'}$ for $E'$ any proper subgroup of $E$. This terminology was introduced in the context of finite groups in \cite[Sec.~9]{bik_finitegroups}. As in \cite[Thm.~9.1]{bik_finitegroups} the strong form of Quillen stratification proven in \Cref{thm:qstratification} implies that for each $\fp \in \cal{V}_{\cG}$ the pairs $(E,\fq)$ where $\fp = \res(\fq)$ are all $\cF$-conjugate, and there is a bijection between primes in $\cal{V}_{\cG}$ and $\cF$-isomorphism classes of such pairs $(E,\fq)$. 
 
\begin{thm}\label{prop:quillenliftingfusion}
		Let $\cG=(\cF,S)$ be a $p$-local compact group, then the morphism
\[
\xymatrix{f \colon C^*(B\cG,\F_p) \ar[r] &  \prod \limits_{E \in \cE(\cG)}C^*(BE,\F_p)}
\]
satisfies Quillen lifting. Here $\cE(\cG)$ denotes a set of representatives of $\cF$-isomorphism classes of elementary abelian subgroups of $S$. 
\end{thm}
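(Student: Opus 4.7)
The plan is to use strong Quillen stratification to reduce the problem to constructing a common lift of $\fp$ at the origin variety $\cV_E^+$ for a single $E \in \cE(\cG)$, and then to verify the required support and cosupport conditions at this origin by combining the subgroup theorem with $\cF$-conjugation. First, I would unpack the hypothesis using the product decomposition of the target ring: this yields $\Ind M \simeq \prod_{E} \Ind_\cG^E M$ and $\Coind N \simeq \prod_{E} \Coind_\cG^E N$ together with corresponding decompositions of support and cosupport, where $\Ind_\cG^E$ and $\Coind_\cG^E$ denote induction and coinduction along $C^*(B\cG,\F_p) \to C^*(BE,\F_p)$. The hypothesis then supplies $E_1, E_2 \in \cE(\cG)$ and primes $\fq_i \in \cV_{E_i} := \Spec^h H^*(BE_i,\F_p)$ with $\fq_1 \in \supp_{E_1}(\Ind_\cG^{E_1} M)$, $\fq_2 \in \cosupp_{E_2}(\Coind_\cG^{E_2} N)$, and $\res_\cG^{E_i}(\fq_i) = \fp$ for $i = 1, 2$. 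By \Cref{thm:qstratification}, there exist $E \in \cE(\cG)$ and $\fq \in \cV_E^+$ with $\res_\cG^E(\fq) = \fp$, and the goal becomes to show $\fq \in \supp_E(\Ind_\cG^E M) \cap \cosupp_E(\Coind_\cG^E N)$.

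Next, for each $i$ I would first refine $\fq_i$ to an origin within $\cV_{E_i}$: by Quillen's stratification of $\cV_{E_i}$, there exists $E_i' \leq E_i$ and $\fq_i' \in \cV_{E_i'}^+$ with $\fq_i = \res_{E_i}^{E_i'}(\fq_i')$, and strong Quillen stratification of $\cG$ forces $(E_i', \fq_i')$ to be $\cF$-conjugate to $(E, \fq)$. Since $\Mod_{C^*(BE_i,\F_p)}$ is canonically stratified by \Cref{prop:elementary}, the abstract subgroup theorem \Cref{prop:abstractsubgroupthm} applies to the restriction morphism $C^*(BE_i,\F_p) \to C^*(BE_i',\F_p)$. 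Combined with the transitivity identification $\Ind_{E_i}^{E_i'} \Ind_\cG^{E_i} M \simeq \Ind_\cG^{E_i'} M$ (and its analogue for coinduction), this yields
\[
\supp_{E_i'}(\Ind_\cG^{E_i'} M) = (\res_{E_i}^{E_i'})^{-1}\supp_{E_i}(\Ind_\cG^{E_i} M)
\]
and the cosupport analogue, so $\fq_1' \in \supp_{E_1'}(\Ind_\cG^{E_1'} M)$ and $\fq_2' \in \cosupp_{E_2'}(\Coind_\cG^{E_2'} N)$.

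Finally, I would transfer along the $\cF$-conjugacy by picking $\cF$-isomorphisms $\phi_i \colon E \to E_i'$ that identify $(E, \fq)$ with $(E_i', \fq_i')$. By \cite[Thm.~6.3]{blo_pcompact}, the two homomorphisms $E \to S$ given by the inclusion $i_E$ and by $i_{E_i'} \circ \phi_i$ are $\cF$-conjugate, hence induce homotopic maps $BE \to B\cG$. Consequently the canonical ring map $C^*(B\cG,\F_p) \to C^*(BE,\F_p)$ factors up to homotopy as $C^*(B\cG,\F_p) \to C^*(BE_i',\F_p) \xrightarrow{\phi_i^*} C^*(BE,\F_p)$, and since $\phi_i^*$ is a ring isomorphism, the associated base-change is an equivalence of module categories whose induced bijection on $\Spec$ matches $\fq_i'$ with $\fq$. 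This places $\fq$ in $\supp_E(\Ind_\cG^E M) \cap \cosupp_E(\Coind_\cG^E N)$, providing the required common lift under the product decomposition. The main obstacle in this strategy is orchestrating the interplay between the subgroup theorem---which operates within a fixed elementary abelian via stratification of $\Mod_{C^*(BE,\F_p)}$---and $\cF$-conjugation, which bridges different elementary abelian subgroups; strong Quillen stratification is precisely the ingredient that ties these two operations into a single descent to the origin $(E, \fq)$ and ensures the same pair works simultaneously for $\Ind M$ and $\Coind N$.
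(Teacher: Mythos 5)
Your proposal is correct and follows essentially the same route as the paper's proof: it combines the product decomposition, the subgroup theorem applied within elementary abelian cochains (stratified by \Cref{prop:elementary}), and the $\cF$-conjugacy of origin pairs guaranteed by strong Quillen stratification (\Cref{thm:qstratification}). The only difference is organizational---you fix a target origin pair $(E,\fq)$ up front and transport both the support and cosupport data to it, whereas the paper shows separately that every origin pair lies in $\supp_E(\Ind_\cG^E M)$ and in $\cosupp_E(\Coind_\cG^E N)$ before intersecting.
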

\begin{proof}\sloppy
  	  For brevity let us denote $\supp_{C^*(B\cG,\F_p)}$ by $\supp_{\cG}$, $\supp_{C^*(BE,\F_p)}$ by $\supp_E$ for any subgroup $E \leqslant S$, and $\supp_{\prod_{E \in \cE(\cG)}C^*(BE,\F_p)}$ by $\supp_{\cE(\cG)}$, 
  and similarly for cosupport. We also denote by  $\Mod_{\cG}$ and $\Mod_{\cE(\cG)}$ the corresponding categories of modules. Let $E\leq S$ be an elementary abelian subgroup, and denote by $\iota_E\colon BE\rightarrow B\cG$
  the corresponding inclusion. Note that for any morphism $f\in \Hom_\mathcal F(P,Q)$ in the fusion system, we obtain a homotopy $\iota_{P}\simeq \iota_{Q}\circ Bf$.
  
Let $M\in \Mod_{\cG}$ and assume there exists $\fp \in \res(\supp_{\cE(\cG)}(\Ind_{\cG}^{\cE(\cG)}M))$. By the discussion above there is a decomposition  
\[
\supp_{\cE(\cG)} (\Ind_{\cG}^{\cE(\cG)}(M))=\bigcup_{E \in \cE(\cG)} \supp_E \Ind_{\cG}^{E}(M),
\]
and similarly for cosupport and coinduction. Then there exists an elementary abelian subgroup $\widetilde{E} \leq S$ and 
  $\widetilde{\fq} \in \supp_{\widetilde{E}}(\Ind_{\cG}^{\widetilde{E}}M)$ with $\res(\widetilde{\fq})=\fp$. 
  Let $E \leq \widetilde{E}$ and $\fq \in \cV_E$ be a pair where $\fp$ originates. By \Cref{prop:elementary} and the subgroup theorem \cite[Thm.~4.4(ii)]{bg_stratifyingcompactlie} we obtain $\supp_{E}(\Ind_{\widetilde{E}}^{E}(L))=\res^{-1}(\supp_{\widetilde{E}}(L))$ for any $L \in \Mod_{\widetilde{E}}$, hence $\res(\fq)=\widetilde{\fq}$ and $\fq \in \supp_E(\Ind_{\cG}^{E}(M))$. 
  
  Let $(E',\fq')$ be another pair where $\fp$ originates. By \Cref{thm:qstratification}, $\cG$ satisfies strong Quillen stratification, and so any two such pairs are $\mathcal F$-conjugate. 
  That is, there is an isomorphism $f\in \Hom_{\mathcal F}(E',E)$ such that $Bf^*\colon C^*(BE,\F_p)\rightarrow C^*(BE',\F_p)$ is a homotopy equivalence with 
  $\Spec^h(Bf^*)(\fq)=\fq' $.  Let $\Ind_{Bf^*}$ denote the left adjoint to restriction along $Bf^*$. Since $\Ind_{Bf^*}\circ \Ind_{\cG}^{E}\simeq \Ind_{\cG}^{E'}$, we have that 
  $\fq'\in \supp_{E'} (\Ind_{\cG}^{E'}(M))\subseteq \supp_{\cE(\cG)}(\Ind_{\cG}^{\cE(\cG)}(M))$. In other words, for any pair $(E,\fq)$ where $\fp$ originates, we have $\fq \in \supp_E(\Ind_{\cG}^{E}M).$
  
  The same holds for coinduction, replacing induction by coinduction, support by cosupport, and using the subgroup theorem for cosupport \cite[Thm.~11.11]{bik12}. It follows that for any pair $(E,\fq)$ where $\fp$ originates, we have $\fq \in \cosupp_E(\Coind_{\cG}^{E}(N))$. Combining this with the previous paragraph, we obtain Quillen lifting along the morphism $f$. 
\end{proof}
By arguing along the same lines we can also obtain a stronger version of Chouinard's theorem than proven previously, cf.~\Cref{prop:plocalcompactchouniard}, by considering representatives of elementary abelian $p$-subgroups with respect to conjugation in $\cG$.  
\begin{cor}\label{cor:strongchouinard}
    Let $\cF$ be a saturated fusion system over a geometric discrete $p$-toral group $S$, then induction and coinduction along the morphism
\[
\xymatrix{f \colon C^*(B\cG,\F_p) \ar[r] &  \prod \limits_{E \in \cE(\cG)}C^*(BE,\F_p)}
\]
are conservative. 
\end{cor}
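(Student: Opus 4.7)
The plan is to deduce this corollary from the earlier form of Chouinard's theorem in \Cref{prop:plocalcompactchouniard}, exploiting the fact that $\cF$-isomorphic elementary abelian subgroups give equivalent factors as $C^*(B\cG,\F_p)$-algebras. Since $\cE(\cG)$ indexes $\cF$-isomorphism classes and $\cE(S)$ indexes the generally finer $S$-conjugacy classes, each $E' \in \cE(S)$ is $\cF$-isomorphic to some $E \in \cE(\cG)$; my goal is to show that vanishing of the $E$-factor in $\Ind_f M$ (resp.\ $\Coind_f M$) forces vanishing of the $E'$-factor.

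First, I would record the following structural observation: for any isomorphism $\phi\in\Hom_{\cF}(E',E)$ between elementary abelian subgroups of $S$, the canonical inclusions $\iota_E\colon BE\to B\cG$ and $\iota_{E'}\colon BE'\to B\cG$ satisfy $\iota_{E'}\simeq \iota_E\circ B\phi$; this is built into the definition of a $p$-local compact group and is used in the proof of \Cref{prop:quillenliftingfusion}. Passing to $C^*(-,\F_p)$, the homotopy equivalence $B\phi^*\colon C^*(BE,\F_p)\xrightarrow{\sim} C^*(BE',\F_p)$ is therefore a homotopy equivalence of $C^*(B\cG,\F_p)$-algebras. Consequently, for any $M\in\Mod_{C^*(B\cG,\F_p)}$ there are equivalences
\[
C^*(BE',\F_p)\otimes_{C^*(B\cG,\F_p)} M \;\simeq\; C^*(BE,\F_p)\otimes_{C^*(B\cG,\F_p)} M
\]
and
\[
\Hom_{C^*(B\cG,\F_p)}\!\bigl(C^*(BE',\F_p),M\bigr)\;\simeq\;\Hom_{C^*(B\cG,\F_p)}\!\bigl(C^*(BE,\F_p),M\bigr).
\]

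Second, I would conclude conservativity. Write $f$ for the morphism appearing in \Cref{prop:plocalcompactchouniard} (product indexed by $\cE(S)$) and $g$ for the morphism in the statement (product indexed by $\cE(\cG)$). Assume $\Ind_g M\simeq 0$. Using the decomposition $\Ind_g M \simeq \prod_{E\in\cE(\cG)} C^*(BE,\F_p)\otimes_{C^*(B\cG,\F_p)} M$, each factor vanishes. For an arbitrary $E'\in\cE(S)$, pick $E\in\cE(\cG)$ with $E'$ $\cF$-isomorphic to $E$; by the observation above, the $E'$-factor of $\Ind_f M$ also vanishes. Hence $\Ind_f M\simeq 0$, and \Cref{prop:plocalcompactchouniard} (applicable because $S$ is geometric) forces $M\simeq 0$. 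The argument for $\Coind_g$ is identical, replacing tensor products with mapping spectra.

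The only conceptual point is the $\cF$-compatibility of the maps $\iota_E$; this is not an obstacle but simply the defining feature of the centric linking system underlying $\cG$. No further work is needed beyond invoking \Cref{prop:plocalcompactchouniard} for the set $\cE(S)$ of $S$-conjugacy class representatives.
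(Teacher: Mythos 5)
Your proposal is correct, and it takes a genuinely different and in fact more elementary route than the paper's own proof. The paper argues through support theory: it reformulates conservativity as the statement that $\supp_{\cG}(M) \ne \varnothing$ iff $\bigcup_{E \in \cE(\cG)} \supp_E(\Ind_{\cG}^{E}M) \ne \varnothing$ (using \Cref{thm:cosupptrivialobjects}), applies the geometric-$S$ Chouinard theorem to find some elementary abelian $E\le S$ with $\Ind_{\cG}^{E}M\not\simeq 0$, and then uses the subgroup theorem together with the strong Quillen stratification (\Cref{thm:qstratification}) and the ``originates'' machinery to conjugate to a pair $(E_0,\fq_0)$ with $E_0\in\cE(\cG)$. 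You instead observe directly that every $E'\in\cE(S)$ is $\cF$-isomorphic to a unique $E\in\cE(\cG)$, and that an $\cF$-isomorphism $\phi$ gives a homotopy $\iota_{E'}\simeq\iota_E\circ B\phi$, hence a $C^*(B\cG,\F_p)$-algebra equivalence $(B\phi)^*\colon C^*(BE,\F_p)\xrightarrow{\sim}C^*(BE',\F_p)$; thus vanishing of each $\cE(\cG)$-factor forces vanishing of each $\cE(S)$-factor, and \Cref{prop:plocalcompactchouniard} finishes. Your argument bypasses both the support formalism and the Quillen stratification theorem, relying only on the $\cF$-equivariance of the inclusion maps into $B\cG$ (the same observation the paper records at the start of the proof of \Cref{prop:quillenliftingfusion}). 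This is shorter and makes the structural reason transparent --- the $\cE(\cG)$-indexed product contains all the information of the $\cE(S)$-indexed product up to a factor-by-factor equivalence --- at the cost of not re-deriving the support-theoretic form of the statement, which in any case is equivalent via \Cref{thm:cosupptrivialobjects}. One small point worth making explicit when writing this up: the finiteness of $\cE(S)$ and $\cE(\cG)$ (a consequence of the discussion in \cite[Sec.~3]{blo_pcompact}) is what underlies the factor-wise decomposition of $\Ind$ and $\Coind$ over these products, and is already used implicitly in \Cref{prop:plocalcompactchouniard} and \Cref{prop:quillenliftingfusion}.
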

\begin{proof}
  Since support detects trivial objects by \Cref{thm:cosupptrivialobjects}, it suffices to prove that for $M \in \Mod_{C^*(B\cG,\F_p)}$ we have $\supp_{\cG}(M) \ne \varnothing $ if and only if $\bigcup_{E \in \cE(\cG)} \supp_E(\Ind_{\cG}^{E}M) \ne \varnothing$, and similarly for cosupport and coinduction. One direction is clear, so let us assume $\supp_{\cG}(M) \ne \varnothing$, which implies $M\not\simeq 0$. By \Cref{prop:toralchouinard}, there exists an elementary abelian subgroup $E\leq S$ such that $\Ind_{\cG}^E(M)\not\simeq 0$.  Let $\fp \in \res(\supp_{E}(\Ind_{\cG}^{E}M)) \subseteq \supp_{\cG}(M)$. As in the proof of \Cref{prop:quillenliftingfusion}, we see that all pairs $(E,\fq)$ in which $\fp$ originates are $\cF$-conjugate, so it follows that $\bigcup_{E \in \cE(\cG)} \supp_E(\Ind_{\cG}^{E}M) \ne \varnothing$, as required. A similar argument works for cosupport and coinduction. 
\end{proof}

\subsection{The proof of (co)stratification and its consequences}\label{sec:consequences}
Along with our abstract descent statements for stratification and costratification, the results of the previous two subsections now provide the necessary ingredients to prove (co)stratification for $p$-local compact groups over a geometric $p$-toral discrete group. We will repeatedly use without mention the fact that $C^*(B\cG,\F_p)$ is a Noetherian ring spectrum, which was proved in \Cref{cor:noetherian}, so that the abstract methods of \Cref{sec:basechange} apply. 

\begin{thm}\label{thm:plocalstrat}
Let $\cG=(S,\cF)$ be a $p$-local compact group with geometric $S$, then $\Mod_{C^*(B\cG,\F_p)}$ is canonically stratified. 
\end{thm}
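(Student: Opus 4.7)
The plan is to apply the abstract descent theorem \Cref{thm:stratdescent} to the morphism
\[
f\colon C^*(B\cG,\F_p) \longrightarrow T := \prod_{E \in \cE(\cG)} C^*(BE,\F_p)
\]
assembled from restriction along the inclusions of elementary abelian subgroups. The source is Noetherian by \Cref{cor:noetherian}, so the abstract machinery of \Cref{sec:basechange} applies. Three hypotheses must be verified: (i) $\Mod_T$ is canonically stratified, (ii) $f$ satisfies Quillen lifting, and (iii) $\Ind$ and $\Coind$ along $f$ are both conservative.

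For (i), the set $\cE(\cG)$ is finite (since $S$ has bounded $p$-rank and only finitely many $\cF$-conjugacy classes of elementary abelians), so $T$ is a finite product of Noetherian ring spectra and hence Noetherian. The canonical equivalence $\Mod_T \simeq \prod_{E} \Mod_{C^*(BE,\F_p)}$ identifies $\Spec^h(\pi_*T)$ with the disjoint union $\coprod_{E} \Spec^h(H^*(BE,\F_p))$, and both local cohomology and support decompose factorwise under this equivalence. Hence canonical stratification of $\Mod_T$ reduces to the stratification of each factor $\Mod_{C^*(BE,\F_p)}$, which is \Cref{prop:elementary}. Condition (ii) is precisely the content of \Cref{prop:quillenliftingfusion}, which was set up exactly to feed into \Cref{thm:stratdescent}. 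Condition (iii) is \Cref{cor:strongchouinard}, our strong form of Chouinard's theorem, which crucially relies on the hypothesis that $S$ is geometric (used in \Cref{prop:toralchouinard}) together with the stable transfer $C^*(BS,\F_p) \to C^*(B\cG,\F_p)$ of \Cref{thm:cochainsretract}.

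With all three conditions in place, \Cref{thm:stratdescent} yields the canonical stratification of $\Mod_{C^*(B\cG,\F_p)}$. The only step that requires genuine argument (as opposed to citation) is the compatibility of canonical stratification with finite products in (i); this is where some care is needed because the $\pi_*R$-linear action has to be matched with the product decomposition of $\Spec^h$, but since $\pi_*T \cong \prod_E H^*(BE,\F_p)$ and the prime ideal spectrum of a finite product decomposes as a disjoint union, $\Gamma_\fq^T$ for $\fq \in \Spec^h(\pi_*T)$ is supported in a single factor and agrees there with $\Gamma_\fq^{C^*(BE,\F_p)}$. Consequently, the local-to-global principle (which holds automatically by \Cref{thm:localtoglobal}) and minimality of $\Gamma_\fq^T\Mod_T$ follow termwise from the corresponding properties of each factor, completing the verification.
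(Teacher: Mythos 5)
Your proof is correct and takes essentially the same route as the paper: apply \Cref{thm:stratdescent} to the restriction morphism into $\prod_{E\in\cE(\cG)}C^*(BE,\F_p)$, citing \Cref{prop:quillenliftingfusion} for Quillen lifting, \Cref{cor:strongchouinard} for conservativity of induction and coinduction, and \Cref{prop:elementary} for stratification of the target. The extra paragraph you spend justifying the reduction of stratification across a finite product is worth having explicitly, but it is the same observation the paper makes in the discussion preceding \Cref{prop:quillenliftingfusion}, so the overall argument is the same.
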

\begin{proof}
Consider the morphism
\[
\xymatrix{f \colon C^*(B\cG,\F_p) \ar[r] & \prod \limits_{E \in \cE(\cG)} C^*(BE,\F_p).}
\]
By \Cref{prop:quillenliftingfusion} $f$ satisfies Quillen lifting and by \Cref{cor:strongchouinard} induction and coinduction along $f$ are conservative. The result then follows from \Cref{prop:elementary,thm:stratdescent}.
\end{proof}
In order to prove costratification for $p$-local compact groups, we first prove it for geometric discrete $p$-toral groups. 
\begin{prop}\label{prop:costratdiscreteptoral}
  Let $S$ be a geometric discrete $p$-toral group, then $\Mod_{C^*(BS,\F_p)}$ is canonically costratified. 
\end{prop}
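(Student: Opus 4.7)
The plan is to reduce canonical costratification for $\Mod_{C^*(BS,\F_p)}$ to the corresponding statement for a $p$-toral compact Lie group, where it is known by work of Benson--Greenlees. This uses the same device as the proof of \Cref{prop:toralchouinard}.

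Concretely, since $S$ is geometric, by definition there exists a $p$-toral compact Lie group $S'$ together with an equivalence $BS^{\wedge}_p \simeq (BS')^{\wedge}_p$, and hence a mod $p$ equivalence $BS \to (BS')^{\wedge}_p$. As $BS'$ is $p$-good, \Cref{lem:cochaincompletion} combined with this mod $p$ equivalence produces an equivalence of commutative ring spectra
\[
C^*(BS,\F_p) \simeq C^*(BS',\F_p).
\]
Such an equivalence of commutative ring spectra upgrades to a symmetric monoidal equivalence of module categories which intertwines the canonical actions of the respective cohomology rings, and hence transports canonical costratification across it.

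It thus suffices to verify that $\Mod_{C^*(BS',\F_p)}$ is canonically costratified. By definition of a $p$-toral group, $S'$ fits in an extension $1 \to T \to S' \to P \to 1$ with $T$ a torus and $P$ a finite $p$-group, so $\pi_0 S' \cong P$ is a finite $p$-group. This places us in precisely the situation of the remark following \Cref{prop:finitedescentcostrat}, which recalls that when $G$ is a compact Lie group with $\pi_0 G$ a finite $p$-group, the arguments of \cite{bg_stratifyingcompactlie} imply that induction and coinduction agree up to an invertible twist, so that the proof of costratification from \cite{bik12} applies to give canonical costratification of $\Mod_{C^*(BG,\F_p)}$. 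Applied to $G = S'$, this concludes the argument. There is no real obstacle here: the geometric hypothesis on $S$ is tailored to enable exactly this reduction, and compatibility of the equivalence with the canonical $\pi_*$-action is automatic since it descends from an underlying equivalence of commutative ring spectra.
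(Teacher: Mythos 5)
Your argument is correct and arrives at the same conclusion, but it takes a different route from the paper's proof once the reduction to the $p$-toral compact Lie group $S'$ is made. The paper observes that $BS^{\wedge}_p$ is the classifying space of a $p$-compact group, so by \Cref{prop:pcompactgroupchar} the morphism $C^*(BS,\F_p) \to \prod_{E \in \cE(S)} C^*(BE,\F_p)$ is \emph{finite}; it then applies \Cref{prop:finitedescentcostrat} (finite descent for costratification) together with \Cref{prop:elementary} and \Cref{prop:plocalcompactchouniard} to conclude. You instead reduce to the compact Lie group $S'$ with $\pi_0 S'$ a finite $p$-group and invoke the remark following \Cref{prop:finitedescentcostrat}, which asserts that in this situation the arguments of \cite{bg_stratifyingcompactlie} plus \cite{bik12} already give canonical costratification directly. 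This is logically sound, but note that the remark you lean on is stated without proof in the paper; it defers to the literature rather than to the machinery developed in \Cref{sec:basechange}. The paper's route has the advantage of being self-contained within its own framework (everything used is a proved proposition), at the cost of routing through the characterization of $p$-compact groups. Your route is shorter and uses less of the internal machinery, but its rigor ultimately rests on the unproved remark and on the cited external results being applicable verbatim in the $\Mod_{C^*(BG,\F_p)}$ setting.
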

\begin{proof}
 The $p$-completion of $BS$ is the classifying space of a $p$-compact toral group. The suspension spectrum $BS_+$ is $H\F_p$-equivalent to that of the classifying space of a $p$-compact group, and so by \Cref{lem:cochaincompletion} it suffices to prove the statement for the latter. In this case,  the morphism 
\[
\xymatrix{C^*(BS,\F_p) \ar[r] & \prod \limits_{E \in \cE(S)} C^*(BE,\F_p)}
\]
is finite by \Cref{prop:pcompactgroupchar}. Combining \Cref{prop:finitedescentcostrat,prop:elementary,prop:plocalcompactchouniard} gives the desired result. 
\end{proof}

\begin{thm}\label{thm:plocalcostrat}
Let $\cG=(S,\cF)$ be a $p$-local compact group with geometric $S$, then $\Mod_{C^*(B\cG,\F_p)}$ is canonically costratified. 
\end{thm}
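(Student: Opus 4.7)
The plan is to descend canonical costratification from $\Mod_{C^*(BS,\F_p)}$ to $\Mod_{C^*(B\cG,\F_p)}$ along the canonical map $\theta^{\ast}\colon C^*(B\cG,\F_p) \to C^*(BS,\F_p)$, by invoking the abstract descent theorem \Cref{thm:costratdescent}. That theorem requires four ingredients: both ring spectra are Noetherian, the map admits an $R$-module retract, the target is canonically costratified, and the source is canonically stratified.

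I would verify these ingredients in turn. Noetherianity of $C^*(BS,\F_p)$ is due to Dwyer--Wilkerson \cite[Thm.~12.1]{dwyerwilkerson_finloop}, and that of $C^*(B\cG,\F_p)$ is \Cref{cor:noetherian}. The existence of the required $C^*(B\cG,\F_p)$-module retract of $\theta^{\ast}$ is \Cref{thm:cochainsretract}, which is the crucial consequence of the stable transfer constructed in \Cref{ssec:splitting}. Since $S$ is geometric by hypothesis, \Cref{prop:costratdiscreteptoral} supplies canonical costratification of $\Mod_{C^*(BS,\F_p)}$, while \Cref{thm:plocalstrat} gives canonical stratification of $\Mod_{C^*(B\cG,\F_p)}$. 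With all four hypotheses in place, \Cref{thm:costratdescent} delivers the desired conclusion.

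The main obstacle to this strategy has already been surmounted prior to the statement: namely, constructing the retract of $\theta^{\ast}$, which required the intrinsically stable argument of \Cref{prop:splitting} involving Brown--Comenetz duality and the non-existence of phantom maps targeting spectra with degreewise finite homotopy, combined with Gonzalez's approximation (\Cref{thm:approximation}) and Ragnarsson's transfer (\Cref{thm:transfer}) at the level of $p$-local finite groups. Once that retract is in hand, the descent machinery of \Cref{thm:costratdescent} does the remaining work, and no further input from the theory of $p$-local compact groups is needed here.
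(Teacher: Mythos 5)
Your proposal is correct and matches the paper's proof of this theorem exactly: descend costratification along $\theta^{\ast}$ using \Cref{thm:costratdescent}, supplying the retract from \Cref{thm:cochainsretract}, costratification of $\Mod_{C^*(BS,\F_p)}$ from \Cref{prop:costratdiscreteptoral}, and stratification of $\Mod_{C^*(B\cG,\F_p)}$ from \Cref{thm:plocalstrat}.
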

\begin{proof}
We have shown in \Cref{thm:cochainsretract} that the canonical map $C^*(B\cG,\F_p) \to C^*(BS,\F_p)$ is split as a map of $C^*(B\cG,\F_p)$-modules, so that induction and coinduction along this map are conservative, see \Cref{lem:retract}. Since $\Mod_{C^*(B\cG,\F_p)}$ is canonically stratified by \Cref{thm:plocalstrat} and $\Mod_{C^*(BS,\F_p)}$ is canonically costratified by \Cref{prop:costratdiscreteptoral}, the result follows from \Cref{thm:costratdescent}. 
\end{proof}

The next corollary completes the work of Benson and Greenlees \cite{bg_stratifyingcompactlie}, who showed stratification in the special case that $G$ is a compact Lie group which has group of components a finite $p$-group. 

\begin{cor}\label{rem:compactliecostrat}
 Let $G$ be a compact Lie group, then $\Mod_{C^*(BG,\F_p)}$ is stratified and costratified by the canonical action of $H^*(BG,\F_p)$.
\end{cor}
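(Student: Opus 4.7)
The plan is to reduce the statement about the compact Lie group $G$ to the corresponding statement for a modelling $p$-local compact group, and then apply the main theorems \Cref{thm:plocalstrat} and \Cref{thm:plocalcostrat}. By \Cref{rem:pcompactmodel}, any compact Lie group $G$ admits a maximal discrete $p$-toral subgroup $S$ and a $p$-local compact group $\cG = (S, \cF_S(G), \mathcal{L}_S(G))$ with $|\mathcal{L}_S(G)|^{\wedge}_p \simeq BG^{\wedge}_p$. Since $BG$ is $p$-good (as remarked after \Cref{lem:cochaincompletion}), passing to cochains gives an equivalence of commutative ring spectra
\[
C^*(BG, \F_p) \simeq C^*(BG^{\wedge}_p, \F_p) \simeq C^*(B\cG, \F_p),
\]
and this equivalence is the identity on homotopy groups, so it is compatible with the canonical actions of $H^*(BG, \F_p) \cong H^*(B\cG, \F_p)$ on both sides.

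Next I would invoke \Cref{cor:geometricexamples}(1) to conclude that the discrete $p$-toral group $S$ associated to the compact Lie group $G$ is geometric in the sense of \Cref{defn:geometric}. At this point the hypotheses of our main stratification and costratification theorems for $p$-local compact groups are satisfied. Applying \Cref{thm:plocalstrat} yields that $\Mod_{C^*(B\cG, \F_p)}$ is canonically stratified, while \Cref{thm:plocalcostrat} yields that it is canonically costratified.

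Finally, since the equivalence $C^*(BG, \F_p) \simeq C^*(B\cG, \F_p)$ above is an equivalence of commutative ring spectra which induces the identity on the associated graded rings $H^*(BG, \F_p) \cong H^*(B\cG, \F_p)$, it transports canonical stratification and costratification from $\Mod_{C^*(B\cG, \F_p)}$ to $\Mod_{C^*(BG, \F_p)}$. There is no real obstacle here beyond the chain of identifications; all the hard work has already been carried out in the proofs of \Cref{thm:plocalstrat} and \Cref{thm:plocalcostrat}, and the only thing that needs checking is that the modelling produces a geometric $S$, which is guaranteed for compact Lie groups by \Cref{cor:geometricexamples}(1). The main thing worth emphasising, compared to \cite{bg_stratifyingcompactlie}, is that we make no assumption on the group of components $\pi_0 G$: the geometricity of the discrete $p$-toral Sylow of a general compact Lie group is the essential input that removes their restriction to the case where $\pi_0 G$ is a finite $p$-group.
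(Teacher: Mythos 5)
Your proof is correct and follows the same route as the paper: reduce via \Cref{lem:cochaincompletion} and \Cref{rem:pcompactmodel} to a modelling $p$-local compact group, note that $S$ is geometric by \Cref{cor:geometricexamples}, and conclude from \Cref{thm:plocalstrat} and \Cref{thm:plocalcostrat}.
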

\begin{proof}
By \Cref{lem:cochaincompletion} there is an equivalence of categories $\Mod_{C^*(BG,\F_p)} \simeq \Mod_{C^*(\pc{BG},\F_p)}$, so it suffices to prove the result for $\Mod_{C^*(\pc{BG},\F_p)}$. By \Cref{rem:pcompactmodel} there exists a $p$-local compact group $(S,\cF_s(G),\cal{L}_s(G))$ whose associated classifying space is homotopy equivalent to $\pc{BG}$. Moreover, $S$ is geometric by \Cref{cor:geometricexamples}. It follows from \Cref{thm:plocalstrat,thm:plocalcostrat} that $\Mod_{C^*(BG,\F_p)}$ is (co)stratified by the canonical action of $H^*(BG,\F_p)$. 
\end{proof}

We now list the consequences of (co)stratification for $p$-local compact groups that were stated abstractly in \Cref{sec:recollections} and \Cref{prop:abstractsubgroupthm}, while the list of examples given at the end of the theorem uses \Cref{cor:geometricexamples}.

\begin{thm}\label{thm:costratpgroupcons}
Let $\cG = (\cF,\cS)$ be a $p$-local compact group with geometric $S$. 
  \begin{enumerate}
    \item The map sending a localizing subcategory $\cL$ of $\Mod_{C^*(B\cG,\F_p)}$ to its left orthogonal $\cL^{\perp}$ induces a bijection between localizing and colocalizing subcategories of $\Mod_{C^*(B\cG)}$, and these are in bijection with subsets of $\Spec^h(H^*(B\cG,\F_p))$.
    \item There is a bijection between thick subcategories of compact $C^*(B\cG,\F_p)$-modules and specialization closed subsets of $\Spec^h(H^*(B\cG,\F_p))$, which takes a specialization closed subset $\cal{U}$ to the full-subcategory of compact $C^*(B\cG,\F_p)$-modules whose support is contained in $\cal{U}$. 
  \item The telescope conjecture holds in $\Mod_{C^*{(B\cG,\F_p)}}$: Let $\cal{L}$ be a localizing subcategory of $\Mod_{C^*{(B\cG,\F_p)}}$, then the following conditions are equivalent: 
  \begin{enumerate}
    \item The localizing subcategory $\cal{L}$ is smashing, i.e., the associated localizing endofunctor $L_{\cL}$ on $\Mod_{C^*{(B\cG,\F_p)}}$ preserves colimits. 
    \item The localizing subcategory $\cal{L}$ is generated by compact objects in $\Mod_{C^*(B\cG,\F_p)}$. 
    \item The support of $\cal{L}$ is specialization closed. 
  \end{enumerate}
  \item For any $M,N \in \Mod_{C^*(B\cG,\F_p)}$ there are identities:
  \begin{enumerate}
  	\item $\supp_{\cG}(M \otimes_{C^*(B\cG,\F_p)} N) = \supp_{\cG}(M) \cap \supp_{\cG}(N)$, and 
	\item $\cosupp_{\cG}(\Hom_{C^*(B\cG,\F_p)}(M,N)) = \supp_{\cG}(M) \cap \cosupp_{\cG}(N)$.
  \end{enumerate}
  \item Let $\cH=(S',\cF')$ be another $p$-local compact group with associated classifying space $B\cH$. For any morphism $f \colon C^*(B\cH,\F_p) \to C^*(B\cG,\F_p)$ of ring spectra we have 
  \[
\supp_{\cG}(\Ind M) = \res^{-1}\supp_{\cH}(M) \quad \text{ and } \quad \cosupp_{\cG}(\Coind M) = \res^{-1}\cosupp_{\cH}(M)
  \]
  for any $M \in \Mod_{C^*(B\cH,\F_p)}$. 
\end{enumerate}
In particular, each of these statements hold for compact Lie groups, connected $p$-compact groups, and $p$-local finite groups.
\end{thm}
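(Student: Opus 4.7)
The plan is straightforward: this theorem is essentially a bookkeeping exercise that packages the abstract consequences of canonical (co)stratification. Once we know that $\Mod_{C^*(B\cG,\F_p)}$ is canonically stratified (\Cref{thm:plocalstrat}) and costratified (\Cref{thm:plocalcostrat}), and that $C^*(B\cG,\F_p)$ is Noetherian (\Cref{cor:noetherian}), I would simply invoke the relevant general statements from \Cref{sec:recollections} and \Cref{sec:basechange}, and verify for each part that the abstract hypotheses hold in the setting at hand.

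More specifically, for Part (1), canonical (co)stratification yields two bijections between localizing (respectively colocalizing) subcategories and subsets of $\Spec^h(H^*(B\cG,\F_p))$, and when both hold simultaneously the map $\cL \mapsto \cL^{\perp}$ is a bijection between localizing and colocalizing subcategories; all three statements are collected in the three-part theorem recalled just after \Cref{defn:costratification}. Part (2) follows from canonical stratification via the classification of thick subcategories of compact objects stated before \Cref{thm:tensor_hom_(co)supp}. Parts (3) and (4) are immediate consequences of \Cref{thm:tel_conj} and \Cref{thm:tensor_hom_(co)supp}, respectively. Part (5) is an application of \Cref{prop:abstractsubgroupthm} to the morphism $f\colon C^*(B\cH,\F_p) \to C^*(B\cG,\F_p)$; that proposition requires the source category $\Mod_{C^*(B\cH,\F_p)}$ to be canonically stratified, so one should read Part (5) with the tacit assumption that $\cH$ also has geometric discrete $p$-toral Sylow, in which case stratification of the source follows from a second application of \Cref{thm:plocalstrat}. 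The closing assertion that each of these statements holds for compact Lie groups, connected $p$-compact groups, and $p$-local finite groups is then a direct consequence of \Cref{cor:geometricexamples}, which confirms that in each of these classes the discrete $p$-toral group $S$ is indeed geometric.

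The main obstacle in this argument is essentially nothing beyond correctly matching the abstract statements to the homotopical-group setting; all of the substantive work has already been carried out in establishing \Cref{thm:plocalstrat}, \Cref{thm:plocalcostrat}, and \Cref{cor:noetherian}. The only subtlety I would flag is the need to make explicit (or at least implicit) a geometricity hypothesis on the Sylow of $\cH$ for Part (5) to follow cleanly from \Cref{prop:abstractsubgroupthm}; otherwise the proof reduces to invoking the appropriate abstract result in each case.
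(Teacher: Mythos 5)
Your proposal is correct and mirrors the paper's own treatment exactly: the theorem is presented as a direct assembly of the abstract consequences of (co)stratification from Section 2 and Proposition 3.10, applied once \Cref{thm:plocalstrat}, \Cref{thm:plocalcostrat}, and \Cref{cor:noetherian} are in place. Your observation about Part (5) is apt — the hypothesis of \Cref{prop:abstractsubgroupthm} is stratification of the \emph{source} category $\Mod_{C^*(B\cH,\F_p)}$, so the statement does implicitly require $S'$ to be geometric as well (and $C^*(B\cH,\F_p)$ is Noetherian by \Cref{cor:noetherian} regardless), a point the paper leaves tacit.
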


\begin{rem}
Suppose that $G$ and $H$ are compact Lie groups. As in \Cref{rem:compactliecostrat} we may as well assume that $BG$ and $BH$ are $p$-complete. Then, any morphism $H \to G$ of compact Lie groups gives rise to a morphism $C^*(BG,\F_p) \to C^*(BH,\F_p)$ and, using \Cref{rem:compactliecostrat} again, we see that the subgroup theorem holds. Note that we need not assume that $H$ is a closed subgroup of $G$, in contrast to the subgroup theorem given in \cite[Thm.~5.2]{bg_stratifyingcompactlie}. 
\end{rem}

\section{Gorenstein duality for $p$-compact groups}

Let $G$ be a finite group and $\fp$ be a homogeneous prime ideal of $H^*(BG,k)$, where $k$ is a field of characteristic $p$. 
In the context of the stable module category of $G$, Benson conjectured that the object $T_{k}(I_{\fp})$ introduced by Benson and Krause in \cite{bk_pureinjectives} is stably equivalent to 
$\Gamma_{\fp}k$ up to a shift ~\cite{benson_moduleswithinjcohom}; here $I_{\fp}$ is the injective hull of $H^*(BG,k)/\fp$, and $T_{k}(I_{\fp})$ is 
constructed in the same way as the objects $T_C(I) \in \Mod_R$ introduced in \Cref{ssec:finitedescent}. This was proven by Benson and Greenlees 
in \cite{bg_localduality} by addressing the corresponding statement about $C^*(BG,k)$ in $\Mod_{C^*(BG,k)}$, where $G$ is a compact Lie group 
satisfying an orientability condition. Later, Benson gave an alternative proof for the stable module category of a finite group \cite{benson_shortproof}. 
Inspired by the latter, in \cite{bhv2} we studied the analogous conjecture for the category of modules over a commutative ring spectrum. 
To be specific, we studied the question when $R$ is \Gorenstein in the following sense, see \cite[Def.~5.6]{bhv2}. 

\begin{defn}\label{def:bc}
	Let $R$ be a ring spectrum. We say that $R$ is \Gorenstein with shift $\nu$ if, for each $\frak p \in \Spec^h (\pi_*R)$ of dimension $d=d(\fp)$, there is an equivalence $\Gamma_{\frak p}R \simeq \Sigma^{\nu+d} T_R(I_{\frak p})$. More generally, we say that $R$ is \Gorenstein with twist $J$ if there exists an invertible $R$-module $J$ such that there is an equivalence 
\[
\Gamma_{\frak p}R \otimes_R J \simeq \Sigma^{d} T_R(I_{\frak p})
\]
for any $\frak p \in \Spec^h(\pi_*R)$ of dimension $d$.
\end{defn}
In what follows, we will show that when $R = C^*(BG,k)$ for $G$ a $p$-compact group, then $R$ is absolute 
Gorenstein with shift given by the $\F_p$-cohomological dimension of $G$.

\subsection{$p$-compact groups from the stable perspective}
In this section we give an equivalent definition of $p$-compact group, following Bauer \cite{bauer_pcompact} and Rognes \cite{rognes_galois}. Since any loop space is equivalent to a topological group, we will work with topological groups $G$ instead of loop spaces. This allows us to more easily work with group actions by $G$. 
\begin{defn}[Rognes]\label{defn:rognespcompact}
	A topological group $G$ is $H\F_p$-locally stably dualizable if $(G_+)^{\wedge}_{p}$ is dualizable in the $p$-complete stable homotopy category. A $p$-compact group is an $H\F_p$-locally stably dualizable group whose (unstable) classifying space $BG$ is $p$-complete. 
\end{defn}
By \cite[Ex.~2.3.3]{rognes_galois} $G$ is $H\F_p$-locally stably dualizable if and only if $G$ is $\F_p$-finite. Moreover, the notion of $p$-compact group as defined in \Cref{defn:rognespcompact} agrees with that as given in \Cref{defn:pcompactgroup}, see Ex.~2.4.4 of \cite{rognes_galois}. 

Whenever equivariance plays a role in this section, we will work in the naive $p$-complete $G$-equivariant homotopy category. An object of this category is a $p$-complete spectrum $E$ together with a $G$-action on each space $E_n$ so that the structure maps $E_n \to \Omega E_{n+1}$ are $G$-equivariant homeomorphisms. We will write $S$ for the unit, which is the $p$-complete sphere spectrum with trivial $G$-action. Given objects $X,Y$ in this category, we write $X \otimes Y$ for the $p$-complete smash product of $X$ and $Y$, which is given the diagonal $G$-action, while the function spectrum $F(X,Y)$ has the conjugation $G$-action.

\subsection{The relative dualizing complex}
The following definition is due to Bauer~\cite{bauer_pcompact}.
\begin{defn}
	For a $p$-compact group $G$, define the dualizing spectrum $S_G$ by
	\[
S_G = (G_+)^{hG},
	\]
formed with respect to the standard right $G$ action on $G_+$ (thus we are left with a left $G$-action on $S_G$). 
\end{defn}
The dimension of a $p$-compact group $G$, $\dim_{\mathbb F_p}(G)$, is the $\F_p$-cohomological dimension of $G$. As shown in \Cref{adjspaceonecomponent}, $S_G$ is homotopy equivalent to a $H\F_p$-local sphere of dimension $\dim_{\mathbb F_p}(G)$.

Given a subgroup $H \leq_f G$ of a $p$-compact group, we define the relative dualizing object $\omega_f$ as the coinduced $C^*(BH,\F_p)$-module
\[
\omega_f := \Hom_{C^*(BG,\F_p)}(C^*(BH,\F_p),C^*(BG,\F_p)).
\]

\begin{thm}\label{thm:dualizing}
Suppose $H\leq_f G$ is a subgroup of a $p$-compact group $G$ of codimension $d = \dim_{\mathbb F_p}(G) - \dim_{\mathbb F_p}(H)$. Then, we have an equivalence of $C^*(BH,\F_p)$-modules
\[
\omega_f \simeq \Sigma^d C^*(BH,\F_p). 
\]
\end{thm}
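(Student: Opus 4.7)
The plan is to compute the relative dualizing module $\omega_f$ via base change along the homotopy fiber sequence $G/H \to BH \to BG$, exploiting that $G/H$ is $\F_p$-finite of $\F_p$-cohomological dimension $d$, and then invoking a Poincar\'e duality statement for $G/H$ in the $p$-complete stable category.

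First, I would establish the identification
\[
\omega_f \simeq C^*(BH, F((G/H)_+, \F_p))
\]
of $C^*(BH,\F_p)$-modules. This can be derived by expressing $C^*(BH,\F_p)$ in Galois-theoretic terms: because $BH \simeq (G/H)_{hG}$, one has $C^*(BH,\F_p) \simeq (F((G/H)_+, H\F_p))^{hG}$ and $C^*(BG,\F_p) \simeq (H\F_p)^{hG}$, so a standard adjunction (in the style of Rognes and Bauer) reduces the computation of $\Hom_{C^*(BG,\F_p)}(C^*(BH,\F_p), C^*(BG,\F_p))$ to a $G$-equivariant internal Hom, which collapses to the claimed $\F_p$-linear dual of $(G/H)_+$ with its residual action.

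Second, I would invoke a Poincar\'e duality statement for the homogeneous space $G/H$. Because $H \leq_f G$ is a subgroup of a $p$-compact group, $G/H$ is $\F_p$-finite of dimension $d$, and the relative version of Bauer's dualizing spectrum theorem for the pair $(G,H)$ provides an equivalence
\[
F((G/H)_+, H\F_p) \simeq \Sigma^{d} H\F_p
\]
in the category of naive $G$-equivariant $p$-complete spectra. The essential input is that the relative dualizing spectrum of $G/H$ is an $H\F_p$-sphere; since the Picard group of $\Mod_{H\F_p}$ is generated by ordinary suspensions, no orientation character can survive, so the equivalence is untwisted over $\F_p$. Substituting into the first step yields
\[
\omega_f \simeq C^*(BH, \Sigma^{d}\F_p) \simeq \Sigma^{d} C^*(BH,\F_p),
\]
which is the claim.

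The main obstacle is the second step: carefully promoting Bauer's absolute result on $S_G \simeq (G_+)^{hG}$ being a sphere of dimension $\dim_{\F_p}(G)$ to a relative statement for the homogeneous space $G/H$ as a $G$-equivariant object, and verifying that the resulting invertible $H\F_p$-module is just a shift (rather than a twist). This is where the hypothesis $H \leq_f G$ (and the associated $\F_p$-finiteness of the fiber) together with the triviality of $\mathrm{Pic}(\Mod_{H\F_p})$ must be combined; once these are in hand the rest of the argument is a formal manipulation of adjunctions between induction, restriction, and coinduction along $f$.
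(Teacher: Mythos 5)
Your overall strategy mirrors the paper's: reduce $\omega_f$ to a homotopy-fixed-point/Borel-construction object built from a relative dualizing sphere for $G/H$, and then argue that the resulting invertible $C^*(BH,\F_p)$-module is an untwisted shift. The paper also goes through Bauer's relative duality $D((G/H)_+)\otimes S_G \simeq G_+\otimes_H S_H$ (extended to non-connected $p$-compact groups in the appendix), lands on $\omega_f \simeq C^*((S_H\otimes D(S_G))_{hH},\F_p)$, and then kills the twist. So the route is essentially the same; the question is whether your justification for the untwistedness is correct.

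It is not, and this is a genuine gap. You argue that ``since the Picard group of $\Mod_{H\F_p}$ is generated by ordinary suspensions, no orientation character can survive.'' But $\operatorname{Pic}(\Mod_{H\F_p})\cong\Z$ is a statement about \emph{non-equivariant} invertible $H\F_p$-modules; it tells you that the dualizing object is non-equivariantly a shift $\Sigma^{d}H\F_p$, nothing more. The twist you need to rule out lives elsewhere: it is the monodromy of the (naive) $H$-equivariant structure, i.e.\ a homomorphism $\pi_0(H)\to \pi_0\operatorname{Aut}(\Sigma^d H\F_p)\cong\F_p^\times$ (or, working at the level of $p$-complete spheres as the paper does, $\pi_0(H)\to\Z_p^\times$). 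Triviality of the non-equivariant Picard group does not force such a character to vanish, and if it were nontrivial, the Borel construction would not produce a free $H^*(BH,\F_p)$-module on a single generator, so your conclusion would fail. What actually makes the character vanish is the $p$-compact hypothesis: $\pi_0(H)$ is a finite $p$-group, the identity component acts trivially, $\Z_p^\times \cong \Z_p\times\Z/(p-1)$ has no nontrivial $p$-power-order elements when $p$ is odd, and at $p=2$ the only possible character is the sign, which acts trivially on $\F_2$. Your proof never invokes that $\pi_0(H)$ is a $p$-group, so the key step is unsupported. A secondary, smaller point: even once the $E_2$-page of the Borel spectral sequence is identified with a shift of $H^*(BH,\F_p)$, one must realize that abstract isomorphism by an actual map of $C^*(BH,\F_p)$-module spectra; the paper does this via a surjectivity lemma of Shamir, whereas you pass over it as ``a formal manipulation of adjunctions.''
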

\begin{proof}
The proof closely follows that of Benson--Greenlees \cite[Thm.~6.8]{bg_stratifyingcompactlie} . Let $h=\textrm{dim}_{\F_p}(H)$, and $g=\textrm{dim}_{\F_p}(G)$. Let $b = F(EG_+,H\F_p)$.
The quotient $G/H$ is non-equivariantly dualizable and so there is an equivalence $F(G/H_+,b) \simeq D(G/H_+) \otimes b$, which is even $G$-equivariant.

 For a monomorphism of connected $p$-compact groups $\alpha \colon H \to G$, Bauer showed \cite[Thm.~3]{bauer_pcompact} that there is a homotopy equivalence $G_+ \otimes_H S_H \simeq D(G/H_+) \otimes S_G$. By \Cref{relativeduality} this extends to not-necessarily connected $p$-compact groups. 

Combining the facts above, we see that there is a $G$-equivariant equivalence 
\[
F(G/H_+,b) \simeq D(G/H_+) \otimes b \simeq (G_+ \otimes_H S_H) \otimes D(S_G) \otimes b.
\]
Taking $G$-fixed points and using the definition of $b$, we see that 
\[
\begin{split}
	\Hom_b(F(G/H_+,b),b)^G &\simeq F((G_+ \otimes_H S_H) \otimes D(S_G),b)^G \\
	&\simeq F(EG_+ \otimes (G_+ \otimes_H S_H) \otimes D(S_G),H\mathbb F_p)^G\\
	&\simeq C^*(((G_+ \otimes_H S_H) \otimes D(S_G))_{hG},\mathbb F_p) \\
	&\simeq C^*((S_H \otimes D(S_G))_{hH},\mathbb F_p) \\
	&\simeq C^*(S_H \otimes D(S_G),\F_p)^{hH}. 
\end{split}
\]
It follows that there is a spectral sequence
\[
H^*(BH,H^*(S_H \otimes D(S_G),\F_p)) \implies H^*((S_H \otimes D(S_G))_{hH},\F_p).
\]

We now claim that the action of $H$ on the cohomology $H^*(S_H \otimes D(S_G),\F_p)$ is trivial.  Indeed, the action of $H$ arises from a morphism $\pi_0(H) \to \Z_p^\times \subseteq \pi_0S$ because the connected component must act trivially as it contains the identity. Since $\Z_p^\times \cong \Z_p \times \Z/(p-1)$, and $\pi_0(H)$ is a finite $p$-group, the action is trivial if $p$ is odd. Moreover, if $p = 2$, then the only possible non-trivial action is the sign action, but the sign representation is trivial on $\F_2$. Since $S_H \otimes D(S_G)$ is equivalent to a $p$-complete sphere of dimension $d$ we deduce that the reduced cohomology $H^*((S_H \otimes D(S_G))_{hH},\F_p)$ is a free $H^*(BH,\F_p)$-module on a class in degree $d$. We will produce a map of spectra that realizes this isomorphism in cohomology. To do this, we appeal to \cite[Lem.~4.1]{shamir_strat} with $R = H\F_p$, $A = C^*(BH,\F_p)$ and $M = C^*((S_H \otimes D(S_G))_{hH},\mathbb F_p)$. This result says that that map
\[
\pi_*\Hom_{H\F_p}(C^*(BH,\F_p),C^*((S_H \otimes D(S_G))_{hH},\mathbb F_p)) \longrightarrow \pi_*C^*((S_H \otimes D(S_G))_{hH},\mathbb F_p))
\]
induced by $H\F_p \to C^*(BH,\F_p)$ is surjective. It follows that there is a map of spectra $\Sigma^dC^*(BH,\F_p) \to C^*((S_H \otimes D(S_G))_{hH},\mathbb F_p)) $ which realizes the abstract isomorphism constructed previously. We conclude that 
\[
\Hom_b(F(G/H_+,b),b)^G \simeq \Sigma^d C^*(BH,\F_p). 
\]

On the other hand, as in \cite[Thm.~6.8]{bg_stratifyingcompactlie}, the natural map 
\[
\Hom_{b}(F(G/H_+,b),b)^G \to \Hom_{b^G}(F(G/H_+,b)^G,b^G) \simeq \Hom_{C^*(BG,\F_p)}(C^*(BH,\F_p),C^*(BG,\F_p))
\]
is an equivalence. Hence, we get
\[
\omega_f = \Hom_{C^*(BG,\F_p)}(C^*(BH,\F_p),C^*(BG,\F_p)) \simeq \Sigma^d C^*(BH,\F_p),\]
as desired. 
\end{proof}

\subsection{Gorenstein ascent}
We recall some basic definitions from \cite[Sec.~4]{bhv2}. 
\begin{defn}
We say that a ring spectrum $R$ with $\pi_*R$ local Noetherian of dimension $n$ is algebraically Gorenstein of shift $\nu$ if $\pi_*R$ is a graded Gorenstein ring; that is, the local cohomology $H_{\frak m}^i(\pi_*R)$ is non-zero only when $i = n$ and $(H^n_{\frak m}(\pi_*R))_t \cong (I_{\frak m})_{t-\nu-n}$. If $\pi_*R$ is non-local, then it is algebraically Gorenstein of shift $\nu$ if its localization at each maximal ideal is algebraically Gorenstein of shift $\nu$ in the above sense.
\end{defn}
For example, it is a consequence of work of Benson and Greenlees \cite{bg_commalg} that if $G$ is a compact Lie group whose adjoint representation is orientable and $H^*(BG,\F_p)$ is a Gorenstein ring, then $C^*(BG,\F_p)$ is algebraically Gorenstein with shift the dimension of $G$, see \cite[Exmp.~4.8]{bhv2}. By \cite[Prop.~4.7]{bhv2}, algebraically Gorenstein ring spectra are always \Gorenstein in the sense of \Cref{def:bc}.
\begin{defn}\label{def:nn}
We say that $S$ has a \Normalization (of shift $\nu$) if there exists a ring spectrum $R$ and a map of ring spectra $f \colon R \to S$ such that 
	\begin{enumerate}
		\item $R$ is \Gorenstein with shift $\nu$, 
		\item $S$ is a compact $R$-module via $f$, and
		\item $\omega_f$ is an invertible $S$-module. 
	\end{enumerate} 
\end{defn}
Our main result on detecting (twisted) absolute Gorenstein ring spectra is the following \cite[Thm~4.27]{bhv2}. 
\begin{thm}\label{thm:bc}
	Suppose $S$ has a \Normalization $f \colon R \to S$ of shift $\nu$. Then for each $\fp \in \Spec^h(\pi_*S)$ of dimension $d$, there is an equivalence	
\[
\Sigma^{-d} \Gamma_{\fp}S \otimes \omega_f \simeq \Sigma^{\nu} T_S(I_{\fp}).
\]	
 In particular, we obtain an isomorphism $\pi_\ast(\Gamma_{\fp}\omega_f)  \simeq (I_{\fp})_{\ast-d-\nu}$.
\end{thm}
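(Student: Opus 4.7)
My plan is to reduce the statement to the \nsGorenstein{} hypothesis on $R$ by base-changing along the finite morphism $f$, with the invertibility of $\omega_f$ supplying the dualizing twist needed to pass between $R$-linear and $S$-linear data. The three key inputs are the base-change decomposition of \Cref{cor:inddecomposition}, the Grothendieck-duality identity $\Coind\simeq\omega_f\otimes_S\Ind$ valid for any finite morphism (cf.\ the remark following \Cref{prop:finitedescentcostrat}), and the universal property of the $T$-construction recalled in \Cref{ssec:finitedescent}.

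Concretely, I fix $\fp\in\Spec^h(\pi_*S)$ and set $\fq:=f^*(\fp)\in\Spec^h(\pi_*R)$; finiteness of $f$ ensures that $\pi_*S$ is finitely generated over $\pi_*R$ (\Cref{rem:finite_fg}) and so that $\res^{-1}(\fq)$ is finite and discrete, with $d(\fp)=d(\fq)=:d$. I then proceed in three steps. First, I apply $\Ind=-\otimes_R S$ to the Gorenstein equivalence $\Gamma_\fq R\simeq\Sigma^{\nu+d}T_R(I_\fq)$; by \Cref{cor:inddecomposition} the left-hand side decomposes as $\coprod_{\fp'\in\res^{-1}(\fq)}\Gamma_{\fp'} S$, producing
\[
\coprod_{\fp'\in\res^{-1}(\fq)}\Gamma_{\fp'} S\simeq \Sigma^{\nu+d}\bigl(T_R(I_\fq)\otimes_R S\bigr).
\]
Second, inverting the Grothendieck-duality identity to $\Ind\simeq\omega_f^{-1}\otimes_S\Coind$ and combining with the universal property of $T_R(I_\fq)$, the equivalence $\Ind(\kos{R}{\fq})\simeq\kos{S}{\fq}$ of \Cref{lem:key}, and the decomposition of $I_\fq$ as a graded-injective $\pi_*S$-module into the summands $I_{\fp'}$ indexed by $\res^{-1}(\fq)$, I identify
\[
T_R(I_\fq)\otimes_R S\simeq\omega_f^{-1}\otimes_S\coprod_{\fp'\in\res^{-1}(\fq)}T_S(I_{\fp'}).
\]
Third, since $\Gamma_{\fp'} S$ has support $\{\fp'\}$ by \Cref{prop:orthogonality}(1) and $T_S(I_{\fp'})$ has cosupport $\{\fp'\}$, the two coproducts decompose compatibly and I may match the $\fp$-summand on each side. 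Tensoring the resulting equivalence with the invertible $\omega_f$ yields the desired formula. The final statement on $\pi_*(\Gamma_\fp\omega_f)$ then drops out by taking homotopy groups of $\Gamma_\fp\omega_f\simeq\Sigma^{\nu+d}T_S(I_\fp)$ (using that $\Gamma_\fp$ is smashing and $\omega_f$ invertible) together with the standard identification of $\pi_*T_S(I_\fp)$ with a shift of $I_\fp$ coming from the universal property of $T_S$.

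The main technical obstacle is the identification in step two, namely that $\Coind T_R(I_\fq)\simeq\Hom_R(S,T_R(I_\fq))$ decomposes as a coproduct $\coprod_{\fp'\in\res^{-1}(\fq)}T_S(I_{\fp'})$. This step transports the graded-injective module $I_\fq$ from $\pi_*R$ to $\pi_*S$ across the finite homomorphism $\pi_*f$; the inputs one needs are that $\res^{-1}(\fq)$ is finite and discrete (so that the appropriate localization of $I_\fq$ splits into the $I_{\fp'}$) and that $\omega_f$ controls the unique invertible twist relating induction and coinduction. Once this is in hand, every other step is a formal consequence of the base-change formulae developed in \Cref{sec:basechange}.
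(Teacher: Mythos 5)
The paper cites this result to \cite[Thm.~4.27]{bhv2} and supplies no proof of its own, so there is no in-text argument to compare against; I therefore assess your proposal on its merits. Your overall strategy is sound: induce the Gorenstein identification for $R$ along the finite $f$, decompose via \Cref{cor:inddecomposition}, use $\omega_f$ to convert $\Ind$ into $\Coind$, and then invoke the universal property of the $T$-construction. In outline this will produce the theorem, but two steps are misstated and one is glossed over.

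First, you write ``the decomposition of $I_\fq$ as a graded-injective $\pi_*S$-module into the summands $I_{\fp'}$,'' but $I_\fq$ is a $\pi_*R$-module and carries no $\pi_*S$-action. The object that actually decomposes is the coinduction, via the Matlis-theoretic isomorphism of $\pi_*S$-modules $\Hom_{\pi_*R}(\pi_*S,I_\fq)\cong\bigoplus_{\fp'\in\res^{-1}(\fq)}I_{\fp'}$, valid because $\pi_*R\to\pi_*S$ is finite. With this corrected, $\Coind T_R(I_\fq)\simeq\coprod_{\fp'}T_S(I_{\fp'})$ follows from the $(\Ind,\Res,\Coind)$ adjunctions, the change-of-rings isomorphism at the level of $\pi_*$, and the universal property of $T_C$; the equivalence $\Ind(\kos{R}{\fq})\simeq\kos{S}{\fq}$ of \Cref{lem:key} plays no role, because $T_R(I_\fq)$ and $T_S(I_{\fp'})$ as used in \Cref{def:bc} are $T_C(-)$ with $C$ the respective unit, not with $C$ a Koszul object; its appearance in your outline suggests a conflation with the cogenerator $T_\fp I_\fp=T_{\kos{R}{\fp}}(I_\fp)$ of \Cref{ssec:finitedescent}. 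Second, the step where you ``match the $\fp$-summand on each side'' needs more care. Applying $\Gamma_\fp$ kills the $\fp'\neq\fp$ terms of $\coprod\Gamma_{\fp'}S$ by \Cref{prop:orthogonality}(3), but to do the same on the $\coprod\omega_f^{-1}\otimes_S T_S(I_{\fp'})$ side you need $\supp_S(T_S(I_{\fp'}))=\{\fp'\}$, which does not follow from the cosupport claim alone. You should first deduce from the pre-matching equivalence that $\supp_S$ of the right-hand coproduct lies in the discrete set $\res^{-1}(\fq)$, then combine this with $\cosupp_S(T_S(I_{\fp'}))\subseteq\{\fp'\}$ (itself worth justifying, since the paper only records the cosupport of the Koszul variant $T_{\fp'}I_{\fp'}$) and the minimal-prime comparison of \cite[Thm.~4.13]{bik12} to obtain $\supp_S(T_S(I_{\fp'}))=\{\fp'\}$; only then does applying $\Gamma_\fp$ to both sides give $\Gamma_\fp S\simeq\Sigma^{\nu+d}\omega_f^{-1}\otimes_S T_S(I_\fp)$ and hence the theorem.
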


We can now prove a version of Benson's conjecture for $p$-compact groups.
 \begin{thm}\label{thm:p-comp_goren}
	Let $G$ be a $p$-compact group of dimension $w$. Then for each $\fp \in \Spec^h (H^*(BG,\F_p))$ of dimension $d$, there is an equivalence	
\[
\Sigma^{-d} \Gamma_{\fp}C^*(BG,\F_p) \simeq \Sigma^{w} T_{C^*(BG,\F_p)}(I_{\fp}).
\]	
 In particular, we obtain an isomorphism $\pi_\ast(\Gamma_{\fp}C^*(BG,\F_p))  \simeq (I_{\fp})_{\ast-d-w}$.
\end{thm}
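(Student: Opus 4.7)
The plan is to exhibit a \nsNormalization{} of $R = C^*(BG,\F_p)$ in the sense of Definition~\ref{def:nn} and then invoke Theorem~\ref{thm:bc}. Since every $p$-compact group admits a unitary embedding (see \cite{cancas_finloop} and the discussion preceding Proposition~\ref{prop:unitaryembbedingfinite}), I would fix one such $\iota\colon BG \to (BU(n))^\wedge_p$ and argue that the induced map of cochains $\iota^*\colon C^*(BU(n),\F_p) \to C^*(BG,\F_p)$ is a \nsNormalization{} of shift $\nu = n^2$.

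To verify the three conditions of Definition~\ref{def:nn}: finiteness of $\iota^*$ (condition (2)) is immediate from Proposition~\ref{prop:unitaryembbedingfinite}, or alternatively from Lemma~\ref{lem:monofinite} applied to the monomorphism $\iota$. For condition (1), one observes that $H^*(BU(n),\F_p) \cong \F_p[c_1,\ldots,c_n]$ is a graded polynomial, hence Gorenstein, algebra; a standard local cohomology computation as in \cite[Exmp.~4.8]{bhv2} then shows that $C^*(BU(n),\F_p)$ is algebraically Gorenstein with shift $n^2 = \dim_{\F_p}(U(n))$, and by \cite[Prop.~4.7]{bhv2} it is thus \Gorenstein{} with the same shift. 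Condition (3) is delivered by Theorem~\ref{thm:dualizing} applied to the subgroup $G \leq_\iota U(n)$ of codimension $n^2 - w$, which gives
\[
\omega_{\iota} \simeq \Sigma^{n^2 - w} C^*(BG,\F_p),
\]
and this is manifestly an invertible $C^*(BG,\F_p)$-module.

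With the three conditions of Definition~\ref{def:nn} in place, Theorem~\ref{thm:bc} produces
\[
\Sigma^{-d}\Gamma_\fp C^*(BG,\F_p) \otimes_{C^*(BG,\F_p)} \omega_{\iota} \simeq \Sigma^{n^2} T_{C^*(BG,\F_p)}(I_\fp),
\]
so substituting the computed value of $\omega_{\iota}$ and cancelling the common factor $\Sigma^{n^2}$ yields the asserted equivalence $\Sigma^{-d}\Gamma_\fp C^*(BG,\F_p) \simeq \Sigma^{w}T_{C^*(BG,\F_p)}(I_\fp)$. The main subtlety is to pin down the algebraic Gorenstein shift of $C^*(BU(n),\F_p)$ under the paper's homological grading convention $\pi_{-i}C^*(X,k) \cong H^i(X,k)$: one needs to verify that the shift of the polynomial ring $\F_p[c_1,\ldots,c_n]$ translates to precisely $n^2$ at the level of spectra. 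This can alternatively be bootstrapped from the Gorenstein duality for compact Lie groups of Benson--Greenlees \cite{bg_localduality} rather than computed from scratch.
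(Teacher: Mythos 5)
Your proposal is correct and follows essentially the same route as the paper's proof: exhibit a \nsNormalization{} via a unitary embedding, verify the three conditions of \Cref{def:nn}, and apply \Cref{thm:bc} together with \Cref{thm:dualizing}. The one cosmetic difference is that you embed into $U(n)$ whereas the paper uses $SU(n)$; the paper's choice is dictated by the fact that \cite[Ex.~4.8(2)]{bhv2} records the algebraic Gorenstein property with shift $n^2 - 1$ explicitly for $C^*(BSU(n),\F_p)$, so no further bookkeeping with the grading convention is needed, whereas your version requires redoing that computation for $C^*(BU(n),\F_p)$ to pin down the shift $n^2 = \dim_{\F_p}U(n)$ — a step you correctly flag as the residual subtlety. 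Either way, the telescoping of shifts works out identically (the ambient group's dimension cancels against itself and only $w$ and $d$ survive), so your conclusion matches the paper's.
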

\begin{proof}
As noted previously, for any $p$-compact group $G$, there is always a unitary embedding  $ B\iota \colon BG \to BSU(n)^{\wedge}_p$. By \Cref{prop:unitaryembbedingfinite} we see that $C^*(BG,\F_p)$ is a compact $C^*(BSU(n),\F_p)$-module via $B\iota^*$. Moreover,  $C^*(BSU(n),\F_p)$ is absolute Gorenstein with shift $n^2-1$ \cite[Ex.~4.8(2)]{bhv2}.  

Note that $f \coloneqq B\iota^* \colon C^*(BSU(n),\F_p) \to C^*(BG,\F_p)$ satisfies the conditions of \Cref{def:nn}. By \Cref{thm:dualizing} we have $\omega_f \simeq \Sigma^{n^2-1-w}C^*(BG,\F_p)$. It then follows from \Cref{thm:bc} that
\[
\Sigma^{-d+n^2-1-w} \Gamma_{\fp}C^*(BG,\F_p) \simeq \Sigma^{n^2-1} T_{C^*(BG,\F_p)}(I_{\fp}),
\]
hence the result. 
\end{proof}
This has the following standard consequences. 
\begin{cor}\label{cor:pgroupss}
Let $G$ be a $p$-compact group of dimension $w$, and $\fp \in \Spec^h(H^*(BG,\F_p))$ a prime of dimension $d$.  
\begin{enumerate}
	\item There is a spectral sequence 
\[
E_2^{s,t} \cong (H_{\fp}^sH^*(BG,\F_p)_{\fp})_t \implies (I_{\fp})_{-t-s-w-d}.
		\]
In particular, if $H^*(BG,\F_p)$ is Cohen--Macaulay, then it is Gorenstein. 
\item $H^*(BG,\F_p)$ is generically Gorenstein (i.e., its localization at any minimal prime is Gorenstein). 
\end{enumerate}
\end{cor}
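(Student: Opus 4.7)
The plan is to deduce both statements as formal consequences of \Cref{thm:p-comp_goren} via the standard local cohomology spectral sequence for the functor $\Gamma_{\fp}$ on a Noetherian commutative ring spectrum.

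First I would invoke the local cohomology spectral sequence for $R = C^*(BG,\F_p)$ and the prime $\fp \in \Spec^h(H^*(BG,\F_p))$. Since $R$ is Noetherian by \Cref{cor:noetherian}, the local cohomology functor $\Gamma_{\fp}$ is constructed via a suitable Koszul-type tower on generators of $\fp$ together with algebraic localization at $\fp$ (see \Cref{sec:localcohom}). Filtering by this tower produces a strongly convergent spectral sequence
\[
E_2^{s,t} \cong \bigl(H^{s}_{\fp} (\pi_{-*}R)_{\fp}\bigr)_t \;\Longrightarrow\; \pi_{-s-t}(\Gamma_{\fp}R),
\]
which is a direct analogue of the classical Grothendieck local cohomology spectral sequence; a reference in the ring-spectrum setting is \cite{benson_local_cohom_2008} (or one could deduce it from the homotopy fixed point / Koszul tower computation in \cite{bhv2}). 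Combining this with the identification $\pi_*(\Gamma_{\fp}C^*(BG,\F_p)) \cong (I_{\fp})_{*-d-w}$ of \Cref{thm:p-comp_goren} and reindexing to our homological grading yields the spectral sequence displayed in the corollary.

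For the Cohen--Macaulay clause of part (1), observe that if $(H^*(BG,\F_p))_{\fp}$ is Cohen--Macaulay of dimension $n$, then by definition the local cohomology $H^{s}_{\fp}(H^*(BG,\F_p)_{\fp})$ is concentrated in the single degree $s = n$. The spectral sequence therefore collapses at $E_2$ with no extension problems, giving an isomorphism of graded $H^*(BG,\F_p)_{\fp}$-modules between $H^{n}_{\fp}(H^*(BG,\F_p)_{\fp})$ and $I_{\fp}$ up to the predicted shift. Since $I_{\fp}$ is the injective hull of $H^*(BG,\F_p)_{\fp}/\fp$, this is exactly the Gorenstein criterion for the local ring $H^*(BG,\F_p)_{\fp}$; as $\fp$ was arbitrary, $H^*(BG,\F_p)$ is Gorenstein.

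For part (2), at any minimal prime $\fp \in \Spec^h(H^*(BG,\F_p))$, the localization $H^*(BG,\F_p)_{\fp}$ is zero-dimensional and hence trivially Cohen--Macaulay, so the conclusion of part (1) applies to show it is Gorenstein. I expect the only subtle point to be bookkeeping: justifying that the spectral sequence is strongly convergent and keeping the homological grading convention aligned with the injective-hull shift in \Cref{thm:p-comp_goren}; once the grading is set up correctly, both assertions are immediate corollaries and no further input beyond \Cref{thm:p-comp_goren} and standard commutative algebra is required.
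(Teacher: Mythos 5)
Your proposal is correct and takes essentially the same route as the paper: part (1) is indeed the local cohomology spectral sequence of \cite[Prop.~3.19(1)]{bhv2} with abutment identified via \Cref{thm:p-comp_goren}, and the Cohen--Macaulay clause follows by the pointwise collapse argument you describe. For part (2) the paper's primary citation is \cite[Cor.~7.4]{green_lyub}, but it also records precisely your argument (collapse of the spectral sequence at a minimal prime, forcing $H^*(BG,\F_p)_{\fp}$ to be Gorenstein) as an alternative, so your treatment matches.
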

\begin{proof}
	The first follows from \cite[Prop.~3.19(1)]{bhv2}, using the previous result to identify the target. The second follows from \cite[Cor.~7.4]{green_lyub} where, in the language of the latter, $H^*(BG,\F_p)$ has a local cohomology theorem with shift $-w$. Alternatively, this can be proven directly using the spectral sequence from (1): if $\fp$ is minimal, then there is only one nonvanishing column in the spectral sequence, and it follows that $H^*(BG,\F_p)_{\fp}$ is Gorenstein. 
\end{proof}

\renewcommand {\sp}{\Sigma^\infty}
\newcommand{\spp}{\Sigma^\infty_+}
\newcommand {\sm}{\otimes}

\appendix
\section{Duality for stably dualizable groups, by Tilman Bauer}

The aim of this appendix is to generalize various needed results from \cite{bauer_pcompact} to the class of $H\F_p$-local $\F_p$-finite groups. In that paper, the author restricted attention to connected $p$-compact groups. This assumption is never really needed, but we want to give short proofs of the relevant results for the sake of completeness.

We work in the category of $H\F_p$-local spectra $X$ with a (left) $G$-action on every space $X_n$ and such that the structure maps are $G$-equivariant.  We will call a $G$-equivariant map $f\colon X\rightarrow Y$ between $G$-spectra a $hG$-equivalence if it is a weak equivalence of underlying spectra.  

\begin{lem}\label{freediscretehomotopyfixedpoints}
Let $H < G$ be an inclusion of $H\F_p$-local $\F_p$-finite groups, and let $X$ be a non-equivariant spectrum. Then the $H^{\op}$-action on the mapping spectrum $\map(G_+,X)$ gives a weak equivalence, natural in $H$ and $G$:
\[
\map(G_+,X)^{hH^{\op}} \simeq \map(G/H_+,X).
\]
\end{lem}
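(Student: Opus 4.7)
The strategy is to express both sides as mapping spectra out of the same space using the Borel construction, exploiting the freeness of the right $H$-action on $G$.

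I would begin by observing that the right multiplication action of $H$ on $G$ is free, since $H<G$ is a subgroup inclusion. Consequently, the canonical projection from the Borel construction
\[
EH \times_H G \longrightarrow G/H
\]
is a weak equivalence: its homotopy fibers are copies of the contractible space $EH$. This persists after $H\F_p$-localization because $G/H$ is $\F_p$-finite, as follows from the Serre spectral sequence of the fibration $H \to G \to G/H$ together with the $\F_p$-finiteness of $G$ and $H$. Applying the contravariant functor $\map((-)_+,X)$ yields the first half of the argument,
\[
\map((G/H)_+, X) \simeq \map((EH \times_H G)_+, X).
\]

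Next, because $EH \times G$ carries a free right $H$-action (already $EH$ itself is a free $H$-space), the strict $H$-orbits coincide with the homotopy $H$-orbits, and dually, strict $H^{\op}$-fixed points of mapping spectra into the trivially-acted-upon spectrum $X$ compute the derived ones. Combining this with the internal hom adjunction gives
\[
\map((EH \times_H G)_+, X) = \map((EH \times G)_+, X)^{H^{\op}} \simeq \map(EH_+, \map(G_+, X))^{H^{\op}} = \map(G_+, X)^{hH^{\op}},
\]
where the last identification is the standard presentation of derived $H^{\op}$-homotopy fixed points via the universal free contractible $H^{\op}$-space $EH$.

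Concatenating the two chains of equivalences produces the desired identification. Naturality in both $H$ and $G$ is automatic, since each ingredient---the quotient projection, the Borel construction, and the hom-tensor adjunction---is canonical. The principal (and rather mild) obstacle is to remain consistent in the $H\F_p$-local equivariant category, but this is handled by the $\F_p$-finiteness of the spaces involved and by the freeness of the action on $EH \times G$, which ensures that no homotopy-coherent corrections are required beyond what is already packaged in the Borel construction.
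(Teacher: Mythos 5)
Your proof is correct and follows essentially the same route as the paper's, just run in the opposite direction: you start from $\map(G/H_+,X)$, pass through the Borel construction, use the adjunction $\map((EH\times G)_+,X)\simeq\map(EH_+,\map(G_+,X))$, and land on the definition of homotopy fixed points, whereas the paper's one-line chain of equivalences starts from $\map(G_+,X)^{hH^{\op}}$ and ends at $\map(G/H_+,X)$. One small inaccuracy worth flagging: the sentence invoking the Serre spectral sequence of $H\to G\to G/H$ to conclude that $G/H$ is $\F_p$-finite does not actually deliver that conclusion (the Serre spectral sequence computes $H^*(G)$ from $H^*(G/H)$ and $H^*(H)$, not the other way around; one would need an Eilenberg--Moore type argument), and in any case no such finiteness is needed here, since a weak equivalence of spaces such as $EH\times_H G\to G/H$ is preserved by $H\F_p$-localization unconditionally. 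That remark is a harmless detour rather than a gap, and the rest of the argument matches the paper's proof.
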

\begin{proof}
There are equivalences
\begin{align*}
\map(G_+,X)^{hH^{\op}} \simeq & \map_{H^{\op}}(EH_+,\map(G_+,X)) \simeq \map_{H^{\op}}((EH \times G)_+,X)\\
\simeq & \map((EH \times G)_+/H,X) \simeq \map(G/H_+,X).\qedhere
\end{align*}
\end{proof}

Recall the following definition due to Bauer~\cite{bauer_pcompact}. For a $p$-compact group $G$, define the dualizing spectrum $S_G$ by
	\[
S_G = (G_+)^{hG},
	\]
formed with respect to the standard right $G$ action on $G_+$ (thus we are left with a left $G$-action on $S_G$). 

The dimension of a $p$-compact group $G$, $\dim_{\mathbb F_p}(G)$, is the $\F_p$-cohomological dimension of $G$. In \cite{bauer_pcompact}, it is proven that $S_G$ is homotopy equivalent to a $H\F_p$-local sphere of dimension $\dim_{\mathbb F_p}(G)$ when $G$ is connected.

\begin{lem}\label{adjspaceonecomponent}
Let $G$ be an $H\F_p$-local $\F_p$-finite group of dimension $d$. Then the dualizing spectrum $S_G$ is equivalent to an $H\F_p$-local sphere of the same dimension $d$, and the inclusion of the identity component $G_0 \hookrightarrow G$ induces a $G_0$-equivariant equivalence $S_{G_0} \to S_G$.
\end{lem}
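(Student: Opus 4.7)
The strategy is to reduce to Bauer's theorem \cite{bauer_pcompact} for connected $p$-compact groups, by analyzing the short exact sequence of topological groups $1 \to G_0 \to G \to W \to 1$, where $W := \pi_0(G)$. Since $G$ is $\F_p$-finite, $W$ is finite and the identity component $G_0$ is open-and-closed in $G$. The partition $G = \coprod_{c \in W} g_c G_0$ yields a decomposition $H^*(G;\F_p) \cong \bigoplus_{c \in W} H^*(G_0;\F_p)$ (so that $\dim_{\F_p}(G_0) = \dim_{\F_p}(G) = d$) and exhibits $G_{0,+}$ as a wedge summand of $G_+$, so $G_0$ inherits $H\F_p$-locality. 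Hence $G_0$ is itself a connected $H\F_p$-local $\F_p$-finite group, and Bauer's theorem gives $S_{G_0}$ as an $H\F_p$-local $d$-sphere.

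I would then compute $S_G$ via the iterated homotopy fixed-point formula $S_G \simeq \bigl((G_+)^{hG_0}\bigr)^{hW}$ for the normal subgroup $G_0 \triangleleft G$. Choosing coset representatives $\{g_c\}_{c \in W}$ with $g_e = e$ yields a right-$G_0$-equivariant splitting $G_+ \simeq \bigvee_{c \in W} G_{0,+}$, whence
\[
(G_+)^{hG_0} \simeq \prod_{c \in W} S_{G_0},
\]
using that finite wedges and products of spectra coincide. The residual $W$-action permutes the factors freely and transitively, with a twist on each factor coming from conjugation by the $g_c$'s on $G_0$; this descends to an action of $W$ on $S_{G_0}$ by honest automorphisms, since inner automorphisms of the connected group $G_0$ act trivially on $S_{G_0}$.

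To compute $\bigl(\prod_c S_{G_0}\bigr)^{hW}$, I would run the homotopy fixed-point spectral sequence $E_2^{s,t} = H^s\bigl(W;\pi_t(\prod_c S_{G_0})\bigr) \Rightarrow \pi_{t-s}(S_G)$. The free and transitive $W$-action on the indexing set makes $\pi_*(\prod_c S_{G_0})$ a free $\Z_p[W]$-module regardless of the twist, so by Shapiro's lemma $E_2^{s,t} = 0$ for $s > 0$ and $E_2^{0,*} \cong \pi_*(S_{G_0})$. Hence the spectral sequence collapses, giving $S_G \simeq S_{G_0}$. At the spectrum level, this equivalence is realized by the norm map $S_{G_0} \to \bigl(\prod_c S_{G_0}\bigr)^{hW}$ (sending an element of the identity-coset factor to the sum of its $W$-translates), which is natural in the inclusion $G_0 \hookrightarrow G$ and equivariant for the residual left $G_0$-action, furnishing the required $G_0$-equivariant equivalence $S_{G_0} \to S_G$.

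The main obstacle is controlling the twisted $W$-action on $\prod_c S_{G_0}$ to ensure the spectral sequence collapses. The key is that the twist acts by honest automorphisms of $S_{G_0}$, so $\pi_*(\prod_c S_{G_0})$ remains free over $\Z_p[W]$ (a rank-one free $\Z_p[W]$-module tensored with a character remains free of rank one). Verifying this freeness carefully, and identifying the natural map $S_{G_0} \to S_G$ with the norm map composed with the inclusion of the identity-coset factor, forms the heart of the proof.
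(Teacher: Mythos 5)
Your proposal follows essentially the same outline as the paper's proof: show $G_0$ is a connected $H\F_p$-local $\F_p$-finite group to apply Bauer's result, decompose $\spp G \simeq \bigvee_{\pi_0 G}\spp{G_0}$ as a right $G_0$-spectrum, use the iterated homotopy fixed-point formula $S_G = \bigl((\spp G)^{hG_0}\bigr)^{h\pi_0 G}$, identify the inner fixed points with a finite product of copies of $S_{G_0}$, and then take fixed points for the finite permutation action. The difference is in the last step: the paper appeals directly to its \Cref{freediscretehomotopyfixedpoints}, which identifies $\map(\pi_+,X)^{h\pi^{\op}}\simeq X$ for a non-equivariant $X$, whereas you run the homotopy fixed-point spectral sequence and invoke Shapiro's lemma. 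Your extra work buys you something real: you correctly flag that the residual $\pi_0 G$-action on $\prod S_{G_0}$ can twist each factor by the automorphism of $S_{G_0}$ induced by conjugation by a coset representative, so the action is not obviously the untwisted permutation action that the lemma presumes; and you observe that the twist lands in $\Z_p^\times$ (up to inner automorphisms of $G_0$, which act trivially by connectivity), so a rank-one $\Z_p[\pi_0 G]$-module twisted by such a character remains free and the Shapiro argument still collapses the spectral sequence. The paper's invocation of the lemma is terser and implicitly suppresses this twist (which for $p$ odd is automatically trivial since $\pi_0 G$ is a finite $p$-group mapping to $\Z_p^\times$, but for $p=2$ requires the argument you give). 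Your identification of the resulting equivalence via the norm map is also a serviceable alternative to the paper's description via restriction to the identity factor. In short, this is the same route with one step worked out more explicitly; the explicitness is a modest improvement in rigor rather than a different idea.
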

\begin{proof}
Let $\pi=\pi_0G$ be the finite group of components. Then $G_0$-equivariantly, $G \simeq \map(\pi_+,G_0)$ and since the suspension functor $\spp{-}$ sends coproducts to wedges,
\[
(\spp{G})^{hG_0} = \map_{G_0}((EG_{0})_+,\spp{G}) \simeq \map(\pi_+,\spp{G_0}^{hG_0}) = \map(\pi_+,S_{G_0}),
\]
which is a finite union of $H\F_p$-local spheres by \cite[Cor.~23]{bauer_pcompact}. Then
\[
\spp{G}^{hG} = \left(\spp{G}^{hG_0}\right)^{h\pi} \simeq \map(\pi_+,S_{G_0})^{h\pi} \simeq S_{G_0}
\]
by \Cref{freediscretehomotopyfixedpoints}.
Moreover, the inclusion of the unit component $G_0 \hookrightarrow G$ induces the inclusion of the identity factor $S_{G_0} \hookrightarrow \spp{G}^{hG_0}$ and hence an equivalence with $S_G$.
 \end{proof}

\begin{lem}\label{inducedsphere}
Let $H < G$ be an inclusion of $H\F_p$-local $\F_p$-finite groups. Then there is a $G$-equivariant weak equivalence, natural in $G$:
\[
G_+ \sm_H S_H \to \spp{G}^{hH^{\op}}.
\]
\end{lem}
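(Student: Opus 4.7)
The plan is to construct the comparison map from a natural assembly transformation and then show it is a weak equivalence by reducing to a free-orbit input, in the spirit of Bauer's argument for the connected case.

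First I would define the map. The multiplication map $G_+\sm H_+\to G_+$ factors through a $(G\times H^{\op})$-equivariant equivalence $G_+\sm_H H_+\simeq G_+$, where $H^{\op}$ acts on the source through the right action on $H_+$ and on the target through the inclusion $H\hookrightarrow G$. Combining this with the canonical assembly transformation
\[
X\sm_H Z^{hH^{\op}}\longrightarrow (X\sm_H Z)^{hH^{\op}},
\]
evaluated at $X=G_+$ and $Z=H_+$, produces the desired $G$-equivariant map
\[
\varphi\colon G_+\sm_H S_H \longrightarrow \spp{G}^{hH^{\op}},
\]
which is manifestly natural in $G$.

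Next I would verify that $\varphi$ is an $hG$-equivalence. Regarded as functors in the right $H$-spectrum variable $X$ (with $Z=H_+$ fixed), both sides of the assembly map are exact: the source because $X\mapsto X\sm_H S_H$ preserves colimits, and the target because $(-)^{hH^{\op}}$ is a right adjoint and, in the stable setting, preserves finite homotopy limits. For $X=H_+$ the assembly map is visibly the identity $S_H\to S_H$. Hence $\varphi$ is a weak equivalence whenever $G_+$ lies in the thick subcategory of right $H$-spectra generated by $H_+$.

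The hard part is to verify this last property. The map $H\hookrightarrow G$ is a principal $H$-bundle with base $G/H$; local triviality combined with the $\F_p$-finiteness of $G/H$ (which holds for an inclusion of $H\F_p$-local $\F_p$-finite groups) should provide a finite $H$-equivariant cell filtration of $\spp{G}$ whose filtration quotients are each of the form $\Sigma^{n_i}H_+$. An alternative, perhaps cleaner, route is to work with the cobar-style augmentation
\[
\spp{G}\longrightarrow \spp{G}\sm H_+\rightrightarrows \spp{G}\sm H_+^{\sm 2}\cdots
\]
whose terms are free $H$-spectra on which the assembly map is visibly an equivalence, and then control the totalization using that $\spp{G}$ is $H\F_p$-locally dualizable. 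Either argument relies essentially on the fact that $G/H$ is $\F_p$-finite, so that the filtration or resolution is effectively finite after $H\F_p$-localization.
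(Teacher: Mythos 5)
Your construction of the comparison map via the assembly transformation is fine and essentially matches the paper's construction. The issue is with the proof that it is an equivalence, where your approach diverges from the paper's and contains a genuine gap.

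The paper does \emph{not} attempt to show $G_+$ is built from $H_+$ in a finite way. Instead, it cites the connected case directly from Bauer's earlier work (\cite[Lem.~19]{bauer_pcompact}) and then reduces the general statement to the connected one by exploiting the $G_0$-equivariant splitting $G \simeq \map(\pi_0(G)_+,G_0)$ together with \Cref{freediscretehomotopyfixedpoints}. In effect, the new content of the appendix's proof is the reduction across the finite group $\pi_0(G)$, not the connected result itself. Your proof tries to establish everything from scratch, which would be a strictly stronger and more self-contained argument, but you leave the crucial step as an assertion.

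The gap is precisely the claim that $G_+$ lies in the thick subcategory of ($H\F_p$-local) right $H$-spectra generated by $H_+$. You suggest two routes, but neither is carried out. For the first: $G\to G/H$ is a principal $H$-bundle, but $G/H$ is a $p$-complete $\F_p$-finite space, which in general does not admit a finite CW decomposition, so there is no honest finite skeletal filtration to pull back; you would need a purely stable finiteness argument in the $H\F_p$-local category. The claim you actually need is that an $H\F_p$-local, bounded-below spectrum with totally finite $\F_p$-homology (such as $\spp{(G/H)}$) lies in $\Thick(L_{H\F_p}S^0)$ — i.e., that $H\F_p$-local dualizability implies membership in the thick subcategory of the unit. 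This is plausible but is not a formal consequence of dualizability, and it deserves a proof. For the cobar alternative, "control the totalization" is exactly where the difficulty lives: convergence of the Tot is not automatic, and asserting it using dualizability is not a proof. Until one of these routes is made precise, the argument does not go through.
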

\begin{proof}
In the case of connected $H$ and $G$, this is \cite[Lem.~19]{bauer_pcompact}. In general, the natural map
\[
G_+ \sm_H \map(EH_+,\spp{H}) \to \map(EH_+,G_+\sm_H \spp{H}) \simeq \map(EH_+,\spp{G}) 
\]
induces a $G$-equivariant map $\phi\colon G_+ \sm_H S_H \to \spp{G}^{hH^{\op}}$ by passing to $H^{\op}$-fixed points. 

After taking $H_0$-fixed points, the right hand side is $\spp{G}^{hH_0^{op}}\simeq \map(\pi_0(G)_+,\spp{G_0}^{hH_0})$ and the left hand side is equivalent to $\map(\pi_0(G)_+, (G_0)_+ \sm_{H_0} S_{H_0})$. 

Non-equivariantly then, $G_+ \sm_H S_H$ splits as $\map((\pi_0G/\pi_0H)_+, G_0 \sm_{H_0} S_{H_0})$ and 
\begin{align*}
\spp{G}^{hH^{\op}} \simeq & \left(\map(\pi_0 G_+, \spp{G_0})^{hH_0^{\op}} \right)^{h\pi_0H^{\op}}\\
 \simeq & \map(\pi_0 G_+, \spp{G_0}^{hH_0^{\op}})^{h\pi_0 H^{\op}} \simeq \map((\pi_0G/\pi_0 H)_+, \spp{G_0}^{hH_0^{\op}})
\end{align*}
by \Cref{freediscretehomotopyfixedpoints}, and $\phi$ respects this splitting. By \cite[Lem.~19]{bauer_pcompact}, $\phi$ is a weak equivalence on every wedge summand, hence a weak equivalence. 
\end{proof}

\begin{prop} \label{relativeduality}
Let $H<G$ be a monomorphism of $H\F_p$-local $\F_p$-finite groups. Then there is a relative $G$-equivariant duality weak equivalence
\[
G_+ \sm_H S_H \simeq D\left(\spp{G/H}\right) \sm S_G.
\]
Moreover, for inclusions $K < H < G$ of $H\F_p$-local $\F_p$-finite groups, the following diagram commutes:
\[
\xymatrix { \spp{G}^{hH^{\op}} \ar[d]^{\operatorname{res}} & G_+ 
\sm_H  S_H \ar@{<.>}[r]^-{\sim} \ar[l]_-{\sim} &  D(G/H_+) \sm S_G 
\ar[d]^{D(\operatorname{proj}) \sm \id}\\ \spp{G}^{hK^{\op}}  & G_+ 
\sm_K S_K \ar@{<.>}[r]^-{\sim} \ar[l]_-{\sim} & D(G/K_+) \sm S_G } 
\]
\end{prop}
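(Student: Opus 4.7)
The strategy is to combine \Cref{inducedsphere} with a reduction to the connected case, in which the result is Theorem 3 of \cite{bauer_pcompact}. First I would apply \Cref{inducedsphere} to rewrite
\[
G_+ \sm_H S_H \simeq \spp{G}^{hH^{\op}},
\]
so the task becomes constructing a $G$-equivariant weak equivalence
\[
\spp{G}^{hH^{\op}} \simeq D\bigl(\spp{G/H}\bigr) \sm S_G
\]
that is natural in the monomorphism $H<G$. Note that $G/H$ is $\F_p$-finite (from the fibration $H\to G\to G/H$), so $\spp{G/H}$ is indeed dualizable in the $H\F_p$-local category and the right hand side is well defined.

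The reduction to the connected case will proceed analogously to the proofs of \Cref{adjspaceonecomponent} and \Cref{inducedsphere}. Let $\pi = \pi_0 G$, $\rho = \pi_0 H$ (with $\rho \hookrightarrow \pi$ injective), and let $G_0, H_0$ denote the identity components, so that $H_0 = H\cap G_0$. Taking $H^{\op}$-homotopy fixed points factors as $H_0^{\op}$-homotopy fixed points followed by $\rho^{\op}$-homotopy fixed points, and using the $G_0$-equivariant component splitting $\spp G \simeq \map(\pi_+,\spp{G_0})$ together with \Cref{freediscretehomotopyfixedpoints}, as in those earlier proofs, the left hand side is identified with an iterated mapping spectrum built out of $\spp{G_0}^{hH_0^{\op}}$. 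Correspondingly, $G/H$ decomposes into $|\pi/\rho|$ copies of $G_0/H_0$, so $D\bigl(\spp{G/H}\bigr)$ admits the matching decomposition, and \Cref{adjspaceonecomponent} identifies $S_G$ with $S_{G_0}$. The connected case is then Bauer's Theorem 3 of \cite{bauer_pcompact}, which supplies the equivalence $\spp{G_0}^{hH_0^{\op}} \simeq D\bigl(\spp{G_0/H_0}\bigr)\sm S_{G_0}$, and reassembling across components yields the desired $G$-equivariant equivalence.

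For the compatibility square with $K<H<G$, naturality should follow by tracking it through each step of the construction. Explicitly, the restriction map $\spp{G}^{hH^{\op}} \to \spp{G}^{hK^{\op}}$ corresponds under \Cref{inducedsphere} to the canonical map $G_+ \sm_K S_K \to G_+ \sm_H S_H$ arising from the transitivity of the homotopy orbit construction, and this in turn should correspond under the duality equivalence to $D(\operatorname{proj})\sm \id_{S_G}$, where $\operatorname{proj}\colon G/K \twoheadrightarrow G/H$ is the canonical quotient map. The commutativity of the diagram thus reduces to the naturality, with respect to subgroup inclusions, of \Cref{inducedsphere} and of the duality equivalence itself.

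The main obstacle will be the careful bookkeeping of $G$-equivariance—rather than mere $G_0$-equivariance—through the component decomposition: the left $G$-action on $G/H$ permutes the $\pi/\rho$-components via the natural action of $\pi$ on $\pi/\rho$, combined with the conjugation action of $\pi$ on $G_0$ coming from the extension $G_0 \to G \to \pi$. Matching these twisted $G$-actions on both sides of the duality equivalence requires carefully tracking how the iterated $H_0^{\op}$- and $\rho^{\op}$-homotopy fixed points interact with this extension, and similarly on the dual side how $\pi$ acts on $D(\spp{G_0/H_0})$.
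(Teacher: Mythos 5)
Your strategy is genuinely different from the paper's, and the obstacle you flag at the end is real. The paper's proof does \emph{not} reduce to the connected case at all: it instead invokes Rognes's extension (\cite[Thm.~3.1.4]{rognes_galois}) of the Poincar\'e duality equivalence $S_G \sm DG_+ \to \spp{G}$ (originally \cite[Prop.~22]{bauer_pcompact} for connected groups) to all stably dualizable groups, in particular to $H\F_p$-local $\F_p$-finite groups with nontrivial $\pi_0$. This equivalence carries two commuting $G$-actions; taking $H$-homotopy fixed points with respect to the second one, and applying \Cref{freediscretehomotopyfixedpoints} to the left-hand term, produces an explicit zigzag of $hG$-equivalences
\[
D(G/H)_+ \sm S_G \xleftarrow{\ \sim\ } DG_+^{hH} \sm S_G \xrightarrow{\ \sim\ } (DG_+ \sm S_G)^{hH} \xrightarrow{\ \sim\ } \spp{G}^{hH},
\]
which is manifestly natural in the inclusion $H<G$; precomposing with \Cref{inducedsphere} gives both claims simultaneously, with no component decomposition and hence no equivariance bookkeeping to do. Your route, by contrast, rests on unproven ground in several places. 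First, the decomposition of $G/H$ as a $G_0$-space is not into copies of $G_0/H_0$: the component indexed by $g\in\pi_0 G/\pi_0 H$ is $G_0/(gH_0g^{-1})$, so different conjugates of $H_0$ appear across the summands, and you would have to show Bauer's connected-case equivalence is compatible with conjugation. Second, the final application of \Cref{freediscretehomotopyfixedpoints} in your reduction is not a direct application of that lemma as stated: it requires the target $X$ to be a non-equivariant spectrum, whereas after taking $H_0^{\op}$-fixed points, $\pi_0 H^{\op}$ still acts nontrivially on $\spp{G_0}^{hH_0^{\op}}$ (by conjugation) in addition to permuting the $\pi_0 G$ factors, so a twisted version would be needed. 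Third, for the commuting square you would need the connected-case equivalence of \cite[Thm.~3]{bauer_pcompact} to be natural in $H<G$; this is exactly what the paper's zigzag construction delivers for free, and is not something you can simply assert of the black-boxed connected-case statement. In short: the missing ingredient is Rognes's generalization, which renders the entire component reduction (and the difficulties you correctly identify with it) unnecessary.
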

\begin{proof}
In \cite[Prop.~22]{bauer_pcompact}, we constructed a weak equivalence
\[
S_G \sm DG_+ \to \spp{G}
\]
for connected $p$-compact groups $G$, which is equivariant with respect to two different $G$-actions. The first is multiplication on $DG_+$ and $\spp{G}$ and the standard (conjugation) action on $S_G$, and the second one is right multiplication on $DG_+$ and $\spp{G}$ and the trivial action on $S_G$. Rognes \cite[Thm.~3.1.4]{rognes_galois} extended this proof to stably dualizable groups, in particular to $H\F_p$-local $\F_p$-finite groups. Taking $H$-homotopy fixed points with respect to that second action, we obtain $hG$-equivalences
\[
D(G/H)_+ \sm S_G \xleftarrow{\sim} DG_+^{hH} \sm S_G \xrightarrow{\sim} (DG_+ \sm S_G)^{hH} \xrightarrow{\sim} \spp{G}^{hH},
\]
where the left hand map is the equivalence from \Cref{freediscretehomotopyfixedpoints}.
Composing with the natural equivalence of \Cref{inducedsphere} gives the result.

For the naturality statement, consider the following diagram: 
\[
\xymatrix@1 { D(G/H_+) \sm S_G \ar[d] & DG_+^{hH^{\op}} \sm S_G \ar[l] \ar[d] 
\ar[r] &  (DG_+ \sm S_G)^{hH^{\op}} \ar[d] \ar[r] & \spp{G}^{h H^{\op}} 
\ar[d] \\
D(G/K_+) \sm S_G & DG_+^{hK^{\op}} \sm S_G \ar[l] \ar[r] & (DG_+ \sm  S_G)^{hK^{\op}} \ar[r]& \spp{G}^{hK^{\op}} } 
\]
The left hand square commutes by \Cref{freediscretehomotopyfixedpoints}, the other two  for trivial reasons.
\end{proof}

\biblio
\bibliography{duality}\bibliographystyle{alpha}
\end{document}